\documentclass{article}
\usepackage[alg]{klmm}
\usepackage{tikz}
\usepackage{subcaption}
\usetikzlibrary{positioning, arrows.meta}

\usepackage{booktabs}

\numberwithin{equation}{section}

\begin{document}


\pagestyle{fancy}

\newtheorem{notation}{Notation}

\def\N{{\mathbb N}}
\def\Z{{\mathbb Z}}
\def\Q{{\mathbb Q}}
\def\R{{\mathcal{R}}}
\def\K{{\mathcal{K}}}
\def\C{{\mathbb C}}
\def\H{{\mathcal{H}}}
\def\F{{\mathbb {F}}}
\def\X{{\mathbb{X}}}
\def\res{\hbox{\rm{res}}}
\def\LC{\hbox{\rm{LC}}}
\def\cont{\hbox{\rm{cont}}}
\def\MoCont{\hbox{\rm{MoCont}}}
\def\MoPrim{\hbox{\rm{MoPrim}}}
\def\Cont{\hbox{\rm{Cont}}}
\def\Prim{\hbox{\rm{Prim}}}
\def\poly{\hbox{\rm{Poly}}}
\def\x{\vec{\boldsymbol{x}}}
\def\b{\vec{\boldsymbol{b}}}
\def\inp{{\rm{in}}}
\def\out{{\rm{o}}}
\newcommand{\fs}[1]{\Phi_{\mathbf{s}}(#1)}
\newcommand{\Hi}{\mathbb{H}_{\rm in}}
\newcommand{\Ho}{\mathbb{H}_{o}}


\title{Sparse Polynomial GCD Algorithms Asymptotically Linear in All Fundamental Parameters}

\author{
    Qiao-Long Huang\institution{Shandong University, School of Mathematics,
Jinan, China}
    \and Xiao-Shan Gao\institution{State Key Laboratory of Mathematical Sciences, Academy of Mathematics and Systems Science, Chinese Academy of Sciences; University of Chinese Academy of Sciences, Beijing China.}
 }  
\date{\today}

\maketitle

{\begin{center}
\parbox{14.5cm}{\begin{abstract}
Let $A, B \in \mathbb{Z}[x_1, \dots, x_n]$ be multivariate polynomials with integer coefficients and let $G = \gcd(A, B)$. We present an algorithm for computing $G$ whose expected bit complexity is asymptotically linear in all fundamental parameters: the number of variables $n$, the term count $T = \max\{\|A\|_0, \|B\|_0, \|G\|_0\}$, the total degree $D$, and the logarithmic coefficient sizes $\log\Hi$ and $\log\Ho$, where $\Hi$ bounds the coefficients of the inputs and $\Ho$ bounds those of the GCD. The bit complexity is characterized by the clean bound
\[
\widetilde{O}\bigl( n \cdot T \cdot D \cdot \log\Hi \cdot \log\Ho \bigr).
\]
To our knowledge, this is the first sparse GCD algorithm over the integers that achieves linear complexity in all these parameters simultaneously.

The integer algorithm is built upon a new field GCD algorithm. For $A, B \in \K[x_1, \dots, x_n]$ over a field $\K$ with $\operatorname{char}(\K) = 0$ or $\operatorname{char}(\K) > \deg G$, we give the first algorithm that computes $G = \gcd(A,B)$ with expected 
\[
\widetilde{O}\bigl( n \cdot T \cdot D \bigr)
\]
field operations, which is both input- and output-sensitive.

The key technical contribution behind both algorithms is a derivative-aided separated Hensel lifting technique introduced in this paper.
By introducing an auxiliary variable and leveraging derivative information, our scheme extracts all partial exponents via a single $z^2$-lift per variable, achieving constant sequential depth $O(1)$. This stands in sharp contrast to classical Hensel lifting, which requires $O(D)$ sequential lifting steps and suffers from representation densification in the sparse setting. The field algorithm is then extended to the integer case through modular reduction and rational reconstruction.

Benchmark experiments in Maple confirm the theoretical analysis: our algorithm complements Maple's built-in \texttt{gcd} command, and exhibits a clear performance advantage when the total degree $D$ is large.
\end{abstract}}\end{center}}

\tableofcontents

\section{Introduction}

\subsection{A Historical Overview of Polynomial GCD Computation}

The computation of polynomial greatest common divisors (GCD) is one of the most fundamental problems in symbolic computation, and its development has paralleled the evolution of computer algebra itself. This section provides a chronological account of the major milestones in the history of polynomial GCD algorithms.

The classical Euclidean algorithm can be applied to polynomials using polynomial division with remainder, but this requires the coefficients of the polynomials to be invertible. As early as 1836, Jacobi introduced pseudo--division to overcome this restriction, allowing GCDs of integral polynomials to be computed without leaving the integers. However, this approach generates a polynomial remainder sequence (PRS) with exponential coefficient growth--a phenomenon known as \emph{intermediate expression swell}--and remains impractical for all but the smallest inputs \cite{vonZurGathenLuck2003}.

A major breakthrough came with Collins (1967) \cite{collins1967}, who introduced subresultant theory and proposed a polynomial remainder sequence algorithm that controls coefficient growth. Collins's subresultant PRS has integer coefficients with only linear growth, laying the foundation for practical GCD computation. Building on this work, Brown (1971) \cite{brown1971} further developed the theory of subresultants and, in the same paper, introduced the modular GCD algorithm, which computes GCDs of integer-coefficient polynomials modulo several primes and reconstructs the integer GCD via the Chinese Remainder Theorem, effectively avoiding coefficient growth entirely.

 Moses and Yun (1973) \cite{moses1973} developed the EZ-GCD algorithm based on multivariate Hensel lifting, and later Wang (1980) \cite{wang1980} proposed the EEZ-GCD algorithm as an extension to handle non-monic inputs more effectively.
These algorithms compute the GCD by first evaluating all but one variable at a random point and then lifting the resulting univariate GCD back to the multivariate setting via Hensel lifting, significantly improving practical efficiency.

For sparse polynomials, Zippel introduced a probabilistic interpolation method in 1979 \cite{zippel1979}, based on the observation that a nonzero polynomial evaluated at a random point is almost never zero; hence, any zero coefficient in a random evaluation can be assumed to be zero in the final answer. Building on this, Zippel (1981) \cite{zippel1981} incorporated this idea into a Newton-iteration-based Hensel lifting framework, where the factorization problem is converted into a system of equations and solved iteratively, considering only non-zero terms at each step. This improved the classical EZ-GCD and EEZ-GCD algorithms and is the foundation of Zippel's sparse modular GCD algorithm, which is now the default GCD algorithm in several major computer algebra systems, including Maple, Magma, and Mathematica  \cite{HuMonagan2021} for $\Z[x_1,\dots,x_n]$.

An alternative approach was proposed by Char, Geddes, and Gonnet (1984) \cite{char1984gcdheu}, who introduced a heuristic GCD algorithm based on a single evaluation at a large integer and interpolation from the resulting integer GCD.  It is efficient for problems with few variables.

  Gianni and Trager (1985) \cite{gianni1985gcd} showed that GCDs can be computed as the least degree member of a Gr\"obner basis for an ideal defined in terms of the input polynomials, providing a connection between GCD computation and Gr\"obner basis theory.

Kaltofen (1985) \cite{kaltofen1985b} improved Zippel's sparse Hensel lifting framework by introducing a leading coefficient determination algorithm that works with just the factorization of the univariate image and does not require, unlike Wang's method, the factorization of the leading coefficient of the input polynomial.

A significant departure from traditional approaches came with Kaltofen's work on polynomials represented by straight-line programs. Kaltofen (1985, 1988) \cite{kaltofen1985,kaltofen1988} showed that most algebraic algorithms can be probabilistically applied to data given by a straight-line computation. In particular, he developed a randomized GCD algorithm for multivariate polynomials in this model, where the complexity is polynomial in the program length and the degree of the inputs.

Kaltofen and Trager (1990) \cite{kaltofen1990} further extended this framework to black-box polynomial representations, where the polynomial is accessible only through evaluations. They constructed an evaluation procedure for the greatest common divisor of multivariate polynomials given by black boxes, providing a powerful tool for manipulating polynomials with implicit representations.

Sasaki and Suzuki (1992) \cite{sasaki1992three} presented three algorithms for the multivariate polynomial GCD. The first is a Gr\"obner basis method, which establishes a theoretical connection but is not practically efficient. The other two are truncated power series methods--a subresultant variant and a PRS variant--that achieve significant efficiency by discarding higher-degree terms that do not affect the GCD.

De Kleine, Monagan, and Wittkopf (2005) \cite{dekleine2005} proposed a different approach to the non-monic case of Zippel's sparse modular GCD algorithm. Instead of determining the leading coefficient via factorization, their LINZIP algorithm treats the scaling factors as unknowns and solves a structured coupled linear system, while their RATZIP algorithm reconstructs the monic GCD over the rational function field and then clears the denominators. Both algorithms avoid polynomial factorization entirely at the cost of either solving a larger linear system or performing an additional recursive GCD computation in fewer variables.

The 2000s also saw significant progress on approximate GCD for inexact polynomials, with contributions from Boito on structured matrix methods, Christou et al. on the ERES method, Bini and Boito on fast structured algorithms, Terui on the GPGCD iteration, and Kaltofen, Yang, and Zhi on STLN-based optimization~\cite{boito2011structured,christou2010eres,bini2010fast,terui2010gpgcd,kaltofen2006approximate}.

More recently, Hu and Monagan (2021) \cite{HuMonagan2021} introduced a fast parallel sparse polynomial GCD algorithm that combines a Kronecker substitution with Ben-Or/Tiwari sparse interpolation modulo a smooth prime to determine the support of the GCD. Their algorithm is highly parallelizable and demonstrates significant performance advantages over serial implementations of Zippel's GCD algorithm in Maple and Magma.

Huang and Monagan (2024) \cite{HuangMonagan2024} presented a sparse polynomial GCD algorithm by separating terms with a detailed complexity analysis. Huang and Gao (2026) \cite{HuangGao2025} proposed another GCD algorithm that combines term separation and sparse interpolation while providing its bit complexity. The former has quadratic complexity in the degree $D$, while the latter has quadratic complexity in the term count $T$.

Demin and van der Hoeven (2025) \cite{DeminHoeven2025} proposed a new GCD algorithm within their evaluation-interpolation framework using geometric progressions, introducing an auxiliary variable to normalize the leading coefficient and enable efficient lifting. Their method provides a unified treatment of GCD computation and sparse factorization and demonstrates the power of combining sparse interpolation with structured evaluation sequences.

\subsection{Complexity Barriers for Sparse GCD}

A fundamental question for any GCD algorithm is: what is the best complexity one can hope for? To answer this, we must first understand the inherent difficulty of the problem.

\paragraph{Output-Sensitive Complexity is Unavoidable.}

The GCD of two sparse polynomials can have a size that is exponential in the sizes of the input polynomials. The following classical example illustrates this phenomenon.

\begin{example}[\cite{GCD-Schinzel2003}]
\label{ex-GCD1}
Let $p, q$ be distinct primes and consider
\[
A = x^{pq} - 1, \qquad 
B = x^{p+q} - x^p - x^q + 1 = (x^p - 1)(x^q - 1).
\]
The irreducible decompositions are
\[
x^p - 1 = (x-1)\Phi_p,\qquad
x^q - 1 = (x-1)\Phi_q,\qquad
x^{pq} - 1 = (x-1)\Phi_p\Phi_q\Phi_{pq},
\]
where $\Phi_n$ denotes the $n$-th cyclotomic polynomial. Hence
\[
\gcd(A,B) = (x-1)\Phi_p\Phi_q = (x^p-1)\Phi_q = x^p\Phi_q - \Phi_q,
\]
which contains $2q$ terms, assuming $q < p$.
\end{example}

Thus, the input polynomials $f$ and $g$ have $2$ and $4$ terms, respectively, while their GCD has $2q$ terms, which is exponential in the input size $\log q$. This observation has profound implications:

\begin{quote}
\emph{No algorithm for sparse GCD can be polynomial in the input size alone; any meaningful complexity bound must be output-sensitive, i.e., must depend explicitly on the number of terms in the GCD.}
\end{quote}

\paragraph{Output-Sensitive GCD computation is NP-hard.}

Let $T$ be an upper bound on the number of terms of the inputs and the GCD, and let $D$ be an upper bound on the total degree. A tight upper bound of the input size is $O(n T \log D + T \mathbb{H})$, where $\mathbb{H}$ denotes the bit size of the coefficients, and the output size is also $O(n T \log D + T \mathbb{H})$.
A natural question is~\citep[Challenge 5]{SparsityChallenges}: 

\begin{quote}
\emph{Does there exist a polynomial-time, output-sensitive GCD algorithm, that is, an algorithm for sparse GCD whose complexity is polynomial in the combined input-output size.}
\end{quote}

A recent result \cite{Qiu2026OutputSensitive} settles the above question in the negative direction. It is shown that

\begin{theorem}[\cite{Qiu2026OutputSensitive}]
\label{th-gcdnph}
Output-sensitive GCD computational over finite fields is NP-hard under BPP reduction.
\end{theorem}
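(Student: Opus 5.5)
We will prove the statement by a randomized polynomial-time reduction from a known NP-hard problem about sparse polynomials over finite fields to the output-sensitive GCD problem. The natural source problem is deciding whether two sparse univariate polynomials over $\F_p$ have a nontrivial common factor, or more concretely, whether a sparse polynomial has a root in $\F_p$. The second problem is NP-hard under randomized reductions (Plaisted-style constructions, and Kipnis--Shamir/Bi--Cheng--Rojas for $\F_p$). The reduction must produce inputs whose GCD has polynomially bounded size. An output-sensitive algorithm then runs in time polynomial in the instance size, and NP $\subseteq$ BPP follows.

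\textbf{Step 1: the source problem.} We start from the NP-hard problem ``does a given sparse $f\in\F_p[x]$ have a root in $\F_p$?'' Its hardness comes from a reduction from 3-SAT in Plaisted's style. Clauses are encoded via roots of unity of distinct small prime orders $p_1,\dots,p_k$ dividing $p-1$. A satisfying assignment then corresponds, via CRT, to an element of a subgroup of $\F_p^*$ that is a common root of sparse polynomials built from $x^{(p-1)/p_i}$. Finding a prime $p$ with $p_1\cdots p_k \mid p-1$ requires randomness (Dirichlet plus random search), which explains why the statement says ``under BPP reduction.''

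\textbf{Step 2: encoding as a GCD with small output.} Given sparse $f$, we form $A=f$ and $B=x^{N}-1$ for a suitable $N$ (e.g.\ $N=p-1$ or the order of the relevant subgroup). Then $\gcd(A,B)=\prod_{a\in S}(x-a)$, where $S$ is the set of relevant roots of $f$. This GCD is dense of degree $|S|$, so the output could be large. To control this, we first isolate solutions using a random shift or a Valiant--Vazirani-style restriction. With constant probability the restricted instance has either zero or exactly one solution, so the GCD is $1$ or linear, and its output size is $O(\log p)$. Each restriction to a sub-coset must remain expressible by sparse polynomials. We do this by composing with $x\mapsto x^{m}$ and multiplying by a random element, which preserves sparsity.

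\textbf{Step 3: concluding.} Suppose a GCD algorithm runs in time polynomial in $nT\log D+T\log p$ (input plus output size). We run it on the reduced instance with a timeout equal to that polynomial evaluated at the size bound for outputs of degree at most $1$. A satisfiable formula yields, with constant probability, a nontrivial GCD, and the output can be checked by evaluation. An unsatisfiable formula always yields GCD $1$. The result is a BPP algorithm for 3-SAT, as claimed.

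The main obstacle is Step 2: guaranteeing that the GCD stays small while the polynomials stay sparse. The isolation step, in particular, must be performed by operations that preserve sparsity. If a Valiant--Vazirani-style isolation does not interact well with the multiplicative group structure, we will instead restrict the assignment space clause-by-clause using additional prime-order factors. This forces a unique canonical solution, for example the lexicographically least one found by self-reduction, so that the GCD has at most one linear factor.
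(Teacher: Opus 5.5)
The paper gives no proof of this statement; it is quoted from \cite{Qiu2026OutputSensitive}, so your argument has to stand on its own.

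Steps 1 and 3 are fine in outline. Root detection for sparse $f\in\mathbb{F}_p[x]$ is NP-hard under BPP reductions by Bi--Cheng--Rojas; cite this as a black box, because Dirichlet's theorem by itself gives no bound on the search for a prime $p\equiv 1 \bmod p_1\cdots p_k$. Also, $f$ has a root in $\mathbb{F}_p^*$ iff $\gcd(f,x^{p-1}-1)\neq 1$.

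The genuine gap is Step 2: none of the mechanisms you propose isolates a root.
\begin{itemize}
\item Composition cannot thin out roots. Let $S$ be the root set of $f$ in $\mathbb{F}_p^*$, $g=\gcd(m,p-1)$ and $H=(\mathbb{F}_p^*)^m$. Then $f(cx^m)$ has exactly $g\cdot|S\cap cH|$ roots in $\mathbb{F}_p^*$. Its GCD with $x^{p-1}-1$ is therefore linear only if $g=1$, and in that case $|S\cap cH|=|S|$. So composition isolates only when $|S|=1$ already.
\item Restricting through $B=x^N-\zeta$ with $N\mid p-1$ keeps only the roots lying in one coset of the subgroup $H_N$ of order $N$. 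Whether two roots $y_1,y_2$ survive together is decided by whether $y_1/y_2\in H_N$, independently of the random choice. This family is not pairwise independent, so the Valiant--Vazirani counting argument gives nothing.
\item Survivor counts are dictated by the coset structure of $S$. If $S$ is a union of cosets of a subgroup $K$, every nonempty intersection is a union of cosets of $K\cap H_N$, hence has at least $\gcd(|K|,N)$ elements. Moreover $N$ can range only over divisors of $p-1$. You give no argument that some admissible $N$ leaves exactly one survivor with non-negligible probability.
\item The self-reduction fallback needs a decision procedure for the restricted instances, which is exactly what you are trying to build, so it is circular.
\end{itemize}

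Isolation is also unnecessary, and deleting Step 2 repairs the proof. The idea you missed is that a timeout is itself evidence of a nontrivial GCD.
\begin{itemize}
\item Take $A=f$ and $B=x^{p-1}-1$; a root at $0$ is read off the constant term.
\item On a no-instance the GCD is $1$, so $n=1$, $D=p-1$ and $T=\max\{\|f\|_0,2\}$. The hypothesized algorithm then has expected running time at most $P(\|f\|_0,\log p)$ for a fixed polynomial $P$, and errs with probability at most $1/3$.
\item Run it for $10P$ steps. Answer ``no root'' exactly when it halts within this budget with output $1$; answer ``root'' if it times out or returns anything else.
\item On a no-instance this is correct with probability at least $1-1/3-1/10>1/2$, by Markov's inequality plus the error bound.
\item On a yes-instance the answer is wrong only if the algorithm returns the incorrect value $1$, which has probability at most $1/3$.
\end{itemize}
After standard amplification this is a BPP procedure for root detection, and composing with the Bi--Cheng--Rojas reduction gives $\mathrm{NP}\subseteq\mathrm{BPP}$. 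The size of the GCD on yes-instances never enters. Only the no-instances need small output, and there the output is the constant $1$.
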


In other words, any random algorithm with complexity $\operatorname{poly}(n, T, \log D)$ would imply $\mathrm{P} = \mathrm{NP}$. 

\paragraph{Complexity of GCD algorithms based on the input-output size and the degree.}

Most of the existing work focuses on designing GCD algorithms whose running time is polynomial in the combined input-output size and the degree, which was proposed as  Research Problem 16.17 in~\cite{vonzurGathen2013}.

In the seminal work of Kaltofen \cite{kaltofen1988,kaltofen1985}, it was proved that for polynomials represented by straight-line programs, the GCD can be computed in randomized polynomial time. 
Subsequent research--including Zippel’s probabilistic algorithms for sparse polynomials, sparse interpolation techniques, and several enhancements based on Hensel lifting--has improved the complexity in different ways. Nevertheless, to the best of our knowledge, no approach has yet produced an algorithm whose complexity is simultaneously linear in all three key parameters $n$, $T$, and $D$.
\begin{table}[ht]
    \centering
    \begin{tabular}{llll}
\toprule
\textbf{GCD Algorithm} & \textbf{Complexity} & \textbf{Type} & \textbf{Conditions} \\
\midrule
Kaltofen \cite{kaltofen1988,kaltofen1985} & $\operatorname{poly}(n,T,D)$ & Monte Carlo & None \\
Zippel \cite{zippel1979} & $\widetilde{O}(n \cdot T^2 \cdot D^2)$ & Monte Carlo & Monic inputs \\
Huang--Gao \cite{HuangGao2025} & $\widetilde{O}(n \cdot T^2 \cdot D)$ & Monte Carlo & Primitive root given \\
Huang--Monagan \cite{HuangMonagan2024} & $\widetilde{O}(n \cdot T \cdot D^2)$ & Monte Carlo & None \\
\textbf{This paper} & $\widetilde{O}(n \cdot T \cdot D)$ & Monte Carlo & $\operatorname{char}(\K)=0$ or $>\deg G$ \\
\bottomrule
\end{tabular}
    \caption{Comparison of GCD algorithms with polynomial complexity in input-output size and degree.}
    \label{tab:comparison}
\end{table}

\subsection{Our Main Result}

We design a novel GCD algorithm based on derivative-assisted separated Hensel lifting and prove that its time complexity in $n, T, D$ is
\[
\widetilde{O}(n \cdot T \cdot D).
\]
This is asymptotically \textbf{linear} in all three parameters $n$, $T$, and $D$. In particular, the dependence on $n$ and $T$ is linear, achieving the optimal exponent for these parameters.
Furthermore, the result is valid for a broad and natural class of fields: over any field of characteristic zero or exceeding $D$, there exists a universal GCD algorithm with complexity $\widetilde{O}(nTD)$. This class includes $\mathbb{Q}$, $\mathbb{R}$, $\mathbb{C}$, and finite fields $\mathbb{F}_q$ with $\operatorname{char}(\F_q) > D$, which cover the vast majority of fields of practical interest in symbolic computation.


For ease of reference, we summarize the main results of this paper as follows.

\begin{theorem}\label{thm:main-field}
Let $A, B \in \K[x_1, \dots, x_n]$ be multivariate polynomials over a field $\K$, and let $G = \gcd(A, B)$. 
Suppose $\operatorname{char}(\K) = 0$ or $\operatorname{char}(\K) > \max_{1 \le i \le n} \min\{\deg_{x_i} A, \deg_{x_i} B\}$. 
Let $D := \max\{\deg A, \deg B\}$, $T_G := \|G\|_0$, $T_A := \|A\|_0$, and $T_B := \|B\|_0$. 
Then there exists a randomized algorithm that computes $G$ with probability at least $1-\varepsilon$ and  and whose expected cost is as follows:

\begin{itemize}
    \item If $\K$ is infinite,
    \[
    \widetilde{O}\Bigl(
    n T_G D \log\frac{1}{\varepsilon}
    + n(T_A + T_B) \log^4 T_G \log D \log\frac{1}{\varepsilon}
    \Bigr)
    \]
    field operations in $\K$.

    \item If $\K = \mathbb{F}_q$ is finite,
    \[
    \widetilde{O}\Bigl(
    n T_G D \log q \log\frac{1}{\varepsilon}
    + n(T_A + T_B) \log^4 T_G \log^2 D \log q \log\frac{1}{\varepsilon}
    \Bigr)
    \]
    bit operations.
\end{itemize}
If $T = \max\{T_A, T_B, T_G\}$, the complexity simplifies to $\widetilde{O}( n T D \log\frac{1}{\varepsilon})$ field operations in $\K$ when $\K$ is infinite, and to $\widetilde{O}( n T D \log q \log\frac{1}{\varepsilon})$ bit operations when $\K = \mathbb{F}_q$.
\end{theorem}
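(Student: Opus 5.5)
The plan is to reduce the multivariate GCD to a family of univariate (in an auxiliary variable $z$) GCD computations. We then recover the support and coefficients of $G$ by sparse interpolation, using derivative information to read off exponents.

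\textbf{Step 1: Normalization.} Introduce a random shift and an auxiliary variable $z$ by setting $\tilde A(z)=A(\alpha_1+ z\,y_1,\dots,\alpha_n+z\,y_n)$, or equivalently a Kronecker-type randomized substitution $x_i\mapsto \beta_i z + \gamma_i$. This makes the GCD monic in $z$ after dividing by a leading-coefficient scalar, as in the approach of Demin--van der Hoeven. A Schwartz--Zippel argument shows that for random $\alpha,\beta$ the image $\gcd(\tilde A,\tilde B)$ equals $\tilde G$ up to a unit with probability $\ge 1-O(D^2/|S|)$. Over small $\mathbb{F}_q$ we pass to an extension of degree $O(\log D)$, which accounts for the extra $\log D$ and $\log q$ factors.

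\textbf{Step 2: Derivative-aided separated Hensel lifting.} For each variable $x_i$, consider $G$ evaluated along a geometric progression $x_j=\omega_j^{k}$, with $x_i$ perturbed to first order, i.e.\ work modulo $z^2$. The univariate images $g_k(z)=\gcd(A(\cdot),B(\cdot))$ are obtained by fast univariate GCD in $\widetilde O(D)$. The $z^1$-coefficient of each lifted image equals $\sum_t c_t e_{t,i} m_t^k$, where $e_{t,i}$ is the exponent of $x_i$ in the $t$-th term and $m_t$ its value at the progression. This requires that the lift from $z^0$ to $z^2$ be uniquely solvable, which needs $\gcd(G_0, A_0/G_0)=1$ (coprimality) and invertibility of exponent multipliers. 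The latter is exactly where $\operatorname{char}\K>\min\deg_{x_i}$ is used: $e_{t,i}\ne0$ must be recoverable in $\K$.

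\textbf{Step 3: Recovery and assembly.} Take $2T_G$ progression points. Ben-Or--Tiwari / Prony on the $z^0$ sequence gives $c_t$ and $m_t$. Dividing the transposed Vandermonde solution of the $z^1$ sequence by $c_t$ yields $e_{t,i}$ for every $i$. This costs one Hensel lift per variable per point, totalling $O(nT_G)$ univariate GCD-plus-lift operations of cost $\widetilde O(D)$, hence $\widetilde O(nT_GD)$. Evaluating $A,B$ at $O(T_G)$ progression points per variable via transposed multipoint evaluation costs $\widetilde O(n(T_A+T_B)\log^{O(1)}T_G)$, which matches the second term. Termination and correctness are checked probabilistically, either by early-termination of Berlekamp--Massey or by verifying $G\mid A$ at a random point, with $O(\log 1/\varepsilon)$ repetitions.

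\textbf{Main obstacle.} The hardest part is Step 2: showing that a single $z^2$-lift exposes all partial exponents without densification. This requires arguing that the linear system for the first-order correction $\delta G$ in $G_0\delta H+H_0\delta G = \delta A$ is nonsingular for random choices, i.e.\ that random shifts make $G_0$ and $H_0$ coprime, and bounding the failure probability uniformly over the $O(nT_G)$ points. I would first try a resultant argument: $\mathrm{res}_z(G,A/G)$ is a nonzero polynomial of degree $O(D^2)$ in the randomization parameters, so Schwartz--Zippel over a set of size $O(nT_GD^2/\varepsilon)$ bounds the failure probability.
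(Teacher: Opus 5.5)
Your route (Ben-Or--Tiwari/Prony interpolation along a geometric progression, with first-order perturbations supplying $\sum_t c_te_{t,i}m_t^k$) differs from the paper's. As written, however, it has two genuine gaps.

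The first gap is in the lift. You lift $A_0=G_0H_0$ with $H_0=A_0/G_0$ and assert that $\operatorname{res}(G,A/G)$ is a nonzero polynomial, so that random choices make $G_0,H_0$ coprime. This fails whenever $G$ and $A/G$ share a factor, e.g.\ $A=G^2$, $B=G$. Then the resultant vanishes identically, so no randomization makes $G_0,H_0$ coprime. The B\'ezout coefficients needed for the lift do not exist, and the first-order correction is not unique; your Schwartz--Zippel bound is being applied to the zero polynomial. The paper instead lifts $F=A+cB=G\,(A_1+cB_1)$ with a random scalar $c$. Since $\gcd(A_1,B_1)=1$, each irreducible factor of $G$ divides $A_1+cB_1$ for at most one $c$, so at most $\deg G$ values of $c$ are bad (Theorem~\ref{thm:good-c}).

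The second gap is normalization. The univariate GCD at the $k$-th progression point is defined only up to a scalar $\lambda_k$ that varies with $k$. So the sequences you feed to Ben-Or--Tiwari are $\lambda_k\sum_t c'_tm_t^k$, which are not exponential sums. Making the images monic turns them into ratios of two exponential sums, unless the normalizing coefficient is a single term.

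Step~1 does not resolve this, and its two versions are not equivalent. Substitutions that keep the exponential-sum structure ($x_j\mapsto\omega_j^kz$, or $x_j\mapsto\omega_j^k(\beta_jz+\gamma_j)$) have leading and lowest nonzero coefficients that are, in general, exponential sums in $k$ (e.g.\ the top and bottom homogeneous components of $G$). The shift $x_i\mapsto\alpha_i+zy_i$ does make the constant term the fixed scalar $G(\alpha)$. But it replaces the coefficients by Taylor coefficients of $G$ at $\alpha$, whose support can be exponentially larger than $T_G$ (take $G=(x_1\cdots x_n)^d+1$). This destroys both the $2T_G$ point count and the claimed input-evaluation cost. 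Two smaller points: $z$ cannot serve both as the GCD variable and as the mod-$z^2$ perturbation variable, and checking only $G\mid A$ does not certify a GCD.

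The idea you are missing is the weighted substitution $x_i\mapsto x_iy^{s_i}$ with random $\mathbf s\in[1,N]^n$. It makes $\operatorname{lc}_y(\fs{G})$ a single term (Lemma~\ref{lem:leading}), so monic normalization divides by a monomial and stays sparse and consistent across $\b,\b^2,\b^3$. Terms are separated by $y$-degree, so each coefficient and its partial derivatives are read off at a single point with no Prony step. The derivatives come from $\mathbf{rem}(v\,\partial_{x_k}\fs{F}(\b,y),\mathcal{G}_0)$ by Theorem~\ref{the-1}. Collisions are flagged by the determinant test of Lemma~\ref{lem:collision-detection}. Finally, $O(\log T)$ halving rounds with lowest-term alignment keep $N=O(T)$, hence $y$-degree $O(TD)$ and cost $\widetilde{O}(nTD)$ per round.
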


\begin{theorem}\label{thm:main-integer}
Let $A, B \in \mathbb{Z}[x_1, \dots, x_n]$ be integer polynomials, and let $G = \gcd(A, B)$. 
Let $D := \max\{\deg A, \deg B\}$, $T_G := \|G\|_0$, $T_A := \|A\|_0$, and $T_B := \|B\|_0$, and define
\[
\Hi := \max\{\|A\|_\infty, \|B\|_\infty\}, \qquad \Ho := \|G\|_\infty.
\]
Then there exists a randomized algorithm that computes $G$ with probability at least $1-\varepsilon$ and expected bit complexity
\[
\widetilde{O}\Bigl( 
n T_G D \cdot \log\Ho \cdot \log\Hi \cdot \log\frac{1}{\varepsilon}
+ n (T_A + T_B) \cdot \log^4 T_G \cdot \log^4 D \cdot \log \Hi \cdot \log\Ho \cdot \log\frac{1}{\varepsilon}
\Bigr).
\]

If $T = \max\{T_A, T_B, T_G\}$, the expected complexity simplifies to
$\widetilde{O}\Bigl( 
n T D \cdot \log\Ho \cdot \log\Hi \cdot \log\frac{1}{\varepsilon}
\Bigr).$
\end{theorem}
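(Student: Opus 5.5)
The plan is to reduce Theorem~\ref{thm:main-integer} to the finite-field case of Theorem~\ref{thm:main-field} by multimodular reduction followed by Chinese remaindering and rational reconstruction. The leading-coefficient normalization is handled in the standard way. Since $G$ is determined only up to sign, we first compute the contents and work with the primitive part. We then fix a normalization, for instance making the coefficient of $G$ at its leading monomial, in a term order fixed after a random linear change or after choosing the leading monomial from a modular image, equal to a known divisor of $\gcd(\mathrm{lc}(A),\mathrm{lc}(B))$. An alternative is to make the image monic modulo $p$ and recover a rational $G/\mathrm{lc}(G)$ by rational reconstruction. The second option avoids a leading-coefficient blowup: the coefficients of $G/\mathrm{lc}(G)$ are rationals with numerator and denominator bounded by $\Ho$. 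So about $O(\log \Ho)$ bits of modulus suffice, and clearing denominators at the end recovers the primitive $G$ up to sign.

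\textbf{Step 1: primes.} Choose random primes $p$ of bit size $O(\log D + \log\log\Hi + \log\frac1\varepsilon + \log(nT))$ so that $p > D$; this ensures the characteristic hypothesis of Theorem~\ref{thm:main-field} holds. Call a prime unlucky if it divides a leading coefficient, or if it makes $\gcd(A \bmod p, B \bmod p) \ne G \bmod p$ up to a unit. The second condition is governed by a nonzero subresultant-type quantity. To keep this linear in $D$, we bound it after a random Kronecker-type specialization to a univariate or bivariate setting, giving a quantity of height $\widetilde O(D\log\Hi)$. Hence there are at most $\widetilde O(D\log \Hi)$ unlucky primes, and a random prime from a range of size $\mathrm{poly}(D,\log\Hi,1/\varepsilon)$ is lucky with high probability.

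\textbf{Step 2: modular images.} Run the $\F_p$ algorithm of Theorem~\ref{thm:main-field} for $N = O(\log\Ho/\log p)$ primes. The target is $2\log\Ho$ bits for rational reconstruction, and $\Ho$ is not known a priori. We therefore use early termination, doubling $N$ and stopping when the reconstructed candidate stabilizes and divides $A$ and $B$. The division test is itself done modulo a few random primes or by random evaluation, so that it stays within budget. Each call costs $\widetilde O(nT_GD\log p + n(T_A+T_B)\log^4T_G\log^2D\log p)$ bit operations, plus the cost of reducing $A$ and $B$ modulo $p$. Reducing modulo all $N$ primes together by fast simultaneous reduction costs $\widetilde O(nT\log D + T\log\Hi)$ per term, amortized.

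\textbf{Step 3: consistency.} Images from different primes must be combined coefficientwise on the same support. We discard primes whose image has a different (smaller) support or degree pattern from the majority, which relies on the unlucky primes being rare. CRT and rational reconstruction then cost $\widetilde O(T_G \log\Ho)$ per coefficient.

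Summing over $N$ primes gives the $\log\Ho$ factor. The extra $\log\Hi$ factor appears because I would not rely on a deterministic luckiness bound. Instead, I would repeat the whole procedure with primes scaled to defeat the $\widetilde O(D\log\Hi)$ unlucky set, or use primes of size $\approx\log\Hi$ bits, which is affordable within the stated bound. The $\log^4 D$ factor comes from $\log p$ together with the Kronecker and evaluation overhead.

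The main obstacle is Step 1: certifying that the number of unlucky primes, and the height of the quantity controlling them, is only $\widetilde O(D\log\Hi)$ rather than polynomial in $n$ or $T$. Naive resultant bounds in $n$ variables are far worse. I would first try a random substitution $x_i \mapsto c_i t + d_i$ reducing to univariate GCD degree preservation, then bound the univariate subresultant height by Hadamard's inequality.
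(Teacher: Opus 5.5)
Your overall route is the paper's: monic modular images from the field algorithm, rational reconstruction of $G/\operatorname{lc}(G)$ once the modulus exceeds $2\Ho^2$, an adaptively growing set of primes, and a probabilistic divisibility check. However, your handling of unlucky primes leaves a genuine gap in the expected-cost bound. You run the algorithm of Theorem~\ref{thm:main-field} on every sampled prime and discard unlucky images only afterwards. The per-call cost $\widetilde O(nT_GD\log p+\cdots)$ you quote in Step~2 holds only for lucky primes. In general the expected cost of that algorithm is linear in $\|\gcd(A\bmod p,B\bmod p)\|_0$, and for an unlucky $p$ this can be far larger than $T_G$.

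Here is an example. Let $p_1,p_2$ be distinct primes and $\ell>4$ a prime not dividing $p_1p_2$. Then $A=x^{p_1p_2}-1$ and $B=(x^{p_1}-1)(x^{p_2}-1)+\ell x$ are coprime over $\mathbb{Z}$, since at any $p_1p_2$-th root of unity $\zeta$ one has $|(\zeta^{p_1}-1)(\zeta^{p_2}-1)|\le 4<|\ell\zeta|$. Yet modulo $\ell$ their gcd is $(x^{p_1}-1)\Phi_{p_2}$, with $2\min(p_1,p_2)$ terms (cf.\ Example~\ref{ex-GCD1}). Taking products of such pairs in disjoint variables makes the blow-up superpolynomial in $T_A+T_B$ and $D$.

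A $1/\mathrm{poly}$ chance of drawing such a prime, which is all Step~1 provides, does not pay for this. Nor is there any bound on the sparsity of $\gcd(A\bmod p,B\bmod p)$, polynomial in $T_A,T_B,D$, against which you could enlarge $p$ while keeping $\log p$ polylogarithmic. The missing idea is to certify luckiness \emph{before} the expensive step and to cap every field computation. The paper does this in three steps:
\begin{enumerate}
\item It first computes $d_i=\deg_{x_i}\gcd(A,B)$ once with Algorithm~\ref{alg:int-degree-match}. Bad primes or points can only overestimate a univariate gcd degree, so the minimum over trials is correct with high probability.
\item It accepts a prime only if, at a random point where $\operatorname{lc}_{x_k}(A)$ and $\operatorname{lc}_{x_k}(B)$ do not vanish, the univariate images along every $x_k$ have a gcd of degree exactly $d_k$. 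This test never accepts an unlucky prime.
\item It calls the field algorithm with an explicit term bound $T$ (Algorithm~\ref{alg:recursive-amplified}), supplied by an integer-level doubling loop (Algorithms~\ref{alg:intgcd-no-bounds} and~\ref{alg:intgcd-guess}). So the cost of every call is bounded in terms of $T$ whatever the prime.
\end{enumerate}

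The step you single out as the main obstacle is not one. The number of unlucky primes enters only through the size of the sampling interval, hence only through $\log p$. Brown's bound (Theorem~\ref{Brownthe2}), $u\le \frac{nd}{\ln\lambda}\ln(2d\Hi^2t^2)$, is polynomial in $n$ and $d$. It still lets one take $\lambda$ polynomial in $n,d,\log(\Hi t)$, so $\log p=O(\log(nd)+\log\log(\Hi t))$, and no bound free of $n$ and $T$ is needed. Your specialization-plus-Hadamard estimate would in any case typically give $\widetilde O(D^2+D\log\Hi)$ rather than $\widetilde O(D\log\Hi)$, since the specialized inputs already have coefficients of about $D$ bits; that is equally harmless.

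Two smaller points:
\begin{itemize}
\item The factor $\log\Hi\cdot\log\Ho$ in the paper comes from reducing the $T_A+T_B$ input coefficients modulo each of the $O(\log\Ho)$ primes, and from bounding $\log p$ crudely. It does not come from repeating the whole procedure.
\item Since $\Ho$ is unknown, your stabilize-and-divide loop needs per-round failure probabilities that are summable over an a priori unbounded number of rounds. The paper instead caps the number of rounds using Gelfond's inequality and the sparse bound of Theorem~\ref{thm:sparse-mignotte}.
\end{itemize}
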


We note that the complexity bound in Theorem~\ref{thm:main-integer} is asymptotically linear in each of the natural parameters of the sparse input-output representation:  apart from $\log D$: the number of variables $n$, the term counts $T_G$, $T_A$, $T_B$, and the logarithmic coefficient sizes $\log \Hi$ and $\log \Ho$. The total degree $D$ itself also appears linearly. 
Thus, the algorithm attains near-optimal scaling across all relevant parameters, except for $\log D$.

\begin{theorem}[A Sparse Coefficient Bound for Multivariate Polynomial Factors]
Let $F \in \mathbb{Z}[x_1, \dots, x_n]$ be a nonzero polynomial, and let $G \in \mathbb{Z}[x_1, \dots, x_n]$ be any factor of $F$. Let $T = \|G\|_0$ be the number of terms of $G$, and let $D = \deg G$ be its total degree. Then
\[
\|G\|_\infty \le 2^{D(T-1)} \|F\|_1.
\]
\end{theorem}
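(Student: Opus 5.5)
The plan is to use the Mahler measure as an intermediary: $M$ is multiplicative and cannot decrease when passing from a factor to a multiple with integer coefficients. For the multivariate Mahler measure $M(P)=\exp\int_{\mathbb{T}^n}\log|P|$ we have $M(F)=M(G)M(H)$ where $F=GH$ with $H\in\mathbb{Z}[x_1,\dots,x_n]$ nonzero. Moreover $M(H)\ge 1$, because $M(H)\ge|\mathrm{LC}(H)|$ for the leading coefficient with respect to any monomial order, and $\mathrm{LC}(H)$ is a nonzero integer. Hence
\[
M(G)\le M(F)\le \|F\|_2\le \|F\|_1 ,
\]
where the middle inequality is Jensen's inequality applied to $\log|F|^2$ on the torus. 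It therefore suffices to prove the purely ``one-polynomial'' estimate
\[
\|G\|_\infty\le 2^{D(T-1)}M(G)
\]
for an integer (indeed any complex) polynomial with $T$ terms and total degree $D$. The first step is to record this reduction, citing Jensen's formula and the multiplicativity of $M$ as standard facts.

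For the estimate on $G$, the classical tool is the coefficient bound $|g_\alpha|\le\prod_i\binom{d_i}{\alpha_i}M(G)\le 2^{\sum_i d_i}M(G)$, where $d_i$ is the degree in the $i$-th variable. This follows by applying the univariate inequality $|a_k|\le\binom{d}{k}M(a)$ variable by variable, using $M$ of a coefficient polynomial $\le M$ of the whole. Applied directly in $x_1,\dots,x_n$ this only gives $2^{nD}$. The next step is therefore to replace the $n$ variables by at most $T-1$ variables without increasing the relevant degrees beyond $D$ each.

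Concretely, let $\alpha_0,\dots,\alpha_{T-1}$ be the exponent vectors of $G$, with $\alpha_0$ chosen to be a vertex of the Newton polytope. The differences $\alpha_j-\alpha_0$ generate a lattice $L$ of rank $r\le T-1$. Choosing a basis of the saturation $L_{\mathbb{Q}}\cap\mathbb{Z}^n$ and extending it to a basis of $\mathbb{Z}^n$ gives a unimodular monomial change of variables. Under it, $x^{-\alpha_0}G$ becomes a Laurent polynomial $\widetilde G(y_1,\dots,y_r)$ with the same coefficients. Because the map is an automorphism of the torus $\mathbb{T}^n$ that preserves Haar measure, $M(\widetilde G)=M(G)$. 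The goal is then to choose the basis so that each $y_i$-degree span of $\widetilde G$ is at most $D$. For example, one can try to take the basis among the vectors $\alpha_j-\alpha_0$ themselves, whose $\ell_1$-norms are at most $2D$, and use the vertex choice of $\alpha_0$ to keep the exponents of $\widetilde G$ in each $y_i$ in a window of length $\le D$. Applying the product-of-binomials bound to $\widetilde G$, after multiplying by a monomial so that it becomes an ordinary polynomial, would then give $\|G\|_\infty\le 2^{rD}M(G)\le 2^{D(T-1)}M(G)$. Combined with the first step, this is the claim.

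The main obstacle is this last degree control. A basis drawn from the $\alpha_j-\alpha_0$ may not generate the saturated lattice, in which case the exponents of $\widetilde G$ are rational rather than integral. The coordinates of the $\alpha_j$ in such a basis can also exceed the window $[0,D]$. If this cannot be fixed directly, the fallback is an induction on $T$. Fix a coordinate direction $i$ with $\deg_{x_i}G\ge 1$ and view $G$ as a polynomial in $x_i$ whose coefficients are polynomials in the remaining variables; these coefficients are sparse and together have at most $T$ terms. Apply the univariate bound $\binom{d_i}{k}\le 2^{d_i}\le 2^D$ once, and note that there are at least two distinct $x_i$-levels. The coefficient polynomial $G_k$ at each level therefore has at most $T-1$ terms and $M(G_k)\le M(G)$, so the inductive hypothesis yields the factor $2^{D(T-2)}$ and closes the induction. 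The base case $T=1$ is $|g|=M(G)$, which is trivial.
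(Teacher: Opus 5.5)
Your proposal contains a complete proof, but only through the fallback. Drop the unimodular change of variables: it is not needed, and as you note it does not obviously give a window of length $D$ in each new variable. Present the induction on $T$ as the argument.

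That induction needs two repairs.
\begin{enumerate}
\item Choose $i$ so that the $x_i$-exponents of the terms of $G$ are not all equal, which is possible once $T\ge 2$. The condition $\deg_{x_i}G\ge 1$ alone does not give two levels: take $G=x_1(1+x_2)$ and $i=1$.
\item The inequality $M(G_k)\le M(G)$ is false for intermediate levels: $G=(1+x)^2$ has $G_1=2$. What is true is $M(G_k)\le\binom{d_i}{k}M(G)\le 2^{D}M(G)$. You get it by applying $|a_k|\le\binom{d}{k}M(a)$ to $G$ as a polynomial in $x_i$ at each point of the torus in the remaining variables, then integrating logarithms. This is exactly the factor your count needs: $\|G_k\|_\infty\le 2^{D(T-2)}M(G_k)\le 2^{D(T-1)}M(G)$.
\end{enumerate}

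With these fixes your route is genuinely different from the paper's.

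The paper's argument, for each term $c_kM_k$ of $G$:
\begin{itemize}
\item uses Schwartz--Zippel to find $\mathbf{s}\in[0,T-1]^n$ that isolates that term under $x_i\mapsto x_iy^{s_i}$;
\item sets $x_1=\cdots=x_n=1$, giving a univariate factorization $f=g\cdot h$ in which $c_k$ is a coefficient of $g$;
\item applies the univariate Mignotte bound, where $D(T-1)$ is just the bound $\sum_i s_ie_i\le (T-1)D$ on $\deg g$.
\end{itemize}
That needs only univariate facts and reuses the paper's separation technique.

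Your argument stays multivariate, which gives several advantages:
\begin{itemize}
\item Sparsity enters through at most $T-1$ splits, each costing one factor $2^{D}$.
\item It is deterministic.
\item It gives the sharper intermediate bound $\|G\|_\infty\le 2^{D(T-1)}M(G)\le 2^{D(T-1)}M(F)$, with $M(F)\le\|F\|_2$.
\item It is immune to a degenerate case the specialization must handle, namely $H(y^{s_1},\dots,y^{s_n})=0$. Then $f=0$ and the univariate Mignotte step yields nothing.
\end{itemize}

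That degenerate case is real. For $T=1$ the only admissible choice is $\mathbf{s}=0$, and e.g.\ $G=c$, $F=c(x_1-x_2)$ gives $f=F(1,\dots,1)=0$. It also occurs for $T=2$: with $G=1+x_1$ and $H=(x_2-x_3)(x_2-1)(x_3-1)$, every $\mathbf{s}\in\{0,1\}^3$ kills $H(y^{s_2},y^{s_3})$.

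The price of your route is the multivariate Mahler measure machinery: Jensen's formula with Fubini, multiplicativity, and $M(H)\ge 1$ for nonzero $H\in\mathbb{Z}[x_1,\dots,x_n]$.
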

The above theorem provides a coefficient bound for any factor $G$ of a multivariate integer polynomial $F$. 
Unlike the classical Mignotte bound~\cite{mignotte1974inequality}, which depends exponentially on the number of variables $n$ (via the standard Kronecker substitution), our bound depends only on the total degree $D$ and the sparsity $T$ of the factor $G$, and is completely independent of $n$. 
This is a sparse analog of Mignotte's classical univariate bound, generalized to the multivariate setting while preserving sensitivity to the factor's own term structure.
In the extreme case where $G$ is a monomial ($T = 1$), the bound reduces to $\|G\|_\infty \le \|F\|_1$. This bound is tight when $\|F\|_0 = 1$, i.e., when $F$ is itself a single term.

As \cite{Qiu2026OutputSensitive} shows, under standard complexity-theoretic assumptions, no algorithm can achieve complexity $\operatorname{poly}(n, T, \log D)$ for sparse GCD. Thus, the total degree $D$ cannot be compressed to a logarithmic factor; any tractable algorithm must have a complexity that grows at least faster than polylogarithmic in $D$. Our algorithm achieves $\widetilde{O}(n \cdot T \cdot D)$, which is linear in all three parameters and is, to the best of our knowledge, the first algorithm with such a clean complexity bound. Whether the dependence on $D$ can be improved to sub-linear, e.g., $\widetilde{O}(n \cdot T \cdot D^{1/2})$, remains an interesting open problem.

\subsection{Technical Contribution: Derivative-Driven Hensel Lifting}
In this section, we provide an intuitive explanation of our main technical contribution: the derivative-driven Hensel lifting that is proposed in this paper.

\paragraph{History of Hensel Lifting.}
Hensel lifting is named after the German mathematician Kurt Hensel, who founded the theory of $p$-adic numbers in the early 20th century \cite{hensel1908}. In his research, Hensel realized that one can build solutions to integer equations modulo higher and higher powers of a prime incrementally, starting from a solution modulo that prime--an approach that later became known as Hensel’s Lemma.
%

Hensel lifting was first introduced into symbolic computation in the 1960s, primarily for polynomial factorization. In 1969, Zassenhaus \cite{zassenhaus1969} proposed an integer polynomial factorization algorithm based on Hensel lifting.
Subsequently, Hensel lifting was extended to multivariate polynomials. Moses and Yun (1973) \cite{moses1973} developed the EZ-GCD algorithm based on multivariate Hensel lifting, and later Wang (1980) \cite{wang1980} proposed the EEZ-GCD algorithm to improve it.

Zippel (1981) \cite{zippel1981} introduced sparse Hensel lifting, combining probabilistic interpolation with a Newton-iteration framework to solve the factorization problem iteratively while tracking only non-zero terms. This made the algorithm suitable for sparse polynomials.

Later, Kaltofen~\cite{kaltofen1985b} improved Zippel's sparse Hensel lifting framework, and more recently, Monagan and Tuncer~\cite{MonaganTuncer2016,MonaganTuncer2020} introduced a new approach based on sparse interpolation, significantly improving practical efficiency.
Since then, multivariate Hensel lifting has become a standard tool in symbolic computation, widely applied to factorization, GCD computation, and algebraic equation solving.


\paragraph{Previous Hensel Lifting Methods.}
Let \(F(\x) \in \R[x_1, \dots, x_n]\) be a multivariate polynomial over a unique factorization domain (UFD) \(\R\). Suppose that in the quotient ring modulo some ideal \(\mathcal{I}\), \(F\) admits a factorization
\[
F \equiv G \cdot H \pmod{\mathcal{I}}.
\]
The goal of Hensel lifting is to ``lift'' this factorization to higher powers \(\mathcal{I}^k\), i.e., to find \(G_k, H_k\) such that
\[
F \equiv G_k \cdot H_k \pmod{\mathcal{I}^k},
\]
with \(G_k \equiv G \pmod{\mathcal{I}}\) and \(H_k \equiv H \pmod{\mathcal{I}}\). When \(k\) is sufficiently large, the exact factorization of \(F\) can be recovered in the original ring.

In the univariate case, the classical Hensel lemma provides a sufficient condition for lifting: if \(F(x) \equiv G(x) H(x) \pmod{p}\), where \(p\) is a prime or an irreducible polynomial, and \(G, H\) are coprime modulo $p$, then there exists a unique lift modulo \(p^k\).

In practice, multivariate Hensel lifting is typically performed by choosing a main variable (e.g., \(x_1\)), treating the other variables as parameters, and considering the quotient ring modulo the ideal \(\mathcal{I} = \langle x_2 - b_2, \dots, x_n - b_n \rangle\). Under this specialization, \(F(x_1, b_2, \dots, b_n)\) becomes a univariate polynomial in \(x_1\). 
If \(G(x_1, b_2, \dots, b_n)\) and \(H(x_1, b_2, \dots, b_n)\) are coprime at this evaluation point, then by Hensel's lemma, the factorization can be lifted to the modulus \(\langle x_2 - b_2, \dots, x_n - b_n \rangle^k\), and the original multivariate factors can subsequently be recovered through adjustment.

For sparse polynomials, Zippel \cite{zippel1981} introduced a different paradigm for Hensel lifting. Instead of lifting all variables simultaneously, his method recovers variables one by one. The key idea is to reformulate the factorization problem as a system of polynomial equations. After substituting random constants for the variables not yet introduced, one obtains a simplified system; coefficients that vanish at the random evaluation are assumed to be identically zero, thereby reducing the number of unknowns. Newton iteration is then applied to this simplified system to solve for the unknown coefficient polynomials, introducing one new variable at a time. By tracking only the non-zero terms at each step, the algorithm is well-suited to sparse polynomials.

\paragraph{Advantage of our Hensel Lifting.}
Despite various improvements over the past four decades, all existing sparse Hensel lifting algorithms remain fundamentally sequential in their lifting depth. Classical Hensel lifting, as used in the EZ-GCD algorithm \cite{moses1973}, lifts the factorization successively from $\mathcal{I}$ to $\mathcal{I}^2$, $\mathcal{I}^3$, and so on, until $\mathcal{I}^{D+1}$, resulting in a sequential depth of $O(D)$. Zippel's sparse Hensel lifting \cite{zippel1981} improves this by using Newton iteration, which doubles the lifting order at each step; for each variable, the lifting depth is reduced from $O(d)$ to $O(\log d)$, where $d$ is the partial degree bound. Thus, over $n$ variables, the total sequential depth is $O(n \cdot \log d)$. 

In contrast, our method performs only a single lift $\mathcal{I} \to \mathcal{I}^2$ per variable, and all $n$ lifts are independent of one another. This reduces the sequential depth to $O(1)$, regardless of the total degree $D$ or the number of variables $n$. To recover all terms, we repeat this process for $O(\log T)$ rounds; the number of parallel lifts becomes $n\log T$, but the sequential depth remains $O(1)$.

\begin{figure}[htbp]
\centering
\begin{subfigure}{0.45\textwidth}
    \centering
    \begin{tikzpicture}[node distance=0.5cm, auto, scale=0.9, every node/.style={scale=0.9}]
        \node at (0,2.2) [font=\bfseries] {Classical Hensel Lifting};
        \node[draw, circle, minimum size=1.0cm, fill=blue!20] (I) at (-2.0,0) {$\mathcal{I}$};
        \node[draw, circle, minimum size=1.0cm, fill=blue!30] (I2) at (0.3,0) {$\mathcal{I}^2$};
        \node[draw, circle, minimum size=1.0cm, fill=blue!40] (I3) at (2.6,0) {$\mathcal{I}^3$};
        \node at (4.3,0) {$\cdots$};
        \node[draw, circle, minimum size=1.0cm, fill=red!40] (ID) at (6.0,0) {$\mathcal{I}^{D+1}$};
        \draw[->, thick] (I) -- (I2) node[midway, above] {step 1};
        \draw[->, thick] (I2) -- (I3) node[midway, above] {step 2};
        \draw[->, thick] (I3) -- (4.0,0) node[midway, above]{};
        \draw[->, thick] (4.6,0) -- (ID) node[midway, above] {$D$ steps};
        \node at (1.8,-1.3) [font=\small, align=center] {Sequential depth: $O(D)$};
    \end{tikzpicture}
\end{subfigure}
\hfill
\begin{subfigure}{0.45\textwidth}
    \centering
    \begin{tikzpicture}[node distance=0.5cm, auto, scale=0.9, every node/.style={scale=0.9}]
        \node at (0,2.2) [font=\bfseries] {Our Parallel Lifting};
        \node[draw, circle, minimum size=1.0cm, fill=blue!20] (I) at (-1.0,0) {$\mathcal{I}$};
        \node[draw, circle, minimum size=1.0cm, fill=green!40] (I2_1) at (1.8,1.2) {$\mathcal{I}^2$};
        \node[draw, circle, minimum size=1.0cm, fill=green!40] (I2_2) at (1.8,0.4) {$\mathcal{I}^2$};
        \node at (1.8,-0.4) {$\vdots$};
        \node[draw, circle, minimum size=1.0cm, fill=green!40] (I2_n) at (1.8,-1.2) {$\mathcal{I}^2$};
        \draw[->, thick] (I) -- (I2_1) node[midway, above left] {$x_1$};
        \draw[->, thick] (I) -- (I2_2) node[midway, left] {$x_2$};
        \draw[->, thick] (I) -- (I2_n) node[midway, below left] {$x_n$};
        \node at (3.2,1.2) {$\partial G/\partial x_1$};
        \node at (3.2,0.4) {$\partial G/\partial x_2$};
        \node at (3.2,-1.2) {$\partial G/\partial x_n$};
        \node at (0.5,-2.2) [font=\small, align=center] {Parallel depth: $O(1)$\\$n$ independent lifts};
    \end{tikzpicture}
\end{subfigure}
\caption{Comparison of lifting strategies. Left: classical Hensel lifting requires $D$ sequential steps. Right: our method performs one lift per variable, all in parallel.}
\label{fig:lifting-comparison}
\end{figure}

For ease of illustration, Figure~\ref{fig:lifting-comparison} contrasts the sequential lifting depths of classical Hensel lifting and our approach. On the left, classical Hensel lifting proceeds serially: each step lifts the factorization from $\mathcal{I}^k$ to $\mathcal{I}^{k+1}$, requiring a total of $D$ steps to reach $\mathcal{I}^{D+1}$, with each step dependent on the previous one. On the right, our method performs a single lift $\mathcal{I} \to \mathcal{I}^2$ independently for each variable $x_1,\dots,x_n$; all $n$ lifts can be executed in parallel, and the resulting derivative information is then combined via sparse interpolation. This parallel structure is the key to achieving linear complexity in $n$, $T$, and $D$.

\subsection{Organization of the Paper}
\label{sec:roadmap}
The remainder of the paper is organized as follows. Section~\ref{sec:prelim} presents preliminary definitions and lemmas. Section~\ref{sec:lifting} develops the derivative-assisted Hensel lifting machinery. Section~\ref{sec:field-gcd} presents the GCD algorithm over a field, including the verification and guessing strategies. Section~\ref{sec:int-gcd} extends the algorithm to integer coefficients. Section~\ref{sec:experiments} reports experimental results, and Section~\ref{sec:conclusion} concludes the paper.

\section{Preliminaries}\label{sec:prelim}

\subsection{Notation and Definitions}

Throughout this paper, we adopt the following notational convention to distinguish polynomials in different variable sets.

\begin{itemize}
    \item 
    We reserve \textbf{uppercase} letters for polynomials depending solely on the   $n$ variables $x_1,\dots,x_n$. For instance,
\[
A, B, G \in \K[x_1,\dots,x_n]
\]
represent multivariate polynomials in these original variables.
    \item For polynomials in the extended variable set that includes the auxiliary variable $y$, i.e., in $\K[x_1,\dots,x_n,y]$, we use \textbf{lowercase} letters. For example,
    \[
    f, g, h \in \K[x_1,\dots,x_n,y]
    \]
    denote polynomials that may depend on $y$ as well.
\end{itemize}

This convention is consistently applied throughout the paper. In particular, when we apply the separation transformation $\fs{\cdot}$ (defined below in Definition~\ref{def-separated}) to a polynomial $F \in \K[x_1,\dots,x_n]$, the resulting polynomial $\fs{F} \in \K[x_1,\dots,x_n,y]$ is denoted by a lowercase letter. Thus, for $G = \gcd(A, B)$, we write $\fs{G}$ as the transformed GCD, and its evaluation at a point $\b$ is denoted $g(\b, y)$ or $g_0(\b,y)$ depending on the context.

\begin{definition}[Separation with Respect to the Variable $y$]
Let
\(
f(\x, y) \in \R[x_1, \dots, x_n, y],
\)
where \(\R\) is a commutative ring with identity (typically a field or the ring of integers). If there exist distinct nonnegative integers \(d_1, d_2, \dots, d_t\), and monomials in \(x_1, \dots, x_n,\)
\(
m_i(\x) = x_1^{e_{i1}} \cdots x_n^{e_{in}}, \quad c_i \in \R \setminus \{0\},
\)
such that
\[
f = \sum_{i=1}^{t} c_i \cdot m_i(\x) \cdot y^{d_i},
\]
then the polynomial \(f\) is said to be \textbf{separated with respect to the variable \(y\)}, or simply \textbf{$y$-separated}.
\end{definition}

Equivalently, every nonzero monomial in the expanded form of $f$ has a distinct $y$-degree.

\begin{definition}\label{def-separated}
Let $F \in \K[x_1, \dots, x_n]$ with $\K$ be a field. For a vector $\mathbf{s} = (s_1, \dots, s_n) \in \mathbb{N}^n$, define
\[
\fs{F}(\x, y)= \frac{F(x_1 y^{s_1}, \dots, x_n y^{s_n})}{y^{k_F}},
\]
where $k_F$ is the lowest power of $y$ in the $F(x_1 y^{s_1}, \dots, x_n y^{s_n})$.
\end{definition}

We fix the lexicographic monomial order $\succ$ on $\K[\x, y]$ with
\[
x_n \succ x_{n-1} \succ \cdots \succ x_1 \succ y,
\]
and this order is used throughout the entire paper.

This choice ensures that the monomial order is an elimination order: any monomial involving at least one of the variables $x_1,\dots,x_n$ is greater than any monomial consisting solely of a power of $y$. Consequently, when we apply the separation transformation $x_i \mapsto x_i y^{s_i}$, the relative order of two monomials is determined entirely by their $\x$-parts, and the appended $y$-factors do not affect the comparison. In other words, the order of the monomials in $\fs{F}$ is the same as the order of the corresponding monomials in the original polynomial $F$. This property is essential for identifying the lowest term of $\fs{F}$ from the lowest term of $F$, which provides the canonical alignment point in our recursive strategy.

For a polynomial $f\in \R\left[x_1,\dots,x_n,y \right]$, denote ${\operatorname{lc}}(f)$ as the leading coefficient of $f$.

\begin{definition}
Let $A, B \in \K[x_1,\dots,x_n]$ be nonzero polynomials over a field $\K$. A polynomial $G \in \K[x_1,\dots,x_n]$ is called the \textbf{greatest common divisor} of $A$ and $B$, denoted $G = \gcd(A,B)$, if it satisfies the following three conditions:

\begin{enumerate}
    \item $G$ divides both $A$ and $B$ in $\K[x_1,\dots,x_n]$;
    \item every common divisor of $A$ and $B$ in $\K[x_1,\dots,x_n]$ divides $G$;
    \item $\operatorname{lc}(G) = 1$.
\end{enumerate}
\end{definition}

\begin{definition}
Let $A, B \in \mathbb{Z}[x_1,\dots,x_n]$ be nonzero polynomials with integer coefficients. A polynomial $G \in \mathbb{Z}[x_1,\dots,x_n]$ is called the \textbf{greatest common divisor} of $A$ and $B$, denoted $G = \gcd(A,B)$, if it satisfies the following three conditions:

\begin{enumerate}
    \item $G$ divides both $A$ and $B$ in $\mathbb{Z}[x_1,\dots,x_n]$;
    \item every common divisor of $A$ and $B$ in $\mathbb{Z}[x_1,\dots,x_n]$ divides $G$;
    \item $\operatorname{lc}(G)>0$.
\end{enumerate}
\end{definition}

\begin{definition}\label{def-monocont}
Let $F = c_1 M_1 + \cdots + c_t M_t \in \R[x_1,\dots,x_n]$ with $c_i \neq 0$, and let $M_i = \x^{\mathbf{e}_i}$ for distinct exponent vectors $\mathbf{e}_i=(e_{i,1},\cdots,e_{i,n}) \in \mathbb{N}^n$. The \textbf{monomial content} of $F$, denoted $\operatorname{MoCont}(F)$, is defined as
\[
\operatorname{MoCont}(F) = \gcd(M_1,\dots,M_t)=\x^{\mathbf{e}},
\]
where $\mathbf{e} = (\min_i e_{i1}, \dots, \min_i e_{in})$ is the componentwise minimum of the exponent vectors of the nonzero terms of $F$.
\end{definition}

Let $f, g \in \R[x_1,\dots,x_n,y]$ be nonzero polynomials over an integral domain $\R$, with
\[
f = \sum_{i=0}^{d} F_i y^i, \qquad g = \sum_{i=0}^{\ell} G_i y^i,
\]
where $F_i, G_i \in \R[x_1,\dots,x_n]$ and $F_d, G_\ell \neq 0$.

The \textbf{Sylvester matrix} of $f$ and $g$ is the $(d+\ell) \times (d+\ell)$ matrix
\[
\begin{pmatrix}
F_d & F_{d-1} & \cdots & F_1 & F_0 & & \\
    & F_d & F_{d-1} & \cdots & F_1 & F_0 & \\
    & & \ddots & \ddots & \ddots & \ddots & \\
    & & & F_d & F_{d-1} & \cdots & F_0 \\
G_\ell & G_{\ell-1} & \cdots & G_1 & G_0 & & \\
    & G_\ell & G_{\ell-1} & \cdots & G_1 & G_0 & \\
    & & \ddots & \ddots & \ddots & \ddots & \\
    & & & G_\ell & G_{\ell-1} & \cdots & G_0
\end{pmatrix},
\]
where the upper $\ell$ rows contain the coefficients of $f$, and the lower $d$ rows contain the coefficients of $g$.

The \textbf{resultant} of $f$ and $g$ with respect to $y$, denoted $\operatorname{Res}_y(f,g)$, is the determinant of their Sylvester matrix. If $\deg_y f = 0$ and $\deg_y g = 0$, the Sylvester matrix is not defined; in this case, we adopt the convention
\[
\operatorname{Res}_y(F,G) = 1.
\]

\subsection{Preliminary Lemmas}
We introduce several lemmas.

The following lemma summarizes the key properties of the resultant that will be used throughout the paper.

\begin{lemma}[cf. Lemma 4 of \cite{HuMonagan2021}]\label{lm-resultant}
Let $\mathcal{R}$ be an integral domain and let $f, g \in \mathcal{R}[x_1,\dots,x_n,y]$. Let
\[
F_d = \operatorname{lc}_y(f), \qquad G_\ell = \operatorname{lc}_y(g), \qquad R = \operatorname{Res}_y(f,g),
\]
and let $\b = (b_1,\dots,b_n) \in \mathcal{R}^n$. Then the following hold:

\begin{enumerate}
    \item $F_d, G_\ell, R \in \mathcal{R}[x_1,\dots,x_n]$.
    \item If $\mathcal{R}$ is a field, $F_d(\b) \neq 0$, and $G_\ell(\b) \neq 0$, then
    \[
    \operatorname{Res}_y(f(\b, y), g(\b, y)) = R(\b),
    \]
    and
    \[
    \deg_y \gcd(f(\b, y), g(\b, y)) > 0
    \quad \Longleftrightarrow \quad
    \operatorname{Res}_y(f(\b, y), g(\b, y)) = 0.
    \]
\end{enumerate}
\end{lemma}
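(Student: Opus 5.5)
Part 1 is immediate from the definitions. $F_d$ and $G_\ell$ are by definition coefficients of $f,g$ viewed in $\mathcal{R}[x_1,\dots,x_n][y]$, so they lie in $\mathcal{R}[x_1,\dots,x_n]$. $R$ is the determinant of a Sylvester matrix whose entries are the $F_i, G_j \in \mathcal{R}[x_1,\dots,x_n]$, and a determinant is a polynomial expression in its entries, so $R\in\mathcal{R}[x_1,\dots,x_n]$. The degenerate case $\deg_y f=\deg_y g=0$ gives $R=1$ by the stated convention.

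For the first claim of part 2, I use that evaluation $\phi_{\b}\colon \mathcal{R}[x_1,\dots,x_n]\to\mathcal{R}$, $x_i\mapsto b_i$, is a ring homomorphism and therefore commutes with the determinant. Applying $\phi_{\b}$ entrywise to the Sylvester matrix of $f,g$ gives a matrix of the same shape with entries $F_i(\b), G_j(\b)$. This is exactly the Sylvester matrix of $f(\b,y)$ and $g(\b,y)$ \emph{provided} these specializations keep $y$-degrees $d$ and $\ell$. That is guaranteed by the hypotheses $F_d(\b)\neq0$ and $G_\ell(\b)\neq0$, which is the one point needing care: if a leading coefficient vanished, the matrix size would be wrong and only a relation up to a power of the other leading coefficient would hold. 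Hence $\operatorname{Res}_y(f(\b,y),g(\b,y))=\det(\phi_{\b}(\text{Syl}))=\phi_{\b}(R)=R(\b)$. If one of $d,\ell$ is $0$, the Sylvester matrix is diagonal with a power of the constant leading coefficient, and the same argument applies.

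For the equivalence, I invoke the classical univariate fact over the field $\mathcal{R}$: for $u,v\in\mathcal{R}[y]$ with $\deg u=d$ and $\deg v=\ell$ (leading coefficients nonzero), $\operatorname{Res}(u,v)=0$ if and only if there exist $s,t$, not both zero, with $\deg s<\ell$, $\deg t<d$, and $su+tv=0$. This holds because the Sylvester matrix is the matrix of the linear map $(s,t)\mapsto su+tv$ on these spaces. If such $s,t$ exist, then $u\mid tv$ with $\deg t<\deg u$, so $u$ and $v$ have a common factor of positive degree. Conversely, if $h=\gcd(u,v)$ has positive degree, then $s=v/h$ and $t=-u/h$ give a nontrivial kernel vector. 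The degree-zero edge cases are consistent: if $d=0$ or $\ell=0$, the gcd is constant and the resultant is a nonzero power of a leading coefficient (or $1$), so both sides are false.

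The main (mild) obstacle is the bookkeeping that the nonvanishing of the leading coefficients preserves the degrees, so that the specialized Sylvester matrix is genuinely the Sylvester matrix of the specializations. The rest is standard (cf.\ \cite{HuMonagan2021}).
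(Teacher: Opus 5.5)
Your proof is correct. The paper gives no argument of its own for this lemma and simply defers to Lemma~4 of \cite{HuMonagan2021}, and what you wrote is the standard proof behind that citation: evaluation commutes with the Sylvester determinant once $F_d(\b)\neq 0$ and $G_\ell(\b)\neq 0$ preserve the $y$-degrees, and the Sylvester matrix is, up to transposition, the matrix of $(s,t)\mapsto su+tv$.

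One small detail should be added to the forward direction of the equivalence. State explicitly that $t\neq 0$ (if $t=0$ then $su=0$ forces $s=0$) before concluding from $u\mid tv$ and $\deg t<\deg u$ that $\gcd(u,v)$ is nonconstant.
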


The following lemma states that the order of the monomials in $\fs{F}$ is completely determined by the order of their $\x$-parts; the $y$-exponents do not affect the relative ordering.
\begin{lemma}\label{thm:order-preservation}
Let
$F = c_1 M_1 + c_2 M_2 + \cdots + c_t M_t \in \K[x_1, \dots, x_n],$
where $M_i = x_1^{e_{i1}} \cdots x_n^{e_{in}}$ are distinct monomials and $c_i \neq 0$. Suppose the monomials are ordered such that
$M_1 \succ M_2 \succ \cdots \succ M_t.$
For $\mathbf{s} = (s_1, \dots, s_n) \in \mathbb{N}^n$, let
$\fs{F}(\x, y) = c_1 M_1 y^{d_1} + c_2 M_2 y^{d_2} + \cdots + c_t M_t y^{d_t}.$
Then
\[
M_1 y^{d_1} \succ M_2 y^{d_2} \succ \cdots \succ M_t y^{d_t}.
\]

\end{lemma}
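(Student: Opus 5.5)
The argument is a direct comparison under the lexicographic order with $x_n \succ \cdots \succ x_1 \succ y$. Fix two indices $i<j$, so $M_i \succ M_j$ in $\K[x_1,\dots,x_n]$; we must show $M_i y^{d_i} \succ M_j y^{d_j}$ in $\K[\x,y]$. First we record what the monomials of $\fs{F}$ are. Substituting $x_k \mapsto x_k y^{s_k}$ sends $c_i M_i$ to $c_i M_i y^{\langle \mathbf{e}_i,\mathbf{s}\rangle}$. Distinct $M_i$ yield distinct $\x$-parts, so no cancellation occurs. Dividing by $y^{k_F}$ then gives $d_i=\langle \mathbf{e}_i,\mathbf{s}\rangle-k_F\ge 0$. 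Consequently the term of $\fs{F}$ attached to $c_i$ has $\x$-part exactly $M_i$, which justifies the indexing in the statement.

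Next we compare exponent vectors. In $\K[\x,y]$ these are ordered as $(e_{\cdot n},\dots,e_{\cdot 1},d_\cdot)$, with the $y$-exponent in the last, least significant position. Since $M_i\succ M_j$ under lex with $x_n\succ\cdots\succ x_1$, there is a largest index $k$ with $e_{ik}\neq e_{jk}$, and for it $e_{ik}>e_{jk}$, while $e_{il}=e_{jl}$ for all $l>k$. The first coordinate at which the extended vectors differ is therefore this same $x_k$-coordinate, reached before the $y$-coordinate is ever consulted. It follows that $M_i y^{d_i}\succ M_j y^{d_j}$ regardless of the values of $d_i$ and $d_j$. Applying this to consecutive pairs, or to all pairs, gives the full chain.

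There is no real obstacle here. The only points needing care are that the $\x$-parts remain distinct after the substitution, so the $t$ terms of $\fs{F}$ correspond bijectively to the $M_i$, and that $y$ is the smallest variable in the order, which makes the comparison of $\x$-parts decisive.
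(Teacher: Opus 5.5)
Your proof is correct and follows essentially the same route as the paper: in the lexicographic order with $y$ the smallest variable, the comparison of $M_i y^{d_i}$ and $M_j y^{d_j}$ is decided at the first differing $\x$-exponent, so the $y$-exponents are never consulted. Your additional remark that the substitution $x_k \mapsto x_k y^{s_k}$ causes no cancellation, so the terms of $\fs{F}$ correspond bijectively to the $M_i$, is a small point the paper leaves implicit.
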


\begin{proof}
For any two distinct monomials $M_i = \x^{\mathbf{e}_i}$ and $M_j = \x^{\mathbf{e}_j}$, the lexicographic order with $x_n \succ \cdots \succ x_1 \succ y$ compares monomials first by their $\x$-exponents. The $y$-exponents are only considered when the $\x$-parts are identical. Since $M_i$ and $M_j$ are distinct, their $\x$-parts differ, so the comparison between $\x^{\mathbf{e}_i} y^{d_i}$ and $\x^{\mathbf{e}_j} y^{d_j}$ is decided solely by the $\x$-parts, exactly as in the comparison between $M_i$ and $M_j$. Hence $M_i \succ M_j$ if and only if $\x^{\mathbf{e}_i} y^{d_i} \succ \x^{\mathbf{e}_j} y^{d_j}$. The claimed ordering follows immediately.
\end{proof}

\begin{lemma}\label{lem:irreducible-separation}
Let $F \in \K[x_1,\dots,x_n]$ be irreducible over a field $\K$, and let 
$\mathbf{s}=(s_1,\dots,s_n)\in \mathbb{N}^n$. 
Then $\fs{F}$ is also irreducible in $\K[\x, y]$.
\end{lemma}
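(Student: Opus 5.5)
The plan is to work in the Laurent polynomial ring
\[
L=\K[x_1^{\pm1},\dots,x_n^{\pm1},y^{\pm1}],
\]
where the substitution defining $\fs{\cdot}$ becomes an automorphism. Define $\sigma:L\to L$ by $x_i\mapsto x_i y^{s_i}$ and $y\mapsto y$. This is a $\K$-algebra automorphism, with inverse $x_i\mapsto x_i y^{-s_i}$. By Definition~\ref{def-separated}, $\fs{F}=y^{-k_F}\sigma(F)$. The units of $L$ are exactly the terms $c\,\x^{\mathbf a}y^{b}$ with $c\ne 0$, so $\fs{F}$ and $\sigma(F)$ are associates in $L$. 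The argument therefore reduces to three transfers: irreducibility of $F$ from $\K[\x]$ to $L$, then through $\sigma$, and then back from $L$ to $\K[\x,y]$.

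\textbf{Step 1 (degenerate case).} If $F$ is associate to a variable, say $F=c\,x_i$, then $\fs{F}=c\,x_i$, which is irreducible in $\K[\x,y]$, and there is nothing more to do. Otherwise, $F$ is irreducible and not associate to any $x_i$, so no $x_i$ divides $F$.

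\textbf{Step 2 (irreducibility in $L$).} First, $F$ stays irreducible in $\K[\x,y]$: in any factorization of $F$ both factors have $y$-degree $0$, so they lie in $\K[\x]$. Since $\K[\x,y]$ is a UFD, $F$ is prime there. Now $L$ is the localization of $\K[\x,y]$ at the multiplicative set of monomials. A prime that divides no monomial stays prime in the localization, and $F$ divides no monomial: the only irreducible divisors of a monomial are the variables, and $F$ is not associate to any variable (this uses Step 1 for the $x_i$, and $F\in\K[\x]$ for $y$). Hence $F$ is a non-unit prime of $L$, so $\sigma(F)$ and its associate $\fs{F}$ are prime, hence irreducible, in $L$.

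\textbf{Step 3 (descent to $\K[\x,y]$).} Let $P=\fs{F}\in\K[\x,y]$. By the choice of $k_F$, $y\nmid P$. The $\x$-parts of the terms of $P$ are exactly the monomials of $F$, since distinct monomials of $F$ have distinct $\x$-parts and so do not merge. Hence $x_i\nmid P$ for every $i$, because $x_i\nmid F$. Now suppose $P=QR$ in $\K[\x,y]$. By Step 2, one factor, say $Q$, is a unit in $L$, so $Q=c\,\x^{\mathbf a}y^{b}$ is a monomial in $\K[\x,y]$. Since no variable divides $P$, we get $\mathbf a=0$ and $b=0$, so $Q$ is a nonzero constant. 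Finally, $P$ is not itself a unit, because it is a non-unit of $L$. Therefore $P$ is irreducible in $\K[\x,y]$.

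The main obstacle is making sure that no monomial or purely-$y$ factors are created or lost along the way. Passing to $L$ handles the pure-$y$ issue: a factor such as $y-1$ would have to come from a factor of $F$ under $\sigma^{-1}$. What remains is the bookkeeping that the normalization by $y^{k_F}$ and the absence of $x_i$-divisibility rule out monomial factors; I expect this bookkeeping to be routine once the Laurent-ring viewpoint is in place.
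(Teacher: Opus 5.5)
Your proof is correct, and it takes a genuinely different route from the paper's.

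\textbf{The paper's route.} The paper never leaves polynomial rings. From a putative factorization $\fs{F}=pq$ it specializes $y=1$ to get $F=p(\x,1)\,q(\x,1)$, and uses irreducibility of $F$ to make $q(\x,1)$ constant. It then compares $\x$-parts of leading monomials in the elimination order $x_n\succ\cdots\succ x_1\succ y$ to force $q\in\K[y]$. Finally it rules out a nonconstant $q$ by taking a root $\alpha\in\overline{\K}$, which is nonzero because $y\nmid\fs{F}$, and noting that $\fs{F}(\x,\alpha)=\alpha^{-k_F}F(\alpha^{s_1}x_1,\dots,\alpha^{s_n}x_n)\neq 0$.

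\textbf{Your route.} You make the substitution a genuine automorphism by passing to the Laurent ring $L$. You then transport primality of $F$ through the localization and through $\sigma$. The only possible obstruction to descending back to $\K[\x,y]$ is then a factor that is a unit of $L$, i.e.\ a monomial. The normalization by $y^{k_F}$ together with $x_i\nmid F$ excludes such a factor.

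\textbf{What each buys.} The paper's argument is more elementary, and it treats $F=c\,x_i$ uniformly. You must handle that case separately, because $x_i$ becomes a unit in $L$ and your localization step would fail for it. In exchange, your proof needs no monomial order and no algebraic closure. It also avoids the somewhat delicate leading-monomial bookkeeping: coefficients of $p$ may cancel at $y=1$, so only an inequality between $\x$-parts is available there. Your argument makes transparent exactly why dividing by $y^{k_F}$ is necessary. It also adapts directly to other monomial substitutions that are automorphisms of $L$, once one normalizes away the monomial content.
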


\begin{proof}
Suppose $\fs{F}(\x,y)=p(\x,y)q(\x,y)$ with $p,q$ nonconstant. Then
\[
f(\x,y):=F(x_1y^{s_1},\dots,x_ny^{s_n})=y^{k_F}p(\x,y)q(\x,y). \tag{1}
\]
Setting $y=1$ gives $F(\x)=p(\x,1)q(\x,1)$. Since $F$ is irreducible, WLOG assume
\[
p(\x,1)=F(\x), \qquad q(\x,1)=1\in\K^*. \tag{2}
\]
Let $\operatorname{LM}(h)$ denote the leading monomial of $h$ with respect to $\succ$.
Since $p(\x,1)=F(\x)$, we have
\[
\operatorname{LM}(p) \succeq \operatorname{LM}(F) \cdot y^t \quad \text{for some } t\ge 0, \tag{3}
\]
where the inequality is in the monomial order. More precisely, the $\x$-part of $\operatorname{LM}(p)$ is at least $\operatorname{LM}(F)(\x)$ in the monomial order restricted to $\K[\x]$, possibly multiplied by some power of $y$.  It cannot be strictly smaller because then after setting $y=1$ it could not produce $\operatorname{LM}(F)$.

Let $\operatorname{LM}(F)(\x)=\x^{\mathbf{a}}$. From (1),
\[
\operatorname{LM}(f)=\x^{\mathbf{a}} y^{\sum a_i s_i}. \tag{4}
\]
On the other hand, using (3),
\[
\x^{\mathbf{a}} y^{\sum a_i s_i}=\operatorname{LM}(f)
= y^{k_F}\cdot \operatorname{LM}(p)\cdot \operatorname{LM}(q)
\succeq \x^{\mathbf{a}} y^{k_F+t}\cdot \operatorname{LM}(q).
\]
Since the total $\x$-degree on both sides must match, $\operatorname{LM}(q)$ must contain no $x_i$'s; otherwise the $\x$-part of $\operatorname{LM}(f)$ would exceed $\x^{\mathbf{a}}$. Hence
\[
\operatorname{LM}(q)=y^r \quad \text{for some } r\ge 0. \tag{5}
\]

Now, in the order $x_n \succ \cdots \succ x_1 \succ y$, any monomial containing some $x_i$ is strictly greater than any power of $y$. Since $\operatorname{LM}(q)=y^r$, the polynomial $q$ cannot contain any term involving $x_i$; otherwise such a term would be larger than $y^r$ and would be the leading monomial. Thus
\[
q(\x,y)\in\K[y]. \tag{6}
\]

If $q$ is nonconstant, since $\fs{F}$ has no pure $y$-factor, $q$ has a nonzero root $\alpha \neq 0$ in $\overline{\K}$ (algebraic closure of $\K$). Substituting $y=\alpha$ into $\fs{F}=pq$ gives
\[
\fs{F}(\x,\alpha)=p(\x,\alpha)q(\alpha)=0.
\]
But
\[
\fs{F}(\x,\alpha)=\alpha^{-k_F}F(\alpha^{s_1}x_1,\dots,\alpha^{s_n}x_n)\neq 0,
\]
because $\alpha\neq 0$ and the map $x_i\mapsto \alpha^{s_i}x_i$ is an automorphism of $\overline{\K}[\x]$. Contradiction. Hence $q$ is constant, and $\fs{F}$ is irreducible.
\end{proof}

\begin{lemma}\label{lem:gcd-separation}
Let $A, B \in \K[x_1,\dots,x_n]$ with $\K$ a field and let $G = \gcd(A, B)$. Then, for any $\mathbf{s} \in \mathbb{N}^n$,
\[
\fs{G} =  \gcd(\fs{A}, \fs{B}).
\]
\end{lemma}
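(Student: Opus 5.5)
The proof rests on two facts. First, $\Phi_{\mathbf{s}}$ is multiplicative. Second, it sends irreducible polynomials to irreducible polynomials (Lemma~\ref{lem:irreducible-separation}). We then compare irreducible factorizations of $A$, $B$, $G$ with those of $\fs{A}$, $\fs{B}$, $\fs{G}$.

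\textbf{Multiplicativity.} Write $\sigma(F)=F(x_1y^{s_1},\dots,x_ny^{s_n})$. This is a ring homomorphism $\K[\x]\to\K[\x,y]$. It is injective, since distinct $\x$-monomials stay distinct after multiplication by powers of $y$. Hence $\sigma(FH)=\sigma(F)\sigma(H)$. The lowest $y$-power of a product is the sum of the lowest $y$-powers of the factors, because lowest $y$-coefficients multiply in the domain $\K[\x]$. Therefore $k_{FH}=k_F+k_H$ and
\[
\fs{FH}=\fs{F}\,\fs{H}.
\]
Moreover, $\fs{F}$ is never divisible by $y$, and a unit $c\in\K^*$ maps to itself.

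\textbf{Factorization correspondence.} Write $A=a\prod P_i^{\alpha_i}$ and $B=b\prod P_i^{\beta_i}$ with pairwise non-associate irreducibles $P_i$. Then $G=\prod P_i^{\min(\alpha_i,\beta_i)}$ up to a unit. By multiplicativity, $\fs{A}=a\prod \fs{P_i}^{\alpha_i}$, and similarly for $\fs{B}$. Each $\fs{P_i}$ is irreducible by Lemma~\ref{lem:irreducible-separation}.

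It remains to show that $\fs{P_i}$ and $\fs{P_j}$ are non-associate for $i\ne j$. Suppose $\fs{P_i}=c\,\fs{P_j}$ with $c$ a unit of $\K[\x,y]$, so $c\in\K^*$. Setting $y=1$ gives $P_i=cP_j$, a contradiction.

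Unique factorization in $\K[\x,y]$ then gives
\[
\gcd(\fs{A},\fs{B})\sim\prod \fs{P_i}^{\min(\alpha_i,\beta_i)}=\fs{G}
\]
up to a unit of $\K^*$. This argument also covers the degenerate case where a $P_i$ is a variable $x_j$: then $\fs{x_j}=x_j$, which is irreducible. No factor of $y$ can arise on either side, since $\fs{A}$ and $\fs{B}$ are not divisible by $y$.

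\textbf{Normalization.} Both $\fs{G}$ and the gcd must have leading coefficient $1$. By Lemma~\ref{thm:order-preservation}, the leading term of $\fs{G}$ is the image of the leading term of $G$, with the same coefficient $\operatorname{lc}(G)=1$. So $\fs{G}$ is already monic, and equality holds exactly.

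\textbf{Main obstacle.} The delicate step is the correspondence of factorizations. One must ensure that $\Phi_{\mathbf{s}}$ neither merges distinct irreducible factors nor introduces new common factors. The division by $y^{k_F}$ rules out spurious powers of $y$. The $y=1$ specialization shows that non-associate irreducibles stay non-associate.
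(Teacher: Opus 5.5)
Your proof is correct and follows the same route as the paper. Both arguments combine the multiplicativity of $\Phi_{\mathbf{s}}$, the preservation of irreducibility (Lemma~\ref{lem:irreducible-separation}) and unique factorization in $\K[\x,y]$; the paper phrases this as $\gcd(\Phi_{\mathbf{s}}(A_1),\Phi_{\mathbf{s}}(B_1))=1$ for the cofactors, while you compare minimum exponents. You also make explicit three points the paper leaves implicit: that $k_{FH}=k_F+k_H$; that images of non-associate irreducibles stay non-associate (your $y=1$ specialization, which the paper attributes loosely to Lemma~\ref{lem:irreducible-separation}); and that $\Phi_{\mathbf{s}}(G)$ is already monic by Lemma~\ref{thm:order-preservation}, so equality holds exactly rather than only up to a unit.
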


\begin{proof}
Write the factorizations of $A$ and $B$ as
\[
A = G \cdot A_1, \qquad B = G \cdot B_1,
\]
where
\[
A_1 = \prod_{i=1}^p P_i^{a_i}, \qquad
B_1 = \prod_{j=1}^q Q_j^{b_j},
\]
with $P_i, Q_j\in \K[\x]$ irreducible, $\gcd(A_1, B_1) = 1$, and all $P_i,Q_j$ are distinct.

Applying the separation transformation to $A$ and $B$, we obtain
\[
\fs{A} = \fs{G} \cdot \fs{A_1}, \qquad \fs{B} = \fs{G} \cdot \fs{B_1}.
\]
By Lemma~\ref{lem:irreducible-separation}, the separation map preserves irreducibility. Moreover, since $\gcd(A_1, B_1) = 1$, the sets of irreducible factors of $A_1$ and $B_1$ are disjoint. By Lemma~\ref{lem:irreducible-separation}, their images under the separation map remain disjoint. Hence
\[
\gcd(\fs{A_1}, \fs{B_1}) = 1.
\]
Therefore,
\[
\gcd(\fs{A}, \fs{B}) = \gcd(\fs{G} \cdot \fs{A_1}, \fs{G} \cdot \fs{B_1})
= \fs{G} \cdot \gcd(\fs{A_1}, \fs{B_1})
= \fs{G}.
\]
This proves the lemma.
\end{proof}

\begin{theorem}[Schwartz-Zippel Lemma]\label{thm:sz}
Let $\K$ be a field, and let $P(x_1,\dots,x_n) \in \K[x_1,\dots,x_n]$ be a nonzero polynomial of total degree $D$. Let $S \subseteq \K$ be a finite subset, and let $b_1,\dots,b_n$ be independently and uniformly chosen from $S$. Then
\[
\Pr\bigl(P(b_1,\dots,b_n) \neq 0\bigr) \ge 1 - \frac{D}{|S|}.
\]
\end{theorem}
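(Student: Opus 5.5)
We prove the Schwartz--Zippel bound by induction on the number of variables $n$. The equivalent statement is $\Pr(P(b_1,\dots,b_n)=0)\le D/|S|$.

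\textbf{Base case $n=1$.} A nonzero univariate polynomial of degree $D$ over the field $\K$ has at most $D$ roots. Hence at most $D$ of the $|S|$ equally likely choices of $b_1$ can be zeros of $P$, and the probability of vanishing is at most $D/|S|$. The case $D=0$ is trivial, since a nonzero constant never vanishes.

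\textbf{Inductive step.} Assume the claim for fewer than $n$ variables and expand $P$ in $x_n$:
\[
P=\sum_{i=0}^{k} P_i(x_1,\dots,x_{n-1})\,x_n^{i},
\]
where $k=\deg_{x_n}P$ and $P_k\neq 0$. Since $P_k x_n^k$ contributes monomials of total degree at most $D$, we get $\deg P_k\le D-k$.

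Let $E_1$ be the event $P_k(b_1,\dots,b_{n-1})=0$. By the induction hypothesis, $\Pr(E_1)\le (D-k)/|S|$.

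Now condition on any fixed values of $b_1,\dots,b_{n-1}$ for which $E_1$ fails. Then $P(b_1,\dots,b_{n-1},x_n)$ is a nonzero univariate polynomial in $x_n$ of degree exactly $k$. Because $b_n$ is independent of $b_1,\dots,b_{n-1}$ and uniform on $S$, the base case shows it vanishes with probability at most $k/|S|$. Averaging over all such conditioning values gives $\Pr(P=0\wedge \neg E_1)\le k/|S|$.

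Finally,
\[
\Pr(P=0)\le \Pr(E_1)+\Pr(P=0\wedge\neg E_1)\le \frac{D-k}{|S|}+\frac{k}{|S|}=\frac{D}{|S|}.
\]
Taking complements gives the stated inequality.

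The main point requiring care is this conditioning step. The independence of $b_n$ from the earlier coordinates is what allows the univariate bound to be applied pointwise for each fixed $(b_1,\dots,b_{n-1})$ and then averaged. The other detail to check is the degree bookkeeping $\deg P_k\le D-k$, which is what makes the two bounds add up to exactly $D/|S|$.
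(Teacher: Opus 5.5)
Your proof is correct. It is the standard induction on the number of variables: the degree bookkeeping $\deg P_k \le D-k$ and the conditioning on $(b_1,\dots,b_{n-1})$ are handled properly, including the degenerate case $k=0$, where the second term vanishes.

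The paper gives no proof of this statement; it is quoted as a standard preliminary, so there is no competing argument to compare against. Your argument uses only the fact that a nonzero univariate polynomial of degree $k$ has at most $k$ roots. It therefore holds verbatim over any integral domain, which is the form the paper actually invokes in Lemma~\ref{lem:leading} when it applies the bound to polynomials in $\mathbb{Z}[s_1,\dots,s_n]$.
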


\section{Derivative-Assisted Hensel Lifting}\label{sec:lifting}

\subsection{The Separation Technique}\label{sec:separation}

In practice, a multivariate polynomial is typically expressed in $n$ variables, say $x_1,\dots,x_n$. In this paper, however, we introduce an auxiliary variable $y$. This serves two purposes. First, it allows us to designate a distinguished main variable for lifting. Second, it embeds the original $n$-variate polynomial into an $(n+1)$-variate one, namely in $x_1,\dots,x_n,y$, enabling us to distinguish different monomials by their $y$-degrees. This technique, called the \textbf{separation technique}, was introduced by Huang and Gao in their work on sparse multivariate polynomial factorization over integers \cite{HuangGao2023Factorization}.

For a polynomial $F(x_1, \dots, x_n)$, we introduce a new variable $y$ via the substitution
\[
x_i \longmapsto x_i \cdot y^{s_i},
\]
where $s_i$ are randomly chosen nonnegative integers. Substituting into the original polynomial $F$, we obtain a new polynomial in $n+1$ variables.

Let us analyze the form of each term after substitution. Let
\[
F = c_1 M_1 + \dots + c_t M_t,
\]
with $t$ nonzero terms, and
$M_i = x_1^{e_{i1}} \cdots x_n^{e_{in}}.$
After the substitution $x_i = x_i \cdot y^{s_i}$, the monomial $M_i$ becomes
\[
M_i \cdot y^{s_1 e_{i1} + s_2 e_{i2} + \dots + s_n e_{in}}.
\]
Thus, the substituted polynomial can be written as
\[
F(x_1 y^{s_1},\dots,x_n y^{s_n}) = c_1 M_1 y^{d_1} + c_2 M_2 y^{d_2} + \dots + c_t M_t y^{d_t},
\]
where
$d_i = s_1 e_{i1} + s_2 e_{i2} + \dots + s_n e_{in}.$
As long as we choose $s_1, \dots, s_n$ such that all $d_1, \dots, d_t$ are pairwise distinct, the new polynomial is separated with respect to $y$.

To this end, construct the following $n$-variate polynomial in $s_1, \dots, s_n$:
\[
S(s_1, \dots, s_n) = \prod_{i \neq j} (d_i - d_j).
\]
This polynomial is nonzero if and only if all $d_i$ are pairwise distinct. Since there are $t$ terms, the degree of the product is $\frac{t(t-1)}{2}$.

For instance, if we randomly choose $s_i$ as integers between $0$ and $t^2$, then by the Schwartz-Zippel lemma, the probability that $S(s_1, \dots, s_n) \neq 0$ is at least $1/2$. In other words, with high probability all $d_i$ are distinct, and hence the substituted $F$ is separated with respect to $y$.

Recall the separation transformation $\fs{\cdot}$ defined in Definition \ref{def-separated}: for a polynomial $F \in \K[x_1, \dots, x_n]$ and a vector $\mathbf{s} \in \mathbb{N}^n$,
\[
\fs{F}(\x, y)= \frac{F(x_1 y^{s_1}, \dots, x_n y^{s_n})}{y^{k_F}},
\]
where $k_F$ is the lowest power of $y$ in the numerator.

The normalization factor $y^{k_F}$ does not affect the separation property: subtracting the same integer from all $y$-degrees preserves their pairwise distinctness. Hence $\fs{F}(\x,y)$ is separated with respect to $y$ if and only if the substituted polynomial $F(x_1 y^{s_1}, \dots, x_n y^{s_n})$ has pairwise distinct $y$-degrees.

This full separation assumption is relaxed in our main algorithm. As we shall see in Sections~\ref{sec:params}, the actual linear-complexity algorithm only requires partial separation: in each iteration, at least half of the remaining terms are separated, while the colliding terms are detected and deferred to subsequent iterations. This reduces the range of $s_i$ from $O(T^2)$ to $O(T)$, yielding the asymptotically linear bound $\widetilde{O}(n \cdot T \cdot D)$.

\subsection{Recovering Separated Polynomials via Derivatives (Algorithm~\ref{alg:derivative_recovery})}
\label{sec:recovery}

Let
$f = \sum_{i=1}^{t} c_i \cdot m_i(\x) \cdot y^{d_i}$
be $y$-separated. We consider two different cases.

\subsubsection*{Case 1: $\R$ Contains Sufficiently Many Distinct Prime Elements}

Assume $\R$ is a UFD and contains sufficiently many distinct prime elements, say $b_1, \dots, b_n$, satisfying:
\begin{itemize}
\item[(1)] $b_1, \dots, b_n$ are pairwise coprime,
\item[(2)] each $b_j$ is coprime to all coefficients $c_i$.
\end{itemize}

Substituting $x_1, \dots, x_n$ for $b_1, \dots, b_n$, respectively, and denoting $\b = (b_1, \dots, b_n)$, we obtain
\[
f(\b, y) = \sum_{i=1}^{t} c_i \cdot b_1^{e_{i1}} \cdots b_n^{e_{in}} \cdot y^{d_i}.
\]

Let the coefficient of $y^{d_i}$ be
\[
C_i := c_i \cdot b_1^{e_{i1}} \cdots b_n^{e_{in}} \in \R \setminus \{0\}.
\]

Since $b_1, \dots, b_n$ are pairwise distinct prime elements and are coprime to $c_i$, by unique factorization, $C_i$ can be uniquely factored in $\R$ as
\[
C_i = u_i \cdot b_1^{e_{i1}} \cdots b_n^{e_{in}},
\]
where $u_i \in \R$ contains no prime factors $b_1, \dots, b_n$ (i.e., $u_i$ is coprime to $b_1, \dots, b_n$).

Thus:
\begin{itemize}
\item By counting the powers of $b_1, \dots, b_n$ in $C_i$, we can uniquely recover the exponents $e_{i1}, \dots, e_{in}$;
\item The remaining part $u_i$ is exactly the coefficient $c_i$ (since $c_i$ is coprime to $b_1, \dots, b_n$).
\end{itemize}

\begin{remark}
Thus, each term $c_i \cdot x_1^{e_{i1}} \cdots x_n^{e_{in}} \cdot y^{d_i}$ of the original polynomial can be fully recovered.
To ensure that the exponents $e_{i1}, \dots, e_{in}$ are not misidentified in the recovery process, it is necessary that $c_i$ does not contain any prime factors $b_1, \dots, b_n$. Otherwise, if $c_i$ also contains some $b_j$, one cannot distinguish $c_i$ from the powers in $m_i$, leading to incorrect exponent recovery.
Since this factorization of each $c_i$ contains only finitely many prime elements, as long as the ring $\R$ contains sufficiently many prime elements, randomly chosen $b_1, \dots, b_n$ will with high probability not divide any $c_i$. When $\R = \mathbb{Z}$, the primes (which are prime elements) are infinite, so this condition is naturally satisfied. However, the requirement of "sufficiently many prime elements" does not hold for all integral domains; for example, finite fields $\mathbb{F}_q$ contain no prime elements. In such cases, the method fails and requires further treatment.
\end{remark}

\subsubsection*{Case 2: $\R$ Does Not Contain Sufficiently Many Prime Elements}

From now on, let $\R=\K$ be a field. Since a field lacks sufficiently many prime elements, the evaluation $f(\b, y)$ alone cannot uniquely recover $f$. In this case, we can resort to partial derivative information. Specifically, assume:

\begin{itemize}
    \item[(1)] $f$ is $y$-separated;
    \item[(2)] The characteristic $p$ of $\K$ is greater than the total degree of $f$, or $\operatorname{char}(\K)=0$;
    \item[(3)] $f(\b, y)$ is known;
    \item[(4)] $\dfrac{\partial f}{\partial x_k}(\b, y)$ is known for $k=1,\dots,n$.
\end{itemize}

Substituting $x_j = b_j$ into $f$ (where $b_j$ are arbitrary nonzero elements of $\K$), we get
\[
f(\b, y) = \sum_{i=1}^{t} c_i \cdot b_1^{e_{i1}} \cdots b_n^{e_{in}} \cdot y^{d_i}.
\]

Taking the partial derivative with respect to $x_k$ ($k=1,\dots,n$), we have
$\frac{\partial f}{\partial x_k}(\x, y) = \sum_{i=1}^{t} c_i \cdot \frac{\partial m_i}{\partial x_k}(\x) \cdot y^{d_i},$
where
$\frac{\partial m_i}{\partial x_k}(\x) = e_{ik} \cdot x_1^{e_{i1}} \cdots x_k^{e_{ik}-1} \cdots x_n^{e_{in}}.$
Substituting $x_j = b_j$ yields
\[
\frac{\partial f}{\partial x_k}(\b, y) = \sum_{i=1}^{t} c_i \cdot e_{ik} \cdot b_1^{e_{i1}} \cdots b_k^{e_{ik}-1} \cdots b_n^{e_{in}} \cdot y^{d_i}.
\]

For each fixed $y$-power $d_i$, comparing the coefficients of $y^{d_i}$ in $f(\b, y)$ and $\frac{\partial f}{\partial x_k}(\b, y)$:
\begin{align*}
A_i &:= c_i \cdot b_1^{e_{i1}} \cdots b_n^{e_{in}}, \\
B_{i,k} &:= c_i \cdot e_{ik} \cdot b_1^{e_{i1}} \cdots b_k^{e_{ik}-1} \cdots b_n^{e_{in}}.
\end{align*}

Assuming $b_k \neq 0$ and $A_i \neq 0$, we have
$\frac{B_{i,k}}{A_i} = \frac{e_{ik}}{b_k}.$
Since $e_{ik}$ is a nonnegative integer and $b_k$ is known, the right-hand side is an element of $\K$. If the characteristic of $\K$ is sufficiently large ($p$ greater than the degree of $f$) or zero, then $e_{ik}$ can be uniquely identified in $\K$ (e.g., via division). Thus we can solve for
\[
e_{ik} = \frac{B_{i,k}}{A_i} \cdot b_k.
\]

Repeating for $k=1,\dots,n$, we obtain all exponents $e_{i1},\dots,e_{in}$. Then using $A_i = c_i \cdot b_1^{e_{i1}} \cdots b_n^{e_{in}}$, we can solve for
\[
c_i = A_i \cdot \bigl( b_1^{e_{i1}} \cdots b_n^{e_{in}} \bigr)^{-1}.
\]

Thus $c_i \cdot m_i(x_1,\dots,x_n)$ is fully recovered, and hence the entire $f$ is recovered.

\paragraph{Remark:} This method requires:
\begin{itemize}
    \item $b_1,\dots,b_n$ such that all $b_1^{e_{i1}}\cdots b_n^{e_{in}}$ are invertible (typically $b_j \ne 0$);
    \item The characteristic of $\K$ is $0$ or greater than the degree of $f$, to ensure that the integers $e_{ik}$ can be uniquely distinguished in $\K$ (to avoid the situation where $e_{ik}$ cannot be uniquely determined in $\K$ due to small characteristic $p$; specifically, from $\frac{B_{i,k}}{A_i} = \frac{e_{ik}}{b_k}$ we can only determine the image of $e_{ik}$ in $\K$. If $p \le e_{ik}$, then $e_{ik}$ and $e_{ik}+p$ are indistinguishable in $\K$, preventing recovery of the true integer value of $e_{ik}$ and thus losing information.)
\end{itemize}

We now present this as pseudocode for convenient later invocation, and refer to it as Algorithm \ref{alg:derivative_recovery}.

\begin{algorithm}
\caption{Derivative-Based Polynomial Recovery Algorithm}
\label{alg:derivative_recovery}
\begin{algorithmic}[1]
\Require
    \begin{itemize}
        \item Evaluation point $\b = (b_1, \dots, b_n) \in \K^{*n}$, where $\K$ is a field (characteristic $0$ or greater than the total degree of $f$);
        \item $f(\b, y) = \sum_{i=1}^{t} A_i y^{d_i}$, where the $d_i$ are distinct and $f$ is $y$-separated;
        \item $\dfrac{\partial f}{\partial x_k}(\b, y) = \sum_{i=1}^{t} B_{i,k} y^{d_i}$ for $k = 1, \dots, n$.
    \end{itemize}
\Ensure 
    $f(\x, y) = \sum_{i=1}^{t} c_i \cdot x_1^{e_{i1}} \cdots x_n^{e_{in}} \cdot y^{d_i}$.

\For{$i = 1$ to $t$}
    \State Read coefficient $A_i$ and degree $d_i$ from $f(\b,y)$
    \For{$k = 1$ to $n$}
        \State Read coefficient $B_{i,k}$ from $\dfrac{\partial f}{\partial x_k}(\b,y)$
        \State Compute $e_{ik} = \dfrac{B_{i,k}}{A_i} \cdot b_k$
    \EndFor
    \State Compute $c_i = A_i \cdot \left( b_1^{e_{i1}} \cdots b_n^{e_{in}} \right)^{-1}$
    \State Construct monomial $m_i(\x) = x_1^{e_{i1}} \cdots x_n^{e_{in}}$
\EndFor

\State Output $f(\x,y) = \sum_{i=1}^{t} c_i \cdot m_i(\x) \cdot y^{d_i}$
\State \Return $f(\x,y)$
\end{algorithmic}
\end{algorithm}

\begin{theorem}\label{the-2}
Algorithm \ref{alg:derivative_recovery} correctly recovers the complete expression of $f(\x, y)$, with time complexity
\[
O(n \cdot t \cdot \log D)
\]
field operations in $\K$, where $D$ is the total degree of $f$ and $t$ is the number of terms of $f$.
\end{theorem}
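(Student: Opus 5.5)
Correctness and cost are two separate arguments; I will handle correctness first and then cost.

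For correctness, I will use the computation in Case 2 of Section~\ref{sec:recovery}. Since $f$ is $y$-separated, every $y$-degree $d_i$ comes from exactly one term $c_i m_i(\x) y^{d_i}$. So the coefficient of $y^{d_i}$ in $f(\b,y)$ is exactly $A_i = c_i\, b_1^{e_{i1}}\cdots b_n^{e_{in}}$, and the coefficient of $y^{d_i}$ in $\partial f/\partial x_k(\b,y)$ is exactly $B_{i,k} = c_i e_{ik} b_1^{e_{i1}}\cdots b_k^{e_{ik}-1}\cdots b_n^{e_{in}}$, with no cancellation between terms. Since $c_i \neq 0$ and all $b_j \in \K^*$, we have $A_i \neq 0$. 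Hence $B_{i,k} b_k / A_i = e_{ik}\cdot 1_\K$.

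The characteristic hypothesis ($0$ or $> D \ge e_{ik}$) makes the map $\{0,\dots,D\} \to \K$, $e \mapsto e\cdot 1_\K$, injective. So $e_{ik}$ is determined as an integer by reading off which of $0,\dots,D$ equals the computed field element. Once all $e_{ik}$ are known, $c_i = A_i \prod_k b_k^{-e_{ik}}$ follows, and reassembling the terms gives $f$ exactly.

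One subtlety I must address: when $e_{ik}=0$, the formal expression $b_k^{e_{ik}-1}$ is harmless, because the factor $e_{ik}=0$ kills that term. In that case $B_{i,k}=0$ and the formula returns $0$, as it should.

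For the cost, I will count per term $i$.
\begin{itemize}
\item Each $e_{ik}$ costs $O(1)$ field operations (one inversion of $A_i$ shared across $k$, then multiplications), giving $O(n)$ per term.
\item Converting the field element $e_{ik}\cdot 1_\K$ back to an integer is bookkeeping in the model. In characteristic $0$ it is literally an integer. For $\mathbb{F}_p$ it is the canonical representative, which costs nothing in field operations.
\item Computing $b_1^{e_{i1}}\cdots b_n^{e_{in}}$ by repeated squaring takes $O(\log e_{ik})$ multiplications per factor, so $O(n\log D)$ per term, followed by one inversion.
\end{itemize}
Summing over $t$ terms gives $O(nt\log D)$. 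I also assume that reading the coefficient of $y^{d_i}$ from the sparse representations is free, or at most logarithmic via a sorted merge on $d_i$, which is absorbed.

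The only step I regard as a genuine obstacle is making the integer-identification claim precise. I will state that the algorithm identifies the integer via the injectivity argument above, and I will note that the input representation of $\K$ lets us read $e\cdot 1_\K$ as $e$. Everything else is direct bookkeeping.
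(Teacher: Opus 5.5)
Your proposal is correct and follows essentially the same route as the paper. You use separation to match the coefficients of $y^{d_i}$, recover $e_{ik}=B_{i,k}b_k/A_i$ with the characteristic hypothesis guaranteeing uniqueness, then set $c_i=A_i\bigl(\prod_j b_j^{e_{ij}}\bigr)^{-1}$, and the cost is dominated by $O(n\log D)$ per term for the powers. Your explicit justification that $A_i\neq 0$ (from $c_i\neq 0$ and $b_j\in\K^*$) and your injectivity argument for $\{0,\dots,D\}\to\K$ make precise points that the paper states without elaboration.
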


\begin{proof}
From the expression of $f$, substituting $x_j = b_j$ gives
\[
f(\b, y) = \sum_{i=1}^{t} \left( c_i \cdot b_1^{e_{i1}} \cdots b_n^{e_{in}} \right) y^{d_i} = \sum_{i=1}^{t} A_i y^{d_i},
\]
so $A_i = c_i \cdot b_1^{e_{i1}} \cdots b_n^{e_{in}}$.
Taking the partial derivative with respect to $x_k$ and substituting $\b$ gives
\[
\frac{\partial f}{\partial x_k}(\b, y) = \sum_{i=1}^{t} \left( c_i \cdot e_{ik} \cdot b_1^{e_{i1}} \cdots b_k^{e_{ik}-1} \cdots b_n^{e_{in}} \right) y^{d_i} = \sum_{i=1}^{t} B_{i,k} y^{d_i},
\]
so $B_{i,k} = c_i \cdot e_{ik} \cdot b_1^{e_{i1}} \cdots b_k^{e_{ik}-1} \cdots b_n^{e_{in}}$. (Note: if $e_{i,k}=0$, then $B_{i,k}=0$.)

Since the $d_i$ are distinct, we can match $A_i$ with $B_{i,k}$ by the exponent of $y$. Computing
\[
\frac{B_{i,k}}{A_i} = \frac{e_{ik}}{b_k} \quad \Rightarrow \quad e_{ik} = \frac{B_{i,k}}{A_i} \cdot b_k.
\]
The characteristic condition on $\K$ ensures that $e_{ik}$ can be uniquely identified in $\K$. After obtaining all $e_{ik}$, substituting back into $A_i$ gives
\[
c_i = A_i \cdot \left( b_1^{e_{i1}} \cdots b_n^{e_{in}} \right)^{-1}.
\]
Thus the algorithm correctly recovers all coefficients $c_i$ and exponents $e_{ik}$, and hence the complete expression of $f$.

Now we analyze the complexity.
For each $i = 1, \dots, t$, the algorithm performs:
\begin{itemize}
    \item Reading $A_i$ and $d_i$: $O(1)$;
    \item For $k = 1, \dots, n$, reading $B_{i,k}$ and computing $e_{ik} = \frac{B_{i,k}}{A_i} \cdot b_k$: $O(n)$ field operations;
    \item Computing $c_i = A_i \cdot (b_1^{e_{i1}} \cdots b_n^{e_{in}})^{-1}$ requires computing $O(n)$ powers, with a complexity of $O(n \log D)$ field operations.
\end{itemize}
Thus, the complexity per iteration is $O(n\log D)$, and the total complexity is $O(t \cdot n \cdot \log D)$ field operations.
\end{proof}

\subsection{Extracting Factor Derivatives by $z^2$-Lifting (Algorithm~\ref{alg:henselonestep})}
\label{sec:z2-lifting}

For convenience, we introduce the operator
\[
{\rm S}(g) := \frac{\partial g}{\partial x_1} + \cdots + \frac{\partial g}{\partial x_n}.
\]

The linearity of this operator is demonstrated by the following lemma, whose proof is straightforward:

\begin{lemma}\label{lm-1}
If $g = g_1 + \cdots + g_k\in \K[x_1,\dots,x_n,y]$, then
\(
{\rm S}(g) = {\rm S}(g_1) + \cdots + {\rm S}(g_k).
\)
\end{lemma}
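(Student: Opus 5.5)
The plan is to reduce everything to the linearity of each individual partial derivative, since ${\rm S}$ is defined as the finite sum $\sum_{j=1}^n \partial/\partial x_j$ of such operators on $\K[x_1,\dots,x_n,y]$. First I would recall that for each fixed $j$, the formal partial derivative $\partial/\partial x_j$ is a $\K$-linear map on $\K[x_1,\dots,x_n,y]$. Indeed, it is defined on monomials $\x^{\mathbf{e}}y^d \mapsto e_j\,\x^{\mathbf{e}-\mathbf{u}_j}y^d$, where $\mathbf{u}_j$ is the $j$-th unit vector, and it is extended coefficientwise. Additivity then holds by construction: the coefficient of each monomial in $g_1+g_2$ is the sum of the corresponding coefficients, and differentiation acts monomial by monomial. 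This gives $\partial(g_1+g_2)/\partial x_j=\partial g_1/\partial x_j+\partial g_2/\partial x_j$. No hypothesis on $\operatorname{char}(\K)$ is needed here.

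Next I would compute ${\rm S}(g_1+g_2)=\sum_j \partial(g_1+g_2)/\partial x_j=\sum_j(\partial g_1/\partial x_j+\partial g_2/\partial x_j)$. Rearranging this finite sum, which is valid by commutativity and associativity of addition in $\K[\x,y]$, yields ${\rm S}(g_1)+{\rm S}(g_2)$.

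Finally I would induct on $k$. The case $k=1$ is trivial. For $k>1$, write $g=(g_1+\cdots+g_{k-1})+g_k$, apply the two-summand case, and then apply the induction hypothesis to the first summand.

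There is no real obstacle in this argument. The only point that deserves a sentence is that the formal derivative is well defined and additive on polynomials, which follows from its coefficientwise definition.
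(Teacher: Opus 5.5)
Your proposal is correct and is the intended argument. The paper gives no proof and only calls the lemma straightforward; additivity of each formal partial derivative $\partial/\partial x_j$, summed over $j$ and extended to $k$ summands by induction, is exactly the routine verification it leaves out.
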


The following lemma relates ${\rm S}(g)$ to the expansion of the translated polynomial modulo $z^2$:

\begin{lemma}\label{lm-4}
Let $g\in \K[x_1,\dots,x_n,y]$ and $\b=(b_1,\dots,b_n)\in \K^{n}$. Suppose $z$ is a new indeterminate. Then
\[
g(z+b_1,\dots,z+b_n,y) \equiv \mathrm{S}(g)(\b,y)\, z + g(\b,y) \pmod{z^2}.\]
\end{lemma}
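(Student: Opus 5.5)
By Lemma~\ref{lm-1}, both sides of the claimed congruence are $\K[y]$-linear in $g$. The map $g \mapsto g(z+b_1,\dots,z+b_n,y) \bmod z^2$ is additive. The map $g\mapsto \mathrm{S}(g)(\b,y)z+g(\b,y)$ is additive as well, since differentiation and evaluation at $\b$ are both additive. Each map also commutes with multiplication by elements of $\K[y]$, because neither touches $y$. It therefore suffices to verify the identity for a single monomial term $g = x_1^{e_1}\cdots x_n^{e_n}$, and then sum over the terms of $g$, with the $y$-powers and coefficients carried along.

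For the monomial case, I would expand each factor $(z+b_k)^{e_k}$ by the binomial theorem modulo $z^2$:
\[
(z+b_k)^{e_k} \equiv b_k^{e_k} + e_k b_k^{e_k-1} z \pmod{z^2}.
\]
When $e_k=0$ the second term is simply absent. Multiplying these $n$ congruences together and discarding all products containing $z^2$ or higher powers gives
\[
\prod_{k=1}^n (z+b_k)^{e_k} \equiv \prod_k b_k^{e_k} + z\sum_{k=1}^n e_k b_k^{e_k-1}\prod_{j\ne k} b_j^{e_j} \pmod{z^2}.
\]
The constant term is $g(\b)$. The coefficient of $z$ is $\sum_k \frac{\partial g}{\partial x_k}(\b)$, which is exactly $\mathrm{S}(g)(\b)$.

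A slicker equivalent route is a first-order Taylor expansion. Set $h(z)=g(z+b_1,\dots,z+b_n,y)\in\K[y][z]$. Then $h(z)\equiv h(0)+h'(0)z \pmod{z^2}$, where $h'$ is the formal derivative in $z$, and the formal chain rule gives $h'(0)=\sum_k \partial_{x_k}g(\b,y)$. Both routes hold over any field with no characteristic assumption, since only formal derivatives and binomial coefficients $\binom{e}{0},\binom{e}{1}$ appear. There is no real obstacle here. The only point needing care is the bookkeeping for exponents $e_k=0$, where the derivative term vanishes consistently on both sides.
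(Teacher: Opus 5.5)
Your proposal is correct and follows essentially the same route as the paper. Both use linearity (Lemma~\ref{lm-1}) to reduce to a single term $c\,x_1^{e_1}\cdots x_n^{e_n}y^d$, expand $c\prod_k (z+b_k)^{e_k}y^d$ to first order in $z$, and identify the $z$-coefficient as $\mathrm{S}(g)(\b,y)$; the paper does this through the Taylor expansion $\mathcal{G}(0)+\mathcal{G}'(0)z$ and the product rule, which is your ``slicker'' variant, while your binomial-expansion version carries out the same computation by hand.
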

\begin{proof}
By linearity (Lemma~\ref{lm-1}), it suffices to consider a single term $g=c\,x_1^{e_1}\cdots x_n^{e_n}\,y^d$. Define
\[
\mathcal{G}(z)=c\,(z+b_1)^{e_1}\cdots (z+b_n)^{e_n}\,y^d.
\]
Expanding $\mathcal{G}(z)$ as a Taylor polynomial in $z$ about $z=0$ gives
$\mathcal{G}(z)=\mathcal{G}(0)+\mathcal{G}'(0)z+O(z^2),$
hence
\[
\mathcal{G}(z)\equiv \mathcal{G}(0)+\mathcal{G}'(0)z \pmod{z^2}.
\]
A direct computation yields
$\mathcal{G}(0)=c\,b_1^{e_1}\cdots b_n^{e_n}\,y^d=f(\b,y),
$
and
$\mathcal{G}'(0)=c\,y^d\sum_{i=1}^n e_i b_i^{e_i-1}\prod_{j\ne i} b_j^{e_j}.$

Thus
$\mathcal{G}'(0)=c\,y^d\sum_{i=1}^n e_i b_1^{e_1}\cdots b_i^{e_i-1}\cdots b_n^{e_n}
= \mathrm S(g)(\b,y),$
where the last equality follows from the definition of $\mathrm S(g)$. Therefore,
\[
\mathcal{G}(z)\equiv g(\b,y)+\mathrm S(g)(\b,y)\,z \pmod{z^2}.
\]
By linearity, the lemma holds for all polynomials $g$.
\end{proof}

Similarly, if we apply a different translation coefficient to the $k$-th variable while keeping the others unchanged, we have:

\begin{lemma}\label{2shift}
Let $g\in \K[x_1,\dots,x_n,y]$ and $\b=(b_1,\dots,b_n)\in \K^{n}$. Suppose $z$ is a new indeterminate. Then
\[
g(z+b_1,\dots,2z+b_k,\dots,z+b_n,y) \equiv 
\left(\mathrm S(g)+\frac{\partial g}{\partial x_k}\right)(\b,y)\, z + g(\b,y) \pmod{z^2}.
\]
\end{lemma}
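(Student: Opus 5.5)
The plan is to mirror the proof of Lemma~\ref{lm-4} exactly, changing only the shift applied to the $k$-th variable. Both sides of the claimed congruence are $\K$-linear in $g$. The left side is linear because substitution is a ring homomorphism and reduction mod $z^2$ is linear. The right side is linear by Lemma~\ref{lm-1} together with linearity of $\partial/\partial x_k$ and of evaluation at $\b$. It therefore suffices to treat a single term $g=c\,x_1^{e_1}\cdots x_n^{e_n}y^d$.

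For such a term, I will define
\[
\mathcal{G}(z)=c\,(z+b_1)^{e_1}\cdots(2z+b_k)^{e_k}\cdots(z+b_n)^{e_n}\,y^d,
\]
a polynomial in $z$ with coefficients in $\K[y]$. Its Taylor expansion gives $\mathcal{G}(z)\equiv \mathcal{G}(0)+\mathcal{G}'(0)z \pmod{z^2}$, which is purely formal and holds over any field, so no characteristic assumption is needed. Clearly $\mathcal{G}(0)=g(\b,y)$.

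The only real computation is $\mathcal{G}'(0)$, done by the product rule. Each factor $(z+b_i)^{e_i}$ with $i\neq k$ contributes $e_i b_i^{e_i-1}\prod_{j\neq i}b_j^{e_j}$, exactly as in Lemma~\ref{lm-4}. The factor $(2z+b_k)^{e_k}$ contributes $2e_k b_k^{e_k-1}\prod_{j\neq k}b_j^{e_j}$ because of the chain-rule factor $2$. Splitting this $2$ as $1+1$ gives
\[
\mathcal{G}'(0)=c\,y^d\sum_{i=1}^n e_i b_1^{e_1}\cdots b_i^{e_i-1}\cdots b_n^{e_n}\;+\;c\,y^d\,e_k b_1^{e_1}\cdots b_k^{e_k-1}\cdots b_n^{e_n}.
\]
The first sum is $\mathrm S(g)(\b,y)$ and the extra term is $\frac{\partial g}{\partial x_k}(\b,y)$.

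Summing over terms by linearity then gives the lemma. I do not expect any genuine obstacle. The only care point is the convention when $e_i=0$: the corresponding product-rule term vanishes, matching $\partial_{x_i}$ of a term not involving $x_i$. Stating the derivative in the form $e_i\prod_j b_j^{e_j-\delta_{ij}}$ avoids any reliance on $b_i\neq0$.
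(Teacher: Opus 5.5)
Your proposal is correct and follows the paper's proof essentially verbatim. Both arguments reduce to a single term by linearity, take the first-order expansion of $\mathcal{G}(z)$, and split the chain-rule factor $2$ from $(2z+b_k)^{e_k}$ into the $\mathrm S(g)$ contribution plus the extra $\partial g/\partial x_k$ term. One minor quibble: the form $e_i\prod_j b_j^{e_j-\delta_{ij}}$ still contains $b_i^{-1}$ when $e_i=0$, so it does not actually avoid division by $b_i$; the clean convention, as in the paper's remark, is that the product-rule term for a factor with $e_i=0$ is simply zero.
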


\begin{proof}
By linearity (Lemma~\ref{lm-1}), it suffices to consider a monomial
$g=c\,x_1^{e_1}\cdots x_n^{e_n}\,y^d$. Define
\[
\mathcal{G}(z)=c\,(z+b_1)^{e_1}\cdots (2z+b_k)^{e_k}\cdots (z+b_n)^{e_n}\,y^d.
\]
Hence
$\mathcal{G}(z)\equiv \mathcal{G}(0)+\mathcal{G}'(0)z \pmod{z^2}.$
Now
$\mathcal{G}(0)=c\,b_1^{e_1}\cdots b_n^{e_n}\,y^d=g(\b,y).$
By the product rule, the derivative at $0$ is
\[
\mathcal{G}'(0)=c\,y^d\sum_{i=1}^n e_i b_i^{e_i-1}\prod_{j\ne i} b_j^{e_j}
+ c\,y^d\, e_k b_k^{e_k-1}\prod_{j\ne k} b_j^{e_j}.
\]
The first sum is precisely $\mathrm S(g)(\b,y)$, and the extra term is
$c\,y^d\, e_k b_1^{e_1}\cdots b_k^{e_k-1}\cdots b_n^{e_n}
= \frac{\partial g}{\partial x_k}(\b,y).$
Therefore,
\[
\mathcal{G}'(0)=\left(\mathrm S(g)+\frac{\partial g}{\partial x_k}\right)(\b,y),
\]
and hence
\[
\mathcal{G}(z)\equiv g(\b,y)+\left(\mathrm S(g)+\frac{\partial g}{\partial x_k}\right)(\b,y)\,z \pmod{z^2}.
\]
By linearity, the lemma holds for all $g$.
\end{proof}

\paragraph{Remark:} 
In the above proofs, the expression for $\mathrm S(g)(\b,y)$ involves only nonnegative powers of the $b_i$'s; the $i$-th term vanishes when $e_i = 0$, since the corresponding partial derivative is zero. Hence the result is valid for arbitrary $b_i \in \K$.

Combining the above two lemmas, we obtain:
\begin{lemma}
Let $g\in \K[x_1,\dots,x_n,y]$ and $\b=(b_1,\dots,b_n)\in \K^{n}$. Suppose $z$ is a new indeterminate. If
\[
g(z+b_1,\dots,z+b_n,y) \pmod{z^2} \quad\text{and}\quad g(z+b_1,\dots,2z+b_k,\dots,z+b_n,y) \pmod{z^2}
\]
are known, then we can compute $g(\b,y)$ and $\dfrac{\partial g}{\partial x_k}(\b,y)$.
\end{lemma}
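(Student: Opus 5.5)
The proof reads off the coefficients of the two truncated expansions using Lemma~\ref{lm-4} and Lemma~\ref{2shift}, and then subtracts.

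Both given quantities are polynomials in $z$ of degree at most one, with coefficients in $\K[y]$. Write them as
\[
g(z+b_1,\dots,z+b_n,y) \equiv P_0(y) + P_1(y)\,z \pmod{z^2},
\]
\[
g(z+b_1,\dots,2z+b_k,\dots,z+b_n,y) \equiv Q_0(y) + Q_1(y)\,z \pmod{z^2}.
\]
The coefficients $P_0,P_1,Q_0,Q_1\in\K[y]$ are then known, because a class modulo $z^2$ has a unique representative of $z$-degree less than $2$.

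By Lemma~\ref{lm-4}, $P_0(y)=g(\b,y)$ and $P_1(y)=\mathrm S(g)(\b,y)$. This already gives $g(\b,y)$. As a side remark, Lemma~\ref{2shift} also gives $Q_0=g(\b,y)$, so the constant terms of the two expansions agree.

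By Lemma~\ref{2shift}, $Q_1(y)=\mathrm S(g)(\b,y)+\frac{\partial g}{\partial x_k}(\b,y)$. Subtracting the two linear coefficients therefore yields
\[
\frac{\partial g}{\partial x_k}(\b,y)=Q_1(y)-P_1(y).
\]
This subtraction is valid because evaluation at $\b$ is a ring homomorphism, and $\mathrm S$ and partial differentiation are linear (Lemma~\ref{lm-1}). As the Remark after Lemma~\ref{2shift} notes, no condition on $\b$ is needed.

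There is no genuine obstacle. The only point that needs care is justifying that the coefficients of $z^0$ and $z^1$ are well defined from the residues modulo $z^2$, and this follows from the uniqueness of the degree-less-than-$2$ representative noted above. The computation itself is a single subtraction of polynomials in $\K[y]$, costing $O(\deg_y g)$ field operations.
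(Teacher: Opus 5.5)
Your proposal is correct and follows the paper's proof: both read off $g(\b,y)$ from the $z^0$ coefficient and obtain $\frac{\partial g}{\partial x_k}(\b,y)$ by subtracting the $z^1$ coefficients given by Lemma~\ref{lm-4} and Lemma~\ref{2shift}. Your added justification that the truncated expansions have well-defined coefficients is a harmless extra.
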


\begin{proof}
Define
\[
\mathcal{A}(z) := g(z+b_1,\dots,z+b_n,y), \qquad
\mathcal{B}(z) := g(z+b_1,\dots,2z+b_k,\dots,z+b_n,y).
\]
By Lemma~\ref{lm-4} and Lemma \ref{2shift}, we have the expansions modulo $z^2$:
\[
\mathcal{A}(z) \equiv g(\b,y) + {\rm S}(g)(\b,y) \cdot z \pmod{z^2},
\]
\[
\mathcal{B}(z) \equiv g(\b,y) + \bigl({\rm S}(g)+\tfrac{\partial g}{\partial x_k}\bigr)(\b,y) \cdot z \pmod{z^2}.
\]

Write these two known expansions as
\[
\mathcal{A}(z) \equiv a_0 + a_1 z \pmod{z^2}, \qquad
\mathcal{B}(z) \equiv b_0 + b_1 z \pmod{z^2},
\]
where $a_0, a_1, b_0, b_1$ are polynomials in $y$.

Comparing the coefficients of $z^0$, we get
$$a_0 = b_0 = g(\b,y).$$
Comparing the coefficients of $z^1$, we get
$a_1 = {\rm S}(g)(\b,y), b_1 = \bigl({\rm S}(g)+\tfrac{\partial g}{\partial x_k}\bigr)(\b,y).$
Subtracting the two gives
\[
b_1 - a_1 = \frac{\partial g}{\partial x_k}(\b,y).
\]

Therefore, from the known $\mathcal{A}(z) \bmod z^2$ and $\mathcal{B}(z) \bmod z^2$, we can obtain $g(\b,y)$ by reading the coefficient of $z^0$, and obtain $\frac{\partial g}{\partial x_k}(\b,y)$ by reading the coefficients of $z^1$ and subtracting.
\end{proof}

In Algorithm~\ref{alg:henselonestep}, we present the Hensel lifting algorithm that lifts a factorization from modulo $z$ to modulo $z^2$ (see \textit{Modern Computer Algebra} \cite{vonzurGathen2013}). The input consists of bivariate polynomials in $\K[z,y]$; the algorithm takes a factorization over $\K[y]$ and lifts it to a factorization over $\R[y]$, where $\R=\K[z]/\langle z^2\rangle$.

\begin{algorithm}
\caption{One-Step Hensel Lifting}
\label{alg:henselonestep}
\begin{algorithmic}[1]
\Require
    \begin{itemize}
        \item Polynomial $\mathcal{F} \in \K[z,y]$, where $\K$ is a field, and $\operatorname{lc}_y(\mathcal{F})$ is not a zero divisor modulo $z$;
        \item Polynomials $\mathcal{G}_0,\mathcal{H}_0\in \K[y]$ satisfying $\mathcal{F} \equiv \mathcal{G}_0 \mathcal{H}_0 \pmod z$;
        \item Polynomials $u, v \in \K[y]$ satisfying $u \mathcal{G}_0 + v \mathcal{H}_0 = 1$;
        \item $\mathcal{G}_0$ is monic, $\deg_y \mathcal{F} = \deg_y \mathcal{G}_0 + \deg_y \mathcal{H}_0$;
        \item $\deg_y u < \deg_y \mathcal{H}_0$, $\deg_y v < \deg_y \mathcal{G}_0$.
    \end{itemize}
\Ensure
    Polynomials $\mathcal{G}_1, \mathcal{H}_1 \in \K[z,y]$ satisfying
    \[
    \mathcal{F} \equiv \mathcal{G}_1 \mathcal{H}_1 \pmod{z^2},
    \]
    with $\mathcal{G}_1$ monic, $\mathcal{G}_1 \equiv \mathcal{G}_0 \pmod{z}$, $\mathcal{H}_1 \equiv \mathcal{H}_0 \pmod{z}$, and
    \[
    \deg_y \mathcal{G}_1 = \deg_y \mathcal{G}_0,\quad \deg_y \mathcal{H}_1 = \deg_y \mathcal{H}_0.
    \]

\State Compute $e \equiv \mathcal{F} - \mathcal{G}_0 \mathcal{H}_0 \pmod{z^2}$.
\State Compute $v \cdot e$, then perform division with remainder with respect to $\mathcal{G}_0$ in $y$, obtaining $q, r \in \K[z,y]$ satisfying
\[
v e \equiv q \mathcal{G}_0 + r \pmod{z^2},\quad \deg_y r < \deg_y \mathcal{G}_0.
\]
\State Compute $\mathcal{H}_1 \equiv \mathcal{H}_0 + u e + q \mathcal{H}_0 \pmod{z^2}$.
\State Compute $\mathcal{G}_1 \equiv \mathcal{G}_0 + r \pmod{z^2}$.

\State \Return $\mathcal{G}_1, \mathcal{H}_1$
\end{algorithmic}
\end{algorithm}

Let $$f(\x,y)=g(\x,y)h(\x,y)\in \K[\x,y],$$ where $g(\x,y)$ is monic in $y$, and let $\b=(b_1,\dots,b_n)\in \K^n$. Denote $z+\b=(z+b_1,\dots,z+b_n)$. Then we have
\[
f(z+\b, y) \equiv g(z+\b, y)\, h(z+\b, y) \pmod{z^i}
\]
for any $i\in \N$. For convenience, set
\[
\mathcal{F}(z,y):=f(z+\b, y),\qquad
\mathcal{G}(z,y):=g(z+\b, y),\qquad
\mathcal{H}(z,y):=h(z+\b, y).
\]
Since
\[
f(z+\b, y)\equiv f(\b,y),\qquad
g(z+\b, y)\equiv g(\b,y),\qquad
h(z+\b, y)\equiv h(\b,y) \pmod{z},
\]
we define
\[
\mathcal{G}_0(y):=g(\b,y),\qquad \mathcal{H}_0(y):=h(\b,y).
\]
Assume further that $g(\b,y)$ and $h(\b,y)$ are coprime, i.e., there exist $u(y),v(y)\in\K[y]$ such that
\[
u(y)g(\b,y)+v(y)h(\b,y)=1.
\]
Then, by applying Algorithm~\ref{alg:henselonestep} to $\mathcal{F}(z,y)$ with the initial factorization $\mathcal{G}_0\mathcal{H}_0$, we obtain the lift $\mathcal{G}_1(z,y):=g(z+\b,y)$ modulo $z^2$.

Similarly, for each $k=1,\dots,n$, consider the alternative translation where the $k$-th variable is shifted by $2z$:
\[
\mathcal{F}^{(k)}(z,y):=f(z+b_1,\dots,2z+b_k,\dots,z+b_n,y),
\]
\[
\mathcal{G}^{(k)}(z,y):=g(z+b_1,\dots,2z+b_k,\dots,z+b_n,y),
\]
\[
\mathcal{H}^{(k)}(z,y):=h(z+b_1,\dots,2z+b_k,\dots,z+b_n,y).
\]
Again, since
\[
\mathcal{F}^{(k)}(z,y)\equiv f(\b,y),\qquad
\mathcal{G}^{(k)}(z,y)\equiv g(\b,y),\qquad
\mathcal{H}^{(k)}(z,y)\equiv h(\b,y) \pmod{z},
\]
the same initial data $\mathcal{G}_0,\mathcal{H}_0$ apply. Applying Algorithm~\ref{alg:henselonestep} to $\mathcal{F}^{(k)}(z,y)$ yields the lift $\mathcal{G}^{(k)}(z,y)$ modulo $z^2$.

Consequently, by carrying out these two distinct translations and expanding to order $z^2$, we obtain simultaneously the evaluation $g(\b,y)$ and the partial derivative $\frac{\partial g}{\partial x_k}(\b,y)$. Given these two quantities, the full expression of $g$ can be recovered from these univariate polynomials, provided that $g$ is $y$-separated.

Thus, differentiation is naturally connected to Hensel lifting (only requiring lifting to modulo $z^2$): by taking the difference of two translations, we can extract derivative information in each direction, thereby providing complete input for recovering separable polynomials.

\subsection*{On the Monicity Condition}

There is a subtle point: Hensel lifting typically requires that the factor being lifted is monic in \(y\). However, \(g(z+b_1,\dots,z+b_n,y)\) is not necessarily monic.

To address this, we assume that the factorization is performed in the rational function field \(\K(\x)[y]\), i.e.,
\[
f(\x,y) = g(\x,y) h(\x,y) \in \K(\x)[y],
\]
and \(g(\x,y) \in \K(\x)[y]\) is monic in \(y\). If \(g\) is not monic in \(y\), we simply transfer its leading coefficient in \(y\) to \(h\).

In this case, \(g(\x,y)\) can be written as
\[
g(\x,y) = y^{d_s} + g_{s-1} y^{d_{s-1}} + \cdots + g_1 y^{d_1},
\]
where \(d_s > \cdots > d_1\), and the \(g_i\) are rational functions in \(x_1,\dots,x_n\). Clearly, \(g(\b,y)\) is also monic in \(y\).

This treatment affects our previous conclusions, so we need to adjust the following two lemmas.

\begin{lemma}\label{lm-2}
Let \(\K\) be a field and $\b=(b_1,\dots,b_n)\in\K^n$, and let \(g(\x,y) = \dfrac{g_1(\x,y)}{g_2(\x,y)}\in \K(\x,y)\) be a rational function, where \(g_1, g_2 \in \K[\x,y]\). Then
\[
g(z+b_1,\dots,z+b_n,y) \equiv g(\b,y) + S(g)(\b,y) \cdot z \pmod{z^2}.
\]
\end{lemma}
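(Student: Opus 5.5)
The plan is to reduce the rational-function case to the polynomial case (Lemma~\ref{lm-4}) through the quotient rule, working in the power series ring $\K(y)[[z]]$ or, more concretely, in $\K[y][z]/\langle z^2\rangle$ after localizing. An implicit hypothesis is needed for the statement to make sense: $g_2(\b,y)\neq 0$ as a polynomial in $y$. I will assume this, as the algorithm does when it evaluates the monic factor at $\b$. Then $g_2(z+\b,y)$ has constant term (in $z$) $g_2(\b,y)$, which is a unit in $\K(y)$. Hence $g_2(z+\b,y)$ is invertible in $\K(y)[z]/\langle z^2\rangle$, and the left-hand side is a well-defined element of that ring.

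First I apply Lemma~\ref{lm-4} separately to the polynomials $g_1$ and $g_2$:
\[
g_j(z+\b,y)\equiv g_j(\b,y)+\mathrm S(g_j)(\b,y)\,z \pmod{z^2},\qquad j=1,2.
\]
Next I invert the denominator modulo $z^2$. For $a_0\neq 0$ we have
\[
(a_0+a_1z)^{-1}\equiv a_0^{-1}-a_1a_0^{-2}z \pmod{z^2},
\]
which can be checked by multiplying out. Multiplying the two expansions gives
\[
g(z+\b,y)\equiv \frac{g_1(\b,y)}{g_2(\b,y)}+\frac{\mathrm S(g_1)g_2-g_1\mathrm S(g_2)}{g_2^2}(\b,y)\,z \pmod{z^2}.
\]

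Finally I identify the $z$-coefficient with $\mathrm S(g)(\b,y)$. Extend the partial derivatives $\partial/\partial x_k$ to $\K(\x,y)$ as the usual derivations. The quotient rule gives $\partial g/\partial x_k=(g_2\,\partial g_1/\partial x_k-g_1\,\partial g_2/\partial x_k)/g_2^2$. Summing over $k$ shows $\mathrm S(g)=(\mathrm S(g_1)g_2-g_1\mathrm S(g_2))/g_2^2$. Evaluating at $\b$ is legitimate because the denominator $g_2^2$ does not vanish there, so the coefficient is exactly $\mathrm S(g)(\b,y)$. The constant term is $g(\b,y)$.

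The only delicate step is making precise what ``$\equiv \pmod{z^2}$'' and ``evaluation at $\b$'' mean for rational functions. I handle it by fixing the ambient ring $\K(y)[z]_{\langle z\rangle}/\langle z^2\rangle$ and using the nonvanishing of $g_2(\b,y)$. I would also note that the conclusion does not depend on the chosen representation $g_1/g_2$: if $g_1/g_2=g_1'/g_2'$, then $g_1g_2'=g_1'g_2$ as polynomials, and the same computation applied to this identity gives equal expansions.
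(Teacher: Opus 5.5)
Your proposal is correct and follows the same route as the paper's appendix proof. Both apply the polynomial case (Lemma~\ref{lm-4}) to $g_1$ and $g_2$, invert $g_2(\b,y)+\mathrm{S}(g_2)(\b,y)\,z$ modulo $z^2$, and identify the $z$-coefficient with $\mathrm{S}(g)(\b,y)$ via the quotient rule, and your hypothesis $g_2(\b,y)\neq 0$ is exactly what the paper assumes implicitly when it says ``since $g(\b,y)$ is well defined.''
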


\begin{lemma}
Let \(\K\) be a field and $\b=(b_1,\dots,b_n)\in\K^n$, and let \(g(\x,y) = \dfrac{g_1(\x,y)}{g_2(\x,y)}\in \K(\x,y)\) be a rational function, where \(g_1, g_2 \in \K[\x,y]\). Then
\[
g(z+b_1,\dots,2z+b_k,\dots,z+b_n,y) \equiv g(\b,y) + \bigl(S(g)+\tfrac{\partial g}{\partial x_k}\bigr)(\b,y) \cdot z \pmod{z^2}.
\]
\end{lemma}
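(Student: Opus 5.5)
The plan is to reduce this rational-function version of Lemma~\ref{2shift} to the polynomial case already proved, working in the power series ring $\K[y][[z]]$, or more precisely in $\K(y)[z]/\langle z^2\rangle$. Write $\mathcal{P}_i(z):=g_i(z+b_1,\dots,2z+b_k,\dots,z+b_n,y)$ for $i=1,2$. Throughout we assume, as is implicit in the statement, that $g_2(\b,y)\neq 0$. Otherwise $g(\b,y)$ is undefined and the congruence modulo $z^2$ has no meaning.

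First apply Lemma~\ref{2shift} to the polynomials $g_1$ and $g_2$ separately:
\[
\mathcal{P}_i(z)\equiv g_i(\b,y)+D_i\,z \pmod{z^2},\qquad D_i:=\Bigl(\mathrm S(g_i)+\tfrac{\partial g_i}{\partial x_k}\Bigr)(\b,y).
\]
Since the constant term $g_2(\b,y)$ is a nonzero element of the field $\K(y)$, the element $\mathcal{P}_2$ is a unit in $\K(y)[z]/\langle z^2\rangle$. Its inverse is
\[
\mathcal{P}_2^{-1}\equiv \frac{1}{g_2(\b,y)}-\frac{D_2}{g_2(\b,y)^2}\,z \pmod{z^2},
\]
which can be checked directly by multiplying out. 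Hence
\[
\frac{\mathcal{P}_1}{\mathcal{P}_2}\equiv \frac{g_1(\b,y)}{g_2(\b,y)}+\frac{D_1\,g_2(\b,y)-g_1(\b,y)\,D_2}{g_2(\b,y)^2}\,z \pmod{z^2}.
\]

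It then remains to identify the coefficient of $z$ with $\bigl(\mathrm S(g)+\partial g/\partial x_k\bigr)(\b,y)$. The quotient rule for formal partial derivatives holds in $\K(\x,y)$, and by linearity (Lemma~\ref{lm-1}) it extends to the operator $L:=\mathrm S+\partial/\partial x_k$. It gives
\[
L(g)=\frac{L(g_1)\,g_2-g_1\,L(g_2)}{g_2^2}.
\]
Evaluating at $\x=\b$ is legitimate because the denominator $g_2^2$ does not vanish there. This evaluation reproduces exactly the coefficient above, and the constant term is $g(\b,y)$. This proves the claim.

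The only delicate point is to make sense of ``substituting $z+\b$ into a rational function modulo $z^2$''. That is why I would state explicitly that the computation takes place in the local ring where $g_2(\b,y)$ is invertible. The derivative identity itself is routine.
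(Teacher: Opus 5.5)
Your proof is correct and follows the paper's appendix argument step for step. You expand numerator and denominator via the polynomial case (Lemma~\ref{2shift}), invert the denominator modulo $z^2$ using $g_2(\b,y)\neq 0$, and match the $z$-coefficient with the quotient rule for the derivation $\mathrm S+\partial/\partial x_k$. Your switch from $\K[y][[z]]$ to $\K(y)[z]/\langle z^2\rangle$ is the right setting, since $g_2(\b,y)$ need not be a unit in $\K[y]$.
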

The proofs are given in the appendix.

\subsection{Computing Partial Derivatives of Factors (Algorithm~\ref{alg:factorparde})}
\label{sec:factor-derivative}

The following theorem is the core result of this section. While the Hensel lifting method above already allows us to compute $g(\b,y)$ and its partial derivatives $\frac{\partial g}{\partial x_k}(\b,y)$, it requires introducing an auxiliary variable $z$ and performing computations modulo $z^2$. We now take a closer look at the explicit steps of the Hensel lift and show that the procedure can be dramatically simplified: the auxiliary variable $z$ can be eliminated entirely. Specifically, we derive a direct expression for $\frac{\partial g}{\partial x_k}(\b,y)$ that depends only on $\frac{\partial f}{\partial x_k}(\b,y)$, together with the univariate evaluations $g(\b,y)$ and $h(\b,y)$. In other words, once the partial derivatives of $f$ at $\b$ are known, the partial derivatives of its factor $g$ at the same point can be computed directly, bypassing the full Hensel lifting computation.

We introduce the following notation:  for polynomials $f, g \in \R[y]$ with $g \neq 0$ and monic, performing polynomial division with remainder with respect to $y$ gives
\[
f = q(y) g + r(y), \qquad \deg_y r < \deg_y g \text{ or } r = 0,
\]
where the quotient is $q(y) = \mathbf{quo}(f, g)$ and the remainder is $r(y) = \mathbf{rem}(f, g)$.

\begin{theorem}[Factor Derivative Theorem]\label{the-1}
Let
$f(\x, y) = g(\x, y) \cdot h(\x, y) \in \K(\x)[y],$
where \(g(\x, y)\) is monic in \(y\), and there exist \(u, v \in \K[y]\) satisfying the B\'ezout identity (equivalent to \(g(\b, y)\) and \(h(\b, y)\) being coprime):
$u \cdot g(\b, y) + v \cdot h(\b, y) = 1.$
Then
\[
\frac{\partial g}{\partial x_k}(\b, y) = 
\mathbf{rem}\bigl(v \cdot \frac{\partial f}{\partial x_k}(\b, y),\; g(\b, y)\bigr),
\]
and
\[
\frac{\partial h}{\partial x_k}(\b, y) = u \cdot \frac{\partial f}{\partial x_k}(\b, y) + 
\mathbf{quo}\bigl(v \cdot \frac{\partial f}{\partial x_k}(\b, y),\; g(\b, y)\bigr) \cdot h(\b, y).
\]
\end{theorem}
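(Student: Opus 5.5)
We prove the identities directly by differentiation, without going through the $z$-translations. Write $g_0:=g(\b,y)$, $h_0:=h(\b,y)$, $f_k:=\frac{\partial f}{\partial x_k}(\b,y)$, $g_k:=\frac{\partial g}{\partial x_k}(\b,y)$ and $h_k:=\frac{\partial h}{\partial x_k}(\b,y)$. Implicitly we assume the coefficients of $g,h$ (rational functions in $\x$) have no pole at $\b$, so that all evaluations make sense; this is the same setting as Lemma~\ref{lm-2}.

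\textbf{Step 1: product rule.} Differentiating $f=gh$ with respect to $x_k$ in $\K(\x)[y]$ gives $\partial_k f=(\partial_k g)h+g(\partial_k h)$. Specializing at $\b$ (evaluation is a ring homomorphism on the local ring of rational functions regular at $\b$, and commutes with $y$-coefficientwise operations) yields
\[
f_k=g_k h_0+g_0 h_k .
\]

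\textbf{Step 2: degree control.} Since $g$ is monic in $y$ of some fixed degree $m$, every $y$-coefficient of $\partial_k g$ is the derivative of a coefficient of $g$. The leading coefficient is the constant $1$, so its derivative is zero. Hence $\deg_y g_k<m=\deg_y g_0$, and $g_0$ is monic.

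\textbf{Step 3: Bézout manipulation.} Multiply the identity from Step 1 by $v$ and use $vh_0=1-ug_0$:
\[
vf_k=g_k(1-ug_0)+vg_0h_k=g_k+g_0\,(vh_k-ug_k).
\]
Since $\deg_y g_k<\deg_y g_0$ and $g_0$ is monic, uniqueness of division with remainder gives
\[
\mathbf{rem}(vf_k,g_0)=g_k \quad\text{and}\quad \mathbf{quo}(vf_k,g_0)=vh_k-ug_k .
\]
This proves the first formula.

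For the second formula, compute
\[
uf_k+\mathbf{quo}(vf_k,g_0)\,h_0 = ug_kh_0+ug_0h_k+vh_0h_k-ug_kh_0 = h_k(ug_0+vh_0)=h_k .
\]
This proves the second formula.

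\textbf{Main obstacle.} The only delicate point is Step 2 together with the justification that evaluation at $\b$ commutes with $\partial_k$ and with products in $\K(\x)[y]$. Commuting with products is immediate once coefficients are regular at $\b$, and the quotient rule keeps derivatives regular there. The monicity of $g$ is exactly what makes the degree bound, and hence the uniqueness of the remainder, work.
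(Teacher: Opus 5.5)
Your proof is correct, and it takes a genuinely different, more direct route than the paper.

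The paper never differentiates $f=gh$ directly. Instead it:
\begin{itemize}
\item translates to $z+\b$, and separately to $z+\b$ with $2z$ in the $k$-th slot;
\item expands each translation modulo $z^2$ via Lemma~\ref{lm-2} and its $2z$-analogue (Lemma~\ref{lm-3});
\item runs one step of Algorithm~\ref{alg:henselonestep} on each translation and identifies the lifted monic factor with $g(z+\b,y)\bmod z^2$;
\item reads off $\mathrm{S}(g)(\b,y)$ and $\bigl(\mathrm{S}(g)+\partial g/\partial x_k\bigr)(\b,y)$ as remainders, and subtracts.
\end{itemize}

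You instead obtain $f_k=g_kh_0+g_0h_k$ from the product rule and use monicity to force $\deg_y g_k<\deg_y g_0$. B\'ezout plus uniqueness of Euclidean division then yields both formulas at once.

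Your route is self-contained: it needs no translation lemmas, no operator $\mathrm{S}$, and no two-shift subtraction. It also supplies two things the paper leaves implicit:
\begin{itemize}
\item The paper's identification $\mathcal{G}_1\equiv\mathcal{G}\pmod{z^2}$ is a uniqueness statement for monic Hensel lifts. The paper asserts it without proof, and its proof is essentially your Step~3.
\item The paper only says it ``further derive[s]'' the formula for $\partial h/\partial x_k$. Your explicit computation of the quotient $vh_k-ug_k$ actually verifies it.
\end{itemize}

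The paper's route buys exposition rather than rigor. It presents the formula as the output of a single $z^2$-Hensel step, matching the ``derivative-driven Hensel lifting'' framing used throughout. Logically, though, your product-rule identity is just the $z$-coefficient comparison for the single shift $x_k\mapsto b_k+z$. The two arguments are the same computation at different levels of indirection.

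Your regularity hypothesis at $\b$ is the right one, and it is harmless in the application. There $g=\fs{G}/\operatorname{lc}_y(\fs{G})$ has Laurent-polynomial coefficients and $\b\in(\K^*)^n$. Moreover, regularity of $f$ and of the monic $g$ already forces regularity of $h$, since division by a monic polynomial stays inside the local ring at $\b$.
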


\begin{proof}
For convenience, denote
\[
\mathcal{G}_0(y) = g(\b, y), \quad \mathcal{H}_0(y) = h(\b, y), \quad \mathcal{F}_0(y) = f(\b, y) = \mathcal{G}_0(y) \mathcal{H}_0(y).
\]

Introduce the translation variable \(z\) and define
\[
\mathcal{F}(z, y) := f(z+\b, y),
\]
\[
\mathcal{G}(z, y) := g(z+\b, y),
\]
\[
\mathcal{H}(z, y) := h(z+\b, y).
\]
Clearly \(\mathcal{F}(z,y) = \mathcal{G}(z,y) \mathcal{H}(z,y)\).
By Lemma \ref{lm-2},
\[
\mathcal{G}(z, y) \equiv \mathcal{G}_0(y) + {\rm S}(g)(\b,y) \cdot z \pmod{z^2},
\]
\[
\mathcal{H}(z, y) \equiv \mathcal{H}_0(y) + {\rm S}(h)(\b,y) \cdot z \pmod{z^2},
\]
\[
\mathcal{F}(z, y) \equiv \mathcal{F}_0(y) + {\rm S}(f)(\b,y) \cdot z \pmod{z^2}.
\]

By assumption, there exist \(u(y), v(y)\) satisfying the B\'ezout identity
\[
u \mathcal{G}_0 + v \mathcal{H}_0 = 1,
\]
and we may choose \(\deg_y u < \deg_y \mathcal{H}_0\), \(\deg_y v < \deg_y \mathcal{G}_0\).

Substituting \(\mathcal{F}, \mathcal{G}_0, \mathcal{H}_0, u, v\) into the Hensel lifting algorithm (Algorithm \ref{alg:henselonestep}), we obtain \(\mathcal{G}_1, \mathcal{H}_1\) satisfying:
\[
\mathcal{F} \equiv \mathcal{G}_1 \mathcal{H}_1 \pmod{z^2},
\]
\[
\mathcal{G}_1 \equiv \mathcal{G}_0 \pmod{z}, \quad \mathcal{H}_1 \equiv \mathcal{H}_0 \pmod{z},
\]
\[
\deg_y \mathcal{G}_1 = \deg_y \mathcal{G}_0, \quad \deg_y \mathcal{H}_1 = \deg_y \mathcal{H}_0,
\]
and \(\mathcal{G}_1\) is monic.

Now we derive the explicit expression for \(\mathcal{G}_1\) according to Algorithm \ref{alg:henselonestep}.

Step 1: Compute \(e = \mathcal{F} - \mathcal{G}_0 \mathcal{H}_0\). Since \(\mathcal{F}_0 = \mathcal{G}_0 \mathcal{H}_0\), we have
\[
e \equiv \mathcal{F}_0 + {\rm S}(f)(\b,y) \cdot z - \mathcal{G}_0 \mathcal{H}_0 \equiv {\rm S}(f)(\b,y) \cdot z \pmod{z^2}.
\]

Step 2: Perform division with remainder of \(v e\) by \(\mathcal{G}_0\). Since \(v e\) contains \(z\) as a factor, we have
\[
r = \mathbf{rem}(v e, \mathcal{G}_0) = \mathbf{rem}\bigl(v {\rm S}(f)(\b,y), \mathcal{G}_0\bigr) \cdot z,
\]
\[
q = \mathbf{quo}(v e, \mathcal{G}_0) = \mathbf{quo}\bigl(v {\rm S}(f)(\b,y), \mathcal{G}_0\bigr) \cdot z.
\]

Thus
\[
\mathcal{G}_1 = \mathcal{G}_0 + r = \mathcal{G}_0 + \mathbf{rem}\bigl(v {\rm S}(f)(\b,y), \mathcal{G}_0\bigr) \cdot z,
\]
\[
\mathcal{H}_1 = \mathcal{H}_0 + u e + q \mathcal{H}_0 = \mathcal{H}_0 + u {\rm S}(f)(\b,y) \cdot z + \mathbf{quo}\bigl(v {\rm S}(f)(\b,y), \mathcal{G}_0\bigr) \cdot z \mathcal{H}_0.
\]

Comparing the expansions of \(\mathcal{G}(z,y)\) and \(\mathcal{G}_1\) modulo \(z^2\), and noting that \(\mathcal{G}_1 \equiv \mathcal{G} \pmod{z^2}\) and \(\deg_y \mathcal{G}_1 = \deg_y \mathcal{G}_0\), we obtain
\[
{\rm S}(g)(\b,y) = \mathbf{rem}\bigl(v {\rm S}(f)(\b,y), \mathcal{G}_0\bigr). \tag{1}
\]

Now consider another translation:
\[
\mathcal{G}^{(k)}(z, y) := g(z+b_1, \dots, 2z+b_k, \dots, z+b_n, y),
\]
\[
\mathcal{H}^{(k)}(z, y) := h(z+b_1, \dots, 2z+b_k, \dots, z+b_n, y),
\]
\[
\mathcal{F}^{(k)}(z, y) := f(z+b_1, \dots, 2z+b_k, \dots, z+b_n, y) = \mathcal{G}^{(k)} \mathcal{H}^{(k)}.
\]

Repeating the above derivation for \(\mathcal{F}^{(k)}\), we similarly obtain
\[
\bigl({\rm S}(g) + \tfrac{\partial g}{\partial x_k}\bigr)(\b,y) = \mathbf{rem}\bigl(v ({\rm S}(f) + \tfrac{\partial f}{\partial x_k})(\b,y), \mathcal{G}_0\bigr). \tag{2}
\]

Subtracting (1) from (2), we get
\[
\frac{\partial g}{\partial x_k}(\b,y) = \mathbf{rem}\bigl(v \cdot \tfrac{\partial f}{\partial x_k}(\b,y), \mathcal{G}_0\bigr).
\]

Finally, from the B\'ezout relation \(u \mathcal{G}_0 + v \mathcal{H}_0 = 1\) and the division identity \(v e = q \mathcal{G}_0 + r\), we further derive the partial derivative expression for \(H\):
\[
\frac{\partial h}{\partial x_k}(\b,y) = u \cdot \frac{\partial f}{\partial x_k}(\b,y) + \mathbf{quo}\bigl(v \cdot \frac{\partial f}{\partial x_k}(\b,y), \mathcal{G}_0\bigr) \cdot \mathcal{H}_0.
\]

This completes the proof.
\end{proof}

\begin{remark}
That is, \(\frac{\partial g}{\partial x_k}(\b, y)\) is the remainder of dividing \(v \cdot \frac{\partial f}{\partial x_k}(\b, y)\) by \(g(\b, y)\), while \(\frac{\partial h}{\partial x_k}(\b, y)\) is given by the corresponding quotient \(q\). The asymmetry in the expressions arises from our assumption that \(g(\b,y)\) is monic, while \(h(\b,y)\) is not necessarily monic. Therefore, \(\frac{\partial g}{\partial x_k}\) can be obtained directly from the remainder of the division, while \(\frac{\partial h}{\partial x_k}(\b, y)\) requires an additional term involving the quotient and \(h(\b,y)\). If the monicity assumption is swapped, the roles of the formulas are interchanged.
\end{remark}

Algorithm~\ref{alg:factorparde} presents the corresponding algorithm for computing the partial derivatives of a factor polynomial.

\begin{algorithm}
\caption{Factor Partial Derivative Algorithm}
\label{alg:factorparde}
\begin{algorithmic}[1]
\Require
    \begin{itemize}
        \item Evaluation point $\b=(b_1, \dots, b_n) \in \K^n$;
        \item Polynomial $f(\x,y) = g(\x,y) \cdot h(\x,y)$ (where $f$ is known, $g,h$ are unknown, $f\in \K[\x,y]$, $g,h\in \K(\x)[y]$, and $g$ is monic in $y$);
        \item Univariate polynomials $\mathcal{G}_0(y) = g(\b,y)$, $\mathcal{H}_0(y) = h(\b,y)$;
        \item B\'ezout coefficients $u(y), v(y) \in \K[y]$ satisfying $u \mathcal{G}_0 + v \mathcal{H}_0 = 1$;
        \item Partial derivatives $\dfrac{\partial f}{\partial x_k}(\b,y)$, $k = 1, \dots, n$.
    \end{itemize}
\Ensure
    Partial derivatives $\dfrac{\partial g}{\partial x_k}(\b,y)$ and $\dfrac{\partial h}{\partial x_k}(\b,y)$ for $k = 1, \dots, n$.

\For{$k = 1$ to $n$}
    \State Compute $p_k(y) := v(y) \cdot \dfrac{\partial f}{\partial x_k}(\b,y)$.
    \State Perform division with remainder: $p_k(y) = q_k(y) \cdot \mathcal{G}_0(y) + r_k(y)$, where $\deg_y r_k < \deg_y \mathcal{G}_0$.
    \State Set $\dfrac{\partial g}{\partial x_k}(\b,y) := r_k(y)$.
    \State Set $\dfrac{\partial h}{\partial x_k}(\b,y) := u(y) \cdot \dfrac{\partial f}{\partial x_k}(\b,y) + q_k(y) \cdot \mathcal{H}_0(y)$.
\EndFor

\State \Return $\left\{ \dfrac{\partial g}{\partial x_k}(\b,y),\ \dfrac{\partial h}{\partial x_k}(\b,y) \right\}_{k=1}^{n}$
\end{algorithmic}
\end{algorithm}

\begin{theorem}
Algorithm \ref{alg:factorparde} correctly recovers $\frac{\partial g}{\partial x_k}(\b,y)$ and $\frac{\partial h}{\partial x_k}(\b,y)$, with time complexity
$
\widetilde{O}\bigl(n \cdot D_y\bigr)
$
field operations, where $D_y = \deg_y f$.
\end{theorem}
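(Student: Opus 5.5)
Correctness follows directly from the Factor Derivative Theorem (Theorem~\ref{the-1}). For each $k$, the algorithm computes $p_k = v\cdot\frac{\partial f}{\partial x_k}(\b,y)$ and divides it by the monic polynomial $\mathcal{G}_0 = g(\b,y)$. Division by a monic polynomial over $\K$ is always defined and yields a unique pair $(q_k,r_k)$ with $\deg_y r_k<\deg_y\mathcal{G}_0$. Hence $r_k = \mathbf{rem}\bigl(v\frac{\partial f}{\partial x_k}(\b,y),\mathcal{G}_0\bigr)$ and $q_k = \mathbf{quo}(\cdot,\cdot)$ coincide exactly with the quantities in Theorem~\ref{the-1}. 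The two assignments in the loop body are therefore precisely the two formulas of that theorem, so the outputs equal $\frac{\partial g}{\partial x_k}(\b,y)$ and $\frac{\partial h}{\partial x_k}(\b,y)$. The hypotheses of Theorem~\ref{the-1} (monicity of $g$ in $y$ and the B\'ezout identity at $\b$) are exactly the stated inputs of the algorithm, so no further argument is needed.

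For the complexity, I will first bound all $y$-degrees by $D_y=\deg_y f$. Since $\mathcal{F}_0=\mathcal{G}_0\mathcal{H}_0$ and evaluation at $\b$ does not raise $y$-degree, we have $\deg \mathcal{G}_0,\deg\mathcal{H}_0\le D_y$. The B\'ezout coefficients satisfy $\deg u<\deg\mathcal{H}_0$ and $\deg v<\deg\mathcal{G}_0$. Finally, $\deg_y \frac{\partial f}{\partial x_k}\le D_y$ because differentiation in $x_k$ does not increase $y$-degree.

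Each loop iteration then performs a constant number of univariate operations on polynomials of degree $O(D_y)$. These are the products $v\cdot\frac{\partial f}{\partial x_k}(\b,y)$ and $u\cdot \frac{\partial f}{\partial x_k}(\b,y)$, both of degree $\le 2D_y$; one division with remainder of a degree-$O(D_y)$ polynomial by a monic $\mathcal{G}_0$; one product $q_k\mathcal{H}_0$; and one addition. Using fast multiplication and Newton-iteration-based division (as in \cite{vonzurGathen2013}), each of these costs $O(\mathsf{M}(D_y))=\widetilde{O}(D_y)$ field operations. Summing over the $n$ iterations gives $\widetilde{O}(n D_y)$.

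The only point requiring care is the degree bookkeeping: we must make sure that $q_k$ has degree $O(D_y)$, so that $q_k\mathcal{H}_0$ stays within the bound. This holds because $\deg q_k\le \deg p_k-\deg\mathcal{G}_0\le D_y$. If one were sloppy and used dense representations of degree $O(D_y^2)$, the bound would fail, so I will state these degree estimates explicitly. No other obstacle is expected, since the inputs $\frac{\partial f}{\partial x_k}(\b,y)$ are assumed given, and their computation is accounted for elsewhere.
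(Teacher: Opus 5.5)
Your proposal is correct and follows essentially the same route as the paper: correctness is read off directly from the Factor Derivative Theorem, and the complexity comes from a constant number of $\widetilde{O}(D_y)$ univariate multiplications and divisions per $k$, summed over the $n$ iterations. Your explicit degree bookkeeping makes precise what the paper's proof uses implicitly. In particular, $\deg u<\deg\mathcal{H}_0$ and $\deg v<\deg\mathcal{G}_0$ are not among the algorithm's stated inputs but are what the extended Euclidean algorithm returns, and $\deg q_k<D_y$ follows from them.
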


\begin{proof}
By Theorem \ref{the-1} (Factor Derivative Theorem), for each $k = 1,\dots,n$, we have
\[
\frac{\partial g}{\partial x_k}(\b,y) = \mathbf{rem}\bigl(v \cdot \tfrac{\partial f}{\partial x_k}(\b,y),\; g(\b,y)\bigr),
\]
\[
\frac{\partial h}{\partial x_k}(\b,y) = u \cdot \tfrac{\partial f}{\partial x_k}(\b,y) + \mathbf{quo}\bigl(v \cdot \tfrac{\partial f}{\partial x_k}(\b,y),\; g(\b,y)\bigr) \cdot h(\b,y).
\]
In the algorithm, $p_k(y) = v(y) \cdot \frac{\partial f}{\partial x_k}(\b,y)$. The division with remainder gives $q_k = \mathbf{quo}(p_k, \mathcal{G}_0)$ and $r_k = \mathbf{rem}(p_k, \mathcal{G}_0)$. Thus
\[
\frac{\partial g}{\partial x_k}(\b,y) = r_k,\qquad 
\frac{\partial h}{\partial x_k}(\b,y) = u \cdot \tfrac{\partial f}{\partial x_k}(\b,y) + q_k \cdot \mathcal{H}_0.
\]
Therefore, the algorithm outputs correctly.

Now we analyze the complexity.
For each $k$, the algorithm performs:
\begin{itemize}
    \item Computing $v(y) \cdot \frac{\partial f}{\partial x_k}(\b,y)$: polynomial multiplication, complexity $\widetilde{O}(D_y)$ field operations;
    \item Division with remainder of $p_k$ by $\mathcal{G}_0$: complexity $\widetilde{O}(D_y)$ field operations;
    \item Computing $u(y) \cdot \frac{\partial f}{\partial x_k}(\b,y)$: complexity $\widetilde{O}(D_y)$ field operations;
    \item Computing $q_k \cdot \mathcal{H}_0$: complexity $\widetilde{O}(D_y)$ field operations;
    \item Addition: $\widetilde{O}(D_y)$ field operations.
\end{itemize}
Since the loop runs $n$ times, the total complexity is $\widetilde{O}(nD_y)$ field operations.
\end{proof}

\section{GCD Algorithm over a Field}
\label{sec:field-gcd}

Before presenting the rigorous GCD algorithm, we first illustrate the core idea underlying our approach: \emph{derivative-driven Hensel lifting}. This heuristic exposition reveals the essential mechanism and justifies the subsequent formal development.

\subsection{A Heuristic View}\label{sec-HenselMoti}

Let us consider a hypothetical scenario. Suppose we have a polynomial $F \in \K[\x]$ known to factor as
\[
F(\x) = G(\x) \cdot H(\x),
\]
where $G$ and $H$ are coprime and sparse. We aim to recover the factor $G$.

Applying the separation transformation introduced in Definition \ref{def-separated}, we obtain
\[
\fs{F}(\x, y) = \fs{G}(\x, y) \cdot \fs{H}(\x, y),
\]
where $\fs{F}(\x, y) = F(x_1 y^{s_1}, \dots, x_n y^{s_n}) / y^{k_F}$. With high probability over the random choice of $\mathbf{s}$, $\fs{G}$ is $y$-separated.

Now choose an evaluation point $\b \in \K^n$ and consider the univariate polynomials
\[
\mathcal{F}_0(y) := \fs{F}(\b, y), \qquad \mathcal{G}_0(y) := \fs{G}(\b, y), \qquad \mathcal{H}_0(y) := \fs{H}(\b, y).
\]
Since $\mathcal{F}_0 = \mathcal{G}_0 \mathcal{H}_0$ and $\gcd(\mathcal{G}_0, \mathcal{H}_0) = 1$ for a good choice of $\b$, the classical Hensel lifting can lift the factorization modulo $z$ to modulo $z^2$:
\[
\fs{F}(z+\b, y) \equiv \fs{G}(z+\b, y) \cdot \fs{H}(z+\b, y) \pmod{z^2}.
\]

The key insight, formalized in Lemma~\ref{lm-4}, is that the coefficient of $z$ in this lifted expansion carries derivative information:
\[
\fs{G}(z+\b, y) \equiv \mathcal{G}_0(y) + \sum_{k=1}^n \frac{\partial \fs{G}}{\partial x_k}(\b, y) \cdot z \pmod{z^2}.
\]

By applying the Hensel lifting step to two different translations---one with all variables shifted by $z$, and another with the $k$-th variable shifted by $2z$---we obtain the differences of the $z$-coefficients, which directly yield the partial derivatives $\frac{\partial \fs{G}}{\partial x_k}(\b, y)$.

More precisely, suppose that in addition to $\mathcal{G}_0(y)$, we have obtained the partial derivative evaluations
\[
\frac{\partial \fs{G}}{\partial x_k}(\b, y), \qquad k = 1, \dots, n.
\]
Then, since $\fs{G}$ is $y$-separated, we can invoke the derivative-based interpolation algorithm (Algorithm~\ref{alg:derivative_recovery}) to recover the complete polynomial $\fs{G}(\x, y)$. Setting $y = 1$ then gives the original factor $G(\x)$.

This heuristic can be summarized as follows:

\begin{itemize}
    \item \textbf{Step 1 (Separation):} Introduce $y$ via random $\mathbf{s}$ to make $\fs{G}$ $y$-separated.
    \item \textbf{Step 2 (Evaluation):} Choose random $\b$ such that $\mathcal{G}_0$ and $\mathcal{H}_0$ are coprime.
    \item \textbf{Step 3 (Derivative recovery via Hensel lifting):} Apply one-step Hensel lifting to two translations, and extract $\frac{\partial \fs{G}}{\partial x_k}(\b, y)$ for all $k$.
    \item \textbf{Step 4 (Sparse interpolation):} Recover $\fs{G}(\x, y)$ from its values and derivatives at $\b$ using Algorithm~\ref{alg:derivative_recovery}.
    \item \textbf{Step 5 (Output):} Set $y = 1$ to obtain $G(\x)$.
\end{itemize}

\paragraph{Remark:} 
The heuristic described above serves as a guide to the intuition behind our method. It is not intended as a standalone algorithm. The formal algorithm is presented in Section~\ref{sec:recursive}.

\subsection{GCD Reduction to Hensel Lifting}

The heuristic above assumes that $\mathcal{G}_0(y) = \fs{G}(\b, y)$ is known, as required by the Hensel lifting step. In a general factorization setting, $\mathcal{G}_0$ is unknown: to determine it, one would need to factor $\mathcal{F}_0(y)$ completely and then select the correct subset of irreducible factors that corresponds to $\fs{G}$. This \emph{factor selection problem} is nontrivial and, in general, is itself a difficult combinatorial task.

However, in the context of \emph{GCD computation}, this obstacle can be circumvented. Given two polynomials $A, B \in \K[\x]$ with $G = \gcd(A, B)$, we have
\[
A = G \cdot A_1, \qquad B = G \cdot B_1, \qquad \gcd(A_1, B_1) = 1.
\]
After separation and evaluation, we can compute
\[
\mathcal{G}_0(y) = \operatorname{monic}\bigl( \gcd( \fs{A}(\b, y), \fs{B}(\b, y) ) \bigr)
\]
directly, without any factor selection. This is the key observation that enables a fully rigorous and efficient algorithm.

Before presenting the algorithmic details, we first show how the problem of computing the GCD of two multivariate polynomials can be reduced to the Hensel lifting framework developed in the preceding sections.

The crucial question is whether the two factors $G$ and $A_1+cB_1$ in the factorization
\[
F = A + cB = G \cdot (A_1 + c B_1)
\]
are coprime, as this is the prerequisite for applying Hensel lifting.
Since $\gcd(A_1, B_1) = 1$, we have
\[
\gcd(G,\; A_1 + c B_1) = 1
\]
for all but a small number of exceptional choices of $c$. Indeed, any irreducible factor of $G$ that divides $A_1 + cB_1$ would force a common factor between $A_1$ and $B_1$, unless $c$ takes a specific value.

The following theorem makes this precise: there are at most $\deg G$ bad choices of $c$ for which $\gcd(G,\; A_1 + cB_1) \neq 1$.

\begin{lemma}\label{thm:bad-c}
Let $\K$ be a field, let $P \in \K[x_1, \dots, x_n]$ be irreducible and nonconstant, and let $A_1, B_1 \in \K[x_1, \dots, x_n]$ with $P \nmid A_1$ and $P \nmid B_1$. Then there is at most one $c \in \K$ such that $P \mid (A_1 + c B_1)$.
\end{lemma}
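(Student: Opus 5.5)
The plan is a proof by contradiction that uses only the primality of $P$ in the UFD $\K[x_1,\dots,x_n]$. Suppose there are two distinct values $c_1 \neq c_2$ in $\K$ with
\[
P \mid (A_1 + c_1 B_1) \quad\text{and}\quad P \mid (A_1 + c_2 B_1).
\]

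The first step is to subtract the two divisibility relations. Since $P$ divides both polynomials, it divides their difference:
\[
P \mid (c_1 - c_2) B_1 .
\]
Because $c_1 - c_2$ is a nonzero element of $\K$, it is a unit in $\K[x_1,\dots,x_n]$. Hence $P \mid B_1$, which contradicts the hypothesis $P \nmid B_1$. Therefore at most one $c$ can satisfy $P \mid (A_1 + cB_1)$.

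It is worth recording where each hypothesis enters. The case $c = 0$ would give $P \mid A_1$, so the hypothesis $P \nmid A_1$ rules out $c=0$ as a bad value. That fact is not needed for the uniqueness claim itself. Irreducibility of $P$ is also not essential for this subtraction argument; only $P \nmid B_1$ and the invertibility of nonzero field elements are used. Irreducibility matters for the later counting step, where the bad values of $c$ for $\gcd(G, A_1 + cB_1) \ne 1$ are bounded by the number of distinct irreducible factors of $G$. That number is at most $\deg G$.

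I do not expect any real obstacle here. The only point requiring care is to note explicitly that $c_1 - c_2$ is a unit, so that it can be cancelled from the divisibility relation.
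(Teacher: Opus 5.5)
Your proof is correct and matches the paper's argument: assume two distinct values $c_1 \neq c_2$, subtract to get $P \mid (c_1-c_2)B_1$, cancel the unit $c_1-c_2$, and contradict $P \nmid B_1$. Your side remarks are also accurate: this step uses only $P \nmid B_1$, not irreducibility or $P \nmid A_1$, and irreducibility enters only in the counting argument of Theorem~\ref{thm:good-c}.
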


\begin{proof}
Suppose, for contradiction, that there exist two distinct constants $c_1, c_2 \in \K$ such that
\[
P \mid (A_1 + c_1 B_1), \qquad P \mid (A_1 + c_2 B_1).
\]
Subtracting the two divisibility relations gives
\[
P \mid (c_1 - c_2) B_1.
\]
Since $c_1 \neq c_2$, we have $c_1 - c_2 \in \K^*$, which is a unit in $\K[x_1, \dots, x_n]$. Therefore,
\[
P \mid B_1,
\]
contradicting the assumption $P \nmid B_1$. Hence, there is at most one $c \in \K$ such that $P \mid (A_1 + c B_1)$.
\end{proof}

\begin{theorem}\label{thm:good-c}
Let $\K$ be a field, let $A, B \in \K[x_1, \dots, x_n]$ and $G = \gcd(A, B)$. Write $A = G A_1$, $B = G B_1$. Then there are at most $\deg G$ values of $c \in \K$ such that
\[
\gcd(G,\; A_1 + c B_1) \neq 1.
\]

\end{theorem}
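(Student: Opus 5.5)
The plan is to reduce the statement to Lemma~\ref{thm:bad-c} by working one irreducible factor of $G$ at a time, and then to count. If $G$ is a nonzero constant, then $\gcd(G, A_1+cB_1)=1$ for every $c$ and there is nothing to prove. Otherwise, I will factor $G = \lambda P_1^{a_1}\cdots P_m^{a_m}$ in the UFD $\K[x_1,\dots,x_n]$, with pairwise non-associate irreducible nonconstant $P_i$ and $\lambda\in\K^*$.

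The first observation is that $\gcd(G, A_1+cB_1)\neq 1$ holds if and only if some irreducible factor $P_i$ of $G$ divides $A_1+cB_1$. So the set of bad $c$ equals $\bigcup_{i=1}^m C_i$, where $C_i=\{c\in\K : P_i\mid A_1+cB_1\}$. Since the degrees are additive, $\deg G=\sum a_i\deg P_i\ge m$. It therefore suffices to show $|C_i|\le 1$ for each $i$; the union bound then gives at most $m\le\deg G$ bad values.

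To bound $|C_i|$, I will use that $\gcd(A_1,B_1)=1$, which follows from $G=\gcd(A,B)$ (assuming $A,B$ nonzero). Hence $P_i$ cannot divide both $A_1$ and $B_1$, and three cases remain.
\begin{enumerate}
\item If $P_i\nmid A_1$ and $P_i\nmid B_1$, then Lemma~\ref{thm:bad-c} applies directly and gives $|C_i|\le 1$.
\item If $P_i\mid A_1$ but $P_i\nmid B_1$, then $P_i\mid A_1+cB_1$ if and only if $P_i\mid cB_1$. Since $P_i$ is prime and $P_i\nmid B_1$, this happens if and only if $c=0$, so $C_i\subseteq\{0\}$.
\item If $P_i\mid B_1$ but $P_i\nmid A_1$, then $A_1+cB_1\equiv A_1\not\equiv 0 \pmod{P_i}$ for every $c$, so $C_i=\emptyset$.
\end{enumerate}

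No step here is a real obstacle. The only points needing care are that Lemma~\ref{thm:bad-c} carries the hypothesis $P\nmid A_1$, $P\nmid B_1$, so the remaining cases must be handled separately as above. One must also count distinct irreducible factors rather than factors with multiplicity, which is what makes the bound $m\le\deg G$ valid.
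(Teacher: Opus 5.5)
Your proposal is correct and follows the same route as the paper: factor $G$ into irreducibles, use Lemma~\ref{thm:bad-c} to get at most one bad $c$ per factor, and bound the number of factors by $\deg G$. Your case split on whether $P_i$ divides $A_1$ or $B_1$ (using $\gcd(A_1,B_1)=1$) adds a check that the paper omits, since the paper invokes Lemma~\ref{thm:bad-c} without verifying its hypotheses $P\nmid A_1$ and $P\nmid B_1$.
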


\begin{proof}
Let $G = P_1 \cdots P_r$ be the irreducible factorization of $G$. For each irreducible factor $P_i$, Lemma~\ref{thm:bad-c} implies that there is at most one value of $c$ such that $P_i \mid (A_1 + c B_1)$. Hence, the total number of bad $c$'s is at most $r \le \deg G$.
\end{proof}

\paragraph{Remark:} 
Theorem~\ref{thm:good-c} ensures that by choosing $c$ uniformly at random from a sufficiently large subset $S$ of $\K$, the probability that $c$ is bad is at most $\deg G / |S|$, which can be made arbitrarily small by choosing $|S|$ large enough. In particular, if $\K$ is a finite field with $|\K| > 2\deg G$, a random $c$ is good with probability at least $1/2$. For infinite fields, the probability of choosing a bad $c$ is zero in the measure-theoretic sense.

Once a good $c$ is chosen, we set
\[
F = A + c B = G \cdot (A_1 + c B_1),
\]
where the two factors are coprime. This places us exactly in the Hensel lifting framework: we have a polynomial $F$ that factors as $F = G \cdot H$ with $G = G$ and $H = A_1 + c B_1$, and $\gcd(G, H) = 1$.

Moreover, at a suitable evaluation point $\b \in \K^n$ (i.e., one for which the univariate GCD does not lose degree or acquire spurious common factors), the univariate factor $\mathcal{G}_0(y)$ corresponding to $G$ can be computed directly as
\[
\mathcal{G}_0(y) = \operatorname{monic}\bigl( \gcd( \fs{A}(\b, y),\; \fs{B}(\b, y) ) \bigr),
\]
without any factor selection ambiguity. This is the crucial advantage of specializing to the GCD problem.

The following sections develop the complete algorithm based on this reduction.

\subsection{Conditions on the Random Vector $\mathbf{s}$}
\label{sec:params}

The heuristic described in Section~\ref{sec-HenselMoti} hinges on a strong assumption: all terms of $\fs{G}$ must be fully separated with respect to $y$. As established in Section~\ref{sec:separation}, achieving such complete separation requires choosing the random vector $\mathbf{s}$ from a range of size $O(T^2)$. Consequently, the $y$-degree of $\fs{F}$ becomes $O(T^2 D)$, introducing a quadratic dependence on $T$ that undermines our goal of linear complexity.

To overcome this $T^2$ bottleneck, we develop a recursive approximation strategy. Rather than attempting to separate all terms of $G$ in a single transformation, we proceed iteratively, recovering a subset of terms at each step. Suppose we already have an approximation $G^*$ of $G$ (initially $G^* = 0$). From this current approximation, we compute an improved approximation $G^{**}$ that recovers additional terms of $G$ beyond those already captured by $G^*$.

\subsubsection*{Condition 1: Leading Coefficient Separation}

A subtle but important issue must be addressed at the outset: the polynomial recovered by our procedure is determined only up to multiplication by a single term (i.e., a monomial with a nonzero coefficient). This ambiguity is inherent to the Hensel lifting framework. Recall that to apply the lifting step, we require the factor $\fs{G}$ to be monic with respect to the main variable $y$. Starting from the factorization
\[
F(\x) = G(\x) \cdot H(\x) \in \K[\x],
\]
we first apply the separation transformation from Definition \ref{def-separated} to obtain
\[
\fs{F}(\x, y) = \fs{G}(\x, y) \cdot \fs{H}(\x, y) \in \K[\x, y],
\]
where $\fs{F}(\x, y) = F(x_1 y^{s_1}, \dots, x_n y^{s_n}) / y^{k_F}$, and similarly for $\fs{G}$ and $\fs{H}$.

If $\fs{G}$ is not already monic in $y$, we normalize it by dividing by its leading coefficient in $y$:
\[
\mathcal{G}_0(\x, y): = \frac{\fs{G}(\x, y)}{\operatorname{lc}_y(\fs{G}(\x, y))},
\qquad
\mathcal{H}_0(\x, y):= \operatorname{lc}_y(\fs{G}(\x, y)) \cdot \fs{H}(\x, y).
\]
This normalization ensures that $\mathcal{G}_0$ is monic in $y$ and hence amenable to Hensel lifting.

Our recursive strategy does not require $\fs{G}$ to be fully $y$-separated. Instead, we impose a significantly weaker condition: the leading coefficient $\operatorname{lc}_y(\fs{G}(\x, y))$ must be a single term in $\x$. This constitutes \textbf{Condition~1} in our choice of the random vector $\mathbf{s}$.

Dividing by this single term makes $\mathcal{G}_0$ monic in $y$; moreover, each coefficient of $\mathcal{G}_0$ is obtained from the corresponding coefficient of $\fs{G}$ by dividing by a single term in $\x$. Hence the coefficients of $\mathcal{G}_0$ are Laurent polynomials in $\x$ with no increase in the number of terms. This property is essential for preventing intermediate expression swell, which is one of the main obstacles in classical Hensel lifting algorithms.

Consequently, $\mathcal{G}_0$ differs from $\fs{G}$ only by a term factor (namely $\operatorname{lc}_y(\fs{G})$), and hence from the original factor $G$ as well. This monomial ambiguity is harmless, as it is systematically handled by the alignment procedure described below.

\subsubsection*{Condition 2: Half-Remaining Separation}

Condition~1 only guarantees that the leading term is unambiguous; other terms of $\fs{G}$ may still collide after the $y$-separation. The derivative-based interpolation algorithm (Algorithm~\ref{alg:derivative_recovery}) can only recover those terms that are non-colliding in the current separation. As a result, in a single iteration we can only recover a proper subset of the terms of $G$.

To recover the full factor $G$, we therefore proceed iteratively: in each round, we recover a subset of the remaining terms, while the colliding terms are detected and deferred to subsequent iterations. To formally describe this process of gradually recovering more terms, we introduce the following notions of approximation and improvement.

\begin{definition}[Approximation]\label{def-appro}
Let $G^* \in \K[\x]$ be a polynomial. If there exists a single term $\alpha$ such that every term of $\alpha \cdot G^*$ is also a term of $G$, then $G^*$ is called an \textbf{approximation} of $G$, denoted $G^* \sqsubseteq G$. In particular, the zero polynomial is an approximation of any $G$.
\end{definition}

\begin{definition}[Improvement]
Let $G^*$ and $G^{**}$ be approximations of $G$. Suppose there exist single terms $\alpha_1, \alpha_2$ such that $\alpha_1 G^*$ and $\alpha_2 G^{**}$ are both partial sums of $G$. If
\[
\|G - \alpha_2 G^{**}\|_0 < \|G - \alpha_1 G^*\|_0,
\]
where $\|\cdot\|_0$ denotes the number of nonzero terms of a polynomial, then $G^{**}$ is called an \textbf{improvement} of $G^*$ (equivalently, a \textbf{better approximation}).
\end{definition}

In each iteration, starting from the current approximation $G^*$, we choose a new random vector $\mathbf{s}$ and invoke the derivative-based interpolation algorithm (Algorithm~\ref{alg:derivative_recovery}), together with the collision detection mechanism (Section~\ref{sec:collision}), to compute an improved approximation $G^{**}$. The goal is to recover a substantial portion of the terms not yet captured by $G^*$.

\begin{definition}[Non-Colliding Terms]\label{def:nc}
Let 
$F = c_1 M_1 + c_2 M_2 + \cdots + c_t M_t \in \K[x_1, \dots, x_n],$
where $c_i \in \K^*$ and $M_i = x_1^{e_{i1}} \cdots x_n^{e_{in}}$ are distinct monomials. 
For a vector $\mathbf{s} = (s_1, \dots, s_n) \in \mathbb{N}^n$, let 
\[
\fs{F}(\x, y) = \frac{F(x_1 y^{s_1}, \dots, x_n y^{s_n})}{y^{k_F}}
\]
be the transformed polynomial, where $k_F$ is the lowest power of $y$ in the numerator. 
For each term $c_i M_i$, let
\[
d_i = s_1 e_{i1} + s_2 e_{i2} + \cdots + s_n e_{in} - k_F
\]
be its $y$-exponent in $\fs{F}$. 
We say that $c_i M_i$ is \textbf{non-colliding} with respect to $\mathbf{s}$ if its $y$-exponent is unique among all terms of $\fs{F}$, i.e.,
\[
d_i \neq d_j \quad \text{for all } j \neq i.
\]
The set of non-colliding terms of $F$ with respect to $\mathbf{s}$ is denoted by
\[
\operatorname{NC}(F, \mathbf{s}) = \{\, c_i M_i \mid d_i \neq d_j \text{ for all } j \neq i \,\}.
\]
\end{definition}

For this to yield asymptotically linear complexity in $T$ (up to the logarithmic factor from $O(\log T)$ iterations), we need that in each iteration at least half of the remaining terms are non-colliding and thus recoverable. More precisely, suppose the current approximation $G^*$ satisfies $G^* \sqsubseteq G$ (Definition~\ref{def-appro}), meaning that there exists a single term $\alpha_1$ such that $\alpha_1 G^*$ is a partial sum of terms of $G$. Let
\[
R = G - \alpha_1 G^*
\]
denote the remaining terms after aligning the current approximation with $G$. We require that the random vector $\mathbf{s}$ be chosen such that at least half of the terms of $R$ are non-colliding after the separation transformation:
\[
\#\operatorname{NC}(R, \mathbf{s}) \ge \frac{1}{2} \cdot \|R\|_0.
\]
This property, which we refer to as \textbf{Condition 2} (\emph{half-remaining separation}) with respect to $R = G - \alpha_1 G^*$, ensures that a single iteration can recover at least half of the currently missing terms.

The core invariant of our iterative strategy is therefore
\[
\bigl\| G - \alpha_2 G^{**} \bigr\|_0 \;\le\; \frac{1}{2} \bigl\| G - \alpha_1 G^* \bigr\|_0,
\]
where the terms $\alpha_1, \alpha_2$ are the aligning factors provided by the definition of approximation. Since the number of missing terms decreases by a factor of at least two in each round, $O(\log t)$ iterations suffice to recover all terms of $G$, where $t = \|G\|_0 \le T$.

\subsubsection*{Condition 3: Lowest-Term Separation}

There is a subtle obstacle in implementing this strategy. To recover the non-colliding terms of $R = G - \alpha_1 G^*$, we would need to evaluate $\fs{R}(\b, y)$ and its partial derivatives at $\b$. While we can compute the normalized univariate polynomial
\[
\mathcal{G}_0(\b, y) = \operatorname{monic}\bigl( \gcd( \fs{A}(\b, y), \fs{B}(\b, y) ) \bigr)
= \frac{\fs{G}(\b, y)}{\operatorname{lc}_y(\fs{G}(\b, y))}
\]
directly from the univariate GCD, and $\fs{G^*}(\b, y)$ by direct computation from the known approximation $G^*$, several obstacles remain. First, the aligning term $\alpha_1$ is unknown. Second, the $y$-degrees of $\fs{G}$ and $\fs{G^*}$ may not coincide, since the separation transformation introduces different shifts for different polynomials. Third, even if $\alpha_1$ and the $y$-degree shift were known, we only have access to $\mathcal{G}_0(\b, y)$, not $\fs{G}(\b, y)$ itself; the leading coefficient $\operatorname{lc}_y(\fs{G}(\b, y))$ is also unknown and must be determined to reconstruct the true factor. 

To overcome this difficulty, we impose a third condition on the random vector $\mathbf{s}$, which we call the \textbf{lowest-term separation condition}. Recall that $G$ can be written as
\[
G = c_1 M_1 + c_2 M_2 + \cdots + c_t M_t,
\]
where $M_1 \prec M_2\prec \cdots \prec M_t$ and $M_1$ is the lowest monomial with respect to some fixed monomial order. We require that $\mathbf{s}$ be chosen such that $c_1 M_1$ is non-colliding in $\fs{G}$:
\[
c_1 M_1 \in \operatorname{NC}(G, \mathbf{s}).
\]
This is \textbf{Condition 3}.

Assume that the vector $\mathbf{s}_{\text{prev}}$ used in the previous iteration also satisfies Condition~3, so that the current approximation $G^*$ already contains the lowest term of $G$ (up to a single term factor). In the current iteration, we take a new vector $\mathbf{s}$ that likewise satisfies Condition~3. Then this lowest term serves as a canonical anchor for alignment.

We do not have direct access to $\fs{G}(\b, y)$; instead, we can only compute the normalized polynomial
\[
\mathcal{G}_0(\b, y) = \operatorname{monic}\bigl( \gcd( \fs{A}(\b, y), \fs{B}(\b, y) ) \bigr)
= \frac{\fs{G}(\b, y)}{\operatorname{lc}_y(\fs{G}(\b, y))}.
\]
By Condition~1, the leading coefficient $\operatorname{lc}_y(\fs{G})$ is a single term, so $\mathcal{G}_0$ differs from $\fs{G}$ only by a single term factor. 

Applying Algorithm~\ref{alg:derivative_recovery} to $\mathcal{G}_0(\b, y)$ and its partial derivatives 
$\frac{\partial \mathcal{G}_0}{\partial x_k}(\b, y)$ for $k=1,\dots,n$ recovers all non-colliding terms of $\mathcal{G}_0$, yielding a polynomial $\mathcal{G}_0^{\text{non}}(\x, y)$.

The partial derivatives $\frac{\partial \mathcal{G}_0}{\partial x_k}(\b, y)$ are obtained from the Factor Derivative Theorem (Theorem~\ref{the-1}). Recall that $\mathcal{G}_0(\x, y) = \fs{G}(\x, y) / \operatorname{lc}_y(\fs{G}(\x, y))$ is the normalized factor, and
\[
\mathcal{H}_0(\x, y) = \operatorname{lc}_y(\fs{G}(\x, y)) \cdot \fs{H}(\x, y) .
\]
Then 
\[
\fs{F}(\x, y) = \fs{G}(\x, y) \cdot \fs{H}(\x, y)=\mathcal{G}_0(\x, y) \cdot \mathcal{H}_0(\x, y).
\]

Now, by Theorem~\ref{the-1}, for each $k = 1,\dots,n$,
\[
\frac{\partial \mathcal{G}_0}{\partial x_k}(\b, y)
=
\mathbf{rem}\left(v(y) \cdot \frac{\partial \fs{F}}{\partial x_k}(\b, y),\; \mathcal{G}_0(y)\right),
\]
where $u(y), v(y)$ are the B\'ezout coefficients satisfying
\[
u(y) \mathcal{G}_0(\b,y) + v(y) \mathcal{H}_0(\b,y) = 1.
\]

Condition 3 guarantees that the lowest term of $\mathcal{G}_0^{\text{non}}$ corresponds exactly to the lowest term of $\fs{G^*}$. We compare this recovered lowest term with the lowest term of $\fs{G^*}$ (which we can compute directly from $G^*$) and determine:
\begin{itemize}
    \item a single term $\gamma = c \mathbf{x}^{\mathbf{e}}$ in $\x$, to align the $\x$-parts and coefficients;
    \item an integer $\delta$, to align the $y$-degrees,
\end{itemize}
such that
\[
\gamma \cdot \mathcal{G}_0^{\text{non}}(\x, y) \cdot y^{\delta} \quad \text{and} \quad \fs{G^*}(\x, y)
\]
have the same lowest term: same $\x$-part, same coefficient, and same $y$-degree.

Here $\gamma$ absorbs the leading coefficient of $\fs{G}$ and any monomial discrepancy in $\x$, while $\delta$ accounts for any $y$-degree shift between the two polynomials.

Define 
\[
\Delta(\x, y) = \gamma(\x) \cdot \mathcal{G}_0(\x, y) \cdot y^{\delta} - \fs{G^*}(\x, y).
\]
We then form the aligned difference at the evaluation point $\b$:
\[
\Delta(\b, y) = \gamma(\b) \cdot \mathcal{G}_0(\b, y) \cdot y^{\delta} - \fs{G^*}(\b, y),
\]
where $\gamma(\b)$ denotes the evaluation of the monomial $\gamma$ at $\b$. Since $\gamma(\b) \mathcal{G}_0(\b, y) y^{\delta}$ and $\fs{G^*}(\b, y)$ share the same lowest term by construction, their difference has no lowest term, and the non-colliding terms of this difference correspond exactly to the non-colliding terms of $\Delta(\x,y)$, which in turn correspond to the non-colliding terms of $R = G - \alpha_1 G^*$ (up to a nonzero single term factor). 

Similarly, we compute the corresponding partial derivatives. Since $\gamma$ depends only on $\x$, its partial derivative with respect to $x_k$ is zero if $e_k = 0$, and $\partial \gamma/\partial x_k = e_k \cdot \gamma / x_k$ otherwise:
\[
\frac{\partial \Delta}{\partial x_k}(\b, y)
=
\frac{\partial \gamma}{\partial x_k}(\b) \cdot \mathcal{G}_0(\b, y) \cdot y^{\delta}
+ \gamma(\b) \cdot \frac{\partial \mathcal{G}_0}{\partial x_k}(\b, y) \cdot y^{\delta}
- \frac{\partial \fs{G^*}}{\partial x_k}(\b, y),
\quad k = 1,\dots,n.
\]
Applying the derivative-based interpolation algorithm (Algorithm~\ref{alg:derivative_recovery}) to $\Delta(\b, y)$ and its partial derivatives recovers exactly the non-colliding terms of the difference. Let this recovered polynomial be $U$. Then we set
\[
G^{**} = G^* + U.
\]
By Condition~2, at least half of the terms of $R = G - \alpha_1 G^*$ are non-colliding, so $U$ recovers at least half of the missing terms. Hence
\[
\|G - \alpha_1 G^{**}\|_0 = \|G - \alpha_1 G^*\|_0 - \#U \le \frac{1}{2} \|G - \alpha_1 G^*\|_0,
\]
i.e., $G^{**}$ is an improvement of $G^*$ in the sense of Definition~\ref{def-appro}.

\subsubsection*{Summary of Conditions on the Random Vector $\mathbf{s}$}\label{sec-condition}

The three conditions imposed on the random vector $\mathbf{s}$ are formulated as follows:

\begin{enumerate}
    \item \textbf{Leading coefficient separation condition}: Let $G \in \K[x_1,\dots,x_n]$. Write
\[
\fs{G} = G_1 y^{d_1} + G_2 y^{d_2} + \dots + G_k y^{d_k},
\]
where $d_1 < d_2 < \dots < d_k$, and $G_i$ are polynomials (not necessarily monomials). If $G_k=\operatorname{lc}_y(\fs{G})$ is a single term, then $G$ is said to be leading-coefficient separated with respect to $\mathbf{s}$.

   \item \textbf{Half-remaining separation condition}: Let $G^* \sqsubseteq G$ be an approximation and let $\alpha$ be a single term such that every term of $\alpha G^*$ appears in $G$. Write
\[
R = G - \alpha G^*.
\]
If $\mathbf{s}$ is chosen such that
\[
\#\operatorname{NC}(R, \mathbf{s}) \ge \frac{\|R\|_0}{2},
\]
then $\mathbf{s}$ is said to be half-remaining separated with respect to the current approximation $G^*$.

    \item \textbf{Lowest term separation condition}: Write
\[
G = c_1 M_1 + c_2 M_2 + \dots + c_t M_t,
\]
where $c_i \neq 0$, $M_i$ are monomials, and ordered by  lexicographic order with $M_1 \preceq \cdots \preceq M_t$, with $M_1$ being the lowest monomial. If $\mathbf{s}$ is chosen such that $c_1 M_1$ does not collide in $\fs{G}$, i.e., $c_1 M_1 \in {\rm NC}(G,\mathbf{s})$, then $G$ is said to be lowest-term separated with respect to $\mathbf{s}$.
 
\end{enumerate}

Theorem~\ref{thm:s-prob} below establishes that a random vector $\mathbf{s}$, drawn from a suitable distribution, satisfies all three conditions simultaneously with overwhelming probability.

\subsubsection*{Probability Analysis}

We now formally state the three conditions that $\mathbf{s}$ must satisfy, prove that a random choice satisfies all three with high probability, and derive the required range for the $s_i$'s.
Let $G \in \K[x_1, \dots, x_n]$ be the target factor (the GCD), with $\|G\|_0 \le T$, and let $G^*$ be a current approximation satisfying $G^* \sqsubseteq G$ (Definition~\ref{def-appro}). Let $\mathbf{s} = (s_1, \dots, s_n) \in \mathbb{N}^n$ be a random vector.

With reference to the three conditions on $\mathbf{s}$ stated above, we first establish three auxiliary lemmas, each bounding the failure probability of one respective condition.

\begin{lemma}[Condition 1]\label{lem:leading}
Let $G \in \K[x_1, \dots, x_n]$ be a nonzero polynomial with at most $T$ terms. For any $\varepsilon \in (0,1)$, if $\mathbf{s} = (s_1, \dots, s_n)$ is chosen uniformly from $[1, N]^n$ with
$N = \left\lceil (T-1)/{\varepsilon} \right\rceil,$
then the probability that $\operatorname{lc}_y(\fs{G})$ is a single term is at least $1 - \varepsilon$.
\end{lemma}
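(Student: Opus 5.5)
We have to show that, with probability at least $1-\varepsilon$, the top $y$-degree of $\fs{G}$ is attained by exactly one term. The plan is a union bound over the other terms, each compared against one fixed term singled out in advance. The point is to compare against a fixed term rather than the maximizer, which would make the bad events depend on $\mathbf{s}$.

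Write $G=\sum_{i=1}^t c_i\x^{\mathbf{e}_i}$ with $t\le T$. The term $c_i\x^{\mathbf{e}_i}$ acquires $y$-degree $\langle \mathbf{s},\mathbf{e}_i\rangle - k_G$. The shift $k_G$ is common to all terms, so it is irrelevant: $\operatorname{lc}_y(\fs{G})$ is a single term exactly when $\max_i \langle \mathbf{s},\mathbf{e}_i\rangle$ is attained by a unique index $i$. If $t=1$ there is nothing to prove, so assume $t\ge 2$.

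Fix the term $M_t$ whose exponent vector $\mathbf{e}_t$ is lexicographically largest with respect to $x_n\succ\cdots\succ x_1$. Rather than aim for uniqueness of the maximizer directly, I will use a cruder but cleaner sufficient condition: the linear forms $L_j(\mathbf{s})=\langle \mathbf{s},\mathbf{e}_t-\mathbf{e}_j\rangle$, for $j\ne t$, are all nonzero. This does not by itself guarantee a unique maximum, since two other terms could tie at the top.

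To get around this, I will require all pairwise differences among the terms that can be the maximum to be nonzero. I expect this step to be the main obstacle: naively it needs $\binom{t}{2}$ events and hence $N\sim T^2/\varepsilon$. The intended bound $N=\lceil (T-1)/\varepsilon\rceil$ suggests the authors use only $t-1$ events.

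The first fix I would try is a "random direction" argument. The maximizing vertex of the Newton polytope of $G$ in direction $\mathbf{s}$ is unique unless $\mathbf{s}$ lies on one of the hyperplanes $\langle \mathbf{s},\mathbf{e}_i-\mathbf{e}_j\rangle=0$ for some pair $(i,j)$ that forms an edge of the polytope. This still allows more than $t-1$ events in general.

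The second fix, which I would adopt, is a sequential conditioning argument. Let $i^*(\mathbf{s})$ be the lexicographically largest maximizer. A tie at the top means that some $j\ne i^*$ satisfies $\langle \mathbf{s},\mathbf{e}_{i^*}-\mathbf{e}_j\rangle=0$. Pick a coordinate $k$ in which $\mathbf{e}_{i^*}$ and $\mathbf{e}_j$ differ, and condition on all the other coordinates of $\mathbf{s}$. Then at most one value of $s_k$ produces that equality, which gives probability at most $1/N$ for that pair. Summing over the at most $t-1$ indices $j\ne i^*$ would give a failure probability of at most $(t-1)/N\le\varepsilon$.

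The delicate point is that $i^*$ itself depends on $\mathbf{s}$. I would handle this with a union bound over the events $E_j$ that "$j$ ties with the top and $j$ is not $i^*$", and check that each $E_j$ has probability at most $1/N$ when $j$ is fixed. If that check fails, I would fall back on Schwartz–Zippel (Theorem~\ref{thm:sz}) applied to the product $\prod_{j\ne t}L_j$, which has degree $t-1$, combined with an argument that the maximizer is then forced to be unique.
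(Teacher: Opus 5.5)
Your bound rests on the claim, left as a ``check'', that for each fixed $j$ the event $E_j$ (``$j$ attains the maximum $y$-degree but is not the lexicographically largest maximizer'') has probability at most $1/N$. The conditioning argument only handles one competitor at a time. For a fixed pair $i\neq j$ one does have $\Pr[\langle\mathbf{s},\mathbf{e}_i-\mathbf{e}_j\rangle=0]\le 1/N$. But $E_j$ is the union of these events over all lexicographically larger $i$, and the coordinate you condition on changes with $i$.

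The claim is in fact false. Take $n=3$ and $G=x_1x_3+x_2^3+x_1x_2^2$, with unshifted $y$-degrees $s_1+s_3$, $3s_2$, $s_1+2s_2$. The term $x_1x_2^2$ is lexicographically smallest, so for it $E_j$ is the event that $s_2\le s_1$ and $s_3\le 2s_2$, with equality in at least one. For $N=2$ this happens at $(1,1,1),(1,1,2),(2,1,2),(2,2,1),(2,2,2)$, so $\Pr(E_j)=5/8>1/2$. For $N=2M$ a direct count gives $|E_j|=(9M^2+M)/2$, i.e.\ $\Pr(E_j)=(9M+1)/(16M^2)>1/N$ for every $M$, asymptotically $9/(8N)$. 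The lemma is not contradicted; only the per-$j$ bound fails.

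The fallback fails too. As you noted, $\prod_{j\neq t}L_j\neq0$ does not force a unique maximizer (for $G=x_1+x_2+x_3$ take $\mathbf{s}=(2,2,1)$). Moreover, for this $G$ no product of $t-1=2$ linear forms vanishes on all ties: the tie locus contains $\Theta(N^2)$ lattice points on each of the planes $s_1=s_2$, $s_1=s_3$, $s_2=s_3$ lying off the other two. The same example affects the paper's own route, a single product $\Gamma$ of $\ell-1\le t-1$ differences along a chain of adjacent maximizer cones followed by Schwartz--Zippel. Once $n\ge3$ that $\Gamma$ cannot detect every tie, because the adjacency graph of the cones need not be a path (here it is a triangle).

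What works is to charge ties adaptively, proving by induction on $n$ that at most $(t-1)N^{n-1}$ points of $[1,N]^n$ give a non-unique maximum.
\begin{itemize}
\item Split the terms into $g$ classes $S_c$ by their exponent $c$ in $x_n$, and fix $\mathbf{s}'=(s_1,\dots,s_{n-1})$. Class $c$ contributes the line $m_c(\mathbf{s}')+c\,s_n$, where $m_c(\mathbf{s}')=\max_{i\in S_c}\langle\mathbf{s}',\mathbf{e}'_i\rangle$ and $\mathbf{e}'_i$ drops the last coordinate.
\item Two different classes can share the maximum only at a breakpoint of the upper envelope of $g$ lines with distinct slopes. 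This happens for at most $g-1$ values of $s_n$.
\item Otherwise all maximizers lie in a single class, which then has an internal tie at $\mathbf{s}'$. The vectors $\mathbf{e}'_i$, $i\in S_c$, are still distinct, so by induction this happens for at most $(|S_c|-1)N^{n-2}$ choices of $\mathbf{s}'$. Each such $\mathbf{s}'$ accounts for at most $N$ values of $s_n$.
\item The base case $n=1$ is trivial, since $s_1\ge1$ and the exponents are distinct.
\end{itemize}
Altogether there are at most $(g-1)N^{n-1}+\sum_c(|S_c|-1)N^{n-1}=(t-1)N^{n-1}$ bad vectors. This gives failure probability at most $(t-1)/N\le\varepsilon$.
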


\begin{proof}
Let \(G = \sum_{i=1}^{t} a_i x_1^{e_{i1}} \cdots x_n^{e_{in}}\), where \(t \le T\), and each \(a_i \in \K^*\). For a vector \(\mathbf{s} = (s_1,\dots,s_n) \in \mathbb{N}^n\), define the exponents
\[
d_{\mathbf{s},i} = e_{i1}s_1 + \cdots + e_{in}s_n, \qquad i = 1,\dots,t.
\]
These are the \(y\)-exponents of the terms in $G(x_1 y^{s_1},\dots,x_n y^{s_n})$ (before removing the lowest power).

As shown in the proof of Theorem 2.10 in \cite{HuangGao2025}, the set of maximizers
\[
\mathcal{C} = \{ (\mathbf{s}, d_{\mathbf{s},\max}) : \mathbf{s} \in \overline{S} \}
\]
can be decomposed into convex polyhedral cones \(\mathcal{C} = \cup_{i=1}^{\ell} Q_i\), where \(\ell \le t\), and each \(Q_i\) corresponds to a unique index \(\mu_i\) such that for all \(\mathbf{s}\) in the projection of $Q_i$ onto the $\mathbf{s}$-space, $\mu_i$ attains the maximum exponent $d_{\mathbf{s},\max}$. The boundaries between these cones are defined by equations $d_{\mathbf{s},\mu_i} = d_{\mathbf{s},\mu_{i+1}}$ for $i = 1,\dots,\ell-1$.

Define the polynomial
\[
\Gamma(\mathbf{s}) = \prod_{i=1}^{\ell-1} \bigl(d_{\mathbf{s},\mu_i} - d_{\mathbf{s},\mu_{i+1}}\bigr) \in \mathbb{Z}[s_1,\dots,s_n].
\]
Then $\Gamma(\mathbf{s}) \neq 0$ if and only if all adjacent maximum exponents are pairwise distinct, which is equivalent to the maximum exponent $d_{\mathbf{s},\max}$ being attained by a unique index. In this case, after removing $y^{k_G}$, the leading coefficient of $\fs{G}$ is precisely the single term \(a_{\mu} x_1^{e_{\mu,1}}\cdots x_n^{e_{\mu,n}}\) (where $\mu$ is the unique index attaining the maximum). Conversely, if $\Gamma(\mathbf{s}) = 0$, then at least two terms have the same maximum $y$-exponent, and the leading coefficient is a sum of at least two terms, hence not a single term.

The degree of $\Gamma$ is at most $\ell-1 \le t-1 \le T-1$.
Now apply the Schwartz-Zippel lemma. Since $\mathbb{Z}$ is an integral domain, the lemma applies to polynomials over $\mathbb{Z}$. The random vector $\mathbf{s}$ is uniformly chosen from the finite set $[1,N]^n \subset \mathbb{Z}^n$, whose size is $N^n$. Each coordinate is chosen independently, so the probability that $\Gamma(\mathbf{s}) = 0$ satisfies
$\Pr\bigl(\Gamma(\mathbf{s}) = 0\bigr) \le \frac{\deg \Gamma}{N} \le \frac{T-1}{N}.$
Taking $N = \lceil (T-1)/\varepsilon \rceil$ gives
$\Pr\bigl(\Gamma(\mathbf{s}) = 0\bigr) \le \varepsilon,$
and hence
\[
\Pr\bigl(\operatorname{lc}_y(\fs{G})\text{ is a single term}\bigr) \ge 1 - \varepsilon.
\]
\end{proof}

\begin{lemma}[Condition 2]\label{lem:half}
Let $R = G - \alpha G^* \in \K[x_1,\ldots,x_n]$, where $G^* \sqsubseteq G$ is an approximation of $G$ and $\alpha$ is a single term such that every term of $\alpha G^*$ appears in $G$. Let $T \ge \|R\|_0$.  
Let $N = \lceil 2(T-1) / \varepsilon \rceil$.  
If $\mathbf{s} \in [1,N]^n$ is chosen uniformly at random, then  
\[
\Pr\bigl[ \# \mathrm{NC}(R,\mathbf{s}) \ge \tfrac12 \cdot \|R\|_0 \bigr] \ge 1 - \varepsilon.
\]
\end{lemma}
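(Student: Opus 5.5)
Let $r = \|R\|_0$ and write $R = \sum_{i=1}^{r} c_i M_i$ with exponent vectors $\mathbf{e}_i$. For a random $\mathbf{s}$, set $d_i(\mathbf{s}) = \langle \mathbf{e}_i, \mathbf{s}\rangle$. The shift by $k_R$ is common to all terms, so it does not affect collisions. The plan is to bound the expected number of colliding terms and then apply Markov's inequality. If $r \le 1$ the statement is trivial, so assume $r \ge 2$.

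\textbf{Step 1 (pairwise collisions).} For each pair $i \ne j$, the linear form $L_{ij}(\mathbf{s}) = d_i(\mathbf{s}) - d_j(\mathbf{s}) = \langle \mathbf{e}_i - \mathbf{e}_j, \mathbf{s}\rangle$ is a nonzero polynomial of degree $1$ in $\mathbb{Z}[s_1,\dots,s_n]$, because the monomials $M_i$ and $M_j$ are distinct. Theorem~\ref{thm:sz} applied over $\mathbb{Q}$ with $S = [1,N]$ then gives $\Pr[d_i = d_j] \le 1/N$.

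\textbf{Step 2 (expected number of colliding terms).} Let $X$ be the number of terms of $R$ that are \emph{not} in $\operatorname{NC}(R,\mathbf{s})$. A term $c_iM_i$ collides exactly when some $j\ne i$ has $d_j = d_i$, so
\[
X \le \sum_{i=1}^{r} \sum_{j \ne i} \mathbf{1}[d_i = d_j].
\]
Taking expectations and using Step 1,
\[
\mathbb{E}[X] \le \frac{r(r-1)}{N}.
\]

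\textbf{Step 3 (Markov).} Condition~2 fails exactly when $X > r/2$. Markov's inequality gives
\[
\Pr[X > r/2] \le \frac{2\,\mathbb{E}[X]}{r} \le \frac{2(r-1)}{N} \le \frac{2(T-1)}{N} \le \varepsilon,
\]
where the last step uses $N = \lceil 2(T-1)/\varepsilon\rceil$ and $r \le T$. Therefore $\Pr[\#\operatorname{NC}(R,\mathbf{s}) \ge r/2] \ge 1-\varepsilon$.

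The only delicate point is Step 2. A union bound over all pairs would give only $r^2/N$ and force $N$ to be of order $T^2$. The argument avoids this by counting colliding \emph{terms} in expectation, where each term contributes at most its $r-1$ pair indicators, and then dividing by $r/2$ through Markov's inequality; this is what brings the required range down to $O(T/\varepsilon)$. The approximation hypothesis $G^*\sqsubseteq G$ plays no role beyond guaranteeing that $R$ is a well-defined polynomial with $\|R\|_0 \le T$.
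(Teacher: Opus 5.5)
Your proposal is correct and is essentially the paper's proof. The paper counts the vectors $\mathbf{s}\in[1,N]^n$ with at least $t/2$ colliding terms, observes that each such $\mathbf{s}$ has at least $t/4$ colliding pairs, and double-counts against the at most $N^{n-1}$ zeros of each linear form $(\mathbf{e}_i-\mathbf{e}_j)\cdot\mathbf{s}$; this is exactly your Markov step applied through $X\le 2\cdot\#\{\text{colliding pairs}\}$, and both routes give the same bound $2(t-1)/N\le\varepsilon$.
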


\begin{proof}
Let $t = \|R\|_0$ and write $R = \sum_{i=1}^t c_i M_i$, where $M_i = \x^{\mathbf{e}_i}$ are distinct monomials. For $\mathbf{s} \in [1,N]^n$, the $y$-exponent of $M_i$ is $d_i(\mathbf{s}) = \mathbf{e}_i \cdot \mathbf{s}$. A term $M_i$ is colliding if $d_i(\mathbf{s}) = d_j(\mathbf{s})$ for some $j \neq i$.

For each pair $i < j$, define the nonzero linear form
\[
h_{i,j}(\mathbf{s}) = d_i(\mathbf{s}) - d_j(\mathbf{s}) = (\mathbf{e}_i - \mathbf{e}_j) \cdot \mathbf{s}.
\]
The equation $h_{i,j}(\mathbf{s}) = 0$ has at most $N^{n-1}$ solutions in $[1,N]^n$.

Let $K$ be the number of $\mathbf{s} \in [1,N]^n$ such that $\#\mathrm{C}(R,\mathbf{s}) \ge t/2$. For each such $\mathbf{s}$, at least $t/2$ terms collide, yielding at least $t/4$ distinct colliding pairs $(i,j)$. Hence
\[
\frac{t}{4} K \le \sum_{i<j} \#\{\mathbf{s} : h_{i,j}(\mathbf{s}) = 0\}
\le \binom{t}{2} N^{n-1}.
\]
Thus $K \le 2(t-1)N^{n-1}$. Since there are $N^n$ vectors total,
\[
\Pr(\#\mathrm{C}(R,\mathbf{s}) > t/2)\le \Pr(\#\mathrm{C}(R,\mathbf{s}) \ge t/2) = \frac{K}{N^n}
\le \frac{2(t-1)}{N}
\le \varepsilon,
\]
where the last inequality follows from $N = \lceil 2(T-1)/\varepsilon \rceil \ge 2(t-1)/\varepsilon$. Therefore,
\[
\Pr(\#\mathrm{NC}(R,\mathbf{s}) \ge t/2)=\Pr(\#\mathrm{C}(R,\mathbf{s}) \le t/2)
= 1 - \Pr(\#\mathrm{C}(R,\mathbf{s}) > t/2)
\ge 1 - \varepsilon.
\]
\end{proof}

\begin{lemma}[Condition 3]\label{lem:lowest}
Let $G \in \K[x_1, \dots, x_n]$ have at most $T$ terms, and let $c_1 M_1$ be its lowest term with respect to the fixed monomial order. For any $\varepsilon \in (0,1)$, if $\mathbf{s}$ is chosen uniformly from $[1, N]^n$ with
$N = \left\lceil (T-1)/\varepsilon \right\rceil,$
then the probability that $c_1 M_1$ is non-colliding in $\fs{G}$ is at least $1 - \varepsilon$.
\end{lemma}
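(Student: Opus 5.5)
We reduce the statement to a single application of the Schwartz--Zippel lemma (Theorem~\ref{thm:sz}) to a product of linear forms, as in the proof of Lemma~\ref{lem:leading}. Write $G=\sum_{i=1}^t c_i M_i$ with $t\le T$, $M_i=\x^{\mathbf{e}_i}$ pairwise distinct, and $c_1M_1$ the lowest term. The normalization by $y^{k_G}$ shifts all $y$-exponents by the same integer, so it does not affect collisions. Hence $c_1M_1\in\operatorname{NC}(G,\mathbf{s})$ if and only if $\mathbf{e}_1\cdot\mathbf{s}\neq\mathbf{e}_j\cdot\mathbf{s}$ for every $j\ne 1$.

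Define
\[
\Lambda(\mathbf{s})=\prod_{j=2}^{t}(\mathbf{e}_1-\mathbf{e}_j)\cdot\mathbf{s}\in\mathbb{Z}[s_1,\dots,s_n].
\]
Each factor is a linear form with integer coefficients. It is nonzero because $M_1\neq M_j$ forces $\mathbf{e}_1\neq\mathbf{e}_j$. Since $\mathbb{Z}[s_1,\dots,s_n]$ is an integral domain, $\Lambda$ is a nonzero polynomial of total degree $t-1\le T-1$. By construction, $\Lambda(\mathbf{s})\neq0$ holds exactly when the lowest term does not collide.

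We now apply Theorem~\ref{thm:sz} over $\mathbb{Q}$, regarding $\Lambda$ as a rational polynomial, with $S=\{1,\dots,N\}$ and independent uniform coordinates. This gives
\[
\Pr[\Lambda(\mathbf{s})=0]\le\frac{T-1}{N}\le\varepsilon
\]
for $N=\lceil (T-1)/\varepsilon\rceil$. The degenerate case $t=1$ is trivial, since an empty product equals $1$ and the single term is automatically non-colliding. The case $T=1$ gives $N=0$; there we read the range as nonempty, or simply note that the claim is vacuous.

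There is no real obstacle. The only points needing care are two. First, collisions must be counted \emph{only} against $M_1$, so the degree is $t-1$ rather than $\binom{t}{2}$. Second, the $k_G$ shift must be checked to be irrelevant. Note also that the lexicographic ordering and the fact that $M_1$ is the lowest term play no role: the same bound holds for any fixed term.
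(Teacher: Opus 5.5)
Your proposal is correct and is essentially the paper's own argument: the paper forms $\prod_{i=2}^{t}\bigl(d_i(\mathbf{s})-d_1(\mathbf{s})\bigr)$, a product of $t-1\le T-1$ nonzero linear forms that vanishes exactly when the lowest term collides, and applies Schwartz--Zippel on $[1,N]^n$. Your additional remarks go slightly beyond what the paper writes out, and they are accurate: the $k_G$ shift cancels in the differences, the range is empty when $T=1$ (where the claim is trivial), and the fact that $M_1$ is the lowest term is never used.
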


\begin{proof}
Let the terms of $G$ be $c_1 M_1, c_2 M_2, \dots, c_t M_t$, where $c_1 M_1$ is the lowest term with respect to the fixed monomial order. Let $d_i(\mathbf{s})$ be the $y$-exponent of $M_i$ after transformation. The lowest term $c_1 M_1$ is non-colliding iff $d_1(\mathbf{s}) \neq d_i(\mathbf{s})$ for all $i = 2, \dots, t$. Consider the polynomial
$\Delta_{\text{low}}(\mathbf{s}) = \prod_{i=2}^t \bigl(d_i(\mathbf{s}) - d_1(\mathbf{s})\bigr).$
This polynomial is nonzero exactly when $c_1 M_1$ is non-colliding. Each factor is a linear form, so $\deg \Delta_{\text{low}} \le t-1 \le T-1$. By the Schwartz--Zippel lemma,
$\Pr\bigl(\Delta_{\text{low}}(\mathbf{s}) = 0\bigr) \le \frac{T-1}{N}.$
Taking $N = \lceil (T-1)/\varepsilon \rceil$ yields the result.
\end{proof}

\begin{theorem}\label{thm:s-prob}
Let $G \in \K[x_1, \dots, x_n]$ be nonzero with at most $T$ terms, and let $G^* \sqsubseteq G$ be a current approximation with $\#(G - \alpha_1 G^*) \le T$. For any $\varepsilon \in (0,1)$, if $\mathbf{s}$ is chosen uniformly from $[1, N]^n$ with $N = \lceil 6(T-1)/\varepsilon \rceil$, then Conditions 1, 2, and 3 hold simultaneously with probability at least $1 - \varepsilon$.
\end{theorem}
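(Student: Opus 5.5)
The plan is a union bound over the three failure events, applying Lemmas~\ref{lem:leading}, \ref{lem:half}, and \ref{lem:lowest}, each with failure parameter $\varepsilon/3$. Set $N = \lceil 6(T-1)/\varepsilon \rceil$ and draw $\mathbf{s}$ uniformly from $[1,N]^n$. Let $E_1$, $E_2$, $E_3$ be the events that Condition~1, Condition~2 (with respect to $R = G - \alpha_1 G^*$), and Condition~3 fail, respectively. The goal is to show $\Pr(E_1 \cup E_2 \cup E_3) \le \varepsilon$.

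The key step is to check that this single $N$ is at least as large as each lemma requires at parameter $\varepsilon/3$.

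\begin{itemize}
\item For Condition~1, Lemma~\ref{lem:leading} needs $N \ge \lceil 3(T-1)/\varepsilon\rceil$, and this holds.
\item For Condition~3, Lemma~\ref{lem:lowest} needs the same bound, which also holds.
\item For Condition~2, Lemma~\ref{lem:half} needs $N \ge \lceil 2(T-1)/(\varepsilon/3)\rceil = \lceil 6(T-1)/\varepsilon\rceil$. This is exactly the chosen $N$, and it is the term that fixes the constant $6$.
\end{itemize}

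The lemmas are stated for the specific value $N = \lceil\cdot\rceil$, not for all larger $N$, so I will argue directly from their proofs. Each proof bounds the failure probability by an expression of the form $c(T-1)/N$: by Schwartz--Zippel for Conditions~1 and~3, and by the pair-counting argument for Condition~2. These bounds are monotone decreasing in $N$, so they remain valid for any larger $N$.

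For Condition~2, I need $\|R\|_0 \le T$. This is the hypothesis $\#(G-\alpha_1 G^*) \le T$ in the theorem. For Conditions~1 and~3, I need $\|G\|_0 \le T$, which is also assumed.

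The substantive points in each bound are as follows.
\begin{itemize}
\item $\Pr(E_1) \le (T-1)/N \le \varepsilon/6$, since the cone-boundary polynomial $\Gamma$ has degree at most $T-1$.
\item $\Pr(E_3) \le (T-1)/N \le \varepsilon/6$.
\item $\Pr(E_2) \le 2(t-1)/N \le \varepsilon/3$, where $t = \|R\|_0 \le T$.
\end{itemize}
A small subtlety arises in Condition~2 when $R = 0$, meaning $t = 0$. In that case Condition~2 holds trivially, since $0 \ge 0$, so there is no failure to bound.

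Summing the three bounds gives $\Pr(E_1\cup E_2\cup E_3) \le \varepsilon/6+\varepsilon/3+\varepsilon/6 = 2\varepsilon/3 \le \varepsilon$. All three conditions therefore hold simultaneously with probability at least $1-\varepsilon$.

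The only real obstacle is bookkeeping. The lemmas must be used in their monotone form, so that a larger $N$ only helps, and the theorem's hypotheses must supply exactly the term-count bounds each lemma needs. No new probabilistic argument should be required.
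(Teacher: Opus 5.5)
Your proposal is correct and is the same argument as the paper's: a union bound over Lemmas~\ref{lem:leading}, \ref{lem:half}, and \ref{lem:lowest}, each taken at failure level $\varepsilon/3$. You correctly trace the constant $6$ to Condition~2, whereas the paper's proof lists its thresholds in an order that attaches $6(T-1)/\varepsilon$ to Condition~3. Your remark that the lemmas' bounds $c(T-1)/N$ are monotone in $N$ fills a small formal gap that the paper glosses over.

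Neither you nor the paper addresses the degenerate case $T=1$, where $N=0$ and $[1,N]^n$ is empty. In that case every $\mathbf{s}$ satisfies all three conditions, so one should simply take $N\ge 1$.
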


\begin{proof}
By Lemmas~\ref{lem:leading}, \ref{lem:half}, and \ref{lem:lowest} the failure probabilities for Conditions 1, 2, and 3 are respectively bounded by $\varepsilon/3$ when $N \ge \lceil 3(T-1)/\varepsilon\rceil$, $N \ge \lceil 3(T-1)/\varepsilon\rceil$, and $N \ge \lceil 6(T-1)/\varepsilon\rceil$. With $N = \lceil 6(T-1)/\varepsilon \rceil$, all three bounds are satisfied. The union bound gives total failure probability at most $\varepsilon$. Hence all three conditions hold with probability at least $1 - \varepsilon$.
\end{proof}

\subsection{Conditions on the Evaluation Point $\b$}

The preceding discussion establishes the requirements on the random vector $\mathbf{s}$. However, the evaluation point $\b$ must also satisfy certain conditions for the algorithm to succeed.

As $G=\gcd(A,B)$, by Lemma \ref{lem:gcd-separation}, 
\[
\fs{G}(\x, y) = \gcd( \fs{A}(\x, y), \fs{B}(\x, y) ).
\]

First, recall that in each iteration, we compute $\fs{G}(\b, y)$ as
\[
\operatorname{monic}(\fs{G}(\b, y) )= \operatorname{monic}\bigl( \gcd( \fs{A}(\b, y), \fs{B}(\b, y) ) \bigr).
\]
For this equality to hold, $\b$ must be chosen such that the univariate GCD of $\fs{A}(\b, y)$ and $\fs{B}(\b, y)$ exactly corresponds to $\fs{G}(\b, y)$, with no degree loss or spurious common factors. 

To see when this holds, write $A = G \cdot A_1$ and $B = G \cdot B_1$ with $\gcd(A_1, B_1) = 1$. By Lemma~\ref{lem:gcd-separation}, the separation transformation preserves coprimality, so
\[
\gcd\bigl( \fs{A_1}(\x, y), \fs{B_1}(\x, y) \bigr) = 1,
\]
equivalently,
\[
\gcd\bigl( \fs{A}(\x, y)/\fs{G}(\x, y),\; \fs{B}(\x, y)/\fs{G}(\x, y) \bigr) = 1.
\]
Therefore, for a given evaluation point $\b$, Lemma~\ref{lm-resultant} implies that the univariate GCD $\gcd(\fs{A}(\b, y), \fs{B}(\b, y))$ coincides with $\fs{G}(\b, y)$ up to a constant factor, provided that:
\begin{itemize}
    \item the resultant
    \[
    \operatorname{Res}_y\bigl( \fs{A}(\x, y)/\fs{G}(\x, y),\; \fs{B}(\x, y)/\fs{G}(\x, y) \bigr) \in \K[\x]
    \]
    does not vanish at $\b$, which ensures that the two factors remain coprime after evaluation; and
    \item the leading coefficients of $\fs{A}$ and $\fs{B}$ with respect to $y$ do not vanish at $\b$, which ensures that no degree loss occurs when passing from the multivariate polynomials to their evaluations.
\end{itemize}
Both the resultant and the leading coefficients are nonzero polynomials in $\K[\x]$, so the Schwartz--Zippel lemma ensures that a random $\b$ from a sufficiently large set $S \subseteq \K$ satisfies all these conditions with high probability.

Second, recall that the Hensel lifting step in our algorithm requires the univariate factors $\fs{G}(\b, y)$ and $\fs{H}(\b, y)$ to be coprime and to have the same degrees in $y$ as their multivariate counterparts $\fs{G}(\x, y)$ and $\fs{H}(\x, y)$. These two conditions are necessary for the Hensel lifting algorithm to proceed correctly.

Since $F = G \cdot H$ with $\gcd(G, H) = 1$ (recall that $F = A + c \cdot B = G \cdot (A_1 + c \cdot B_1)$, so $H = A_1 + c \cdot B_1$; the constant $c$ is chosen so that $\gcd(G, H) = 1$), Lemma~\ref{lem:gcd-separation} implies that
\[
\gcd\bigl( \fs{G}(\x, y), \fs{H}(\x, y) \bigr) = 1.
\]

To ensure these two conditions, we require that $\b$ satisfies the following:
\begin{itemize}
    \item the resultant
    \[
    \operatorname{Res}_y\bigl( \fs{G}(\x, y),\; \fs{H}(\x, y) \bigr) \in \K[\x]
    \]
    does not vanish at $\b$, which guarantees that $\fs{G}(\b, y)$ and $\fs{H}(\b, y)$ are coprime; and
    \item the leading coefficients of $\fs{G}$ and $\fs{H}$ with respect to $y$ do not vanish at $\b$, i.e.,
    \[
    \operatorname{lc}_y(\fs{G}(\b, y)) \neq 0, \qquad \operatorname{lc}_y(\fs{H}(\b, y)) \neq 0,
    \]
    which ensures that $\deg_y \fs{G}(\b, y) = \deg_y \fs{G}(\x, y)$ and $\deg_y \fs{H}(\b, y) = \deg_y \fs{H}(\x, y)$, so no degree loss occurs upon evaluation.
\end{itemize}
Since both the resultant and the leading coefficients are nonzero polynomials in $\K[\x]$, the Schwartz--Zippel lemma ensures that a random $\b$ from a sufficiently large set satisfies all these conditions with high probability.

Third, as noted above, our recursive strategy does not require $\fs{G}$ to be fully $y$-separated; collisions among terms are permitted. However, when collisions occur, the derivative-based interpolation algorithm (Algorithm~\ref{alg:derivative_recovery}) would incorrectly treat a colliding group of terms as a single ``pseudo-term'' if applied naively, introducing spurious terms into the recovered polynomial. To prevent this, we must distinguish, for each $y$-degree, whether the corresponding coefficient is a single term (non-colliding) or a sum of multiple terms (colliding). 

We achieve this using evaluations at $\b, \b^2, \b^3$. For each $y$-degree $d_i$, let $A_i, B_i, C_i \in \K$ be the coefficients of $y^{d_i}$ in $\fs{G}(\b, y)$, $\fs{G}(\b^2, y)$, and $\fs{G}(\b^3, y)$, respectively. We compute the determinant
\[
\delta_i(\b) = A_i C_i - B_i^2.
\]
A key algebraic fact, established in Lemma~\ref{lem:collision-detection}, is that $\delta_i(\b) = 0$ if the coefficient of $y^{d_i}$ in $\fs{G}$ is a single term; if it is a sum of at least two terms, then $\delta_i(\b) \neq 0$ with high probability over the choice of $\b$. Thus, by testing $\delta_i(\b) = 0$, we can reliably identify the non-colliding terms and exclude the colliding ones from interpolation.

To summarize, the evaluation point $\b$ must satisfy the following three conditions:

\begin{enumerate}
    \item \textbf{Correct univariate GCD recovery:} $\b$ must satisfy the resultant condition for $\fs{A}/\fs{G}$ and $\fs{B}/\fs{G}$, ensuring that the univariate GCD $\gcd(\fs{A}(\b,y), \fs{B}(\b,y))$ correctly recovers $\fs{G}(\b,y)$ up to a constant factor.
    \item \textbf{Coprimality for Hensel lifting:} $\b$ must satisfy the resultant condition for $\fs{G}$ and $\fs{H}$, ensuring that $\fs{G}(\b,y)$ and $\fs{H}(\b,y)$ are coprime, which is a prerequisite for applying Hensel lifting.
    \item \textbf{Correct collision detection:} $\b$ must be chosen so that the determinant test using evaluations at $\b$, $\b^2$, and $\b^3$ correctly distinguishes between single-term and multi-term coefficients, ensuring that only genuine non-colliding terms are recovered by the interpolation algorithm.
\end{enumerate}

\subsection*{Probability Analysis}\label{sec:good-point}

We now analyze the three requirements on $\b$ and prove that a random choice from a sufficiently large set satisfies all of them with high probability.

Throughout this section, we assume that $\mathbf{s}$ is already fixed and satisfies Conditions 1, 2, and 3 from Section~\ref{sec:params}, with $\|\mathbf{s}\|_\infty \le N = O(T/\varepsilon)$. Let $D$ be a bound on the total degree of $A$ and $B$.

\subsubsection*{Condition 1: Correct Recovery of $\fs{G}(\b, y)$ from the Univariate GCD}

Recall that in each iteration we compute
\[
\operatorname{monic}\bigl( \gcd( \fs{A}(\b, y), \fs{B}(\b, y) ) \bigr) = \operatorname{monic}(\fs{G}(\b, y)).
\]
For this equality to hold, $\b$ must be chosen such that the univariate GCD of $\fs{A}(\b, y)$ and $\fs{B}(\b, y)$ exactly corresponds to $\fs{G}(\b, y)$, with no degree loss or spurious common factors.

The following theorem gives a sufficient condition and its probability of success.

\begin{theorem}\label{thm:b-condition1}
Let $A, B \in \K[x_1, \dots, x_n]$ with $G = \gcd(A, B)$, and let $\mathbf{s} \in \mathbb{N}^n$ be fixed with $\|\mathbf{s}\|_\infty \le N$. Suppose $D \ge \max\{\deg A, \deg B\}$. Define
\[
\Psi_1(\mathbf{x}) = \operatorname{lc}_y(\fs{A}) \cdot \operatorname{lc}_y(\fs{B}) \cdot \operatorname{Res}_y\bigl( \fs{A}/\fs{G},\; \fs{B}/\fs{G} \bigr) \in \K[x_1, \dots, x_n].
\]
Then $\Psi_1$ is a nonzero polynomial with
$\deg \Psi_1 \le 2D^2 N + 2D.$
If $\b$ is chosen uniformly from $S^n$ with $S \subseteq \K$ and
$|S| \ge \frac{2D^2 N + 2D}{\varepsilon},$
then
$\Pr\bigl( \Psi_1(\b) \neq 0 \bigr) \ge 1 - \varepsilon.$
Moreover, whenever $\Psi_1(\b) \neq 0$, we have
\[
\operatorname{monic}\bigl( \gcd( \fs{A}(\b, y), \fs{B}(\b, y) ) \bigr) = \operatorname{monic}(\fs{G}(\b, y)).
\]
\end{theorem}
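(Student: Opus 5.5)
The proof has three parts: show $\Psi_1$ is nonzero, bound its degree, and verify the gcd identity whenever $\Psi_1(\b)\neq 0$. The probability bound then follows from the Schwartz--Zippel lemma (Theorem~\ref{thm:sz}) applied to $\Psi_1$ with $|S|\ge \deg\Psi_1/\varepsilon$.

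\textbf{Nonvanishing.} Each factor of $\Psi_1$ is nonzero.
\begin{itemize}
\item $\operatorname{lc}_y(\fs{A})$ and $\operatorname{lc}_y(\fs{B})$ are nonzero since $A,B\neq 0$.
\item $\fs{A}=\fs{G}\fs{A_1}$ and $\fs{B}=\fs{G}\fs{B_1}$ (the normalizing powers $y^{k}$ are additive), so the quotients are $\fs{A_1}$ and $\fs{B_1}$.
\item Lemma~\ref{lem:gcd-separation} gives $\gcd(\fs{A_1},\fs{B_1})=1$ in $\K[\x,y]$. By Gauss's lemma they have no common factor of positive $y$-degree in $\K(\x)[y]$. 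Hence $\operatorname{Res}_y(\fs{A_1},\fs{B_1})\neq 0$.
\end{itemize}
One degenerate case needs the convention $\operatorname{Res}=1$. If one quotient has $y$-degree $0$ while the other does not, the resultant is a power of that $y$-free polynomial, which is still nonzero.

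\textbf{Degree bound.} This is the step that needs the most care.
\begin{itemize}
\item Under $x_i\mapsto x_iy^{s_i}$, a term of $\x$-degree $e$ acquires $y$-exponent at most $eN$. So $\deg_y\fs{A},\deg_y\fs{B}\le DN$, and $\deg_y\fs{A_1}=:\ell_1$, $\deg_y\fs{B_1}=:\ell_2$ are each at most $DN$.
\item The $\x$-degrees of the $y$-coefficients of $\fs{A_1}$ and $\fs{B_1}$ are bounded by $\deg A_1,\deg B_1\le D$.
\item $\deg_{\x}\operatorname{lc}_y(\fs{A})\le D$ and likewise for $B$, which accounts for the $2D$ term.
\item The resultant is a sum of products of $\ell_2$ coefficients of $\fs{A_1}$ and $\ell_1$ coefficients of $\fs{B_1}$. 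Its total $\x$-degree is therefore at most $D\ell_2+D\ell_1\le 2D^2N$.
\end{itemize}
Summing gives $\deg\Psi_1\le 2D^2N+2D$.

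\textbf{The gcd identity.} Suppose $\Psi_1(\b)\ne0$.
\begin{itemize}
\item $\operatorname{lc}_y(\fs{A})(\b)\neq0$ forces $\operatorname{lc}_y(\fs{G})(\b)\neq 0$ and $\operatorname{lc}_y(\fs{A_1})(\b)\ne0$, since the leading coefficient of a product is the product of leading coefficients. The same holds for $B$.
\item Lemma~\ref{lm-resultant} then gives $\operatorname{Res}_y(\fs{A_1}(\b,y),\fs{B_1}(\b,y))=R(\b)\neq0$, so these two univariate polynomials are coprime.
\item Consequently $\gcd(\fs{A}(\b,y),\fs{B}(\b,y))=\fs{G}(\b,y)\cdot\gcd(\fs{A_1}(\b,y),\fs{B_1}(\b,y))$ equals $\fs{G}(\b,y)$ up to a unit.
\item $\fs{G}(\b,y)$ is nonzero with full $y$-degree, so taking monic parts yields the claim.
\end{itemize}
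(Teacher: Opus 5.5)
Your proposal is correct and follows essentially the same route as the paper: coprimality of the cofactors via Lemma~\ref{lem:gcd-separation}, the standard resultant degree bound, Schwartz--Zippel, and Lemma~\ref{lm-resultant} after evaluation. You are in fact slightly more careful than the paper in two places. First, you make explicit that $\operatorname{lc}_y(\fs{A})(\b)\neq 0$ forces the leading coefficients of $\fs{G}$ and of the cofactors to survive evaluation. Second, you handle the degenerate $y$-degree-$0$ cases, which the paper leaves implicit.
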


\begin{proof}
By Lemma~\ref{lem:gcd-separation}, the polynomials $\fs{A}/\fs{G}$ and $\fs{B}/\fs{G}$ are coprime in $\K[x_1, \dots, x_n, y]$; hence their resultant is nonzero. The leading coefficients $\operatorname{lc}_y(\fs{A})$ and $\operatorname{lc}_y(\fs{B})$ are also nonzero polynomials. Thus $\Psi_1 \not\equiv 0$.

Since $\deg A, \deg B \le D$ and $\|\mathbf{s}\|_\infty \le N$, the $y$-degrees of the transformed polynomials are bounded by
\[
\deg_y(\fs{A}), \deg_y(\fs{B}) \le D N.
\]
Indeed, for any term $c x_1^{e_1}\cdots x_n^{e_n}$ with $\sum_i e_i \le D$, its $y$-exponent after separation is $\sum_i s_i e_i \le N \sum_i e_i \le N D$.

Let $p = \fs{A}/\fs{G}$ and $q = \fs{B}/\fs{G}$. Then $\deg_y p, \deg_y q \le D N$, and $\deg_{\x} p, \deg_{\x} q \le D$. By the standard resultant degree bound,
\[
\deg \operatorname{Res}_y(p, q)
\le (\deg_y p)(\deg_{\x} q) + (\deg_y q)(\deg_{\x} p)
\le 2D^2 N.
\]
The leading coefficients $\operatorname{lc}_y(\fs{A})$ and $\operatorname{lc}_y(\fs{B})$ each have total degree at most $D$. Therefore,
\[
\deg \Psi_1 \le 2D^2 N + 2D.
\]

By the Schwartz--Zippel lemma (Theorem~\ref{thm:sz}),
\[
\Pr(\Psi_1(\b) = 0) \le \frac{\deg \Psi_1}{|S|}
\le \frac{2D^2 N + 2D}{|S|}
\le \varepsilon.
\]
Thus $\Pr(\Psi_1(\b) \neq 0) \ge 1 - \varepsilon$.

When $\Psi_1(\b) \neq 0$, the evaluated polynomials $p(\b, y)$ and $q(\b, y)$ are coprime, and the leading coefficients $\operatorname{lc}_y(\fs{A})(\b)$ and $\operatorname{lc}_y(\fs{B})(\b)$ are nonzero. Consequently, $\gcd(\fs{A}(\b, y), \fs{B}(\b, y)) =c\cdot \fs{G}(\b, y)$ up to a nonzero constant. Normalizing by the leading coefficient yields the desired equality.
\end{proof}

\subsubsection*{Condition 2: Coprimality for Hensel Lifting}

Recall that $F = A + cB = G \cdot H$ with $\gcd(G, H) = 1$ (by Theorem~\ref{thm:good-c}). The Hensel lifting step requires that the univariate factors $\fs{G}(\b, y)$ and $\fs{H}(\b, y)$ be coprime. By Lemma~\ref{lem:gcd-separation}, we have
\[
\gcd\bigl( \fs{G}(\x, y), \fs{H}(\x, y) \bigr) = 1.
\]
However, after evaluating at $\b$, it is possible that $\fs{G}(\b, y)$ and $\fs{H}(\b, y)$ acquire a common factor.

The following theorem gives a sufficient condition and its probability of success.

\begin{theorem}\label{thm:b-condition2}
Let $F = G \cdot H$ with $\gcd(G, H) = 1$, and let $\mathbf{s} \in \mathbb{N}^n$ be fixed with $\|\mathbf{s}\|_\infty \le N$. Suppose $D \ge \max\{\deg G, \deg H\}$. Define
\[
\Psi_2(\mathbf{x}) = \operatorname{lc}_y(\fs{G}) \cdot \operatorname{lc}_y(\fs{H}) \cdot \operatorname{Res}_y(\fs{G}, \fs{H}) \in \K[x_1, \dots, x_n].
\]
Then $\Psi_2$ is a nonzero polynomial with
$\deg \Psi_2 \le 2D^2 N + 2D.$
If $\b$ is chosen uniformly from $S^n$ with $S \subseteq \K$ and
$|S| \ge \frac{2D^2 N + 2D}{\varepsilon},$
then
\[
\Pr\bigl( \Psi_2(\b) \neq 0 \bigr) \ge 1 - \varepsilon.
\]
Moreover, whenever $\Psi_2(\b) \neq 0$, the univariate polynomials $\fs{G}(\b, y)$ and $\fs{H}(\b, y)$ are coprime, and their degrees in $y$ are preserved.
\end{theorem}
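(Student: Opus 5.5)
The argument runs parallel to the proof of Theorem~\ref{thm:b-condition1}, with $\fs{A}/\fs{G}$ and $\fs{B}/\fs{G}$ replaced by $\fs{G}$ and $\fs{H}$. The proof has three parts: $\Psi_2\not\equiv 0$, the degree bound, and the consequences of $\Psi_2(\b)\neq 0$.

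\emph{Nonvanishing.} Since $\gcd(G,H)=1$, Lemma~\ref{lem:gcd-separation} applied to the pair $(G,H)$ gives $\gcd(\fs{G},\fs{H})=\fs{\gcd(G,H)}=1$ in $\K[\x,y]$. The domain $\K[\x,y]$ is a UFD and the gcd is $1$, so $\fs{G}$ and $\fs{H}$ have no common factor of positive $y$-degree. Hence $\operatorname{Res}_y(\fs{G},\fs{H})\in\K[\x]$ is nonzero. One degenerate case needs care: if one of $\fs{G},\fs{H}$ has $y$-degree $0$, the resultant is a power of that polynomial, or $1$ by the paper's convention, and it is still nonzero. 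The leading coefficients $\operatorname{lc}_y(\fs{G})$ and $\operatorname{lc}_y(\fs{H})$ are nonzero polynomials in $\x$. Since $\K[\x]$ is an integral domain, the product $\Psi_2$ is nonzero.

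\emph{Degree bound.} Every term of $G$ or $H$ has total $\x$-degree at most $D$, and $\|\mathbf{s}\|_\infty\le N$. Therefore $\deg_y\fs{G},\deg_y\fs{H}\le DN$, and the $\x$-degrees of all coefficients are at most $D$. The normalization by $y^{k}$ does not raise these degrees. The Sylvester matrix has $\deg_y\fs{H}$ rows with entries of $\x$-degree at most $\deg_{\x}\fs{G}$, and $\deg_y\fs{G}$ rows with entries of $\x$-degree at most $\deg_{\x}\fs{H}$. Expanding the determinant gives
\[
\deg\operatorname{Res}_y\le \deg_y\fs{H}\cdot D+\deg_y\fs{G}\cdot D\le 2D^2N.
\]
Adding at most $2D$ for the two leading coefficients gives $\deg\Psi_2\le 2D^2N+2D$. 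The Schwartz--Zippel lemma (Theorem~\ref{thm:sz}) then yields $\Pr(\Psi_2(\b)=0)\le(2D^2N+2D)/|S|\le\varepsilon$.

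\emph{Consequences of $\Psi_2(\b)\neq 0$.} Both $\operatorname{lc}_y(\fs{G})(\b)$ and $\operatorname{lc}_y(\fs{H})(\b)$ are nonzero. So evaluation at $\b$ preserves the $y$-degrees, which is the degree-preservation claim. Lemma~\ref{lm-resultant}(2) then gives
\[
\operatorname{Res}_y(\fs{G}(\b,y),\fs{H}(\b,y))=\operatorname{Res}_y(\fs{G},\fs{H})(\b)\neq 0,
\]
and hence $\gcd(\fs{G}(\b,y),\fs{H}(\b,y))$ has degree $0$, i.e.\ the two are coprime.

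The main obstacle is bookkeeping rather than mathematics: stating the resultant degree bound correctly in terms of total $\x$-degree, and handling the constant-in-$y$ convention for the resultant so that Lemma~\ref{lm-resultant} applies uniformly.
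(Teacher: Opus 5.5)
Your proposal is correct and follows the paper's proof step for step. You establish nonvanishing of $\Psi_2$ via Lemma~\ref{lem:gcd-separation}, bound the resultant's degree by $(\deg_y \fs{G})\deg_{\x}\fs{H}+(\deg_y \fs{H})\deg_{\x}\fs{G}\le 2D^2N$ and add $2D$ for the leading coefficients, apply Schwartz--Zippel (Theorem~\ref{thm:sz}), and use Lemma~\ref{lm-resultant} for coprimality and degree preservation at $\b$. Your extra care with the $y$-degree-zero case of the resultant and the explicit Sylvester row count are harmless refinements the paper leaves implicit.
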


\begin{proof}
Since $\gcd(G, H) = 1$, Lemma~\ref{lem:gcd-separation} implies that $\fs{G}$ and $\fs{H}$ are coprime in $\K[x_1, \dots, x_n, y]$; hence their resultant is nonzero. The leading coefficients $\operatorname{lc}_y(\fs{G})$ and $\operatorname{lc}_y(\fs{H})$ are also nonzero polynomials. Thus $\Psi_2 \not\equiv 0$.

Since $\deg G, \deg H \le D$ and $\|\mathbf{s}\|_\infty \le N$, the $y$-degrees of the transformed polynomials are bounded by
\[
\deg_y(\fs{G}), \deg_y(\fs{H}) \le D N.
\]

By the standard resultant degree bound,
\[
\deg \operatorname{Res}_y(\fs{G}, \fs{H})
\le (\deg_y \fs{G})(\deg_{\x} \fs{H}) + (\deg_y \fs{H})(\deg_{\x} \fs{G})
\le 2D^2 N.
\]
The leading coefficients $\operatorname{lc}_y(\fs{G})$ and $\operatorname{lc}_y(\fs{H})$ each have total degree at most $D$. Therefore,
\[
\deg \Psi_2 \le 2D^2 N + 2D.
\]

By the Schwartz--Zippel lemma (Theorem~\ref{thm:sz}),
\[
\Pr(\Psi_2(\b) = 0) \le \frac{\deg \Psi_2}{|S|}
\le \frac{2D^2 N + 2D}{|S|}
\le \varepsilon.
\]
Thus $\Pr(\Psi_2(\b) \neq 0) \ge 1 - \varepsilon$.

When $\Psi_2(\b) \neq 0$, the resultant $\operatorname{Res}_y(\fs{G}(\b, y), \fs{H}(\b, y))$ is nonzero and the leading coefficients do not vanish. Hence $\fs{G}(\b, y)$ and $\fs{H}(\b, y)$ are coprime and their degrees in $y$ are preserved. This is exactly the condition required for Hensel lifting.
\end{proof}

\subsubsection*{Condition 3: Collision Detection via Evaluations at $\b, \b^2, \b^3$}

As noted earlier, collisions among terms are permitted in our recursive strategy. However, when collisions occur, we must distinguish, for each $y$-degree, whether the corresponding coefficient is a single term or a sum of multiple terms; otherwise, the derivative-based interpolation algorithm would incorrectly treat a colliding group as a single ``pseudo-term'' and introduce spurious terms.

The following lemma serves as the theoretical foundation for our collision detection mechanism. It establishes that the vanishing of a certain determinant is equivalent to the absence of collisions among the terms of a Laurent polynomial. This lemma underpins all subsequent developments in this section.

Denote by $\x^i$ the tuple $(x_1^i,\dots,x_n^i)$, and by $\b^i$ the tuple $(b_1^i,\dots,b_n^i)$.

\begin{lemma}\label{lem:collision-detection}
Let $U = \sum_{i=1}^t c_i M_i$, where $M_i = x_1^{e_{i1}} \cdots x_n^{e_{in}}$ are distinct Laurent monomials and $c_i \neq 0$. Define
\[
\mathcal{D} = \det \begin{pmatrix}
U(x_1, \dots, x_n) & U(x_1^2, \dots, x_n^2) \\
U(x_1^2, \dots, x_n^2) & U(x_1^3, \dots, x_n^3)
\end{pmatrix}.
\]
Then $\mathcal{D} \equiv 0$ if $t = 1$, and $\mathcal{D} \not\equiv 0$ if $t \ge 2$.
\end{lemma}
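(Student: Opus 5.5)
The plan is to expand $\mathcal{D}$ directly as a Hankel-type determinant. Write $U(\x^k)=\sum_{i=1}^t c_i M_i^k$ for $k=1,2,3$, using that $M_i(\x^k)=M_i(\x)^k$ for a Laurent monomial. Then
\[
\mathcal{D} = \Bigl(\sum_i c_i M_i\Bigr)\Bigl(\sum_j c_j M_j^3\Bigr) - \Bigl(\sum_i c_i M_i^2\Bigr)\Bigl(\sum_j c_j M_j^2\Bigr)
= \sum_{i,j} c_i c_j \bigl(M_i M_j^3 - M_i^2 M_j^2\bigr).
\]
The diagonal terms $i=j$ vanish. Symmetrizing the off-diagonal pairs gives the closed form
\[
\mathcal{D}=\sum_{i<j} c_i c_j M_i M_j\,(M_j-M_i)^2 .
\]
For $t=1$ the sum is empty, so $\mathcal{D}\equiv 0$ immediately. (Equivalently, the $2\times 2$ matrix has rank one, since $U(\x^k)=c_1M_1^k$.)

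For $t\ge 2$ I need to show this sum is a nonzero Laurent polynomial. Expanding, each pair $i<j$ contributes $c_ic_j\,(M_iM_j^3 - 2M_i^2M_j^2 + M_i^3M_j)$. The natural approach is to exhibit an extremal monomial that appears exactly once.

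To do this, pick a generic weight vector $w\in\mathbb{Z}^n$ such that the linear functional $\mathbf{e}\mapsto w\cdot\mathbf{e}$ separates the distinct exponent vectors $\mathbf{e}_1,\dots,\mathbf{e}_t$. Order the monomials so that $w\cdot\mathbf{e}_1<\cdots<w\cdot\mathbf{e}_t$. I then look at the largest weight occurring in $\mathcal{D}$.

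Among all monomials $M_iM_j^3$, $M_i^2M_j^2$, $M_i^3M_j$ with $i<j$, the weight is maximized uniquely by $M_{t-1}M_t^3$. Its weight is $w\cdot\mathbf{e}_{t-1}+3\,w\cdot\mathbf{e}_t$. Any other candidate either uses a smaller $i$ or $j$, or shifts weight from $M_t$ to $M_{t-1}$ (as in $M_{t-1}^2M_t^2$ and $M_{t-1}^3M_t$), and in every case the weight strictly drops. Therefore the coefficient of $M_{t-1}M_t^3$ in $\mathcal{D}$ is exactly $c_{t-1}c_t\neq 0$, so $\mathcal{D}\not\equiv 0$.

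The main obstacle is making the uniqueness of this extremal monomial airtight. One must show that no other pair $(i,j)$ and no other of the three shapes can produce the same exponent vector, which a strict weight comparison handles cleanly. An alternative would be to use a lexicographic order on exponent vectors instead of a generic weight vector; the same argument goes through since lex order is compatible with monomial multiplication. Either choice avoids any characteristic issues, because the surviving coefficient is the product $c_{t-1}c_t$ with no integer multiplier.
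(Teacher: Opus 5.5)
Your proposal is correct and follows essentially the same route as the paper. Both proofs expand the determinant into $\sum_{i<j} c_ic_j\,(M_iM_j^3+M_i^3M_j-2M_i^2M_j^2)$ and identify $M_{t-1}M_t^3$ as the unique extremal monomial, whose coefficient $c_{t-1}c_t\neq 0$ forces $\mathcal{D}\not\equiv 0$. The paper orders the monomials lexicographically where you primarily use a generic weight vector, and your explicit strict-weight comparisons and the factored form $c_ic_jM_iM_j(M_j-M_i)^2$ spell out the uniqueness step more carefully than the paper's one-line assertion.
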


\begin{proof}
If $t = 1$, then $U = c_1 M_1$ and $\mathcal{D} = 0$ directly.

If $t \ge 2$, expanding the determinant gives
\[
\mathcal{D} = \sum_{1 \le i < j \le t} c_i c_j \left( M_i M_j^3 + M_j M_i^3 - 2 M_i^2 M_j^2 \right).
\]
Order the monomials lexicographically so that $M_t \succ M_{t-1} \succ \cdots \succ M_1$. For each pair $(i,j)$, the leading term comes from $M_i M_j^3$ when $i<j$. For the pair $(t-1,t)$, this gives $M_{t-1} M_t^3$, which is strictly larger than any monomial arising from other pairs. Hence the unique leading term of $\mathcal{D}$ is $c_{t-1} c_t \, M_{t-1} M_t^3$ with nonzero coefficient, so $\mathcal{D} \not\equiv 0$.
\end{proof}

Guided by this lemma, we now apply the determinant test to the coefficients of $\fs{G}$ with respect to $y$. Write
\[
\fs{G}(\x, y) = \sum_i C_i(\x) y^{d_i}.
\]
For each $y$-degree $d_i$, define
\[
\delta_i(\x) = C_i(\x) C_i(\x^3) - C_i(\x^2)^2.
\]
By Lemma~\ref{lem:collision-detection}, $\delta_i \not\equiv 0$ if and only if $C_i$ is a sum of at least two distinct Laurent monomials (i.e., colliding), and $\delta_i \equiv 0$ if and only if $C_i$ is a single term (non-colliding).

In practice, however, we do not have direct access to the coefficients $C_i(\x)$ of $\fs{G}$. Fix $\mathbf{s}$ satisfying Condition~1, so that $\operatorname{lc}_y(\fs{G}) = \beta(\x)$ is a single term. After normalization, we obtain
\[
\mathcal{G}_0(\x, y) = \frac{\fs{G}(\x, y)}{\beta(\x)}
= \sum_i \widetilde{C}_i(\x) y^{d_i},
\]
where $\widetilde{C}_i(\x) = C_i(\x) / \beta(\x)$ are Laurent polynomials. For these normalized coefficients, define
\[
\mathcal{D}_i(\x) = \widetilde{C}_i(\x) \widetilde{C}_i(\x^3) - \widetilde{C}_i(\x^2)^2.
\]
Since $\beta$ is a single term, we have $\beta(\x) \beta(\x^3) = \beta(\x^2)^2$, and hence
\[
\mathcal{D}_i(\x) = \frac{\delta_i(\x)}{\beta(\x) \beta(\x^3)}.
\]
The denominator is a nonzero single term, so for any $\b \in (\K^*)^n$,
\[
\mathcal{D}_i(\b) = 0 \quad \Longleftrightarrow \quad \delta_i(\b) = 0.
\]
Thus, the normalized coefficients serve as a faithful proxy for collision detection.

To ensure that all non-colliding terms are correctly identified, we define
\[
\Psi_3(\x):=
\prod_{\substack{i : C_i(\x) \\ \text{is not a single term}}} \delta_i(\x),
\]
where the product is taken over all non-single-term coefficients; if no such coefficient exists, we set $$\Psi_3(\x) := 1.$$

This is a nonzero polynomial, since each $\delta_i$ is a polynomial (not a Laurent polynomial). The following theorem quantifies the probability that evaluation at a random point $\b$ successfully distinguishes all collisions.

\begin{theorem}\label{thm:b-condition3}
Let $G \in \K[x_1, \dots, x_n]$ and let $\mathbf{s} \in \mathbb{N}^n$ be fixed, satisfying Condition~1. Suppose $D \ge \deg G$ and let $t = \|G\|_0$ and $\varepsilon\in(0,1)$. Define $\Psi_3$ as above. Then $\Psi_3$ is a nonzero polynomial with $\deg \Psi_3 \le 4 D (t-1)$. If $\b$ is chosen uniformly from $S^n$ with $S \subseteq \K^*$ and $|S| \ge 4 D (t-1) / \varepsilon$, then
\[
\Pr(\Psi_3(\b) \neq 0) \ge 1 - \varepsilon.
\]
Moreover, whenever $\Psi_3(\b) \neq 0$, for every $y$-degree $d_i$,
\[
\mathcal{D}_i(\b) = 0 \quad \Longleftrightarrow \quad C_i(\x) \text{ is a single term}.
\]
That is, the collision detection mechanism correctly identifies all non-colliding terms.
\end{theorem}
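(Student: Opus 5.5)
The plan has three steps: show $\Psi_3$ is a nonzero polynomial, bound its degree, and then apply the Schwartz--Zippel lemma and read off the equivalence from the normalization identity already derived.

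\emph{Nonvanishing.} Write $\fs{G}=\sum_i C_i(\x)y^{d_i}$. Each $C_i$ is a genuine polynomial in $\x$, not a Laurent polynomial. The terms of $G$ have distinct $\x$-monomials and the transformation $\fs{\cdot}$ only attaches powers of $y$, so every $C_i$ is a nonempty sum of distinct terms of $G$ with nonzero coefficients. In particular the sets of terms making up different $C_i$ are disjoint. For every index $i$ with $C_i$ not a single term, Lemma~\ref{lem:collision-detection} applied to $U=C_i$ gives $\delta_i\not\equiv 0$. Since $\K[\x]$ is an integral domain, the product $\Psi_3$ is nonzero. If no such $i$ exists, $\Psi_3=1$ by definition.

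\emph{Degree bound.} Since $\deg C_i\le \deg G\le D$, the polynomials $C_i(\x^2)$ and $C_i(\x^3)$ have total degree at most $2D$ and $3D$. Hence $\deg\delta_i\le \max\{D+3D,\,2\cdot 2D\}=4D$. Let $k_i\ge 2$ be the number of terms of $G$ in a colliding coefficient $C_i$. The disjointness above gives $\sum_i k_i\le t$. Each colliding group has $k_i-1\ge k_i/2\ge 1$, so the number of colliding groups is at most $\sum_i (k_i-1)\le t-1$; this holds because at least one group is nontrivial, and in the trivial case $\Psi_3=1$ has degree $0$. Therefore $\deg\Psi_3\le 4D(t-1)$.

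\emph{Probability and equivalence.} The Schwartz--Zippel lemma (Theorem~\ref{thm:sz}) with $|S|\ge 4D(t-1)/\varepsilon$ gives $\Pr(\Psi_3(\b)=0)\le\varepsilon$. Restricting $S\subseteq\K^*$ does not affect this bound. For the equivalence, the identity $\mathcal{D}_i(\x)=\delta_i(\x)/(\beta(\x)\beta(\x^3))$ was derived above from Condition~1. Because $\beta$ is a single term and $\b\in(\K^*)^n$, we get $\mathcal{D}_i(\b)=0\iff\delta_i(\b)=0$. If $C_i$ is a single term, then $\delta_i\equiv0$ by Lemma~\ref{lem:collision-detection}, so $\mathcal{D}_i(\b)=0$. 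If $C_i$ is not a single term, then $\delta_i$ divides $\Psi_3$, so $\Psi_3(\b)\ne0$ forces $\delta_i(\b)\ne0$ and hence $\mathcal{D}_i(\b)\ne0$.

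The main obstacle is the counting step in the degree bound: obtaining $t-1$ rather than $t$ or $t/2$ requires the disjointness of the colliding groups together with the inequality $\#\text{groups}\le\sum_i(k_i-1)$. The rest is routine bookkeeping.
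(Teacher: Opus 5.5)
Your proof is correct and follows essentially the same route as the paper: Lemma~\ref{lem:collision-detection} gives $\delta_i\not\equiv 0$ for each colliding coefficient, and $\deg\delta_i\le 4D$ together with at most $t-1$ such factors bounds $\deg\Psi_3$; Schwartz--Zippel then gives the probability, and the single-term normalization with $\beta(\b)\beta(\b^3)\neq 0$ for $\b\in(\K^*)^n$ yields the equivalence. Your disjointness argument simply makes the paper's one-line count explicit, and in fact it directly gives at most $\lfloor t/2\rfloor\le t-1$ colliding coefficients, so that step is not really an obstacle.
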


\begin{proof}
By Lemma~\ref{lem:collision-detection}, for each non-single-term coefficient $C_i$, $\delta_i \not\equiv 0$. Hence $\Psi_3 \not\equiv 0$. Since $\deg C_i \le D$, we have $\deg \delta_i \le 4D$. 
Since a non-single-term coefficient must arise from a collision of two or more of the $t$ monomials, the number of non-single-term coefficients is at most $t-1$. Therefore $\Psi_3$ has at most $t-1$ factors, so $\deg \Psi_3 \le 4D(t-1)$.
The Schwartz--Zippel lemma then gives
\[
\Pr(\Psi_3(\b) = 0) \le \frac{\deg \Psi_3}{|S|} \le \varepsilon.
\]
When $\Psi_3(\b) \neq 0$, none of the $\delta_i(\b)$ for non-single-term coefficients vanish. Since $\mathcal{D}_i(\b)$ differs from $\delta_i(\b)$ by a nonzero single term factor (by $\beta(\b)\beta(\b^3)$), we have $\mathcal{D}_i(\b) = 0$ if and only if $\delta_i(\b) = 0$. Thus $\mathcal{D}_i(\b) = 0$ holds exactly for the single-term coefficients. Therefore, the collision detection mechanism correctly identifies all non-colliding terms.
\end{proof}

\subsection{Collision Detection Mechanism (Algorithm~\ref{alg:collideTest})}
\label{sec:collision}

We now present the concrete algorithm that implements the collision detection mechanism described above. Given the evaluations of a polynomial at $\b, \b^2, \b^3$, the algorithm determines, for each $y$-degree, whether the corresponding coefficient is a single term (non-colliding) or a sum of multiple terms (colliding).

The algorithm is invoked in two distinct contexts within our main procedure. First, it is applied to the normalized GCD factor $\mathcal{G}_0(\b, y)$ to identify its non-colliding terms, which are then recovered via Algorithm~\ref{alg:derivative_recovery}. Second, it is applied to the difference polynomial
\[
\Delta(\x, y) = \gamma(\x) \cdot \mathcal{G}_0(\x, y) \cdot y^{\delta} - \fs{G^*}(\x, y)
\]
obtained after aligning the current approximation, enabling the recovery of additional terms in subsequent iterations. In both cases, we let $R$ denote the polynomial under consideration, i.e., $R = \mathcal{G}_0$ or $R = \Delta$. The subroutine relies on the theoretical guarantee established in Theorem~\ref{thm:b-condition3}: whenever $\Psi_3(\b) \neq 0$, the determinant test $\mathcal{D}_i = 0$ correctly characterizes whether the $i$-th coefficient is a single term. We assume throughout that $\b$ satisfies $\Psi_3(\b) \neq 0$, so that the algorithm is deterministic and its correctness is unconditional conditioned on this assumption. The probability analysis of this condition being satisfied is deferred to the overall success probability analysis of the main algorithm, where it is governed by the random choice of $\b$.

\begin{algorithm}
\caption{Collision Detection Subalgorithm}
\label{alg:collideTest}
\begin{algorithmic}[1]
\Require
    \begin{itemize}
        \item $\b$ satisfies $\Psi_3(\b) \neq 0$ as defined in Theorem~\ref{thm:b-condition3};
        \item Polynomial $R_1(y) = \sum_{i=1}^{t} A_i y^{d_i}$, where the $d_i$ are distinct; this is the evaluation of $R$ at $\b$;
        \item Polynomial $R_2(y) = \sum_{i=1}^{t} B_i y^{d_i}$; this is the evaluation at $\b^2$;
        \item Polynomial $R_3(y) = \sum_{i=1}^{t} C_i y^{d_i}$; this is the evaluation at $\b^3$.
    \end{itemize}
\Ensure
    Non-colliding term index set $I_{\text{non}}$ and colliding term index set $I_{\text{coll}}$.

\For{$i = 1$ to $t$}
    \State Compute $\mathcal{D}_i \gets A_i C_i - B_i^2$.
    \If{$\mathcal{D}_i = 0$}
        \State $I_{\text{non}} \gets I_{\text{non}} \cup \{d_i\}$.
    \Else
        \State $I_{\text{coll}} \gets I_{\text{coll}} \cup \{d_i\}$.
    \EndIf
\EndFor

\State \Return $I_{\text{non}}, I_{\text{coll}}$
\end{algorithmic}
\end{algorithm}

\subsection{One-Step Improvement (Algorithm~\ref{alg:one-step})}
\label{sec:one-step}

Based on the above lemmas, we now present Algorithm~\ref{alg:one-step}. Given a current approximation $G^*$ of $G$ -- either $G^* = 0$ or a partial sum of terms of $G$ that includes the lowest term of $G$ (up to the single term factor $\alpha_1$) -- the algorithm computes an improved approximation $G^{**}$ satisfying
\[
\bigl\| G - \alpha_2 G^{**} \bigr\|_0 \le \frac{1}{2} \bigl\| G - \alpha_1 G^* \bigr\|_0,
\]
for some single terms $\alpha_1, \alpha_2$.
The algorithm chooses $\mathbf{s}$ and $\b$ randomly so that, with high probability, all the required conditions hold simultaneously: the three conditions on $\mathbf{s}$ (leading coefficient separation, lowest-term separation, and half-remaining separation) as established in Theorem~\ref{thm:s-prob}, as well as the conditions on $\b$ (correct recovery of the univariate GCD, coprimality for Hensel lifting, and correct collision detection) as established in Section~\ref{sec:good-point}. Thus, the algorithm succeeds with high probability.

Each iteration reduces the number of missing terms by at least half, so after $O(\log t)$ iterations the complete $G$ is recovered.

\begin{algorithm}
\caption{One-Step Improvement of Approximation}
\label{alg:one-step}
\begin{algorithmic}[1]
\Require
    \begin{itemize}
        \item Polynomials $A, B \in \K[\x]$ with $G = \gcd(A,B)$;
        \item A current approximation $G^*$ of $G$, with $G^* \sqsubseteq G$: either $G^* = 0$ or a partial sum of terms of $G$ that includes the lowest term of $G$ (up to a single term factor);
        \item An upper bound $T$ on the number of terms of $G$;
        \item The field $\K$ has characteristic $0$ or characteristic greater than $\max_{i=1}^n\min\{\deg_{x_i} A,\deg_{x_i} B\}$.
    \end{itemize}
\Ensure
    With probability $\ge 1-\mu$, output an improved approximation $G^{**}$ satisfying
    \[
    \|G - \alpha_2 G^{**}\|_0 \le \frac{1}{2} \|G - \alpha_1 G^*\|_0
    \]
    for some single terms $\alpha_1, \alpha_2$, or output ``Failure''.

\State $\varepsilon \gets \mu/3$.
\State $N \gets \lceil 6(T-1)/\varepsilon \rceil$.
\State Choose $S \subseteq \K^*$ with $|S| \ge (12D^2 N + 12D)/\varepsilon$. If $\K$ is small, extend it to a field $\K'$ containing a sufficiently large $S$.
\State Choose random $\mathbf{s} \in [1,N]^n$, $\b \in S^n$, and $c \in S$.

\State Compute $\fs{A}(\x,y)$, $\fs{B}(\x,y)$, and $\fs{G^*}(\x,y)$.
\State Form $F(\x,y) \gets \fs{A}(\x,y) + c \cdot \fs{B}(\x,y)$.
\State Compute evaluations at $\b$: $\fs{A}(\b,y)$, $\fs{B}(\b,y)$, $\fs{G^*}(\b,y)$, $F(\b,y)$, and their partial derivatives at $\b$.
\State Compute evaluations at $\b^2$ and $\b^3$: $\fs{A}(\b^2,y)$, $\fs{B}(\b^2,y)$, $\fs{G^*}(\b^2,y)$, $\fs{A}(\b^3,y)$, $\fs{B}(\b^3,y)$, $\fs{G^*}(\b^3,y)$.

\State \textbf{Compute univariate GCDs:}
\[
\mathcal{G}_0(\b,y) \gets \operatorname{monic}\bigl( \gcd( \fs{A}(\b,y), \fs{B}(\b,y) ) \bigr),
\]
\[
\mathcal{G}_0(\b^2,y) \gets \operatorname{monic}\bigl( \gcd( \fs{A}(\b^2,y), \fs{B}(\b^2,y) ) \bigr),
\]
\[
\mathcal{G}_0(\b^3,y) \gets \operatorname{monic}\bigl( \gcd( \fs{A}(\b^3,y), \fs{B}(\b^3,y) ) \bigr).
\]

\textbf{Prepare Hensel lifting initial factorization:}
\State $\mathcal{H}_0(\b,y) \gets F(\b,y) / \mathcal{G}_0(\b,y)$.
\If{$\gcd(\mathcal{G}_0(\b,y), \mathcal{H}_0(\b,y)) \neq 1$} \State \Return ``Failure'' \EndIf

\State Compute $u(y), v(y)$ such that $u(y) \mathcal{G}_0(\b,y) + v(y) \mathcal{H}_0(\b,y) = 1$.

\For{$k = 1$ to $n$}
    \State Compute $\dfrac{\partial \mathcal{G}_0}{\partial x_k}(\b,y) \gets \mathbf{rem}\bigl( v(y) \cdot \dfrac{\partial \fs{F}}{\partial x_k}(\b,y),\; \mathcal{G}_0(\b,y) \bigr)$ via Algorithm~\ref{alg:factorparde}.
\EndFor

 \textbf{Collision detection for $\mathcal{G}_0$:}
\State Call Algorithm~\ref{alg:collideTest} with inputs $\mathcal{G}_0(\b,y)$, $\mathcal{G}_0(\b^2,y)$, $\mathcal{G}_0(\b^3,y)$ to obtain the non-colliding index set $I_{\text{non}}$.

\State Construct $\mathcal{G}_0^{\text{non}}(\b,y) = \sum_{d_i \in I_{\text{non}}} C_i y^{d_i}$, where $C_i$ is the coefficient of $y^{d_i}$ in $\mathcal{G}_0(\b,y)$.
\For{$k = 1$ to $n$}
    \State Extract the corresponding non-colliding partial derivative terms $\dfrac{\partial \mathcal{G}_0^{\text{non}}}{\partial x_k}(\b,y)$ from $\dfrac{\partial \mathcal{G}_0}{\partial x_k}(\b,y)$.
\EndFor

\algstore{one-step}
\end{algorithmic}
\end{algorithm}


\begin{algorithm}[p]
\begin{algorithmic}[1]
\algrestore{one-step}

\State Call Algorithm~\ref{alg:derivative_recovery} with inputs $\mathcal{G}_0^{\text{non}}(\b,y)$ and $\dfrac{\partial \mathcal{G}_0^{\text{non}}}{\partial x_k}(\b,y)$ ($k=1,\dots,n$) to recover all non-colliding terms of the normalized factor, yielding $\mathcal{G}_0^{\text{non}}(\x,y)$.

\If{$G^* = 0$} \State \Return $\mathcal{G}^{\text{non}}(\x,1)$ \EndIf

\State Align $G^*$ with $\mathcal{G}^{\text{non}}(\x,y)$ by matching their lowest terms, obtaining a single term $\gamma\in \K[\x]$ and an integer $\delta$ such that
\[
\gamma \cdot \mathcal{G}^{\text{non}}(\x,y) \cdot y^{\delta} \quad \text{and} \quad \fs{G^*}(\x,y)
\]
have the same lowest term.

\State Form the difference at $\b$:
\[
\Delta(\b,y) \gets \gamma(\b) \cdot \mathcal{G}_0(\b,y) \cdot y^{\delta} - \fs{G^*}(\b,y).
\]
\State Compute the partial derivatives of the difference:
\[
\frac{\partial \Delta}{\partial x_k}(\b,y)
=
\frac{\partial \gamma}{\partial x_k}(\b) \cdot \mathcal{G}_0(\b,y) \cdot y^{\delta}
+ \gamma(\b) \cdot \frac{\partial \mathcal{G}_0}{\partial x_k}(\b,y) \cdot y^{\delta}
- \frac{\partial \fs{G^*}}{\partial x_k}(\b,y),
\quad k=1,\dots,n.
\]

\State Compute the evaluations of the difference at $\b^2$ and $\b^3$:
\[
\Delta(\b^2,y) \gets \gamma(\b^2) \cdot \mathcal{G}_0(\b^2,y) \cdot y^{\delta} - \fs{G^*}(\b^2,y),
\]
\[
\Delta(\b^3,y) \gets \gamma(\b^3) \cdot \mathcal{G}_0(\b^3,y) \cdot y^{\delta} - \fs{G^*}(\b^3,y).
\]

 \textbf{Collision detection for $\Delta$:}
\State Call Algorithm~\ref{alg:collideTest} with inputs $\Delta(\b,y)$, $\Delta(\b^2,y)$, $\Delta(\b^3,y)$ to obtain the non-colliding index set $J_{\text{non}}$.

\State Construct $\Delta^{\text{non}}(\b,y) = \sum_{e_j \in J_{\text{non}}} Q_j y^{e_j}$, where $Q_j$ is the coefficient of $y^{e_j}$ in $\Delta(\b,y)$.
\For{$k = 1$ to $n$}
    \State Extract the corresponding non-colliding partial derivative terms $\dfrac{\partial \Delta^{\text{non}}}{\partial x_k}(\b,y)$ from $\dfrac{\partial \Delta}{\partial x_k}(\b,y)$.
\EndFor

\State Call Algorithm~\ref{alg:derivative_recovery} with inputs $\Delta^{\text{non}}(\b,y)$ and $\dfrac{\partial \Delta^{\text{non}}}{\partial x_k}(\b,y)$ ($k=1,\dots,n$) to recover $U(\x,y)$.

\State $G^{**} \gets G^* + U(\x,1)$.
\State \Return $G^{**}$.
\end{algorithmic}
\end{algorithm}

\subsubsection*{Algorithm Success Probability and Complexity Analysis}

\begin{theorem}[Algorithm \ref{alg:one-step} Correctness and Complexity]\label{them-one-step}
Let $A, B \in \K[\x]$ with $G = \gcd(A,B)$, and let $G^*$ be a current approximation of $G$ such that $G^* \sqsubseteq G$ -- that is, either $G^* = 0$ or $G^*$ is a partial sum of terms of $G$ that includes the lowest term of $G$ (up to a single term factor). Suppose $\operatorname{char}(\K) = 0$ or $\operatorname{char}(\K) > \max_{i=1}^n\min\{\deg_{x_i} A,\deg_{x_i} B\}$. Let $D$ be a bound on the total degrees of $A$ and $B$, and $T$ a bound on the number of terms of $G$. Then Algorithm~\ref{alg:one-step} outputs an improved approximation $G^{**}$ satisfying
$\|G - \alpha_2 G^{**}\|_0 \le \frac{1}{2} \|G - \alpha_1 G^*\|_0$
for some single terms $\alpha_1, \alpha_2$, with probability at least $1-\mu$.
If $\K$ is infinite, the expected complexity is
\[
\widetilde{O}\Bigl( n D T /\mu+ n(\|A\|_0 + \|B\|_0) \log (TD/\mu) \Bigr)
\]
field operations in $\K$.
If $\K = \mathbb{F}_q$ is a finite field, the expected bit complexity is
\[
\widetilde{O}\Bigl( n D T \log q / \mu+ n(\|A\|_0 + \|B\|_0) \log D\log(DT/\mu)\log q \Bigr).
\]

\end{theorem}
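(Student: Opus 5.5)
The proof splits into correctness, which rests on a union bound over the random choices, and complexity, which follows by tallying the cost of each step of Algorithm~\ref{alg:one-step}.

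\textbf{Correctness.} With $\varepsilon=\mu/3$, three independent random choices must all be good.

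First, $\mathbf{s}$ must satisfy Conditions~1--3. By Theorem~\ref{thm:s-prob} with $N=\lceil 6(T-1)/\varepsilon\rceil$, this fails with probability at most $\varepsilon$.

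Second, $c$ must be good in the sense of Theorem~\ref{thm:good-c}. There are at most $\deg G\le D$ bad values, so this fails with probability at most $D/|S|$, which is far below $\varepsilon$.

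Third, $\b$ must satisfy $\Psi_1(\b)\Psi_2(\b)\Psi_3(\b)\neq0$. For the $\b^2,\b^3$ evaluations in step 9 to also give the correct univariate GCDs, we additionally require $\Psi_1(\b^2)\Psi_1(\b^3)\neq0$. Each of these polynomials has degree $O(D^2N)$, with at most a factor $3$ increase for the substitutions $\b^2,\b^3$. With $|S|\ge(12D^2N+12D)/\varepsilon$, Theorems~\ref{thm:b-condition1}, \ref{thm:b-condition2}, \ref{thm:b-condition3} and Schwartz--Zippel bound this failure probability by $\varepsilon$. A union bound then gives total failure probability at most $\mu$.

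Conditioned on all these events, the argument proceeds as follows.
\begin{itemize}
\item Theorem~\ref{thm:b-condition1} gives correct $\mathcal{G}_0(\b^i,y)$ for $i=1,2,3$.
\item Theorem~\ref{thm:b-condition2} gives coprimality, so the Bézout coefficients $u,v$ exist.
\item By Condition~1, $\mathcal{G}_0=\fs{G}/\beta$ with $\beta$ a single term. Theorem~\ref{the-1}, applied to $\fs{F}=\mathcal{G}_0\mathcal{H}_0$ over $\K(\x)[y]$, then yields the correct $\partial\mathcal{G}_0/\partial x_k(\b,y)$.
\item Theorem~\ref{thm:b-condition3} makes Algorithm~\ref{alg:collideTest} flag exactly the single-term coefficients.
\item Theorem~\ref{the-2} then recovers them exactly. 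The characteristic hypothesis suffices here because the exponents of the Laurent monomials of $\mathcal{G}_0$ are bounded by $\max_i\min\{\deg_{x_i}A,\deg_{x_i}B\}$ in absolute value, so they are determined in $\K$.
\end{itemize}

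If $G^*=0$, Condition~2 with $R=G$ gives at least half the terms, so the bound holds. Otherwise, Condition~3 says that the lowest term of $G$ is non-colliding, hence recovered. By Lemma~\ref{thm:order-preservation} it is also the lowest term of both $\mathcal{G}_0^{\rm non}$ and $\fs{G^*}$ after scaling, so $\gamma,\delta$ are well defined and $\gamma\mathcal{G}_0y^\delta=\fs{\alpha G}$ for some single term $\alpha$. The difference $\Delta$ is then $\fs{R}$ up to a single term factor, where $R=\alpha' G-G^*$.

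The main subtlety is that Condition~3 and Condition~2 refer to collisions in $\fs{G}$ and in $\fs{R}$ respectively. I would argue that collision detection is applied to $\Delta$ itself, so what matters is non-collision within $\fs{R}$. Condition~2 applied to $R$ guarantees at least $\|R\|_0/2$ such terms. The $\Psi_3$ guarantee must also cover $\Delta$'s coefficients, so I would include the corresponding $\delta_i$ factors for $R$ in $\Psi_3$, which keeps the degree bound $4D(t-1)$. This step is where I expect the argument to need the most care.

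\textbf{Complexity.} The dominant costs are the following.
\begin{itemize}
\item Computing $\fs{A},\fs{B},\fs{G^*}$ and evaluating them, and their $n$ partial derivatives, at $\b,\b^2,\b^3$. This takes $O(n(\|A\|_0+\|B\|_0+T)\log(DN))$ operations, since each term requires $n$ powerings with exponents up to $D$, and the derivatives reuse the same products.
\item Univariate GCDs, Bézout coefficients, and the $n$ remainders of Algorithm~\ref{alg:factorparde} on polynomials of degree $D_y\le DN=O(DT/\mu)$. These cost $\widetilde O(nDT/\mu)$.
\item Algorithms~\ref{alg:collideTest} and \ref{alg:derivative_recovery}, which cost $O(nT\log D)$.
\end{itemize}
Summing gives the stated bound for infinite $\K$.

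For $\K=\mathbb{F}_q$ with $q$ too small, we work in an extension of degree $O(\log(D^2N/\mu)/\log q)$. Each extension-field operation costs $\widetilde O(\log q)$ bit operations, up to the extra $\log D$ and $\log(DT/\mu)$ factors appearing in the term-evaluation part. This yields the second bound. Expectation is taken over the Failure branch, and since failure occurs with probability at most $\mu$, it does not change the order of the bound.
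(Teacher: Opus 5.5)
Your skeleton matches the paper's proof. It uses the same union bound with $\varepsilon=\mu/3$ over $\mathbf{s}$ (Theorem~\ref{thm:s-prob}), over $\b$ via $\Psi_1(\b)\Psi_1(\b^2)\Psi_1(\b^3)$, $\Psi_2(\b)$ and a separate $\Psi_3$ for the $\Delta$ step, and over $c$ (Theorem~\ref{thm:good-c}), followed by the same step-by-step cost tally. The paper's proof consists only of that bookkeeping. Your conditional walkthrough (alignment through Lemma~\ref{thm:order-preservation}, $\Delta$ being a single-term multiple of $\fs{R}$, Condition~2 governing collisions inside $\fs{R}$) is additional and, with one exception, sound.

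There are two accounting slips that do not affect the conclusion. With $|S|\ge(12D^2N+12D)/\varepsilon$, the $\b$-failure is $\varepsilon+\varepsilon/6+\varepsilon/9=23\varepsilon/18$, not $\varepsilon$, because $\Psi_1(\x)\Psi_1(\x^2)\Psi_1(\x^3)$ alone uses the whole budget. Also, merging the $\Delta$ factors into one $\Psi_3$ doubles its degree, so keep the two polynomials separate as the paper does. The total failure probability is still below $3\varepsilon=\mu$.

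\textbf{The gap: your justification of the characteristic hypothesis.} Algorithm~\ref{alg:derivative_recovery} sees only the images in $\K$ of the exponents of the Laurent monomials $M_i/\beta$ (and their analogues in the $\Delta$ step). These exponents are $e_{ik}-e_{\beta k}\in[-m_k,m_k]$ with $m_k=\deg_{x_k}G\le\min\{\deg_{x_k}A,\deg_{x_k}B\}$. Integers in $[-m,m]$ are determined by their images in $\K$ only if $\operatorname{char}\K>2m$. The actual window of width $m_k$ is unknown, because $e_{\beta k}$ is unknown and moves with $\mathbf{s}$.

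\emph{One variable.} Over $\mathbb{F}_3$ take $A=x^2+1$ and $B=(x^2+1)(x+1)$. The hypothesis $3>2$ holds, and $\mathcal{G}_0=y^{2s_1}+x^{-2}$. The algorithm computes $e=-2=1\in\mathbb{F}_3$. Reading this as an integer in $\{0,1,2\}$ gives the term $b^{-3}x$, so the output $1+b^{-3}x$ is not an approximation of $G$.

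\emph{Several variables.} Both signs occur. Take $G=x_1^2+x_2^3$ over $\mathbb{F}_5$ (say $A=G$, $B=(x_1+1)G$) with $2s_1<3s_2$. The non-leading term of $\mathcal{G}_0$ has exponents $(2,-3)$ but residues $(2,2)$. Hence no choice of integer representatives depending only on the residue works.

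The paper's proof is silent here too: it simply invokes Theorem~\ref{the-2}, whose hypothesis concerns ordinary polynomials with nonnegative exponents. To close the step you need either a stronger assumption, such as $\operatorname{char}\K=0$ or $\operatorname{char}\K>2\max_i\min\{\deg_{x_i}A,\deg_{x_i}B\}$ with exponents decoded in the symmetric range, or an extra mechanism that determines the offset $e_{\beta k}$.
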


\begin{proof}
The correctness of Algorithm~\ref{alg:one-step} depends on several random events: the choice of the vector $\mathbf{s}$, the choice of the evaluation point $\b$, and the choice of the constant $c$. We analyze each in turn.

\paragraph{Probability of Choosing the Vector $\mathbf{s}$.}
Algorithm~\ref{alg:one-step} requires $\mathbf{s}$ to satisfy three conditions:
\begin{enumerate}
    \item \textbf{Leading coefficient separation:} $\operatorname{lc}_y(\fs{G})$ is a single term;
    \item \textbf{Lowest-term separation:} the lowest term of $G$ is non-colliding in $\fs{G}$;
    \item \textbf{Half-remaining separation:} at least half of the terms of $G - \alpha_1 G^*$ are non-colliding.
\end{enumerate}
By Theorem~\ref{thm:s-prob}, choosing $\mathbf{s}$ uniformly from $[1,N]^n$ with $N = \lceil 6(T-1)/\varepsilon \rceil$ ensures that all three conditions hold simultaneously with probability at least $1 - \varepsilon$.

\paragraph{Probability of Choosing the Evaluation Point $\b$.}
We require $\b$ to satisfy three conditions (see Section~\ref{sec:good-point}):

First, $\b$ must ensure that $\mathcal{G}_0(\b^i,y)$ is correctly obtained from the univariate GCD for $i=1,2,3$:
\[
\mathcal{G}_0(\b^i,y) = \operatorname{monic}\bigl( \gcd( \fs{A}(\b^i,y), \fs{B}(\b^i,y) ) \bigr),\qquad i=1,2,3.
\]
By Theorem~\ref{thm:b-condition1}, for each $i=1,2,3$, this holds whenever $\Psi_1(\b^i) \neq 0$, where $\Psi_1$ is a nonzero polynomial of degree at most $2D^2N + 2D$. Thus it suffices to require
\[
\Psi_1(\b)\cdot \Psi_1(\b^2)\cdot \Psi_1(\b^3) \neq 0.
\]
Note that $\deg \Psi_1(\x^i) = i \cdot \deg \Psi_1(\x) \le i(2D^2N + 2D)$. Hence the product has degree at most
\[
(1+2+3)(2D^2N + 2D) = 12D^2N + 12D.
\]
Choosing $|S| \ge (12D^2N + 12D)/\varepsilon$ and applying the Schwartz--Zippel lemma gives failure probability at most $\varepsilon$.

Second, $\b$ must ensure that $\fs{G}(\b,y)$ and $\fs{H}(\b,y)$ are coprime for Hensel lifting. By Theorem~\ref{thm:b-condition2}, this holds whenever $\Psi_2(\b) \neq 0$, where $\Psi_2$ is a nonzero polynomial of degree at most $2D^2N + 2D$. With the same choice of $|S|$, the failure probability is at most $\varepsilon/6$.

Third, $\b$ must ensure that the collision detection mechanism correctly distinguishes between non-colliding and colliding terms. In Algorithm~\ref{alg:one-step}, this mechanism is invoked twice: once for the normalized GCD factor $\mathcal{G}_0$ and once for the difference polynomial $\Delta$. For each invocation, by Theorem~\ref{thm:b-condition3}, correctness holds whenever the corresponding polynomial $\Psi_3$ does not vanish at $\b$.

Specifically, let $\Psi_3^{(\mathcal{G}_0)}$ and $\Psi_3^{(\Delta)}$ denote the polynomials associated with the two collision detection steps. Both are nonzero polynomials with degree at most $4D(T-1)$. 

With our choice of $N = \lceil 6(T-1)/\varepsilon \rceil$, we have $|S| \ge (12D^2N + 12D)/\varepsilon \ge 72D(T-1)/\varepsilon$. By the Schwartz--Zippel lemma, for each invocation,
\[
\Pr(\Psi_3(\b) = 0) \le \frac{\deg \Psi_3}{|S|}
\le \frac{4D(T-1)}{72D(T-1)/\varepsilon}
= \frac{\varepsilon}{18}.
\]
Taking the union bound over the two invocations gives a total failure probability of at most $\varepsilon/9$ for the collision detection steps.

Taking the union bound over these three conditions on $\b$, with $|S| \ge (12D^2N + 12D)/\varepsilon$, the total failure probability is at most $\varepsilon+\varepsilon/6+\varepsilon/9=23\varepsilon/18$.

\paragraph{Probability of Choosing the Constant $c$.}
By Theorem~\ref{thm:good-c}, choosing $c$ uniformly from a sufficiently large set $S$ ensures $\gcd(G, A_1 + cB_1) = 1$ with probability at least $1 - \deg G / |S|$. Since $\deg G \le D$ and our choice of $|S| \ge (12D^2N + 12D)/\varepsilon$ implies $|S| \ge 12D/\varepsilon$, this failure probability is at most $\varepsilon/12$.

\paragraph{Total Success Probability.}
Taking $\varepsilon = \mu/3$ and applying the union bound over all random events, the total failure probability is at most
\[
\varepsilon + 23\varepsilon/18 + \varepsilon/12 \le 3\varepsilon=\mu.
\]
Thus Algorithm~\ref{alg:one-step} succeeds with probability at least $1-\mu$.

\paragraph{Complexity Analysis.}
The complexity of Algorithm~\ref{alg:one-step} is dominated by the following steps:
\begin{enumerate}
    \item 
    \textbf{Transformation and Evaluation:} Computing $\fs{A}$, $\fs{B}$, $\fs{G^*}$, and their evaluations at $\b, \b^2, \b^3$ requires processing each term of $A$, $B$, and $G^*$. Consider a single term $c x_1^{e_1}\cdots x_n^{e_n}$. Under the separation transformation $x_i \mapsto x_i y^{s_i}$, the $y$-degree of this term becomes $s_1 e_1 + \cdots + s_n e_n$. Since each $s_i$ is bounded by $N = O(T/\mu)$, the $y$-degree is $O(ND) = O(TD/\mu)$, and computing it for all terms contributes
\[
\widetilde O\bigl( n(\|A\|_0+\|B\|_0+\|G^*\|_0)\cdot\log (TD/\mu) \bigr)
\]
bit operations.

Additionally, evaluating the transformed polynomials at $\b, \b^2, \b^3$ for a term $c \x^{\mathbf{e}}$ with $|\mathbf{e}|_1 \le D$ requires $O(n \log D)$ field operations per point. Thus the total evaluation step costs
\[
O\bigl( n(\|A\|_0 + \|B\|_0 + \|G^*\|_0) \log D \bigr)
\]
field operations.

    \item \textbf{Univariate GCDs:} Computing three univariate GCDs $\gcd(\fs{A}(\b^i,y), \fs{B}(\b^i,y))$ for $i=1,2,3$. Each GCD operates on polynomials of degree $O(DN) = O(DT/\mu)$, with complexity $\widetilde{O}(DT/\mu)$ field operations. 
    
    \item \textbf{B\'ezout Coefficients:} Computing $u(y), v(y)$ for $\mathcal{G}_0$ and $\mathcal{H}_0$ costs $\widetilde{O}(DT/\mu)$ field operations.
    
    \item \textbf{Partial Derivatives of $\fs{F}$ and $\fs{G^*}$:} For each $k=1,\dots,n$, we need to compute
    \[
    \frac{\partial \fs{F}}{\partial x_k}(\b,y), \qquad \frac{\partial \fs{G^*}}{\partial x_k}(\b,y).
    \]
    For a single term $c \x^{\mathbf{e}}$, let $a = c b_1^{e_1} \cdots b_n^{e_n}$. Then $\partial (c \x^{\mathbf{e}})/\partial x_k = a \cdot e_k / b_k$, and computing all $n$ partial derivatives costs $O(n\log D)$ field operations. Summing over all terms of $F$ and $G^*$ yields
\[
O\bigl( n(\|F\|_0 + \|G^*\|_0) \log D \bigr),
\]
which is bounded by $O\bigl( n(\|A\|_0 + \|B\|_0 + \|G^*\|_0) \log D \bigr)$ since $F = A + cB$.

    \item \textbf{Factor Partial Derivative Computation:} For each $k=1,\dots,n$, computing $\partial \mathcal{G}_0/\partial x_k$ via Algorithm~\ref{alg:factorparde} costs $\widetilde{O}(DT/\mu)$ field operations. Total: $\widetilde{O}(n DT/\mu)$.
    
    \item \textbf{Collision Detection:} Algorithm~\ref{alg:collideTest} is called twice (for $\mathcal{G}_0$ and for $\Delta$), each processing $O(T)$ coefficients with $O(1)$ work per coefficient. Total: $O(T)$ field operations.
    
    \item \textbf{Interpolation Recovery:} Algorithm~\ref{alg:derivative_recovery} is called twice (for $\mathcal{G}_0^{\text{non}}$ and for $\Delta^{\text{non}}$), each costing $O(n T \log D)$ field operations. Total: $O(nT \log D)$.
    
    \item \textbf{Merge and Align:} Merging $G^*$ with the recovered terms costs $\widetilde{O}(T)$ field operations.
\end{enumerate}

Combining the above, the total complexity of Algorithm~\ref{alg:one-step} is
\[
\widetilde{O}\Bigl( n D T / \mu + n(\|A\|_0 + \|B\|_0) \log D \Bigr)
\]
field operations, plus an additional
\[
\widetilde{O}\bigl(n(\|A\|_0 + \|B\|_0)\cdot\log (TD/\mu) \bigr)
\]
bit operations.

Note that the field-operation count above is stated without specifying the underlying field; this distinction matters because operations in a proper extension field $\K'$ are more expensive than operations in the base field $\K$. We now clarify the dependence on $\K$.
The complexity depends on whether the underlying field $\K$ is infinite or finite. When $\K$ is infinite, no field extension is needed; the field operations are performed directly in $\K$, and since each field operation costs at least one bit operation, the bit complexity is bounded by the same asymptotic bound. To keep the presentation clean, we extend the field-operation bound to
\[
\widetilde{O}\Bigl( n D T / \mu + n(\|A\|_0 + \|B\|_0) \log (TD/\mu) \Bigr),
\]
which dominates the bit operations above.

When $\K = \mathbb{F}_q$ is finite, if $q \ge (12D^2 N + 12D)/\varepsilon = O(D^2 T/\mu)$, no extension is needed. Since each operation in $\K$ costs $\log q$ bit operations, the bit complexity is
\[
\widetilde{O}\Bigl( n D T \log q / \mu + n(\|A\|_0 + \|B\|_0) \log (TD/\mu)\log q \Bigr).
\]

If $q < (12D^2 N + 12D)/\varepsilon = O(D^2 T/\mu)$, we extend $\K$ to $\mathbb{F}_{q^{\ell}}$ in Step~3 with
\[
\ell = \log_q\left(O(D^2 T/\mu)\right) = O(\log_q(DT/\mu)).
\]
Constructing such an extension costs $\operatorname{poly}(\ell)\log q$ bit operations \cite{Shoup1994}. Since $\ell = \widetilde{O}(\log_q(DT/\mu))$, we have $\operatorname{poly}(\ell)\log q = O(\log(DT/\mu)^{O(1)}\log q)$, which is dominated by the cost of the subsequent GCD computations. 

Each arithmetic operation in $\mathbb{F}_{q^{\ell}}$ requires $\widetilde{O}(\ell)$ operations in $\mathbb{F}_q$. Consequently, substituting $\ell$ into the field-operation bound yields the following bit complexity:
\[
\widetilde{O}\Bigl( n D T \log q / \mu + n(\|A\|_0 + \|B\|_0) \log D \log(DT/\mu)\log q \Bigr).
\]

\end{proof}

\subsection{GCD Algorithm over a Field with a Given Term Bound  (Algorithm~\ref{alg:recursive})
\label{sec:recursive}}

With the one-step improvement procedure (Algorithm~\ref{alg:one-step}) at hand, we now present the complete GCD algorithm over a field. The algorithm assumes that a term bound $T \ge \|G\|_0$ is given as input. Starting from the initial approximation $G^* = 0$, it repeatedly applies Algorithm~\ref{alg:one-step} to refine the approximation. Each iteration reduces the number of missing terms by at least half; hence $O(\log T)$ iterations suffice to recover the full GCD. The algorithm is summarized as Algorithm~\ref{alg:recursive}.

\begin{algorithm}
\caption{GCD Algorithm}
\label{alg:recursive}
\begin{algorithmic}[1]
\Require
    \begin{itemize}
        \item Polynomials $A, B \in \K[\x]$ with $G = \gcd(A,B)$;
        \item An upper bound $T$ on the number of terms of $G$;
        \item The field $\K$ has characteristic $0$ or characteristic greater than $\max_{i=1}^n\min\{\deg_{x_i} A,\deg_{x_i} B\}$.
    \end{itemize}
\Ensure
    With probability $\ge 2/3$, output $G$.

\State $G^* \gets 0$.
\State $k \gets \max\{\lceil \log_2 T \rceil,1\}$.
\State $\mu \gets 1/(3k)$.

\For{$i = 1$ to $k$}
    \State Call Algorithm~\ref{alg:one-step} with inputs $A, B, G^*, T, \mu$ to obtain $G^{**}$.
    \If{$G^{**} =$ ``Failure''}
        \State \Return ``Failure''
    \Else
        \State $G^* \gets G^{**}$
    \EndIf
\EndFor

\State Let $M_A = \operatorname{MoCont}(A)$ be the monomial content of $A$, as defined in Definition~\ref{def-monocont}.
\State Let $M_B = \operatorname{MoCont}(B)$ be the monomial content of $B$.
\State $M \gets \gcd(M_A, M_B)$, the common monomial content of $A$ and $B$.
\State Let $Q = \operatorname{MoCont}(G^*)$ be the monomial content of $G^*$.
\State Let $\ell = \operatorname{lc}(G^*)$ be the leading coefficient of $G^*$ (which is the scalar part of the monomial content).
\State $G \gets M \cdot G^* / (Q \cdot \ell)$.
\Comment{Normalize $G^*$ so that its leading coefficient is $1$ and its monomial content matches that of $\gcd(A,B)$}.
\State \Return $G$.
\end{algorithmic}
\end{algorithm}

\subsubsection*{Analysis of the GCD Algorithm}

We now analyze the correctness, success probability, and complexity of Algorithm~\ref{alg:recursive}.

\begin{theorem}[Correctness of Algorithm~\ref{alg:recursive}]\label{thm:recursive}
Let $A, B \in \K[\x]$ and let $G = \gcd(A,B)$. Suppose $\operatorname{char}(\K) = 0$ or $\operatorname{char}(\K) > \max_{i=1}^n\min\{\deg_{x_i} A,\deg_{x_i} B\}$. Then Algorithm~\ref{alg:recursive} outputs $G$ with probability at least $2/3$.
\end{theorem}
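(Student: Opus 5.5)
We argue by induction on the iterations of the main loop, using Theorem~\ref{them-one-step} as the engine and closing with a union bound and a final normalization step.

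\textbf{Invariant.} Let $G^*_i$ be the approximation after the $i$-th call to Algorithm~\ref{alg:one-step}, with $G^*_0=0$. We claim that if the first $i$ calls all succeed, then there is a single term $\alpha_i$ with $\alpha_i G^*_i$ a partial sum of $G$, and
\[
\|G-\alpha_i G^*_i\|_0 \le 2^{-i}\|G\|_0 .
\]
\begin{itemize}
\item The base case $i=0$ is trivial.
\item For the inductive step, the hypothesis of Theorem~\ref{them-one-step} requires $G^*_i\sqsubseteq G$ and that $G^*_i$ contain the lowest term of $G$ up to a single-term factor. For $i\ge1$ this second requirement holds because Condition~3 was imposed on $\mathbf{s}$ in the previous call. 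Consequently $\mathcal{G}_0^{\rm non}$, and hence $G^*_1$, already contains the aligned lowest term, and the alignment step never removes it.
\item Theorem~\ref{them-one-step} then gives the halving with probability at least $1-\mu$.
\end{itemize}

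\textbf{Number of iterations.} Since $\|G\|_0\le T$ and $k=\max\{\lceil\log_2T\rceil,1\}$, after $k$ successful rounds
\[
\|G-\alpha_kG^*_k\|_0\le T/2^k\le 1 .
\]
This is not yet $0$, so one extra observation is needed. I would argue that when $\|R\|_0=1$, Condition~2 (half-remaining separation) forces that single term to be non-colliding, since $\#\mathrm{NC}\ge 1/2$ means $\#\mathrm{NC}\ge1$. Hence that round recovers it completely. Equivalently, the count drops to $\lfloor\|R\|_0/2\rfloor$ in each round, giving $\lfloor T/2^k\rfloor=0$. So $\alpha_kG^*_k=G$ exactly.

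\textbf{Failure probability.} A union bound over the $k$ calls, each failing with probability at most $\mu=1/(3k)$, gives total failure probability at most $1/3$. I would also check that any ``Failure'' return is itself contained in these bad events: when $\b$ and $c$ are good, $\gcd(\mathcal{G}_0,\mathcal{H}_0)=1$ by Theorem~\ref{thm:b-condition2}, so the algorithm does not return ``Failure''.

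\textbf{Normalization.} Finally we show that the last lines turn $G^*_k=\alpha_k^{-1}G$ into $G$.
\begin{itemize}
\item Since $\alpha_k^{-1}$ is a single term $c\x^{\mathbf{a}}$, we have $\operatorname{MoCont}(G^*_k)=\x^{\mathbf{a}}\operatorname{MoCont}(G)$, ignoring the scalar.
\item Dividing by $Q$ removes all monomial content, leaving $G/\operatorname{MoCont}(G)$ up to a scalar. Dividing by $\ell$ makes it monic.
\item It remains to show that $\operatorname{MoCont}(G)=\gcd(\operatorname{MoCont}(A),\operatorname{MoCont}(B))$. Write $G=\x^{\mathbf{g}}\widehat G$, $A=\x^{\mathbf{a}}\widehat A$, $B=\x^{\mathbf{b}}\widehat B$ with $\widehat{\cdot}$ having no monomial content. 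Then $\gcd(A,B)=\gcd(\x^{\mathbf a},\x^{\mathbf b})\gcd(\widehat A,\widehat B)$, because monomials $x_i$ are prime and coprime to the hatted parts. Also $\gcd(\widehat A,\widehat B)$ has trivial monomial content, since it divides $\widehat A$.
\item Multiplying by $M$ then yields $G$, with leading coefficient $1$ preserved since $M$ is monic.
\end{itemize}

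\textbf{Main obstacle.} I expect the delicate step to be the anchor argument: that the lowest term persists, and that the alignment $(\gamma,\delta)$ computed from $\mathcal{G}_0^{\rm non}$ and $\fs{G^*}$ really matches the same term of $G$. This is what keeps the difference $\Delta$ equal to a single-term multiple of $\fs{R}$ with $R=G-\alpha G^*$. I would rely on Lemma~\ref{thm:order-preservation} together with Condition~3, which holds in every round, to identify the lowest term on both sides.
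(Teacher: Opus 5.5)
Your route is the paper's: an invariant carried through the $k$ calls via Theorem~\ref{them-one-step}, a union bound giving $1-k\cdot\frac{1}{3k}=\frac23$, and the monomial-content normalization at the end. Three of your additions are more careful than the paper's proof, which simply asserts these points:
\begin{itemize}
\item your check that ``Failure'' is returned only on bad events;
\item your verification of the lowest-term hypothesis for $i\ge 1$;
\item your proof that $\operatorname{MoCont}(G)=\gcd(M_A,M_B)$.
\end{itemize}
You are also right that the halving bound only gives $\|G-\alpha_kG^*_k\|_0\le T/2^k\le 1$. The paper closes this step by writing $2^{-k}\|G\|_0\le\frac{1}{2T}\cdot T=\frac12$. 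That would need $2^k\ge 2T$, which does not follow from $k=\lceil\log_2 T\rceil$.

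However, your repair does not close the step either.
\begin{itemize}
\item The claim $\lfloor T/2^k\rfloor=0$ is false whenever $T=2^k$, i.e.\ whenever $T\ge 2$ is a power of two.
\item If moreover $\|G\|_0=T$, your recursion $r_i\le\lfloor r_{i-1}/2\rfloor$ gives $r_{k-1}\le 2$ and then only $r_k\le 1$.
\item Your remark that a round starting from $\|R\|_0=1$ clears that term would need a $(k+1)$-st call, which Algorithm~\ref{alg:recursive} does not make.
\end{itemize}
No argument that uses Theorem~\ref{them-one-step} only as a black-box halving guarantee can do better, since $k$ halvings starting from $2^k$ terms only reach $1$.

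The missing idea is structural. In a successful round the algorithm recovers \emph{every} non-colliding term of $R=G-\alpha_1G^*$, not merely half of them. Collision detection on $\Delta$ flags exactly the multi-term coefficients, and Algorithm~\ref{alg:derivative_recovery} recovers every single-term one. So the terms left over are exactly the colliding terms of $R$. Moreover $\#\mathrm{C}(R,\mathbf{s})\neq 1$, because a colliding term shares its $y$-exponent with at least one other term of $R$, and that term is then colliding too. Hence every residual count lies in $\{0,2,3,\dots\}$, and together with $r_k\le 1$ this forces $r_k=0$. Equivalently, once $\|R\|_0\le 3$, Condition~2 gives $\#\mathrm{C}\le 1$, and therefore $\#\mathrm{C}=0$. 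With this observation (or with one extra iteration), the rest of your argument goes through.
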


\begin{proof}
We prove correctness by establishing the following invariant: after the $i$-th iteration of the loop, the approximation $G^*$ satisfies
\[
G^* \sqsubseteq G \quad \text{and} \quad \|G - \alpha_i G^*\|_0 \le \frac{1}{2^i} \|G\|_0,
\]
for some single term $\alpha_i$.

The invariant is trivially true before any iterations, since $G^* = 0$ and $\|G - 0\|_0 = \|G\|_0$.

Assume the invariant holds after iteration $i-1$. By Theorem~\ref{them-one-step}, Algorithm~\ref{alg:one-step} computes an improved approximation $G^{**}$ satisfying
\[
\|G - \alpha_{i} G^{**}\|_0 \le \frac{1}{2} \|G - \alpha_{i-1} G^*\|_0 \le \frac{1}{2^i} \|G\|_0,
\]
with probability at least $1 - 1/(3k)$. Thus the invariant holds after iteration $i$.

After $k = \max\{\lceil \log_2 T \rceil,1\}$ iterations, we have
\[
\|G - \alpha_k G^*\|_0 \le \frac{1}{2^k} \|G\|_0 \le \frac{1}{2T} \cdot T = \frac{1}{2}.
\]
Since $\|G - \alpha_k G^*\|_0$ is an integer, it must be $0$. Hence $\alpha_k G^*$ contains all terms of $G$, i.e., $\alpha_k G^* = G$.

The final step recovers the scalar and monomial content of $G$. Let $M_A = \operatorname{MoCont}(A)$ and $M_B = \operatorname{MoCont}(B)$ be the monomial contents of $A$ and $B$, and let $M = \gcd(M_A, M_B)$ be their common monomial content. Since $G = \gcd(A,B)$, the monomial content of $G$ is exactly $M$. Let $Q = \operatorname{MoCont}(G^*)$ be the monomial content of the recovered approximation, and let $\ell = \operatorname{lc}(G^*)$ be its leading coefficient (the scalar factor). Then the true GCD is recovered as
\[
G = \frac{M \cdot G^*}{Q \cdot \ell}.
\]
This final normalization is deterministic and always correct.

The success probability follows from the union bound over the $k$ iterations. Each call to Algorithm~\ref{alg:one-step} succeeds with probability at least $1 - \mu = 1 - 1/(3k)$. By the union bound,
\[
\Pr(\text{all iterations succeed}) \ge 1 - k \cdot \frac{1}{3k} = \frac{2}{3}.
\]
If any iteration fails, the algorithm either returns ``Failure'' (if the failure is detected) or the correctness of the final output is still guaranteed by the theorem. 
Thus Algorithm~\ref{alg:recursive} outputs the correct GCD $G$ with probability at least $2/3$.
\end{proof}

\begin{theorem}[Complexity of Algorithm~\ref{alg:recursive}]\label{thm:recursive-complexity}
Let $A, B \in \K[\x]$ and let $G = \gcd(A,B)$. Suppose $\operatorname{char}(\K) = 0$ or $\operatorname{char}(\K) > \max_{i=1}^n\min\{\deg_{x_i} A,\deg_{x_i} B\}$.
If $\K$ is infinite, the expected complexity of Algorithm~\ref{alg:recursive} is
\[
\widetilde{O}\Bigl( n D T + n(\|A\|_0 + \|B\|_0) \log^2 T \log D \Bigr)
\]
field operations in $\K$.
If $\K = \mathbb{F}_q$ is a finite field, the expected bit complexity of Algorithm~\ref{alg:recursive}   is
\[
\widetilde{O}\Bigl(
n D T \log q
+ n(\|A\|_0 + \|B\|_0) \log^2 T \log^2 D \log q
\Bigr).
\]

\end{theorem}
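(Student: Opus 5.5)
The plan is to sum the cost bound of Theorem~\ref{them-one-step} over the $k=\max\{\lceil\log_2 T\rceil,1\}$ calls made by Algorithm~\ref{alg:recursive} with $\mu=1/(3k)$, and then add the cost of the final normalization step.

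\textbf{Step 1 (one call).} Substitute $1/\mu = 3k = O(\log T)$ into Theorem~\ref{them-one-step}. For infinite $\K$, a single call of Algorithm~\ref{alg:one-step} costs
\[
\widetilde{O}\bigl(nDT\log T + n(\|A\|_0+\|B\|_0)\log(TD\log T)\bigr)
\]
field operations. The $\log T$ factor in the first term is absorbed by $\widetilde{O}$. The logarithm in the second term is $O(\log T + \log D)$. I will keep it in the form $\log T\log D$ as an upper bound, which is valid once $T,D\ge 2$; the degenerate cases $T=1$ or $D\le 1$ are handled trivially.

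\textbf{Step 2 (all $k$ calls).} Multiply by $k=O(\log T)$. This yields $\widetilde{O}(nDT)$ for the first term and $O\bigl(n(\|A\|_0+\|B\|_0)\log^2 T\log D\bigr)$ for the second.

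The approximation $G^*$ enters the per-call cost only through the terms $n\|G^*\|_0\log D$. By the invariant established in the proof of Theorem~\ref{thm:recursive}, $G^*$ is, up to a single term, a partial sum of $G$. Hence $\|G^*\|_0\le T$, and these contributions are $O(nT\log D)$ per call, which is absorbed into $\widetilde{O}(nDT)$.

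\textbf{Step 3 (normalization).} The final step computes $\operatorname{MoCont}(A)$, $\operatorname{MoCont}(B)$, and $\operatorname{MoCont}(G^*)$ as componentwise minima of exponent vectors. This costs $O(n(\|A\|_0+\|B\|_0+T))$ operations on exponents of bit size $O(\log D)$. It is followed by one scalar inversion and $O(nT)$ exponent updates. All of this is dominated by the bounds above.

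\textbf{Step 4 (finite field).} For $\K=\mathbb{F}_q$, repeat Steps 1 and 2 starting from the finite-field bound of Theorem~\ref{them-one-step}. The first term becomes $\widetilde{O}(nDT\log q\log T)$ per call, hence $\widetilde{O}(nDT\log q)$ overall. The second term per call is $n(\|A\|_0+\|B\|_0)\log D\log(DT\log T)\log q$. Bounding $\log(DT\log T)=O(\log D\log T)$ and multiplying by $k$ gives
\[
n(\|A\|_0+\|B\|_0)\log^2T\log^2D\log q .
\]
The extension-field construction cost from the one-step analysis is polylogarithmic per call, so it is absorbed.

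\textbf{Expected versus worst-case cost.} Each call costs at most the stated bound deterministically, apart from the randomness inside the field extension construction, which is already treated as an expected cost. Therefore the expectation of the total is at most the sum of the per-call expectations.

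\textbf{Main obstacle.} The step needing care is the logarithmic bookkeeping: checking that $\log(TD/\mu)$ with $\mu=1/(3k)$ really is $O(\log T\log D)$ in the nondegenerate range, so that exactly $\log^2T\log D$ (respectively $\log^2T\log^2D$) emerges rather than an extra $\log\log$ factor. Such a factor is in any case swallowed by $\widetilde{O}$, so I will simply invoke $\widetilde{O}$ there.
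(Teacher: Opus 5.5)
Your proposal is correct and follows essentially the same route as the paper. You substitute $\mu = 1/(3k)$ into the per-call bound of Theorem~\ref{them-one-step}, multiply by the $k = O(\log T)$ iterations, let $\widetilde{O}$ absorb the resulting $\log^2 T$ factor on the $nDT$ term, and bound $\log(TDk)$ by $O(\log T \log D)$ in the input-size term. Your extra accounting for the final normalization and for the $\|G^*\|_0$-dependent evaluation cost goes slightly beyond the paper's proof and is fine. The one caveat is that $\|G^*\|_0 \le T$ is guaranteed only on the success path, which the paper also tacitly assumes when it drops $\|G^*\|_0$ from the bound of Theorem~\ref{them-one-step}.
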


\begin{proof}
Algorithm~\ref{alg:recursive} calls Algorithm~\ref{alg:one-step} for $k = O(\log T)$ iterations. By Theorem~\ref{them-one-step}, if $\K$ is infinite, each call has complexity
\[
\widetilde{O}\Bigl( n D T / \mu + n(\|A\|_0 + \|B\|_0) \log (TD/\mu) \Bigr)
\]
field operations in $\K$, where $\mu = 1/(3k)$ is the failure probability per iteration. Substituting $\mu = 1/(3k)$, the per-iteration field complexity is
\[
\widetilde{O}\Bigl( n D T k + n(\|A\|_0 + \|B\|_0) \log (TDk) \Bigr).
\]
Summing over $k = O(\log T)$ iterations, the total field complexity is
\[
\widetilde{O}\Bigl( n D T \log^2 T + n(\|A\|_0 + \|B\|_0) \log D \log^2 T \Bigr),
\]
which simplifies to
\[
\widetilde{O}\Bigl( n D T + n(\|A\|_0 + \|B\|_0) \log D \log^2 T \Bigr).
\]

If $\K$ is finite, each call to Algorithm~\ref{alg:one-step} has bit complexity
\[
\widetilde{O}\Bigl( n D T \log q / \mu + n(\|A\|_0 + \|B\|_0) \log D \log(DT/\mu)\log q \Bigr),
\]
where $\mu = 1/(3k)$ again. Substituting $\mu = 1/(3k)$, the per-iteration bit complexity is
\[
\widetilde{O}\Bigl( n D T k \log q + n(\|A\|_0 + \|B\|_0) \log D \log (TDk)\log q \Bigr).
\]
Summing over $k = O(\log T)$ iterations yields the total bit complexity
\[
\widetilde{O}\Bigl( n D T \log^2 T \log q + n(\|A\|_0 + \|B\|_0) \log^2 D \log^2 T \log q \Bigr),
\]
which simplifies to
\[
\widetilde{O}\Bigl( n D T \log q + n(\|A\|_0 + \|B\|_0) \log^2 D \log^2 T \log q \Bigr).
\]

\end{proof}

\subsection*{Amplifying the Success Probability of Algorithm~\ref{alg:recursive}}

The success probability of Algorithm~\ref{alg:recursive} is at least \(2/3\). For applications requiring higher reliability, we can amplify this probability to any desired \(1-\varepsilon\) by repeating the algorithm independently \(k\) times and taking a majority vote.

The following Chernoff bound for independent Bernoulli random variables is a special case of Theorem~4.4 in \cite{MitzenmacherUpfal2017} (which is stated for Poisson trials).

\begin{lemma}\cite{MitzenmacherUpfal2017}[Chernoff Bound, Upper Tail]\label{thm:chernoff-upper}
Let \(X_1, \dots, X_n\) be independent Bernoulli random variables, where \(\Pr(X_i = 1) = p_i\) and \(\Pr(X_i = 0) = 1 - p_i\). Let \(X = \sum_{i=1}^n X_i\) and \(\mu = \mathbb{E}[X] = \sum_{i=1}^n p_i\). Then for \(0 < \delta < 1\),
\[
\Pr\bigl(X \ge (1 + \delta)\mu\bigr) \le \exp\left(-\frac{\delta^2 \mu}{3}\right).
\]
\end{lemma}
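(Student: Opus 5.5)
The plan is the standard moment-generating-function argument, specialized to Bernoulli variables. For any $t>0$, Markov's inequality applied to the nonnegative variable $e^{tX}$ gives
\[
\Pr\bigl(X \ge (1+\delta)\mu\bigr) \le \frac{\mathbb{E}[e^{tX}]}{e^{t(1+\delta)\mu}} .
\]
By independence, $\mathbb{E}[e^{tX}]=\prod_{i=1}^n \mathbb{E}[e^{tX_i}]$. Each factor is
\[
\mathbb{E}[e^{tX_i}] = 1+p_i(e^t-1) \le \exp\bigl(p_i(e^t-1)\bigr),
\]
using $1+x\le e^x$. Multiplying these bounds gives $\mathbb{E}[e^{tX}]\le \exp\bigl(\mu(e^t-1)\bigr)$.

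Next we choose $t=\ln(1+\delta)>0$. This is the value that minimizes the resulting exponent $\mu(e^t-1)-t(1+\delta)\mu$. Substituting it yields the intermediate bound
\[
\Pr\bigl(X\ge(1+\delta)\mu\bigr)\le \left(\frac{e^{\delta}}{(1+\delta)^{1+\delta}}\right)^{\mu}.
\]
The statement then reduces to the purely analytic inequality
\[
\delta-(1+\delta)\ln(1+\delta)\le -\frac{\delta^2}{3}, \qquad 0<\delta<1 .
\]

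This elementary inequality is the only step needing care. Set $f(\delta)=\delta-(1+\delta)\ln(1+\delta)+\delta^2/3$. Then:
\begin{itemize}
\item $f(0)=0$;
\item $f'(\delta)=-\ln(1+\delta)+\tfrac{2}{3}\delta$, so $f'(0)=0$;
\item $f''(\delta)=-\tfrac{1}{1+\delta}+\tfrac{2}{3}$.
\end{itemize}
The second derivative is negative on $[0,1/2)$ and positive on $(1/2,1]$. Hence $f'$ first decreases from $f'(0)=0$ and then increases up to $f'(1)=\tfrac{2}{3}-\ln 2<0$. So $f'\le 0$ throughout $[0,1]$. Therefore $f$ is nonincreasing on $[0,1]$, and $f(\delta)\le f(0)=0$, which is exactly the required inequality.

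Combining this with the intermediate bound gives $\Pr(X\ge(1+\delta)\mu)\le \exp(-\delta^2\mu/3)$, as claimed. Since the lemma is quoted from \cite{MitzenmacherUpfal2017}, one could also simply cite it. The sketch above is the self-contained argument, and its only nontrivial point is the sign analysis of $f'$ on $[0,1]$.
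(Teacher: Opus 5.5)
Your proof is correct and matches the paper's. The paper gives no proof of its own: it quotes the lemma as a special case of Theorem~4.4 of \cite{MitzenmacherUpfal2017}, and that theorem's proof is exactly your route. It applies Markov's inequality to $e^{tX}$, uses $1+x\le e^x$, sets $t=\ln(1+\delta)$, and carries out the same sign analysis of $f(\delta)=\delta-(1+\delta)\ln(1+\delta)+\delta^2/3$ via $f''$ and $f'(1)=2/3-\ln 2<0$. Nothing is missing; your argument even covers the endpoint $\delta=1$, which the paper's statement excludes.
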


\begin{lemma}\label{lem:amplify}
Let $\varepsilon > 0$ be given. If Algorithm~\ref{alg:recursive} is executed independently
$k = \left\lceil 36 \ln \frac{1}{\varepsilon} \right\rceil$
times, and the output is chosen by majority vote, then the probability that the final output is correct is at least \(1 - \varepsilon\).
\end{lemma}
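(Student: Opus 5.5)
We will use the standard majority-vote argument. Let $X_j$ be the indicator that the $j$-th independent run of Algorithm~\ref{alg:recursive} does \emph{not} output the correct $G$. The runs use independent randomness, so the $X_j$ are independent Bernoulli variables. By Theorem~\ref{thm:recursive}, $\Pr(X_j=1)=p_j\le 1/3$. Let $X=\sum_{j=1}^k X_j$ count the bad runs.

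The majority vote returns $G$ whenever strictly more than half of the runs output $G$, that is, whenever $X<k/2$. Wrong outputs may differ from one another, or be ``Failure''. This only helps, because the correct value then wins any plurality as well. So it suffices to bound $\Pr(X\ge k/2)$.

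The one technical point is that Lemma~\ref{thm:chernoff-upper} is stated relative to $\mu=\mathbb{E}[X]$, which may be smaller than $k/3$. We handle this by monotonicity: replace each $X_j$ by a dominating Bernoulli variable $Y_j\ge X_j$ with $\Pr(Y_j=1)=1/3$. This can be done by an independent coupling, flipping $X_j$ to $1$ with the appropriate extra probability. Then $\Pr(X\ge k/2)\le\Pr(Y\ge k/2)$, where $Y=\sum_j Y_j$ has mean exactly $k/3$.

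Write $k/2=(1+\delta)\cdot k/3$ with $\delta=1/2\in(0,1)$. Lemma~\ref{thm:chernoff-upper} then gives
\[
\Pr(Y\ge k/2)\le\exp\!\left(-\frac{(1/2)^2 (k/3)}{3}\right)=\exp\!\left(-\frac{k}{36}\right).
\]
With $k=\lceil 36\ln(1/\varepsilon)\rceil$ this is at most $\varepsilon$, so the majority output is correct with probability at least $1-\varepsilon$.

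The coupling step is the only place needing care. Alternatively, one can observe that the Chernoff tail probability is monotone in the $p_j$, or apply the bound with $\mu$ replaced by the upper bound $\mu_H=k/3$, a standard variant of Theorem~4.4.
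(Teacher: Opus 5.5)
Your proposal is correct and matches the paper's proof step for step. Both use independent failure indicators, stochastic domination by a $\mathrm{Binomial}(k,1/3)$ variable, and the stated Chernoff bound with $\delta=1/2$ to get $\exp(-k/36)\le\varepsilon$. Your explicit coupling and your observation that a strict majority for $G$ beats any mix of wrong or ``Failure'' outputs make rigorous two points the paper only asserts.
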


\begin{proof}
Each execution of Algorithm~\ref{alg:recursive} succeeds with probability at least \(2/3\) and fails with probability at most \(1/3\). Let \(X_i\) be the indicator that the \(i\)-th execution fails. Then \(X_1,\dots,X_k\) are independent Bernoulli random variables with \(\Pr(X_i = 1) \le 1/3\). Let \(X = \sum_{i=1}^k X_i\) be the total number of failures.

We use the standard Chernoff bound for Bernoulli variables with bounded expectation. Since \(\Pr(X_i = 1) \le 1/3\), the random variable \(X\) is stochastically dominated by a \(\mathrm{Binomial}(k, 1/3)\) random variable \(Y\). That is, for any threshold \(t\),
\[
\Pr(X \ge t) \le \Pr(Y \ge t).
\]
Therefore,
\[
\Pr\left(X \ge \frac{k}{2}\right)
\le \Pr\left(Y \ge \frac{k}{2}\right).
\]
Applying the Chernoff bound to \(Y\), with \(\mu_Y = \mathbb{E}[Y] = k/3\) and \(\delta = 1/2\) (since \((1+\delta)\mu_Y = (3/2)(k/3) = k/2\)), we obtain
\[
\Pr\left(Y \ge \frac{k}{2}\right)
\le \exp\left(-\frac{(1/2)^2 \cdot (k/3)}{3}\right)
= \exp\left(-\frac{k}{36}\right)
\le \varepsilon,
\]
where the last inequality follows from \(k = \left\lceil 36 \ln\frac{1}{\varepsilon} \right\rceil\).

Thus, with probability at least \(1-\varepsilon\), the majority of the \(k\) executions returns the correct GCD.
\end{proof}

\begin{algorithm}
\caption{Recursive GCD Algorithm (Amplified)}
\label{alg:recursive-amplified}
\begin{algorithmic}[1]
\Require
    \begin{itemize}
        \item Polynomials $A, B \in \K[\x]$ with $G = \gcd(A,B)$;
        \item An upper bound $T\ge \|G\|_0$;
        \item A desired failure probability $\varepsilon > 0$;
        \item The field $\K$ has characteristic $0$ or characteristic greater than $\max_{i=1}^n\min\{\deg_{x_i} A,\deg_{x_i} B\}$.
    \end{itemize}
\Ensure
    With probability $\ge 1-\varepsilon$, output $G$; or ``Failure".

\State $k \gets \left\lceil 36 \ln\frac{1}{\varepsilon} \right\rceil$.
\State Initialize an empty list $\mathcal{L}$.

\For{$i = 1$ to $k$}
    \State Run Algorithm~\ref{alg:recursive} once on inputs $A, B, T$, obtaining a candidate $G_i$.
    \If{$G_i \neq$ ``Failure''}
        \State Append $G_i$ to $\mathcal{L}$.
    \EndIf
\EndFor

\If{$\mathcal{L}$ is empty}
    \State \Return ``Failure''.
\EndIf

\State \Return the polynomial that appears most frequently in $\mathcal{L}$ (majority vote).
\end{algorithmic}
\end{algorithm}

\begin{theorem}\label{rem-ampl-com}
Algorithm~\ref{alg:recursive-amplified} succeeds with probability at least \(1-\varepsilon\). Its expected complexity is:

\begin{itemize}
    \item If $\K$ is infinite,
   \[
\widetilde{O}\!\left( n D T \log \frac{1}{\varepsilon}
+ n(\|A\|_0 + \|B\|_0) \log^2 T \log D \log \frac{1}{\varepsilon}
\right)
\]
    field operations in $\K$.

    \item If $\K = \mathbb{F}_q$,
   \[
\widetilde{O}\!\left(
n D T \log q \log \frac{1}{\varepsilon}
+ n(\|A\|_0 + \|B\|_0) \log^2 T \log^2 D \log q \log \frac{1}{\varepsilon}
\right)
\]
    bit operations.
\end{itemize}
\end{theorem}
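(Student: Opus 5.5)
The proof splits into two parts: the success probability, which comes from Lemma~\ref{lem:amplify}, and the expected cost, which is Theorem~\ref{thm:recursive-complexity} multiplied by the number of repetitions $k=\lceil 36\ln(1/\varepsilon)\rceil$, plus the cost of the vote.

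\textbf{Correctness.} Each run of Algorithm~\ref{alg:recursive} returns $G$ with probability at least $2/3$, by Theorem~\ref{thm:recursive}. The runs use independent randomness ($\mathbf{s}$, $\b$, $c$). Let $X_i$ indicate that run $i$ does \emph{not} return $G$; this covers both an explicit ``Failure'' and a wrong polynomial. Lemma~\ref{lem:amplify} then shows that with probability at least $1-\varepsilon$, more than $k/2$ runs return exactly $G$.

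On this event, $G$ appears more than $k/2$ times in $\mathcal{L}$. All other entries of $\mathcal{L}$ together number fewer than $k/2$. Hence $G$ is the strict plurality of $\mathcal{L}$, the list is nonempty, and the majority vote outputs $G$.

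Note that discarding ``Failure'' entries before voting can only help. Some care is needed because $\mathcal{L}$ may have fewer than $k$ entries. I would therefore phrase the argument as a count over all $k$ runs rather than over $\mathcal{L}$: $G$ occurs more than $k/2$ times, so no other polynomial can tie it.

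\textbf{Complexity.} Each run costs the bound of Theorem~\ref{thm:recursive-complexity}. Multiplying by $k=O(\log(1/\varepsilon))$ gives the two main terms stated.

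The remaining cost is the majority vote over at most $k$ sparse polynomials. Each candidate has at most $T$ terms with exponent vectors of size $n$. I would sort each candidate's terms, or hash them, and then compare candidates pairwise or with a dictionary. This costs $\widetilde O(k^2 nT\log D)$ comparisons in the naive version. A better route is to sort the $k$ canonical representations, which costs $\widetilde O(k\,nT\log D)$ plus $\log k$ factors.

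Both versions are polylogarithmic in $1/\varepsilon$ times $nT\log D$. This is absorbed into $\widetilde O(nDT\log(1/\varepsilon))$, provided the $\widetilde O$ hides $\log\log(1/\varepsilon)$ factors. If one wants to avoid $k^2$ entirely, the linear-time sort version should be used.

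In the finite-field case, each coefficient comparison costs $O(\log q)$ bits, or $O(\ell\log q)$ after field extension. This is again dominated by the first term.

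\textbf{Main obstacle.} The only delicate point is the word ``expected''. Each run of Algorithm~\ref{alg:recursive} has a bounded, not merely expected, cost up to the randomness in the extension-field construction. The expected total is therefore simply $k$ times the per-run expected cost, by linearity of expectation. I would state this explicitly; independence is not needed for this step.
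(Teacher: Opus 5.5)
Your proposal is correct and follows the paper's proof. The success probability comes from Lemma~\ref{lem:amplify}, and the cost is $k=\lceil 36\ln(1/\varepsilon)\rceil$ times the bound of Theorem~\ref{thm:recursive-complexity}, by linearity of expectation. The paper's proof ignores the majority-vote step, so your sorting or hashing argument is a harmless addition; one correction is that a wrong candidate need not have at most $T$ terms, but its size is bounded by the cost of the run that produced it (or you can discard any candidate with more than $T$ terms), so the vote cost is still absorbed.
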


\begin{proof}
The success probability follows directly from Lemma~\ref{lem:amplify}. For the complexity, Algorithm~\ref{alg:recursive-amplified} executes Algorithm~\ref{alg:recursive} exactly $k = \left\lceil 36 \ln\frac{1}{\varepsilon} \right\rceil$ times. Multiplying the corresponding bounds from Theorem~\ref{thm:recursive-complexity} by this factor $k$ yields the stated complexities.
\end{proof}

\subsection{Probabilistic Verification (Algorithm~\ref{alg:verify-gcd})}
\label{sec:verification}

In the preceding algorithms, we assumed that the term bound $T$ is given as input. However, in practical applications, $T$ is usually unknown. We will remove this assumption through a guessing strategy, but before doing so, we need a verification algorithm to ensure the correctness of the output. In this section, we give such an algorithm.

Given polynomials $A, B, G \in \K[x_1, \dots, x_n]$, we wish to verify whether $G = \gcd(A, B)$ with high probability. The verification consists of three conditions:

\begin{enumerate}
    \item \textbf{Divisibility:} $G \mid A$ and $G \mid B$ in $\K[x_1,\dots,x_n]$.
    \item \textbf{Degree matching:} For each variable $x_i$ ($i=1,\dots,n$),
    \[
    \deg_{x_i}\bigl(\gcd(A, B)\bigr) = \deg_{x_i}(G).
    \]
    \item \textbf{Leading coefficient:} $G$ is monic (with respect to the fixed monomial order).
\end{enumerate}

If all three conditions hold, then $G = \gcd(A, B)$.

\subsubsection*{Checking Divisibility}

The divisibility $G \mid F$ can be checked by exact polynomial division in $\K[x_1,\dots,x_n]$. However, exact division of multivariate polynomials can be expensive. We use a probabilistic reduction: it suffices to check divisibility after a random evaluation.

For a vector $\mathbf{a} = (a_1,\dots,a_n) \in \K^n$, denote
\[
\mathbf{\check{a}}_{k \mapsto y}  := (a_1,\dots,a_{k-1}, y, a_{k+1},\dots,a_n),
\]
i.e., we replace the $k$-th coordinate with the variable $y$.

The following lemma shows that multivariate divisibility can be reduced to univariate divisibility with high probability.

\begin{lemma}\label{lem:divisibility-test}
Let $F, G \in \K[x_1,\dots,x_n]$ with $\deg_{x_i} G \le \deg_{x_i} F$ for all $i$. If $G \nmid F$, then for a random vector $\mathbf{a} \in S^n$ with $S \subseteq \K$ and $|S| \ge 8d^2 + 8d$, where $d$ is a bound on the partial degree of $F$ and $G$, with probability at least $3/4$, there exists at least one $k \in \{1,\dots,n\}$ such that
\[
G(\mathbf{\check{a}}_{k \mapsto y} ) \nmid F(\mathbf{\check{a}}_{k \mapsto y} ) \quad \text{in } \K[y].
\]
\end{lemma}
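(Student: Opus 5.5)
The plan is to reduce the multivariate non-divisibility to non-divisibility over a rational function field in a single well-chosen variable $x_k$, and then show that a random specialization of the remaining variables preserves both the degree of $G$ in $x_k$ and the non-vanishing of a pseudo-remainder.

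\textbf{Step 1 (choosing $k$).} Since $G\nmid F$, unique factorization in $\K[\x]$ gives an irreducible $P$ whose multiplicity $e$ in $G$ exceeds its multiplicity in $F$. If $P$ is a constant, then $G\mid F$ trivially fails only through units, which cannot happen, so $P$ is nonconstant and involves some variable $x_k$. Let $\K_k:=\K(x_1,\dots,x_{k-1},x_{k+1},\dots,x_n)$. Viewed in $\K_k[x_k]$, $P$ is primitive and irreducible. By Gauss's lemma, $P^e\nmid F$ in $\K_k[x_k]$, and therefore $G\nmid F$ in $\K_k[x_k]$. This choice of $k$ is necessary: the lemma would fail for a variable not occurring in $P$, as the example $G=x_2$, $F=x_1$ shows.

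\textbf{Step 2 (pseudo-division in $x_k$).} Let $L=\operatorname{lc}_{x_k}(G)$ and $m=\deg_{x_k}F-\deg_{x_k}G+1$, so that $m\ge 1$ by the hypothesis on partial degrees. Pseudo-division gives
\[
L^{m}F=QG+R,\qquad \deg_{x_k}R<\deg_{x_k}G,
\]
with $Q,R\in\K[\x]$. By Step 1 we have $R\neq 0$. Choose a nonzero coefficient $R_j$ of $R$ as a polynomial in $x_k$, and set $\Phi:=L\cdot R_j$, a nonzero polynomial in the variables other than $x_k$. The key observation is that if $\Phi(\mathbf a)\neq 0$, then specialization commutes with pseudo-division. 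Indeed, $L(\mathbf a)\ne0$ keeps $\deg_y G(\mathbf{\check{a}}_{k\mapsto y})=\deg_{x_k}G$. The specialized identity $L(\mathbf a)^m F(\mathbf{\check a}_{k\mapsto y})=Q(\mathbf{\check a}_{k\mapsto y})\,G(\mathbf{\check a}_{k\mapsto y})+R(\mathbf{\check a}_{k\mapsto y})$ is then a genuine division with remainder in $\K[y]$, and its remainder $R(\mathbf{\check a}_{k\mapsto y})$ is nonzero because its coefficient $R_j(\mathbf a)\neq 0$. Since $L(\mathbf a)^m$ is a unit, $G(\mathbf{\check a}_{k\mapsto y})\nmid F(\mathbf{\check a}_{k\mapsto y})$.

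\textbf{Step 3 (probability).} It remains to bound the degree of $\Phi$. Each coefficient of $R$ is a polynomial expression in the coefficients of $F$ and $G$, of degree at most $m+1\le d+1$ in those coefficients. Hence every partial degree of $R_j$ in a variable $x_i$ ($i\ne k$) is at most about $(d+1)d$, and that of $\Phi$ at most $d^2+2d$. Applying the Schwartz--Zippel lemma (Theorem~\ref{thm:sz}), preferably in its individual-degree form, then gives a failure probability of at most $\deg\Phi/|S|$.

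\textbf{Main obstacle.} Matching this bound with the stated hypothesis $|S|\ge 8d^2+8d$ and the target $3/4$ is where I expect difficulty. The total degree of $\Phi$ may grow with $n$ (roughly $n(d^2+2d)$), whereas the lemma's bound depends only on $d$. My first attempt would be to read $d$ as a bound on the total degrees of $F$ and $G$. Then the total degree of $\Phi$ is at most $(d+1)d+d\le 2d^2+2d$, and Schwartz--Zippel yields failure probability at most $(2d^2+2d)/(8d^2+8d)=1/4$, which is exactly the claimed constant. If the lemma truly intends partial degrees, I would need a sharper argument that avoids the factor $n$. For instance, I would try evaluating only the variables that actually occur in $\Phi$, or apply a subresultant-based bound to the single irreducible factor $P$ rather than to all of $G$.
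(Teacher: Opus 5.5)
Your proof is correct once $d$ is read as a bound on the total degrees of $F$ and $G$. It reaches the conclusion with a different certificate than the paper.

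The paper sets $Q=\gcd(F,G)$ and picks $k$ with $\deg_{x_k}Q<\deg_{x_k}G$. This is equivalent to your choice of a variable occurring in an irreducible $P$ with larger multiplicity in $G$ than in $F$. It then evaluates $\Gamma_k=\operatorname{lc}_{x_k}(F)\operatorname{lc}_{x_k}(G)\operatorname{Res}_{x_k}(F/Q,G/Q)$. When $\Gamma_k(\mathbf a)\neq 0$, the specialized gcd equals $Q(\mathbf{\check{a}}_{k \mapsto y})$ up to a unit, so its degree $\deg_{x_k}Q$ is below $\deg G(\mathbf{\check{a}}_{k \mapsto y})$.

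You instead use one nonzero coefficient of the pseudo-remainder of $F$ by $G$ in $x_k$, multiplied by $\operatorname{lc}_{x_k}(G)$. This needs neither $Q$, nor a resultant, nor the coprimality of $F/Q$ and $G/Q$. It does not require $\operatorname{lc}_{x_k}(F)(\mathbf a)\neq 0$, and it gives $\deg\Phi\le d^2+2d$, well within what $|S|\ge 8d^2+8d$ allows. The paper's certificate proves more, namely the exact degree of the specialized gcd. It is also the same resultant device used in Lemma~\ref{lem:degree-matching}, so both checks of the verification algorithm share one analysis.

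Your worry about partial degrees is justified, and no sharper argument will remove the dependence on $n$: for a partial-degree bound the statement is false. Take $c\in S$, $F=\prod_{i=1}^n(x_i-c)$ and $G=x_1+\cdots+x_n$. Here $d=1$, so $|S|=16$ is allowed. Yet whenever two coordinates of $\mathbf a$ equal $c$, you get $F(\mathbf{\check{a}}_{k \mapsto y})=0$ for every $k$, and this event has probability tending to $1$ as $n\to\infty$.

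The paper's proof makes the same implicit assumption. Its estimate $\deg\Gamma_k\le 2d^2+2d$ holds for total degree but not for partial degree. For instance, $\operatorname{Res}_{x_1}\bigl(x_1^d-1,\;x_1-(x_2\cdots x_n)^d\bigr)=\pm\bigl((x_2\cdots x_n)^{d^2}-1\bigr)$ has total degree $(n-1)d^2$. So your total-degree reading is the one under which both proofs are valid, and you can drop the alternative repairs you sketched.
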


\begin{proof}
Since $G \nmid F$, let $Q = \gcd(F, G)$. If $\deg_{x_k} Q = \deg_{x_k} G$ for every $k$, then $Q$ and $G$ have the same degree in each variable, so $Q = c \cdot G$ for some $c \in \K^*$, implying $G \mid F$, a contradiction. Hence there exists some $k$ with $\deg_{x_k} Q < \deg_{x_k} G$.

Fix such a $k$. Let
\[
\tilde{F} = \frac{F}{Q}, \qquad \tilde{G} = \frac{G}{Q}.
\]
Then $\gcd(\tilde{F}, \tilde{G}) = 1$. Define
\[
\Gamma_k(\mathbf{x}) := \operatorname{lc}_{x_k}(F) \cdot \operatorname{lc}_{x_k}(G) \cdot \operatorname{Res}_{x_k}(\tilde{F}, \tilde{G}),
\]
where $\operatorname{Res}_{x_k}$ denotes the resultant with respect to $x_k$. Since $\tilde{F}$ and $\tilde{G}$ are coprime, $\Gamma_k$ is a nonzero polynomial. Its degree is bounded by
\[
\deg \Gamma_k \le 2d^2 + 2d.
\]
By the Schwartz--Zippel lemma,
\[
\Pr\bigl(\Gamma_k(\mathbf{a}) \neq 0\bigr) \ge 1 - \frac{2d^2 + 2d}{|S|}.
\]
If $\Gamma_k(\mathbf{a}) \neq 0$, then the leading coefficients do not vanish and the resultant is nonzero, so the univariate GCD satisfies
\[
\gcd(F(\mathbf{\check{a}}_{k \mapsto y} ), G(\mathbf{\check{a}}_{k \mapsto y} )) = Q(\mathbf{\check{a}}_{k \mapsto y} ) \cdot c
\]
for some $c \in \K^*$. Hence
\[
\deg\bigl(\gcd(F(\mathbf{\check{a}}_{k \mapsto y} ), G(\mathbf{\check{a}}_{k \mapsto y} ))\bigr)
= \deg_{x_k} Q
< \deg_{x_k} G
= \deg G(\mathbf{\check{a}}_{k \mapsto y} ).
\]
Therefore $G(\mathbf{\check{a}}_{k \mapsto y} ) \nmid F(\mathbf{\check{a}}_{k \mapsto y} )$. Choosing $|S| \ge 8d^2 + 8d$ gives success probability at least $3/4$.
\end{proof}

\subsubsection*{Checking Degree Matching}

For each variable $x_k$, we need to verify that
\[
\deg_{x_k}\bigl(\gcd(A, B)\bigr) = \deg_{x_k}(G).
\]
Let $Q = \gcd(A, B)$. We need to compute $\deg_{x_k} Q$.

For a random evaluation $\mathbf{a} \in S^n$, define
\[
A_k(y) = A(\mathbf{\check{a}}_{k \mapsto y} ), \qquad B_k(y) = B(\mathbf{\check{a}}_{k \mapsto y} ).
\]
If the leading coefficients of $A$ and $B$ with respect to $x_k$ do not vanish at $\mathbf{a}$, and the resultant $\operatorname{Res}_{x_k}(A/Q, B/Q)$ is nonzero, then
\[
\deg\bigl(\gcd(A_k, B_k)\bigr) = \deg_{x_k} Q.
\]
Define
\[
\Gamma_k(\mathbf{x}) := \operatorname{lc}_{x_k}(A) \cdot \operatorname{lc}_{x_k}(B) \cdot \operatorname{Res}_{x_k}(A/Q, B/Q).
\]
Then $\Gamma_k$ is a nonzero polynomial with $\deg \Gamma_k \le 2d^2 + 2d$. Let
\[
\Gamma(\mathbf{x}) := \prod_{k=1}^n \Gamma_k(\mathbf{x}).
\]
Then $\deg \Gamma \le 2nd^2 + 2nd$. If $\Gamma(\mathbf{a}) \neq 0$, then for every $k$,  the leading coefficients do not vanish and the resultant is nonzero, so the univariate GCD satisfies
\[
\gcd(A_k, B_k) = Q(a_1,\dots,a_{k-1}, y, a_{k+1},\dots,a_n) \cdot c_k
\]
for some $c_k \in \K^*$. Therefore $\deg(\gcd(A_k, B_k)) = \deg_{x_k} Q$ for all $k$.
Thus we can compute all partial degrees of $Q = \gcd(A, B)$ from the univariate GCDs at a single random point.

By the Schwartz--Zippel lemma, choosing $|S| \ge 8nd^2 + 8nd$ ensures that $\Gamma(\mathbf{a}) \neq 0$ with probability at least $3/4$.

We summarize the above discussion in the following lemma.
\begin{lemma}\label{lem:degree-matching}
Let $A, B \in \K[x_1,\dots,x_n]$ and let $Q = \gcd(A, B)$. Suppose $d$ is a bound on the partial degree of $A$ and $B$. For a random vector $\mathbf{a}=(a_1,\dots,a_n)\in S^n$ with $S \subseteq \K$ and $|S| \ge 8nd^2 + 8nd$, define
\[
A_k(y) = A(a_1,\dots,a_{k-1}, y, a_{k+1},\dots,a_n), \qquad
B_k(y) = B(a_1,\dots,a_{k-1}, y, a_{k+1},\dots,a_n).
\]
Then with probability at least $3/4$, for every $k = 1,\dots,n$,
\[
\deg\bigl(\gcd(A_k, B_k)\bigr) = \deg_{x_k} Q.
\]
\end{lemma}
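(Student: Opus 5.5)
The plan is to formalize the discussion preceding the statement. Write $A = Q A_1$ and $B = Q B_1$ with $\gcd(A_1,B_1)=1$. For each $k$, define
\[
\Gamma_k(\x) := \operatorname{lc}_{x_k}(A)\cdot \operatorname{lc}_{x_k}(B)\cdot \operatorname{Res}_{x_k}(A_1,B_1).
\]
If $\deg_{x_k}A_1=\deg_{x_k}B_1=0$, use the paper's convention that this resultant equals $1$. The proof then has three steps: show $\Gamma_k\not\equiv 0$ and bound its degree; show that $\Gamma_k(\mathbf{a})\neq 0$ forces $\deg\gcd(A_k,B_k)=\deg_{x_k}Q$; and apply Schwartz--Zippel to $\Gamma=\prod_k\Gamma_k$.

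\emph{Nonvanishing and degree.} The leading coefficients with respect to $x_k$ are nonzero, so it remains to treat the resultant. Regard $A_1,B_1$ as polynomials in $x_k$ over the UFD $\K[x_1,\dots,\widehat{x_k},\dots,x_n]$. Since they have no common factor in $\K[\x]$, Gauss's lemma shows they have no common factor of positive $x_k$-degree over the fraction field. Hence $\operatorname{Res}_{x_k}(A_1,B_1)\neq 0$, by the analogue of Lemma~\ref{lm-resultant} with $x_k$ as the main variable.

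For the degree bound, apply the standard estimate used in Theorem~\ref{thm:b-condition1}:
\[
\deg\operatorname{Res}_{x_k}(A_1,B_1)\le \deg_{x_k}A_1\cdot\deg B_1+\deg_{x_k}B_1\cdot \deg A_1.
\]
Read with $d$ bounding both the partial and the relevant degrees, as the paper does, this gives at most $2d^2$. The two leading coefficients contribute at most $2d$ more, so $\deg\Gamma_k\le 2d^2+2d$ and $\deg\Gamma\le 2nd^2+2nd$.

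\emph{Specialization.} Fix $\mathbf{a}$ with $\Gamma(\mathbf{a})\ne 0$ and fix $k$. Because $\operatorname{lc}_{x_k}(A)(\mathbf{a})\neq 0$ and $\operatorname{lc}_{x_k}(A)=\operatorname{lc}_{x_k}(Q)\operatorname{lc}_{x_k}(A_1)$, both $\operatorname{lc}_{x_k}(Q)$ and $\operatorname{lc}_{x_k}(A_1)$ are nonzero at $\mathbf{a}$; the same holds for $B_1$. Therefore specialization preserves the $x_k$-degrees of $Q$, $A_1$ and $B_1$.

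The resultant of the specialized pair $A_1(\check{\mathbf a}_{k\mapsto y})$, $B_1(\check{\mathbf a}_{k\mapsto y})$ then equals $\operatorname{Res}_{x_k}(A_1,B_1)(\mathbf a)\neq 0$ by Lemma~\ref{lm-resultant}(2), so the specialized pair is coprime in $\K[y]$. Since $A_k=Q(\check{\mathbf a}_{k\mapsto y})\,A_1(\check{\mathbf a}_{k\mapsto y})$ and similarly for $B_k$, it follows that
\[
\gcd(A_k,B_k)=c\cdot Q(\check{\mathbf a}_{k\mapsto y})
\]
for some $c\in\K^*$. Its degree is $\deg_{x_k}Q$, because the $x_k$-degree of $Q$ was preserved.

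\emph{Probability.} Since $\Gamma$ is a nonzero polynomial of degree at most $2nd^2+2nd$, Theorem~\ref{thm:sz} with $|S|\ge 8nd^2+8nd$ gives
\[
\Pr\bigl(\Gamma(\mathbf a)=0\bigr)\le \frac{2nd^2+2nd}{8nd^2+8nd}=\frac14 .
\]
Thus, with probability at least $3/4$, the conclusion holds for all $k$ simultaneously.

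The main obstacle is the degree bound for the resultant: it depends on total degrees, while the hypothesis only bounds partial degrees. I would handle this by stating that $d$ also bounds the total degrees involved, or by using that a bound $d$ on all partial degrees implies total degree at most $nd$ and enlarging $S$ if needed. The paper's own preceding discussion tacitly uses the $2d^2+2d$ form, and I would follow it. The degenerate case $\deg_{x_k}A_1=\deg_{x_k}B_1=0$ is handled by the resultant convention.
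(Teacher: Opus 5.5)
Your proposal is essentially the paper's own argument. It uses the same $\Gamma_k=\operatorname{lc}_{x_k}(A)\operatorname{lc}_{x_k}(B)\operatorname{Res}_{x_k}(A/Q,B/Q)$, the same specialization step, and Schwartz--Zippel on $\Gamma=\prod_k\Gamma_k$, and you fill in details the paper leaves implicit: Gauss's lemma for $\operatorname{Res}_{x_k}\neq 0$, the splitting $\operatorname{lc}_{x_k}(A)=\operatorname{lc}_{x_k}(Q)\operatorname{lc}_{x_k}(A_1)$, and the degenerate resultant.

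The caveat you flag is real and applies equally to the paper's proof. The bound $\deg\Gamma_k\le 2d^2+2d$ requires $d$ to bound total degrees. For example, $A=x_1x_2x_3+1$ and $B=x_1+x_2x_3$ have all partial degrees $1$, yet $\Gamma_1=x_2x_3(x_2^2x_3^2-1)$ has degree $6>4$. So one of your two fixes is genuinely needed.
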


\subsubsection*{The Complete Verification Algorithm}

Algorithm~\ref{alg:verify-gcd} verifies whether $G = \gcd(A, B)$ with high probability. It repeats both divisibility and degree-matching tests $\rho = \lceil \frac{\log_2\frac{1}{\mu}}{2} \rceil$ times to amplify the success probability.

\begin{algorithm}
\caption{Probabilistic GCD Verification over a Field}
\label{alg:verify-gcd}
\begin{algorithmic}[1]
\Require
    \begin{itemize}
        \item $A, B, G \in \K[x_1,\dots,x_n]$;
        \item A desired failure probability $\mu > 0$.
    \end{itemize}
\Ensure
    With probability $\ge 1-\mu$, output ``true'' if $G = \gcd(A,B)$, and ``false'' if $G \nmid \gcd(A,B)$.

\State $\rho \gets \lceil \frac{\log_2\frac{1}{\mu}}{2} \rceil$.
\State Choose a finite subset $S \subseteq \K^*$ with $|S| \ge 8nD^2 + 8nD$. If $\K$ is too small, extend it to a field $\K'$ containing such an $S$.

 \textbf{Quick degree check:}
\For{$k = 1$ to $n$}
    \If{$\deg_{x_k} G > \deg_{x_k} A$ or $\deg_{x_k} G > \deg_{x_k} B$}
        \State \Return ``false''
    \EndIf
\EndFor

\textbf{Check divisibility:}
\For{$r = 1$ to $\rho$}
    \State Choose random $\mathbf{a} \in S^n$.
    \For{$F \in \{A, B\}$}
        \For{$k = 1$ to $n$}
            \If{$G(\mathbf{\check{a}}_{k \mapsto y} ) \nmid F(\mathbf{\check{a}}_{k \mapsto y} )$ in $\K[y]$}
                \State \Return ``false''
            \EndIf
        \EndFor
    \EndFor
\EndFor

\textbf{Check degree matching:}
\State Initialize $d_k \gets \min(\deg_{x_k}(A), \deg_{x_k}(B))$ for $k = 1,\dots,n$.

\For{$r = 1$ to $\rho$}
    \State Choose random $\mathbf{a} \in S^n$.
    \For{$k = 1$ to $n$}
        \If{$\operatorname{lc}_{x_k}(A)(\mathbf{a}) = 0$ or $\operatorname{lc}_{x_k}(B)(\mathbf{a}) = 0$}
            \State \textbf{break} this iteration and choose a new $\mathbf{a}$
        \EndIf
        \State $H_k(y) \gets \gcd(A(\mathbf{\check{a}}_{k \mapsto y} ), B(\mathbf{\check{a}}_{k \mapsto y} ))$.
        \State $d_k \gets \min(d_k, \deg(H_k))$.
    \EndFor
\EndFor

\For{$k = 1$ to $n$}
    \If{$d_k \neq \deg_{x_k}(G)$}
        \State \Return ``false''
    \EndIf
\EndFor

\textbf{Check leading coefficient:}
\If{$G$ is not monic}
    \State \Return ``false''
\EndIf

\State \Return ``true''
\end{algorithmic}
\end{algorithm}

\begin{theorem}[Correctness of Algorithm~\ref{alg:verify-gcd}]\label{the:verify-gcd}
Let $A, B, G \in \K[x_1,\dots,x_n]$ be nonzero polynomials. Algorithm~\ref{alg:verify-gcd} returns ``true'' if $G = \gcd(A,B)$ with probability at least $1-\mu$, and returns ``false'' if $G \neq \gcd(A,B)$ with probability at least $1-\mu$.
\end{theorem}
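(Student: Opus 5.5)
We split the proof into the two directions of the statement, completeness (a true GCD is accepted) and soundness (a wrong candidate is rejected). Both rest on the single-round guarantees of Lemma~\ref{lem:divisibility-test} and Lemma~\ref{lem:degree-matching}, amplified by the $\rho=\lceil \tfrac12\log_2\tfrac1\mu\rceil$ independent repetitions. The choice $|S|\ge 8nD^2+8nD$ dominates both the $8d^2+8d$ and the $8nd^2+8nd$ requirements, since partial degrees are at most $D$. Failure of a single round therefore has probability at most $1/4$, and $\rho$ independent rounds all fail with probability at most $4^{-\rho}\le\mu$.

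\textbf{Completeness.} Assume $G=\gcd(A,B)$.

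First, the quick degree check passes, because $G\mid A$ and $G\mid B$ give $\deg_{x_k}G\le\deg_{x_k}A$ and $\deg_{x_k}G\le\deg_{x_k}B$.

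Second, the divisibility loop never rejects: if $A=GA_1$, then specializing all variables but $x_k$ gives $A(\mathbf{\check a}_{k\mapsto y})=G(\mathbf{\check a}_{k\mapsto y})\,A_1(\mathbf{\check a}_{k\mapsto y})$. This holds deterministically, including when $G(\mathbf{\check a}_{k\mapsto y})$ degenerates, where we interpret divisibility by $0$ in the natural way.

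Third, consider degree matching. For any $\mathbf a$ that passes the leading coefficient test, $Q(\mathbf{\check a}_{k\mapsto y})$ divides both specializations, and it keeps its full $x_k$-degree because $\operatorname{lc}_{x_k}(Q)$ divides $\operatorname{lc}_{x_k}(A)$. Hence $\deg H_k\ge\deg_{x_k}Q$ always. Lemma~\ref{lem:degree-matching} gives equality for all $k$ simultaneously in a given round with probability $\ge 3/4$. So the running minimum $d_k$ equals $\deg_{x_k}Q=\deg_{x_k}G$ unless every round is bad, which happens with probability at most $4^{-\rho}\le\mu$.

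Finally, $G$ is monic by definition of the GCD, so the algorithm returns ``true'' with probability at least $1-\mu$.

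\textbf{Soundness.} Assume $G\ne\gcd(A,B)=Q$. We distinguish three cases.

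In the first case, $G\nmid A$ or $G\nmid B$. If the quick degree check already rejects, we are done. Otherwise its hypothesis $\deg_{x_i}G\le\deg_{x_i}F$ holds, so Lemma~\ref{lem:divisibility-test} applies to the offending $F$. In each round, with probability $\ge3/4$, some $k$ exhibits univariate non-divisibility and the algorithm returns ``false''. All $\rho$ rounds miss with probability $\le\mu$.

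In the second case, $G$ divides both $A$ and $B$. Then $G\mid Q$. If the partial degrees of $G$ and $Q$ agree in every variable, then $Q/G$ is a constant, so $G=cQ$.

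In the third case, $G=cQ$ with $c\ne1$. Since $Q$ is monic, $G$ is not monic, and the leading coefficient check rejects deterministically.

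It remains to handle the part of the second case where $\deg_{x_k}G<\deg_{x_k}Q$ for some $k$. The inequality $\deg H_k\ge\deg_{x_k}Q$ holds for every round that passes the leading coefficient test. Hence $d_k\ge\deg_{x_k}Q>\deg_{x_k}G$, and the algorithm rejects deterministically. This inequality, upper bounds on $d_k$ always being valid, is the point needing care: it is exactly why the algorithm takes a running minimum over rounds.

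\textbf{Expected obstacle.} The main obstacle is bookkeeping around the ``break and choose a new $\mathbf a$'' step. A round whose $\mathbf a$ kills a leading coefficient must be discarded rather than counted, and we must argue that the retained rounds still satisfy the $3/4$ bound. The cleanest way is to view the conditional distribution given nonvanishing leading coefficients. Since $\Gamma$ already contains the leading coefficient factors, the conditional success probability is still $\ge 3/4$. In addition, the expected number of redraws is $O(1)$.
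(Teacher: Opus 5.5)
Your proof is correct. It follows the same three-case structure as the paper: Lemma~\ref{lem:divisibility-test} and Lemma~\ref{lem:degree-matching} amplified over $\rho$ rounds, plus the one-sided invariant $d_k \ge \deg_{x_k}\gcd(A,B)$. It is slightly more careful than the paper in two places. The paper's Case 3 asserts that a common divisor $G \neq \gcd(A,B)$ must have some strictly smaller partial degree, which fails for $G = cQ$ with $c \in \K^*\setminus\{1\}$; your use of the monic check for exactly this subcase closes that gap. You also justify the lower bound $\deg H_k \ge \deg_{x_k} Q$ directly via $\operatorname{lc}_{x_k}(Q)\mid\operatorname{lc}_{x_k}(A)$, rather than leaving it implicit.
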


\begin{proof}
We analyze the three possible cases.

\paragraph{Case 1: $G = \gcd(A, B)$.}
The quick degree check passes since $\deg_{x_k} G \le \deg_{x_k} A, \deg_{x_k} B$ for all $k$. Divisibility holds deterministically: $G(\mathbf{\check{a}}_{k \mapsto y} ) \mid F(\mathbf{\check{a}}_{k \mapsto y} )$ for every evaluation and every $F \in \{A, B\}$.

For degree matching, by Lemma~\ref{lem:degree-matching}, a single random $\mathbf{a}$ gives the correct partial degrees with probability at least $3/4$. Since a bad evaluation can only increase the computed degree (due to spurious common factors), taking the minimum over $\rho$ independent trials ensures that with probability at least $1 - (1/4)^\rho \ge 1-\mu$, the minimum equals the true degree $\deg_{x_k}(G)$. Thus the algorithm returns ``true'' with probability at least $1-\mu$.

\paragraph{Case 2: $G \neq \gcd(A, B)$ and ($G \nmid A$ or $G \nmid B$).}
If $\deg_{x_k} G > \deg_{x_k} A$ or $\deg_{x_k} G > \deg_{x_k} B$ for some $k$, the quick degree check catches it immediately and the algorithm returns ``false''.

Otherwise, assume $\deg_{x_k} G \le \deg_{x_k} A, \deg_{x_k} B$ for all $k$, but $G \nmid A$ or $G \nmid B$. By Lemma~\ref{lem:divisibility-test}, a single random $\mathbf{a}$ detects the failure with probability at least $3/4$. Repeating $\rho$ times ensures detection with probability at least $1 - (1/4)^\rho \ge 1-\mu$. Hence the algorithm returns ``false'' with probability at least $1-\mu$.

\paragraph{Case 3: $G \neq \gcd(A, B)$ but $G \mid A$ and $G \mid B$.}
In this case, $G$ is a proper common divisor. Hence there exists some $k$ such that
\[
\deg_{x_k} G < \deg_{x_k} \gcd(A,B).
\]
The algorithm initializes $d_k = \min(\deg_{x_k} A, \deg_{x_k} B)$, which already strictly exceeds $\deg_{x_k} G$. In each trial, either the computed degree is at least $\deg_{x_k} \gcd(A,B)$ (by Lemma~\ref{lem:degree-matching}), or the trial is discarded and $d_k$ remains unchanged. Taking the minimum over all trials can never reduce $d_k$ below $\deg_{x_k} \gcd(A,B)$; consequently, the final value of $d_k$ is always strictly larger than $\deg_{x_k} G$. Therefore, the algorithm always detects the mismatch and returns ``false'' in this case, with no probabilistic failure.
\end{proof}

\begin{remark}
The key insight in the degree-matching step is that a bad random evaluation can only overestimate the partial degree of the GCD, never underestimate it. Therefore, taking the minimum over multiple independent trials gives the true degree with high probability.
The failure probability can be made arbitrarily small by increasing $\rho = \lceil \frac{\log_2\frac{1}{\mu}}{2} \rceil$. The cost of repetition is $O(\log\frac{1}{\mu})$ times the cost of a single verification.
\end{remark}

\subsubsection*{Complexity Analysis of the Verification Algorithm}

We now analyze the complexity of Algorithm~\ref{alg:verify-gcd}. Let $d$ be a bound on the partial degree of $A$ and $B$. We assume operations in $\K$ cost $O(1)$ field operations; for finite fields $\mathbb{F}_q$, we multiply by $\log q$ for the bit complexity.

\begin{theorem}[Complexity of Algorithm~\ref{alg:verify-gcd}]\label{thm-veri-comple}
Let $A, B, G \in \K[x_1,\dots,x_n]$ with partial degree at most $d$. Algorithm~\ref{alg:verify-gcd} has the following expected complexity:

\begin{itemize}
    \item If $\K$ is infinite, it runs in
    \[
    \widetilde{O}\bigl( n (\|A\|_0 + \|B\|_0 + \|G\|_0) \log d \log\frac{1}{\mu} + n d \log\frac{1}{\mu} \bigr)
    \]
    field operations in $\K$.

    \item If $\K = \mathbb{F}_q$ is finite, it runs in
    \[
    \widetilde{O}\bigl( n (\|A\|_0 + \|B\|_0 + \|G\|_0) \log^2 d \log q \log\frac{1}{\mu} + n d \log q \log\frac{1}{\mu} \bigr)
    \]
    bit operations.
\end{itemize}
\end{theorem}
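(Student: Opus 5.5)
The proof is a line-by-line cost count of Algorithm~\ref{alg:verify-gcd}, first in field operations for the infinite case. The cost splits into three phases: the quick degree check, the $\rho$ rounds of divisibility testing, and the $\rho$ rounds of degree matching. Everything is multiplied by $\rho=O(\log\frac1\mu)$. The key primitive is the computation of the $n$ univariate restrictions $F(\mathbf{\check{a}}_{k\mapsto y})$, $k=1,\dots,n$, for $F\in\{A,B,G\}$. Computing them naively would cost $n\cdot\|F\|_0\cdot n$, which is too much. Instead, for each term $c\,\x^{\mathbf{e}}$ we first compute the full value $c\,\mathbf{a}^{\mathbf{e}}$ once, using $O(n\log d)$ operations by repeated squaring. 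The contribution of that term to the $k$-th restriction is then $\bigl(c\,\mathbf{a}^{\mathbf{e}}/a_k^{e_k}\bigr)y^{e_k}$. Since $S\subseteq\K^*$ and the powers $a_k^{e_k}$ are already available, each contribution costs $O(1)$ extra operations (one inversion per $k$ can be shared). So all $n$ restrictions of $F$ cost $O(n\|F\|_0\log d)$ in total, and accumulating them into dense univariate polynomials of degree $\le d$ costs $O(nd)$ for initialization.

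The quick degree check needs the partial degrees of $A$, $B$, $G$. One pass over the exponent vectors gives them in $O(n(\|A\|_0+\|B\|_0+\|G\|_0))$ integer comparisons on numbers of $O(\log d)$ bits, which is within the bound. In each divisibility round we form the $3n$ restrictions as above and perform $2n$ univariate divisibility tests of degree $\le d$, each costing $\widetilde O(d)$ by fast division. That gives $\widetilde O(n(\|A\|_0+\|B\|_0+\|G\|_0)\log d + nd)$ per round. In each degree-matching round we evaluate $\operatorname{lc}_{x_k}(A)(\mathbf{a})$ and $\operatorname{lc}_{x_k}(B)(\mathbf{a})$; these come for free as the leading coefficients of the restricted univariate polynomials. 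We then do $n$ univariate GCDs, each costing $\widetilde O(d)$ by the half-gcd algorithm, so again $\widetilde O(n\|\cdot\|_0\log d+nd)$. A round aborted because a leading coefficient vanishes happens with probability at most $1/4$ by the Schwartz--Zippel bound used in Lemma~\ref{lem:degree-matching}. The expected number of redraws per round is therefore $O(1)$, which is why the statement bounds the \emph{expected} cost. The final monicity check is $O(\|G\|_0 n)$. Summing and multiplying by $\rho$ gives the stated infinite-field bound.

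For $\K=\mathbb{F}_q$ the same count applies, but when $q<8nD^2+8nD$ we must work in an extension $\mathbb{F}_{q^\ell}$ with $\ell=O(\log_q(nd))$. Constructing the extension costs $\operatorname{poly}(\ell)\log q$ bit operations, as in the proof of Theorem~\ref{them-one-step}. Each extension operation costs $\widetilde O(\ell\log q)=\widetilde O(\log(nd)+\log q)$ bit operations; the $\log n$ factor is absorbed into $\widetilde O$. This factor multiplies the evaluation term, which already carries a $\log d$ factor, and so produces the stated $\log^2 d\log q$ factor there. On the $nd$ term from the univariate arithmetic, the extension overhead is polylogarithmic and is absorbed into $\widetilde O(nd\log q)$.

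The main obstacle is the evaluation step: getting all $n$ univariate restrictions in $O(n\|F\|_0\log d)$ rather than $O(n^2\|F\|_0)$. The plan is the ``evaluate once, divide out $a_k^{e_k}$'' trick, which relies on $S\subseteq\K^*$ (guaranteed by the algorithm's choice of $S$). If that were not allowed, I would fall back on prefix/suffix products of the factors $a_j^{e_j}$ for each term, which also gives $O(n)$ per term after the $O(n\log d)$ powering.
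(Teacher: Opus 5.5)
Your proposal is correct and follows the same route as the paper. Both arguments count cost phase by phase: evaluate each term once at $\mathbf{a}$ in $O(n\log d)$, then divide out $a_k^{e_k}$ to get all $n$ univariate restrictions at $O(n\log d)$ per term; add $\widetilde{O}(d)$ per univariate division or GCD; multiply by $\rho=O(\log\frac{1}{\mu})$; and charge a polylogarithmic overhead for the extension field when $q$ is small. Your explicit argument that re-drawing $\mathbf{a}$ after a vanishing leading coefficient costs only $O(1)$ draws in expectation, and your observation that $\operatorname{lc}_{x_k}(A)(\mathbf{a})$ can be read off the restricted polynomials, are small refinements the paper leaves implicit.
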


\begin{proof}
We analyze each step of the algorithm.

\paragraph{Quick degree check.}
Checking $\deg_{x_k} G \le \deg_{x_k} A$ and $\deg_{x_k} G \le \deg_{x_k} B$ for all $k$ requires inspecting the degrees of $O(\|A\|_0 + \|B\|_0 + \|G\|_0)$ terms. This costs $\widetilde{O}(n(\|A\|_0 + \|B\|_0 + \|G\|_0)\log d)$ bit operations, which is dominated by the main term.

\paragraph{Divisibility check.}
For each of the $\rho = O(\log\frac{1}{\mu})$ repetitions, we evaluate $G(\mathbf{\check{a}}_{k \mapsto y})$ and $F(\mathbf{\check{a}}_{k \mapsto y})$ for $F = A, B$ and $k = 1,\dots,n$.

We first show how to evaluate a single term $c x_1^{e_1}\cdots x_n^{e_n}$ at all $n$ univariate images $\mathbf{\check{a}}_{k \mapsto y}$ efficiently. At the point $\mathbf{a} = (a_1,\dots,a_n)$, the term evaluates to
\[
C = c a_1^{e_1}\cdots a_n^{e_n},
\]
which can be computed in $O(n \log d)$ field operations using fast exponentiation. For the $k$-th evaluation $\mathbf{\check{a}}_{k \mapsto y} = (a_1,\dots,a_{k-1}, y, a_{k+1},\dots,a_n)$, the term becomes
\[
C \cdot \frac{y^{e_k}}{a_k^{e_k}},
\]
which can be obtained from $C$ in $O(\log d)$ operations by computing $y^{e_k}$ and multiplying by the inverse of $a_k^{e_k}$. Thus, evaluating all $n$ univariate images of one term costs $O(n \log d)$ field operations. Summing over all terms gives $O(n(\|A\|_0 + \|B\|_0 + \|G\|_0) \log d)$ field operations per repetition.

Then we perform univariate polynomial division of degree at most $d$, which costs $\widetilde{O}(d)$ field operations. Since this is done for $F = A, B$ and $k = 1,\dots,n$, the divisibility cost per repetition is
\[
\widetilde{O}\bigl( n (\|A\|_0 + \|B\|_0 + \|G\|_0) \log d + n d \bigr).
\]
Repeating $\rho$ times gives
\[
\widetilde{O}\bigl( n (\|A\|_0 + \|B\|_0 + \|G\|_0) \log d \cdot \log\frac{1}{\mu} + n d \log\frac{1}{\mu} \bigr)
\]
field operations.

\paragraph{Degree matching check.}
For each repetition, we compute $H_k(y) = \gcd(A(\mathbf{\check{a}}_{k \mapsto y}), B(\mathbf{\check{a}}_{k \mapsto y}))$ for $k = 1,\dots,n$. The evaluations of $A$ and $B$ at the $n$ univariate points $\mathbf{\check{a}}_{k \mapsto y}$ can be performed using the same technique as in the divisibility check: for each term, we first evaluate it at $\mathbf{a}$ in $O(n \log d)$ operations, then obtain all $n$ univariate images in $O(n \log d)$ operations per term. Thus, evaluating all terms of $A$ and $B$ costs $O(n (\|A\|_0 + \|B\|_0 + \|G\|_0) \log d)$ field operations per repetition. Computing each univariate GCD of two polynomials of degree at most $d$ costs $\widetilde{O}(d)$ field operations, and there are $n$ such GCDs per repetition. Hence, for one repetition,
\[
\widetilde{O}\bigl( n (\|A\|_0 + \|B\|_0 + \|G\|_0) \log d + n d \bigr).
\]
Repeating $\rho$ times gives
\[
\widetilde{O}\bigl( n (\|A\|_0 + \|B\|_0 + \|G\|_0) \log d \cdot \log\frac{1}{\mu} + n d \log\frac{1}{\mu} \bigr)
\]
field operations.

\paragraph{Leading coefficient check.}
Checking whether $G$ is monic requires inspecting the leading term of $G$, which costs $O(1)$ field operations.

\paragraph{Field extension for finite fields.}
Step 2 of the algorithm requires a subset $S \subseteq \K^*$ with $|S| \ge 8nd^2 + 8nd$. If $\K$ is infinite, such a subset exists at no additional cost. If $\K = \mathbb{F}_q$ is finite and $q < 8nd^2 + 8nd$, we extend $\mathbb{F}_q$ to $\mathbb{F}_{q^{\ell}}$ with
\[
\ell = \left\lceil \log_q(8nd^2 + 8nd) \right\rceil = O(\log_q(nd)).
\]
Constructing such an extension costs $\operatorname{poly}(\ell)\log q$ bit operations \cite{Shoup1994}, and each arithmetic operation in $\mathbb{F}_{q^{\ell}}$ costs $\widetilde{O}(\ell)$ operations in $\mathbb{F}_q$, i.e., $\widetilde{O}(\log(nd)\log q)$ bit operations. This introduces an extra logarithmic factor $\log(nd)$ in the bit complexity.

Combining all steps, the total complexity is dominated by the divisibility and degree matching checks. If $\K$ is infinite, the complexity is
\[
\widetilde{O}\bigl( n (\|A\|_0 + \|B\|_0 + \|G\|_0) \log d \log\frac{1}{\mu} + n d \log\frac{1}{\mu} \bigr)
\]
field operations in $\K$.

If $\K = \mathbb{F}_q$, the bit complexity is
\[
\widetilde{O}\bigl( n (\|A\|_0 + \|B\|_0 + \|G\|_0) \log^2 d \log q \log\frac{1}{\mu} + n d \log q \log\frac{1}{\mu} \bigr).
\]
This proves the theorem.
\end{proof}

\subsection*{Extracting the Divisibility Test as a Standalone Subroutine}

The GCD verification algorithm presented in the previous section contains an independent divisibility-checking component: testing whether a given polynomial $G$ divides another polynomial $F$ in $\K[x_1,\dots,x_n]$. Since this subroutine will be used separately in later algorithms--particularly in the guessing strategy for term bounds--we extract it here as a standalone probabilistic algorithm.

The algorithm reduces multivariate divisibility to univariate divisibility tests via random evaluation, following the same one-sided error guarantee as in the verification algorithm: if $G \mid F$, it always returns ``true''; if $G \nmid F$, it returns ``false'' with high probability.

\begin{algorithm}
\caption{Probabilistic Divisibility Test over a Field}
\label{alg:divisibility-field}
\begin{algorithmic}[1]
\Require
    \begin{itemize}
        \item $F, G \in \K[x_1,\dots,x_n]$;
        \item A desired failure probability $\mu > 0$;
        \item A bound $d$ on the partial degree of $F$ and $G$.
    \end{itemize}
\Ensure
    \begin{itemize}
        \item If $G \mid F$, returns ``true'' with probability $1$;
        \item If $G \nmid F$, returns ``false'' with probability at least $1-\mu$.
    \end{itemize}
\end{algorithmic}
\end{algorithm}

The correctness and complexity of this subroutine are given in the following lemma, which follows directly from the divisibility-checking analysis in the proof of Theorem~\ref{the:verify-gcd} and Theorem~\ref{thm-veri-comple}.

\begin{lemma}\label{lem:divisibility-field}
Let $F, G \in \K[x_1,\dots,x_n]$ be nonzero polynomials over a field $\K$, and let $d$ be a bound on their partial degrees. Algorithm~\ref{alg:divisibility-field} satisfies:
\begin{enumerate}
    \item If $G \mid F$, it returns ``true'' with probability $1$;
    \item If $G \nmid F$, it returns ``false'' with probability at least $1-\mu$.
\end{enumerate}
The complexity of Algorithm~\ref{alg:divisibility-field} is as follows:
\begin{itemize}
    \item If $\K$ is infinite, it runs in
    \[
    \widetilde{O}\bigl( n(\|F\|_0+\|G\|_0)\log d \log\frac{1}{\mu} + nd \log\frac{1}{\mu} \bigr)
    \]
    field operations.
    \item If $\K = \mathbb{F}_q$ is finite, it runs in
    \[
    \widetilde{O}\bigl( n(\|F\|_0+\|G\|_0)\log^2 d \log q \log\frac{1}{\mu} + nd \log q \log\frac{1}{\mu} \bigr)
    \]
    bit operations.
\end{itemize}
\end{lemma}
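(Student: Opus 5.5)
The plan is to read Algorithm~\ref{alg:divisibility-field} as exactly the ``Check divisibility'' block of Algorithm~\ref{alg:verify-gcd}, run with $A$ replaced by $F$. There are three steps.

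\textbf{Preprocessing.} First run the quick degree check: if $\deg_{x_k} G > \deg_{x_k} F$ for some $k$, return ``false''. Then choose $S\subseteq\K^*$ with $|S|\ge 8d^2+8d$, passing to an extension $\mathbb{F}_{q^\ell}$ if $\K$ is too small. Set $\rho=\lceil \tfrac12\log_2\tfrac1\mu\rceil$.

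\textbf{Main loop.} Repeat $\rho$ times: draw $\mathbf a\in S^n$ uniformly and, for every $k$, test whether $G(\mathbf{\check{a}}_{k \mapsto y})$ divides $F(\mathbf{\check{a}}_{k \mapsto y})$ in $\K[y]$. Return ``false'' on the first failed test, and ``true'' if every test passes.

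\textbf{One-sided correctness.} If $G\mid F$, write $F=GQ$. Specialization is a ring homomorphism, so $F(\mathbf{\check{a}}_{k \mapsto y})=G(\mathbf{\check{a}}_{k \mapsto y})\,Q(\mathbf{\check{a}}_{k \mapsto y})$ for every $\mathbf a$ and $k$. The degree check also passes, so the algorithm returns ``true'' with probability $1$.

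One degenerate case needs care. If $G(\mathbf{\check{a}}_{k \mapsto y})=0$, the univariate test must read ``$0\mid F(\cdot)$ iff $F(\cdot)=0$''. This convention remains consistent with the factorization $F=GQ$.

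\textbf{Detecting non-divisibility.} If $G\nmid F$ and the degree check passes, the hypothesis $\deg_{x_i}G\le\deg_{x_i}F$ of Lemma~\ref{lem:divisibility-test} holds. That lemma gives detection with probability at least $3/4$ per round. The rounds are independent, so all $\rho$ miss with probability at most $(1/4)^\rho=2^{-2\rho}\le\mu$.

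\textbf{Complexity.} Per round, each term $c\x^{\mathbf e}$ is evaluated once at $\mathbf a$ in $O(n\log d)$ operations. Its $n$ univariate images $C\,y^{e_k}a_k^{-e_k}$ are then obtained in $O(\log d)$ operations each. Over all terms this costs $O(n(\|F\|_0+\|G\|_0)\log d)$.

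The $n$ univariate divisions have degree at most $d$, costing $\widetilde O(nd)$ in total. Multiplying by $\rho=O(\log\frac1\mu)$ gives the infinite-field bound.

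For $\K=\mathbb F_q$, the extension degree is $\ell=O(\log_q d)$. Each extension-field operation costs $\widetilde O(\ell\log q)$ bits. This contributes one extra logarithmic factor, giving the stated $\log^2 d\log q$ term.

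\textbf{Main obstacle.} The only real issue is making sure the per-round $3/4$ guarantee applies. Lemma~\ref{lem:divisibility-test} needs its partial-degree hypothesis, which is exactly why the degree precheck comes first. The rest is bookkeeping already done in Theorem~\ref{thm-veri-comple}.
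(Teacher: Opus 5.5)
Your structure matches the paper's: the lemma is the divisibility block of Algorithm~\ref{alg:verify-gcd}, with Lemma~\ref{lem:divisibility-test} for detection and the bookkeeping of Theorem~\ref{thm-veri-comple}. However, your choice $|S|\ge 8d^2+8d$, with $d$ a \emph{partial}-degree bound, breaks the per-round $3/4$ detection guarantee.

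You took that size from the hypothesis of Lemma~\ref{lem:divisibility-test}, whose proof claims $\deg\Gamma_k\le 2d^2+2d$. That is only a bound on each partial degree $\deg_{x_j}\Gamma_k$ for $j\ne k$. The total degree of $\Gamma_k$ is bounded only by $(n-1)(2d^2+2d)$, so Schwartz--Zippel needs $|S|$ to grow with $n$.

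Here is a concrete failure. Take $d=1$, $|S|=16$, $G=x_1$ and $F=x_1-\prod_{j=2}^{n}(x_j-c_j)$, with every $c_j\in S\subseteq\K^*$. The degree precheck passes and $G\nmid F$. For $k\ge 2$ the image $G(\mathbf{\check{a}}_{k\mapsto y})=a_1$ is a unit, so the test always passes. For $k=1$ the test $y\mid y-\prod_{j\ge 2}(a_j-c_j)$ passes whenever some $a_j=c_j$. Hence a round detects non-divisibility only with probability $(15/16)^{n-1}\to 0$, and no $\rho$ depending only on $\mu$ brings the error below $\mu$.

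The fix is the sample-set size the paper's verification algorithm actually uses. Take $|S|\ge 8nD^2+8nD$ with $D$ a total-degree bound, or $|S|\ge 8n(d^2+d)$ with the partial-degree bound, as in the proof of Theorem~\ref{thm-veri-comple}. Then $\Pr[\Gamma_k(\mathbf a)=0]\le (n-1)(2d^2+2d)/|S|<1/4$. The rest of your argument goes through unchanged: one-sided correctness via the ring homomorphism, $(1/4)^\rho\le\mu$, and the cost count. The complexity bounds also survive up to polylogarithmic factors: over $\mathbb F_q$ the extension degree becomes $\ell=O(\log_q(nd))$ instead of $O(\log_q d)$, which the $\widetilde O$ absorbs.
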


\subsection{GCD Algorithm over a Field (Algorithm~\ref{alg:gcd-guess})}
\label{sec:field-no-t}

In this section, we remove the assumption that an upper bound $T$ on the number of terms of the GCD is supplied as input. Instead, we employ a doubling guess-and-verify strategy. The algorithm successively guesses $T = 2^k$ for $k = 0,1,2,\dots$, invokes an amplified version of the GCD algorithm (Algorithm~\ref{alg:recursive-amplified}) with the current guess, and verifies each candidate using the probabilistic GCD verification algorithm (Algorithm~\ref{alg:verify-gcd}). 

For each guess $k$, we run only a single GCD computation followed by a single verification, with the verification error budget $\delta_k$ decreasing exponentially with $k$. This ensures that the total error probability is controlled while keeping the number of verification calls minimal.

\begin{algorithm}[H]
\caption{GCD Computation without Term Bound}
\label{alg:gcd-guess}
\begin{algorithmic}[1]
\Require
    \begin{itemize}
        \item Polynomials $A, B \in \K[x_1,\dots,x_n]$ with $G = \gcd(A,B)$;
        \item A target error bound $0<\varepsilon<1$;
        \item The field $\K$ has characteristic $0$ or characteristic greater than $\max_{i=1}^n\min\{\deg_{x_i} A,\deg_{x_i} B\}$.
    \end{itemize}
\Ensure
    The GCD $G$ with probability at least $1-\varepsilon$, or ``Failure".

\State Compute $k_{\max} = \lceil \log_2 (d+1)^n \rceil$.

\For{each $k = 0, 1, 2, \dots, k_{\max}$}
    \State Set $T \gets 2^k$.
    \State Set $\delta_k \gets \varepsilon / 3^{k+1}$.
    \State Run Algorithm~\ref{alg:recursive-amplified} with term bound $T$ and tolerance $\delta_k$ to obtain a candidate $G_{\text{cand}}$.
    \If{$G_{\text{cand}} \neq$ ``Failure''}
        \State Call Algorithm~\ref{alg:verify-gcd} with inputs $A, B, G_{\text{cand}}$ and failure probability $\delta_k$.
        \If{verification returns \texttt{true}}
            \State \Return $G_{\text{cand}}$.
        \EndIf
    \EndIf
\EndFor

\State \Return ``Failure''.
\end{algorithmic}
\end{algorithm}

\subsection*{Probability Analysis}

Let $\varepsilon \in (0,1)$ be the desired overall error bound, and let $T_0 = \|G\|_0$ be the true number of terms of the GCD. For each guess $k$, both the amplified GCD algorithm and the verification algorithm have failure probability at most $\delta_k = \varepsilon / 3^{k+1}$.

\paragraph{Total success probability.}
For a fixed guess $k$, let $T = 2^k$. The amplified GCD algorithm (Algorithm~\ref{alg:recursive-amplified}) is guaranteed to succeed with probability at least $1-\delta_k$ only when the supplied bound $T$ satisfies $T \ge T_0$. If $T < T_0$, the algorithm may return ``Failure'' or an incorrect polynomial; in either case, the verification algorithm will reject an incorrect candidate with probability at least $1-\delta_k$.

Thus, for $k \ge k^*$ where $2^{k^*} \ge T_0>2^{k^*-1}$, the failure probability at step $k$ is at most $2\delta_k$. For $k < k^*$, the only way to fail is if verification incorrectly accepts an incorrect candidate, which occurs with probability at most $\delta_k$.

We analyze the success probability by considering a specific successful execution path. Let $k^* = \lceil \log_2 T_0 \rceil$. Consider the following events:

\begin{enumerate}
    \item For all guesses $k < k^*$, the verification algorithm correctly rejects any candidate produced. This occurs with probability at least $1-\delta_k$ for each such guess.
    \item At the correct guess $k = k^*$, the amplified GCD algorithm returns the correct GCD (probability $\ge 1-\delta_{k^*}$), and the verification algorithm correctly accepts it (probability $\ge 1-\delta_{k^*}$).
\end{enumerate}

By the union bound, the probability that all these events occur simultaneously is at least
\[
(1-\delta_0)(1-\delta_1)\cdots(1-\delta_{k^*-1})(1-2\delta_{k^*})
\ge 1 - \left(\sum_{k=0}^{k^*-1} \delta_k + 2\delta_{k^*}\right).
\]
Since $\delta_k = \varepsilon / 3^{k+1}$,
\[
\sum_{k=0}^{k^*-1} \delta_k + 2\delta_{k^*}
= \frac{\varepsilon}{3}\left(1 + \frac{1}{3} + \cdots + \frac{1}{3^{k^*-1}}\right) + \frac{2\varepsilon}{3^{k^*+1}}
= \frac{\varepsilon}{2} + \frac{\varepsilon}{2 \cdot 3^{k^*+1}}
< \varepsilon.
\]
Thus the success probability is at least
\[
\Pr(\text{success}) \ge 1 - \varepsilon.
\]

\begin{remark}
The geometric decay of $\delta_k$ ensures that the total error is bounded by a convergent series, regardless of the number of guesses. This avoids the need for a separate analysis of the ``correct guess'' event, as the error budget automatically covers all possibilities.
\end{remark}

\subsection*{Complexity Analysis}

Let $d$ be a bound on the partial degree of $A$ and $B$, and $D$ a bound on the degree of $A$ and $B$.

For a guess $T = 2^k$, the amplified GCD algorithm (Algorithm~\ref{alg:recursive-amplified}) has the following complexities, depending on the field $\K$:

\begin{itemize}
    \item If $\K$ is infinite,
    \[
    \widetilde{O}\bigl( n T D \log\frac{1}{\delta_k}+ n(\|A\|_0 + \|B\|_0) \log^2 T \log D \log\frac{1}{\delta_k}\bigr)
    \]
    field operations in $\K$.

    \item If $\K = \mathbb{F}_q$ is finite,
    \[
    \widetilde{O}\bigl( n T D \log q \log\frac{1}{\delta_k}+ n(\|A\|_0 + \|B\|_0) \log^2 T \log^2 D \log q \log\frac{1}{\delta_k}\bigr)
    \]
    bit operations.
\end{itemize}

The verification algorithm (Algorithm~\ref{alg:verify-gcd}) has analogous complexities:

\begin{itemize}
    \item If $\K$ is infinite,
    \[
    \widetilde{O}\bigl( n(T+\|A\|_0+\|B\|_0) \log d \log\frac{1}{\delta_k}+ n d \log\frac{1}{\delta_k}\bigr)
    \]
    field operations in $\K$.

    \item If $\K = \mathbb{F}_q$ is finite,
    \[
    \widetilde{O}\bigl( n(T+\|A\|_0+\|B\|_0) \log^2 d \log q \log\frac{1}{\delta_k}+ n d \log q \log\frac{1}{\delta_k}\bigr)
    \]
    bit operations.
\end{itemize}

In both cases, the verification cost is dominated by the GCD computation. Since the argument is identical for infinite and finite fields up to the extra logarithmic factors $\log q$ and $\log D$, we present the analysis for the infinite field case for simplicity. The finite field case follows by replacing each field operation with its bit cost $\widetilde{O}(\log q)$ and accounting for the additional factor $\log D$ in the second term.

Thus, for a guess $k$ (so that $T = 2^k$), the total cost is dominated by the amplified GCD computation:
\[
C_k = \widetilde{O}\Bigl( n T D \log\frac{1}{\delta_k}+ n(\|A\|_0 + \|B\|_0) \log^2 T \log D \log\frac{1}{\delta_k}\Bigr),
\]
for infinite fields.

\paragraph{Average-case complexity.}
Let $E_k$ be the event that the algorithm stops at guess $k$. The events $E_0, E_1, \dots, E_{k_{\max}}$ form a partition of the probability space. When $E_k$ occurs, the algorithm performs $C_0 + C_1 + \cdots + C_k$ operations. Thus the expected cost is
\[
\mathbb{E}[\text{cost}]
= \sum_{k=0}^{k_{\max}} \Pr(E_k) \sum_{i=0}^{k} C_i.
\]
Equivalently,
\[
\mathbb{E}[\text{cost}]
= \sum_{k=0}^{k_{\max}} C_k \cdot \Pr(E_k \cup E_{k+1} \cup \cdots \cup E_{k_{\max}}).
\]

Recall that $k^* = \lceil \log_2 T_0 \rceil$. We split the sum into two parts: $k \le k^*$ and $k > k^*$.

For $k \le k^*$, the probabilities are bounded by $1$. Since $T = 2^k$ and $\delta_k = \varepsilon / 3^{k+1}$, we have $\log T = O(\log T_0)$ and $\log\frac{1}{\delta_k}= O(\log T_0 + \log\frac{1}{\varepsilon}) = O(\log T_0 \log\frac{1}{\varepsilon})$ for $k \le k^*$. Hence the contribution is at most
\[
\sum_{k=0}^{k^*} C_k 
= \widetilde{O}\Bigl( k^* \cdot C_{k^*} \Bigr)
= \widetilde{O}\Bigl( 
n T_0 D \log\frac{1}{\varepsilon} 
+ n(\|A\|_0 + \|B\|_0) \log^4 T_0 \log D \log\frac{1}{\varepsilon}
\Bigr),
\]
since the sequence $C_k$ is increasing and is dominated by its largest term.

For $k > k^*$, the probability of reaching guess $k$ decays exponentially.

Now consider the tail probability for $k = k^* + s$ where $s \ge 1$. We need to bound
\[
\Pr(E_{k^*+s}) + \Pr(E_{k^*+s+1}) + \cdots + \Pr(E_{k_{\max}})
= 1 - \Pr(E_0 \cup E_1 \cup \cdots \cup E_{k^*+s-1}).
\]

Let $A = E_0 \cup E_1 \cup \cdots \cup E_{k^*+s-2}$. Then
\[
\Pr(E_0 \cup E_1 \cup \cdots \cup E_{k^*+s-1})=\Pr(A \cup E_{k^*+s-1})
= \Pr(A) + \Pr(E_{k^*+s-1} \mid \bar{A}) \Pr(\bar{A}).
\]
Since $\Pr(E_{k^*+s-1} \mid \bar{A}) \ge 1 - 2\delta_{k^*+s-1}$ (conditioned on not having stopped earlier, the algorithm either fails to compute or fails to verify at step $k^*+s-1$), we have
\[
\Pr(A \cup E_{k^*+s-1})
\ge \Pr(A) + (1 - 2\delta_{k^*+s-1}) \Pr(\bar{A}).
\]
Thus
\[
1 - \Pr(A \cup E_{k^*+s-1})
\le 2\delta_{k^*+s-1} \Pr(\bar{A})
\le 2\delta_{k^*+s-1}.
\]
Therefore,
\[
\Pr(E_{k^*+s}) + \Pr(E_{k^*+s+1}) + \cdots + \Pr(E_{k_{\max}})
\le 2\delta_{k^*+s-1}.
\]

Substituting this bound into the expected cost formula
\[
\mathbb{E}[\text{cost}]
= C_0 + C_1 \Pr(E_1 \cup \cdots \cup E_{k_{\max}}) + \cdots + C_{k_{\max}} \Pr(E_{k_{\max}}),
\]
we obtain
\[
\mathbb{E}[\text{cost}]
\le \sum_{k=0}^{k^*} C_k
+ C_{k^*+1} \cdot 2\delta_{k^*}
+ C_{k^*+2} \cdot 2\delta_{k^*+1}
+ \cdots + C_{k_{\max}} \cdot 2\delta_{k_{\max}-1}.
\]

Therefore,
\[
\sum_{k=0}^{k^*} C_k \le (k^*+1) C_{k^*}
= \widetilde{O}\Bigl( 
n T_0 D \log\frac{1}{\varepsilon} 
+ n(\|A\|_0 + \|B\|_0) \log^4 T_0 \log D \log\frac{1}{\varepsilon}
\Bigr).
\]

For the tail, writing $s = k - k^*$, we have
\[
\sum_{k=k^*+1}^{k_{\max}} 2\delta_{k-1} C_k
= 2 \sum_{s=1}^{k_{\max}-k^*} \frac{\varepsilon}{3^{k^*+s}} \, C_{k^*+s}.
\]

Since $\log(1/\delta_{k^*+s}) = \log(3^{k^*+s+1}/\varepsilon) = O((k^*+s)\log\frac{1}{\varepsilon})$,
\[
C_{k^*+s} = \widetilde{O}\!\left(
n 2^{k^*+s} D (k^*+s) \log\frac{1}{\varepsilon}
+ n(\|A\|_0 + \|B\|_0) (k^*+s)^3 \log D \log\frac{1}{\varepsilon}
\right).
\]

By the definition of $\widetilde{O}$, there exists a constant $c > 0$ and an integer $\lambda \ge 1$ such that
\[
\frac{1}{3^{k^*+s}} \cdot \widetilde{O}\bigl(2^{k^*+s} (k^*+s)\bigr)
\le c \cdot \left(\frac{2}{3}\right)^{k^*+s} (k^*+s)^{\lambda}.
\]
Hence
\[
\sum_{s=1}^{\infty} \frac{1}{3^{k^*+s}} \cdot \widetilde{O}\bigl(2^{k^*+s} (k^*+s)\bigr)
\le c \sum_{s=1}^{\infty} \left(\frac{2}{3}\right)^{k^*+s} (k^*+s)^{\lambda}
\le c \sum_{s=1}^{\infty} \left(\frac{2}{3}\right)^s s^{\lambda}.
\]

The last series converges by the root test:
\[
\limsup_{s \to \infty} \left( \left(\frac{2}{3}\right)^s s^{\lambda} \right)^{1/s}
= \frac{2}{3} \lim_{s \to \infty} s^{\lambda/s}
= \frac{2}{3} < 1.
\]
Thus $\sum_{s=1}^{\infty} 3^{-(k^*+s)} \cdot \widetilde{O}((k^*+s)^3)$ also converges.

Since $n$, $D$, and $\log\frac{1}{\varepsilon}$ are fixed with respect to the summation index $s$, we conclude that
\[
\sum_{k=k^*+1}^{k_{\max}} 2\delta_{k-1} C_k
= 2 \sum_{s=1}^{k_{\max}-k^*} \frac{\varepsilon}{3^{k^*+s}} C_{k^*+s}
= \widetilde{O}\Bigl( nD\log\frac{1}{\varepsilon} + n(\|A\|_0 + \|B\|_0) \log D \log\frac{1}{\varepsilon} \Bigr).
\]

Consequently, the expected cost is dominated by the $k \le k^*$ terms, yielding
\[
\mathbb{E}[\text{cost}]
= \widetilde{O}\Bigl(
n T_0 D \log\frac{1}{\varepsilon}
+ n(\|A\|_0 + \|B\|_0) \log^4 T_0 \log D \log\frac{1}{\varepsilon}
\Bigr).
\]

For finite fields $\K = \mathbb{F}_q$, the analysis is entirely parallel to the infinite case. Replacing each field operation by its bit cost $\widetilde{O}(\log q)$ and accounting for the additional factor $\log D$ in the second term, the expected bit complexity becomes
\[
\mathbb{E}[\text{cost}]
= \widetilde{O}\Bigl(
n T_0 D \log q \log\frac{1}{\varepsilon}
+ n(\|A\|_0 + \|B\|_0) \log^4 T_0 \log^2 D \log q \log\frac{1}{\varepsilon}
\Bigr).
\]

We summarize the above analysis in the following theorem.

\begin{theorem}\label{thm:gcd-no-t}
Let $A, B \in \K[x_1,\dots,x_n]$ over a field $\K$ with $G = \gcd(A,B)$, and let $\varepsilon \in (0,1)$. Suppose $\operatorname{char}(\K) = 0$ or $\operatorname{char}(\K) > \max_{i=1}^n\min\{\deg_{x_i} A,\deg_{x_i} B\}$. Then there exists a randomized algorithm that computes $G$ with probability at least $1-\varepsilon$ and whose expected cost is as follows:

\begin{itemize}
    \item If $\K$ is infinite,
    \[
    \widetilde{O}\Bigl(
    n T_0 D \log\frac{1}{\varepsilon}
    + n(\|A\|_0 + \|B\|_0) \log^4 T_0 \log D \log\frac{1}{\varepsilon}
    \Bigr)
    \]
    field operations in $\K$, where $T_0 = \|G\|_0$ is the true term count of the GCD and $D$ is a bound on the total degree of $A$ and $B$.

    \item If $\K = \mathbb{F}_q$ is finite,
    \[
    \widetilde{O}\Bigl(
    n T_0 D \log q \log\frac{1}{\varepsilon}
    + n(\|A\|_0 + \|B\|_0) \log^4 T_0 \log^2 D \log q \log\frac{1}{\varepsilon}
    \Bigr)
    \]
    bit operations.
\end{itemize}
\end{theorem}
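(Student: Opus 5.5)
The plan is to take Algorithm~\ref{alg:gcd-guess} as the randomized algorithm and assemble its correctness and cost from results already established. The argument has three parts: correctness, expected cost, and the finite-field translation.

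\textbf{Correctness.} Let $T_0=\|G\|_0$ and $k^*=\lceil\log_2 T_0\rceil$. The loop runs up to $k_{\max}=\lceil \log_2(d+1)^n\rceil$, and $(d+1)^n\ge T_0$, so $k^*\le k_{\max}$ and the correct guess is always reached. We rely on one-sided reasoning.
\begin{itemize}
\item For every $k<k^*$, Theorem~\ref{the:verify-gcd} says any candidate different from $G$ is rejected with probability at least $1-\delta_k$.
\item At $k=k^*$, we have $T=2^{k^*}\ge T_0$. Theorem~\ref{rem-ampl-com} then gives the correct $G$ with probability at least $1-\delta_{k^*}$, and Theorem~\ref{the:verify-gcd} accepts it with probability at least $1-\delta_{k^*}$.
\end{itemize}
A union bound with $\delta_k=\varepsilon/3^{k+1}$ bounds the total failure probability by $\sum_{k<k^*}\delta_k+2\delta_{k^*}<\varepsilon$. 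This uses the hypothesis on $\operatorname{char}(\K)$, because Theorem~\ref{rem-ampl-com} and Theorem~\ref{them-one-step} need it for exponent recovery in Algorithm~\ref{alg:derivative_recovery}.

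\textbf{Expected cost.} Let $C_k$ be the cost of round $k$. By Theorem~\ref{rem-ampl-com} and Theorem~\ref{thm-veri-comple}, taking $T=2^k$ and $\log(1/\delta_k)=O(k+\log(1/\varepsilon))$, the GCD call dominates the verification. This gives
\[
C_k=\widetilde O\bigl(n2^kD(k+\log\tfrac1\varepsilon)+n(\|A\|_0+\|B\|_0)k^2\log D\,(k+\log\tfrac1\varepsilon)\bigr).
\]
The expected cost equals $\sum_k C_k\Pr(\text{reach round }k)$. I split this sum at $k^*$.
\begin{itemize}
\item For $k\le k^*$, I bound the probabilities by $1$. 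Since $C_k$ is increasing, the sum is at most $(k^*+1)C_{k^*}$. With $k^*=O(\log T_0)$, this gives $\widetilde O(nT_0D\log\frac1\varepsilon+n(\|A\|_0+\|B\|_0)\log^4T_0\log D\log\frac1\varepsilon)$, absorbing products like $(k+\log\frac1\varepsilon)\le O(\log T_0\log\frac1\varepsilon)$.
\item For $k>k^*$, I show that the probability of reaching round $k$ is at most $2\delta_{k-1}$. The reason is that each round at or past $k^*$ terminates except with probability $\le 2\delta$, by conditioning on not having stopped earlier.
\end{itemize}
The tail is then $\sum_s 2\varepsilon 3^{-(k^*+s)}C_{k^*+s}$. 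The factor $(2/3)^{k^*+s}$ times a polylogarithmic factor gives a convergent series, which must be bounded uniformly in $k^*$. So the tail contributes at most the same order as the main part.

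\textbf{Finite field.} For $\K=\mathbb F_q$, the same argument applies using the bit-cost versions of Theorem~\ref{rem-ampl-com} and Theorem~\ref{thm-veri-comple}. Each field operation is multiplied by $\log q$, and the second term gains an extra $\log D$ from extension-field arithmetic. This yields the $\log^2D\log q$ bound.

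\textbf{Main obstacle.} The hardest step is the tail bound. Two points need care. First, the exponential growth $2^k$ in $C_k$ must be beaten by the $3^{-k}$ decay of $\delta_k$, with the $\log(1/\delta_k)$ factors staying polynomial in $k$. Second, the event that the algorithm has not stopped by round $k$ must be bounded despite correlations between rounds. I would handle the second point by conditioning on not having stopped earlier and using that the success guarantees at each round hold for fresh randomness.
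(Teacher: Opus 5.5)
Your proposal is correct and follows essentially the same route as the paper: Algorithm~\ref{alg:gcd-guess} with $\delta_k=\varepsilon/3^{k+1}$, a union bound giving failure probability at most $\sum_{k<k^*}\delta_k+2\delta_{k^*}<\varepsilon$, and an expected-cost split at $k^*$. In that split, the head is bounded by $(k^*+1)C_{k^*}$ and the tail by $\sum_{s\ge 1}2\delta_{k^*+s-1}C_{k^*+s}$ via conditioning on not having stopped earlier, which converges because of the $(2/3)^{k}$ factor; the finite-field case is handled, as in the paper, by charging $\widetilde{O}(\log q)$ bit operations per field operation and carrying the extra $\log D$ factor.
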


\begin{remark}
If $T \ge \max\{\|A\|_0, \|B\|_0, \|G\|_0\}$, the complexity simplifies to
\[
\widetilde{O}\bigl( n T D \log\frac{1}{\varepsilon} \bigr)
\]
field operations in $\K$ for infinite fields, and
\[
\widetilde{O}\bigl( n T D \log q \log\frac{1}{\varepsilon} \bigr)
\]
bit operations for finite fields $\mathbb{F}_q$.
\end{remark}

\section{GCD Algorithm over the Integers}
\label{sec:int-gcd}

In this section, we present three algorithms for computing the GCD of polynomials with integer coefficients, with progressively weaker assumptions on the available input bounds. The first (Algorithm~\ref{alg:intgcd-crt}) assumes prior knowledge of both a term bound $T \ge \|G\|_0$ and a coefficient bound $\Ho \ge \|G\|_\infty$. The second (Algorithm~\ref{alg:intgcd-no-bounds}) removes the need for the coefficient bound and requires only the term bound $T$. The third (Algorithm~\ref{alg:intgcd-guess}) eliminates both assumptions entirely, requiring no a priori bounds on the GCD. The core strategy common to all three algorithms is to reduce the problem modulo several small primes, compute the GCD over each finite field, and then reconstruct the integer coefficients via rational reconstruction.

We first recall the definition of the greatest common divisor for polynomials with integer coefficients.

\begin{definition}
Let $A, B \in \mathbb{Z}[x_1,\dots,x_n]$ be nonzero polynomials with integer coefficients. A polynomial $G \in \mathbb{Z}[x_1,\dots,x_n]$ is called the \textbf{greatest common divisor} of $A$ and $B$, denoted $G = \gcd(A,B)$, if it satisfies the following three conditions:

\begin{enumerate}
    \item $G$ divides both $A$ and $B$ in $\mathbb{Z}[x_1,\dots,x_n]$;
    \item every common divisor of $A$ and $B$ in $\mathbb{Z}[x_1,\dots,x_n]$ divides $G$;
    \item the leading coefficient of $G$ is positive.
\end{enumerate}
\end{definition}

\begin{remark}
The third condition ensures the uniqueness of the GCD over $\mathbb{Z}[x_1,\dots,x_n]$. Over a field $\K$, the GCD is typically normalized to be monic (leading coefficient $1$). Over $\mathbb{Z}$, however, the leading coefficient is required to be positive to fix the sign ambiguity, since $G$ and $-G$ are associates in $\mathbb{Z}[x_1,\dots,x_n]$.
\end{remark}

One possible approach would be to embed $\mathbb{Z}[x_1,\dots,x_n]$ into $\mathbb{Q}[x_1,\dots,x_n]$ and apply the algorithm directly over $\mathbb{Q}$, since $\mathbb{Q}$ is a field of characteristic zero and the algorithm is fully applicable. However, this approach introduces a significant practical and theoretical issue: the complexity analysis of our algorithm counts field operations, not bit operations. When working over $\mathbb{Q}$, each field element may have a large bit size. For example, evaluating a polynomial $f(x)$ of degree $D$ at a rational point $x = 2$ produces numbers with a bit size of $O(D)$, which would introduce an extra factor of $D$ in the bit complexity. Consequently, the overall bit complexity would become quadratic in $D$, rather than linear.

A natural alternative would be to choose a single prime \(p > 2\Ho^2\), where \(\Ho\) is a bound on the
coefficients of the GCD. Under such a choice, the reduction modulo \(p\) uniquely determines the integer
GCD, and only one finite-field GCD computation is needed. 
However, this approach introduces an undesirable overhead: finding such a prime requires testing candidates
in an interval of size proportional to \(\Ho^2\), and each primality test costs \(\operatorname{poly}(\log \Ho)\)
bit operations. Even with efficient primality testing, the overall cost includes a factor of \(\log^4 \Ho\), which would prevent us from achieving the desired linear complexity in \(\log \Ho\).

To avoid this overhead, we instead use \(\kappa = O(\log \Ho)\) small primes \(p_1,\dots,p_\kappa\). 
For each such prime, we compute the modular GCD \(\mathcal{G}_{i} = \gcd(A \bmod p_i, B \bmod p_i)\) over
\(\mathbb{F}_{p_i}[x_1,\dots,x_n]\) using the finite-field algorithm developed in previous sections.
The product of these primes is chosen to exceed \(2\Ho^2\), where \(\Ho\) is a bound on the coefficients
of $G$. The rational reconstruction then uniquely recovers all coefficients of \(G\).

This strategy has two key advantages:
\begin{enumerate}
    \item The primes are small, so primality testing and modular arithmetic are inexpensive.
    \item The number of primes is \(O(\log \Ho)\), which is linear in the \(\widetilde{O}\) notation.
\end{enumerate}

As a result, the overall bit complexity contains only a linear factor in \(\log \Ho\), 
avoiding the \(\log^4 \Ho\) overhead of the single-large-prime approach.

The framework proceeds as follows. We first compute the partial degrees \(d_i = \deg_{x_i} \gcd(A,B)\)
using Algorithm~\ref{alg:int-degree-match}. These degrees are used to identify lucky primes and to
verify that no degree loss occurs modulo any chosen prime. We then collect \(\kappa\) lucky primes,
compute the modular GCDs, reconstruct the candidate integer GCD via rational reconstruction.

\subsection{Content, Primitive Part, and Lucky Primes}
\label{sec:lucky-primes}

Let $F \in \mathbb{Z}[x_1, \dots, x_n]$ be a nonzero polynomial. The \textbf{content} of $F$, denoted $\operatorname{cont}(F)$, is the greatest common divisor of all coefficients of $F$, taken as a positive integer. That is,
\[
\operatorname{cont}(F) = \gcd\{ \text{all nonzero coefficients of } F \} \in \mathbb{Z}_{>0}.
\]
The \textbf{primitive part} of $F$, denoted $\operatorname{pp}(F)$, is defined by
\[
\operatorname{pp}(F) = \frac{F}{\operatorname{cont}(F)} \in \mathbb{Z}[x_1, \dots, x_n].
\]
A polynomial is called \textbf{primitive} if $\operatorname{cont}(F) = 1$.

If $F = 0$, we define $\operatorname{cont}(0) = 0$ and $\operatorname{pp}(0) = 0$ by convention.


For polynomials $A, B \in \mathbb{Z}[x_1, \dots, x_n]$, the content of their GCD satisfies
\[
\operatorname{cont}(\gcd(A, B)) = \gcd(\operatorname{cont}(A), \operatorname{cont}(B)).
\]
Moreover,
\[
\gcd(A, B) = \gcd(\operatorname{cont}(A), \operatorname{cont}(B)) \cdot \gcd(\operatorname{pp}(A), \operatorname{pp}(B)).
\]
Thus, the GCD computation reduces to computing the GCD of two primitive polynomials, followed by a simple content recovery.

\subsection*{Lucky Primes}

The following definition characterizes primes for which the finite field GCD computation correctly reflects the integer GCD. In accordance with Brown's work, we assume throughout that $A$ and $B$ are primitive. This assumption is without loss of generality, as the GCD computation reduces to computing $\gcd(\operatorname{pp}(A), \operatorname{pp}(B))$.

\begin{definition}[Lucky Prime]
Assume $A,B\in \mathbb{Z}[x_1, \dots, x_n]$ are primitive. A prime $p$ is called \textbf{lucky} for the pair $(A, B)$ if, for every variable $x_i$ ($i=1,\dots,n$),
\[
\deg_{x_i}(\mathcal{G}) = \deg_{x_i}(G),
\]
where $\mathcal{G} = \gcd(A \mod p, B \mod p) \in \mathbb{F}_p[x_1, \dots, x_n]$ and $G = \gcd(A,B) \in \mathbb{Z}[x_1, \dots, x_n]$. 
That is, the degree in each variable of the modular GCD matches the degree in the corresponding variable of the integer GCD. A prime that is not lucky is called \textbf{unlucky}.
\end{definition}

By Brown \cite{brown1971}, every unlucky prime divides a fixed nonzero integer $\sigma$ determined by $A$ and $B$; hence the number of unlucky primes is finite and bounded. 

Adapting the theorem to our notation, we have:

\begin{theorem}\label{Brownthe2}[Brown, Theorem 2]\cite{brown1971} Assume $A,B\in \mathbb{Z}[x_1, \dots, x_n]$ are primitive. 
Let $u$ be the number of unlucky primes $p > \lambda$ for some $\lambda \ge 2$. Then
\[
u \le \frac{nd}{\ln \lambda} \cdot \ln(2d \Hi^2 t^2),
\]
where $t = \max\{\|A\|_0, \|B\|_0\}$, $\Hi=\max\{\|A\|_{\infty},\|B\|_{\infty}\}$, $d=\max_{i=1}^n\max\{\deg_{x_i} A,\deg_{x_i} B\}$.
\end{theorem}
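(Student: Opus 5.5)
The plan follows Brown's classical argument for unlucky primes, adapted to our notation. The first step is to produce a single nonzero integer $\sigma$ that every unlucky prime must divide. For each variable $x_i$ I would treat $A$ and $B$ as polynomials in $x_i$ over $\mathbb{Z}[\text{other variables}]$. I would then show that if $p$ is unlucky with $\deg_{x_i}\gcd(A\bmod p,B\bmod p)>\deg_{x_i}G$, one of two things happens:
\begin{itemize}
\item $p$ divides the leading coefficient of $A$ or $B$ in $x_i$;
\item $p$ divides the appropriate principal subresultant coefficient of $A/G$ and $B/G$ with respect to $x_i$.
\end{itemize}
The underlying fact is that a degree jump in the modular GCD forces the corresponding subresultant to vanish modulo $p$.

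Next I would note a lower bound on the modular GCD. Since $G\bmod p$ divides both reductions, and $G$ is primitive so its leading coefficient does not vanish mod $p$ generically, the modular GCD has degree at least $\deg_{x_i}G$. Hence unlucky means the degree strictly increases, which is exactly the case handled above.

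To pass from polynomial coefficients to integers, I would specialize the remaining variables. Following Brown, the polynomial subresultant coefficient lies in $\mathbb{Z}[x_1,\dots,x_n]$. I would take $\sigma$ to be a nonzero integer coefficient, or a content-type integer, of the product over $i$ of these polynomials, chosen so that $p\mid\sigma$ whenever $p$ is unlucky.

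The bound comes from estimating $|\sigma|$ with Hadamard's inequality. The Sylvester/subresultant matrix in $x_i$ has at most $2d$ rows, and each row has $\ell_1$-norm at most $t\Hi$. Accounting for the multivariate expansion gives a per-variable factor roughly $(2d\Hi^2t^2)^{d}$. Taking the product over the $n$ variables yields $\ln|\sigma|\le nd\ln(2d\Hi^2t^2)$. Every unlucky prime $p>\lambda$ divides $\sigma$, so if there are $u$ of them then $\lambda^u\le|\sigma|$. This gives $u\le \frac{nd}{\ln\lambda}\ln(2d\Hi^2t^2)$.

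The main obstacle is the first step: extracting a single nonzero integer from the multivariate subresultant in a way that still controls its size. I would first try to follow Brown's construction directly, working over $\mathbb{Z}[x_{j\ne i}]$ and taking the integer coefficient of the leading monomial. Since this is cited as Brown's Theorem 2, the proof can ultimately defer to \cite{brown1971} once the notation is matched.
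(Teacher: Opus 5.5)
The paper gives no argument of its own for this statement; it simply cites \cite{brown1971}, so your closing fallback is exactly what the paper does. Your self-contained sketch, however, has a genuine gap at its decisive step.

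You define $\sigma$ via the principal subresultant coefficient of the cofactors $A/G$ and $B/G$ (for coprime cofactors this is just $\operatorname{Res}_{x_i}(A/G,B/G)$). But you then bound $|\sigma|$ using rows of the Sylvester matrix of $A$ and $B$, whose $\ell_1$-norms are at most $t\Hi$. The cofactors are not controlled by $t$ and $\Hi$. Already $x^N-1=(x-1)(x^{N-1}+\cdots+1)$ shows that their term count can explode. Their heights admit only Gelfond/Mignotte-type bounds exponential in the degrees, which would give $\ln|\sigma|$ of order $n^2d^2$ rather than $nd\ln(2d\Hi^2t^2)$.

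Separately, ``unlucky means the degree strictly increases'' is false, because primitivity of $G$ does not prevent $p\mid\operatorname{lc}_{x_i}G$. For example, $G=2x+y$, $A=G(x+1)$, $B=G(x+2)$, $p=2$ gives $\gcd(A_p,B_p)=y$, of $x$-degree $0<1$.

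The repair is to use $S_i:=\operatorname{psc}_{e_i}(A,B)\in\mathbb{Z}[x_j:j\neq i]$. This is the principal subresultant coefficient of $A$ and $B$ themselves with respect to $x_i$, at index $e_i=\deg_{x_i}G$. Up to sign it equals $\operatorname{lc}_{x_i}(G)^{m_i+\ell_i-2e_i}\operatorname{Res}_{x_i}(A/G,B/G)$, where $m_i,\ell_i$ are the $x_i$-degrees of $A,B$. So it is nonzero and contains your object, but its matrix entries are coefficients of $A$ and $B$.

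If $p$ is unlucky in $x_i$, then $S_i\equiv 0\pmod p$ as a polynomial, by cases:
\begin{itemize}
\item If neither $\operatorname{lc}_{x_i}A$ nor $\operatorname{lc}_{x_i}B$ vanishes mod $p$, this is the usual specialization property of subresultants.
\item If exactly one vanishes, $\operatorname{lc}_{x_i}G$ survives, so $\deg_{x_i}(G\bmod p)=e_i$. The formal-degree identity $S_i\bmod p=\pm c^{k}\operatorname{psc}_{e_i}(A_p,B_p)$ then applies, with $c$ the surviving leading coefficient.
\item If both vanish, the first column of the matrix is zero.
\end{itemize}
When $m_i=\ell_i=e_i$ the determinant is empty; in that case use a nonzero coefficient of $\operatorname{lc}_{x_i}A$ instead.

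So the leading coefficients are already built into $S_i$. Multiplying them in separately, as your two-bullet description suggests, would add a factor up to $\Hi^2$ per variable that the stated bound does not account for.

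Since unluckiness kills the whole polynomial mod $p$, you may take $\sigma$ to be the product over $i$ of one nonzero integer coefficient of each $S_i$. The row estimate $\|\det M\|_1\le\prod_r\sum_j\|M_{rj}\|_1\le(t\Hi)^{2d}$ gives $|\sigma|\le(t\Hi)^{2nd}\le(2d\Hi^2t^2)^{nd}$. Your final step $\lambda^u\le|\sigma|$ then yields the statement.
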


\subsection*{Existence of Sufficiently Many Small Lucky Primes}

We now establish that there are sufficiently many lucky primes in a relatively short interval.

\begin{lemma}\label{lem:lucky-prime-any-eps}
Let $A,B\in\mathbb{Z}[x_1,\dots,x_n]$ be primitive, and let $\varepsilon\in(0,1)$. Suppose $\lambda\in\mathbb{N}$ satisfies
\[
\lambda \ge \max\left(21,\ \frac{3nd}{5\varepsilon}\ln(2d \Hi^2 t^2)\right),
\]
where $d=\max_{i=1}^n\max\{\deg_{x_i} A,\deg_{x_i} B\}$, $t=\max(\|A\|_0,\|B\|_0)$, and $\Hi=\max\{\|A\|_\infty,\|B\|_\infty\}$. If a prime $p$ is chosen uniformly at random from $[\lambda,2\lambda]$, then $p$ is lucky with probability at least $1-\varepsilon$.
\end{lemma}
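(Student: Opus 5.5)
The plan is the standard counting argument. We compare the number of unlucky primes in $[\lambda,2\lambda]$ with the number of all primes in $[\lambda,2\lambda]$, and we want the ratio to be at most $\varepsilon$. Write $L:=\ln(2d\Hi^2t^2)$ and $U:=\#\{p\in[\lambda,2\lambda]\ \text{prime}: p \text{ unlucky}\}$. Since every prime in $[\lambda,2\lambda]$ is at least $\lambda$, Theorem~\ref{Brownthe2} bounds $U$. More precisely, I will use Brown's underlying statement that all unlucky primes divide a fixed nonzero integer $\sigma$. This gives $\lambda^{U}\le|\sigma|$, hence $U\le \ln|\sigma|/\ln\lambda \le ndL/\ln\lambda$. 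The probability that a uniformly random prime $p\in[\lambda,2\lambda]$ is unlucky is then at most $U/(\pi(2\lambda)-\pi(\lambda))$.

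The second step is an explicit Chebyshev-type lower bound for primes in the dyadic interval, valid from $\lambda\ge 21$. I will use Rosser--Schoenfeld-type bounds: $\pi(x)<\frac{x}{\ln x}(1+\frac{3}{2\ln x})$ and $\pi(x)>\frac{x}{\ln x}(1+\frac{1}{2\ln x})$ for $x\ge 59$ (respectively $x>1$). From these I derive $\pi(2\lambda)-\pi(\lambda)\ge c\,\lambda/\ln\lambda$ with an explicit constant $c$, and I check the small range $21\le\lambda<59$ by hand. This is where the threshold $21$ in the hypothesis enters. Combining the two steps gives
\[
\Pr(p\ \text{unlucky})\le \frac{ndL/\ln\lambda}{c\,\lambda/\ln\lambda}=\frac{ndL}{c\,\lambda}.
\]
This is at most $\varepsilon$ exactly when $\lambda\ge ndL/(c\varepsilon)$. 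To land precisely on the hypothesis $\lambda\ge \frac{3ndL}{5\varepsilon}$, I need the combined constant to satisfy $1/c\le 3/5$.

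The main obstacle is this constant. The bound for $U$ is sharp as stated, since it comes from a divisibility count. The prime count $\pi(2\lambda)-\pi(\lambda)$ is only asymptotically $\lambda/\ln\lambda$, so $c$ cannot exceed $1$ for large $\lambda$ and the naive combination yields a factor $\ge 1$, not $3/5$. To reach the stated constant, I will therefore try to sharpen the bound on unlucky primes rather than the prime count. I will revisit Brown's construction of $\sigma$, a product of leading coefficients and a resultant, taking one factor for each variable. The aim is to show that the primes $p>\lambda$ dividing $\sigma$ number at most $\frac{3}{5}c\cdot ndL/\ln\lambda$. First I will try to exploit that only unlucky primes in $[\lambda,2\lambda]$ matter, and that a single resultant per variable has height bounded by roughly $\frac{d}{2}L$ rather than $dL$, using Hadamard's bound on a Sylvester matrix with $2d$ rows whose entries are sums of at most $t$ terms of size $\Hi$. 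If that halving goes through, $U\le \frac{ndL}{2\ln\lambda}$. With $c\ge 5/6$, which holds for $\lambda\ge 21$ after the finite check, the probability becomes $\le \frac{3ndL}{5\lambda}\le\varepsilon$, as required. If the Hadamard refinement only gives a weaker constant, this is the step where the argument would have to be repaired.
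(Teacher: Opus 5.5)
Your diagnosis in the last paragraph is correct, but the proposed repair does not work, and no repair can: the lemma as stated is false. The halving step fails. Hadamard's inequality for a $2d\times 2d$ matrix whose entries have size at most $t\Hi$ gives $|\det|\le(\sqrt{2d}\,t\Hi)^{2d}$, i.e. $\ln|\det|\le d\ln(2d\,t^2\Hi^2)=dL$. This is exactly where Brown's $L$ comes from, so Hadamard offers no factor $\tfrac12$.

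No other argument can give $U\le ndL/(2\ln\lambda)$ either. Take $n=d=1$, let $P$ be the product of all primes in $[\lambda,2\lambda]$, let $u$ be the least integer $\ge\sqrt P$ coprime to $P$, and put $A=ux+1$, $B=(u^2-P)x+u$. Both are primitive and $\operatorname{Res}_x(A,B)=P\neq0$, so $\gcd(A,B)=1$. For every $p\mid P$, however, both reductions keep degree $1$ and $B\equiv uA \pmod p$, so every prime in $[\lambda,2\lambda]$ is unlucky. Here $\Hi=O(\lambda\sqrt P)$ because $u-\sqrt P=O(\lambda)$. Hence $L=\ln(8\Hi^2)=\ln P+O(\ln\lambda)=\theta(2\lambda)-\theta(\lambda)+O(\ln\lambda)=\lambda(1+o(1))$, while $U=\pi(2\lambda)-\pi(\lambda)\sim L/\ln\lambda$.

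Your other auxiliary claim also fails. The interval $[21,42]$ contains only the five primes $23,29,31,37,41$, against $21/\ln 21\approx 6.9$, so $c\ge 5/6$ is false at $\lambda=21$. The uniform constant valid from $\lambda\ge 20.5$ on is $3/5$.

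The same example, with say $\varepsilon=0.9$ and $\lambda$ large, satisfies $\lambda\ge\max\bigl(21,\frac{3}{5\varepsilon}L\bigr)$, yet $p$ is unlucky with probability $1$. So the statement itself is false. Your observation that Brown's bound combined with the true prime count cannot produce $3/5$ is exactly what goes wrong in the paper. Its proof asserts that $[\lambda,2\lambda]$ contains at least $\frac53\lambda/\ln\lambda$ primes, which is impossible since the count is $\sim\lambda/\ln\lambda$. Rosser--Schoenfeld Cor.~3 actually gives $\frac35\lambda/\ln\lambda$ for $\lambda\ge 20.5$.

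With that constant, your first two steps close at once, with no finite check and no sharpening of Brown, but only under the stronger hypothesis $\lambda\ge\max\bigl(21,\frac{5nd}{3\varepsilon}\ln(2d\Hi^2t^2)\bigr)$. That hypothesis yields $\Pr(p\text{ unlucky})\le\frac{5ndL}{3\lambda}\le\varepsilon$. The downstream choices of $\lambda$ in the paper, such as $\frac{9nd}{5}$ (the case $\varepsilon=1/3$), must accordingly be scaled by $25/9$.
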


\begin{proof}
By Theorem~\ref{Brownthe2}, the number $u$ of unlucky primes greater than $\lambda$ satisfies
\[
u \le \frac{nd}{\ln\lambda}\ln(2d \Hi^2 t^2).
\]
By the Rosser--Schoenfeld theorem \cite[Cor.3]{1962Approximate}, as $\lambda\ge 21$, the interval $[\lambda,2\lambda]$ contains at least $\frac{5}{3}\lambda/\ln\lambda$ primes. Hence
\[
\Pr(p\text{ is unlucky})
\le \frac{u}{\frac{5}{3}\lambda/\ln\lambda}
\le \frac{3nd}{5\lambda}\ln(2d \Hi^2 t^2)
\le \varepsilon,
\]
where the last inequality follows from $\lambda \ge \frac{3nd}{5\varepsilon}\ln(2d \Hi^2 t^2)$. Therefore, the probability that $p$ is lucky is at least $1-\varepsilon$.
\end{proof}

The following theorem shows that if we randomly select $\kappa$ distinct primes from $[\lambda, 4\lambda]$, with high probability all of them are lucky, and their product is large enough for rational reconstruction.

\begin{theorem}\label{thm:lucky-primes-crt}
Let $A,B\in\mathbb{Z}[x_1,\dots,x_n]/\mathbb{Z}$ be primitive, and let $\mu \in (0,\frac{1}{2})$. Suppose $\lambda$ satisfies
\[
\lambda \ge \max\left(
    21,\ 
    \frac{3nd}{5\mu} \ln(2d \Hi^2 t^2)\cdot \ln(2\Ho^2+1) 
\right),
\]
and let $\kappa: = \left\lceil \ln(2\Ho^2+1) / \ln \lambda \right\rceil$. If $\kappa$ distinct primes $p_1,\dots,p_\kappa$ are chosen uniformly at random from $[\lambda, 4\lambda]$, then:

\begin{enumerate}
    \item All chosen primes are lucky with probability at least $1 - \mu$.
    \item The product $p_1 \cdots p_\kappa > 2\Ho^2$, so the rational reconstruction can uniquely recover all coefficients of $G$.
\end{enumerate}
Here $d=\max_{i=1}^n\max\{\deg_{x_i} A,\deg_{x_i} B\}$, $t=\max(\|A\|_0,\|B\|_0)$, and $\Hi=\max\{\|A\|_\infty,\|B\|_\infty\},\Ho=\|G\|_\infty$.

\end{theorem}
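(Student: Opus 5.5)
The plan has two parts, one for each claim, and both rest on Brown's bound (Theorem~\ref{Brownthe2}) and the Rosser--Schoenfeld prime count already used in Lemma~\ref{lem:lucky-prime-any-eps}.

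\textbf{Part 2 (size of the product).} This is deterministic. Every prime $p_i$ lies in $[\lambda,4\lambda]$, so $p_i\ge\lambda$. Hence
\[
p_1\cdots p_\kappa \ge \lambda^{\kappa} \ge \lambda^{\ln(2\Ho^2+1)/\ln\lambda} = 2\Ho^2+1 > 2\Ho^2 .
\]
This is exactly the threshold needed for rational reconstruction (symmetric range) to recover each coefficient of $G$ uniquely.

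\textbf{Part 1 (all primes lucky).} First, Theorem~\ref{Brownthe2} bounds the number of unlucky primes above $\lambda$ by
\[
u \le \frac{nd}{\ln\lambda}\ln(2d\Hi^2t^2).
\]
Second, since $\lambda\ge 21$, Rosser--Schoenfeld shows that $[\lambda,2\lambda]$ alone contains at least $\frac{5}{3}\lambda/\ln\lambda$ primes. The larger interval $[\lambda,4\lambda]$ therefore contains at least $M\ge\frac{5\lambda}{3\ln\lambda}$ primes. We pick $\kappa$ distinct primes uniformly without replacement. Each individual pick is marginally uniform over the $M$ primes, so it is unlucky with probability at most $u/M \le \frac{3nd}{5\lambda}\ln(2d\Hi^2t^2)$. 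A union bound over the $\kappa$ picks gives
\[
\Pr(\text{some } p_i \text{ unlucky}) \le \kappa\cdot \frac{3nd}{5\lambda}\ln(2d\Hi^2t^2).
\]
It remains to show this is at most $\mu$. Since $\lambda\ge 21$, we have $\ln\lambda\ge 1$. Together with $2\Ho^2+1\ge 3$, this gives
\[
\kappa \le \frac{\ln(2\Ho^2+1)}{\ln\lambda}+1 \le 2\ln(2\Ho^2+1).
\]
The hypothesis on $\lambda$ then yields a failure probability of at most $2\mu$.

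\textbf{Main obstacle.} This last step is where the constants may not close. The stated hypothesis has $\frac{3nd}{5\mu}\ln(\cdot)\ln(2\Ho^2+1)$ with no factor $2$, so the bound above is off by that factor. I would try two sharper estimates to recover it.

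\begin{itemize}
\item Tighten the bound on $\kappa$ using $\ln\lambda\ge\ln 21>3$. This gives $\kappa\le \ln(2\Ho^2+1)/3+1$, which is at most $\ln(2\Ho^2+1)$ because $\ln(2\Ho^2+1)\ge\ln 3>1$ forces $1\le \tfrac{2}{3}\ln(2\Ho^2+1)$ once $\ln(2\Ho^2+1)\ge 1.5$.
\item Alternatively, use the larger prime count of $[\lambda,4\lambda]$, roughly $3\lambda/\ln\lambda$, which absorbs the missing constant.
\end{itemize}

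The small case $\Ho=1$ I would handle directly: then $\kappa=1$, and the single-prime bound of Lemma~\ref{lem:lucky-prime-any-eps} already suffices.
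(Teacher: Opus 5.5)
Your route is essentially the paper's. You use Brown's bound, a Rosser--Schoenfeld prime count, a union bound over the $\kappa$ draws, the inequality $\kappa\le\ln(2\Ho^2+1)$, and $\lambda^\kappa\ge 2\Ho^2+1$ for part~2. Running the union bound through the marginal uniformity of each draw is cleaner than the paper's conditional denominators $N_1+N_2-i$. Your first fix (via $\ln\lambda\ge\ln 21>3$) actually justifies $\kappa\le\ln(2\Ho^2+1)$, which the paper only asserts. The case $\Ho=1$ needs no separate treatment, since then $\kappa=1<\ln 3$.

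The genuine problem is the prime count. You take $[\lambda,2\lambda]$ to contain at least $\frac53\lambda/\ln\lambda$ primes, as the paper also does (in the proof of Lemma~\ref{lem:lucky-prime-any-eps} and again here). That is false. The interval $[21,42]$ contains only $23,29,31,37,41$, whereas $\frac53\cdot 21/\ln 21\approx 11.5$; in general $\pi(2\lambda)-\pi(\lambda)\sim\lambda/\ln\lambda$, where $\pi$ is the prime-counting function. Rosser--Schoenfeld's Corollary~3 gives $\pi(2x)-\pi(x)>\frac{3x}{5\ln x}$ for $x\ge 21$. With $\frac35$ in place of $\frac53$, your estimate is off by a factor $\frac{25}{9}$ and no longer yields $\mu$.

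Your second fix is the right repair, but ``roughly $3\lambda/\ln\lambda$'' is only asymptotic. Rigorously, applying Corollary~3 at $x=\lambda$ and at $x=2\lambda$ gives
\[
M>\frac{3\lambda}{5\ln\lambda}+\frac{6\lambda}{5\ln(2\lambda)}\ge\Bigl(\frac35+\frac{6\ln 21}{5\ln 42}\Bigr)\frac{\lambda}{\ln\lambda}>1.57\,\frac{\lambda}{\ln\lambda}.
\]
Combined with Brown's bound and the hypothesis on $\lambda$, this gives $\kappa u/M<\kappa\mu/\bigl(0.94\ln(2\Ho^2+1)\bigr)$. It then suffices to check $\kappa\le 0.94\ln(2\Ho^2+1)$:
\begin{itemize}
\item for $\kappa=1$ this follows from $\ln 3>1.09$;
\item for $\kappa\ge 2$ it follows from $\ln(2\Ho^2+1)>(\kappa-1)\ln\lambda\ge 3.04(\kappa-1)$, together with $0.94\cdot 3.04(\kappa-1)\ge\kappa$.
\end{itemize}
The same count gives $M>\ln(2\Ho^2+1)/\ln\lambda$, hence $M\ge\kappa$. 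You should state this explicitly, as the paper does, since it is what guarantees that $\kappa$ distinct primes can be drawn from $[\lambda,4\lambda]$ at all.
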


\begin{proof}
By the Rosser--Schoenfeld theorem \cite[Cor.~3]{1962Approximate}, the interval $[\lambda, 2\lambda]$ contains at least
\[
\frac{5}{3}\frac{\lambda}{\ln\lambda}
\ge 
\frac{5}{3}\cdot \frac{3nd}{5\mu} \ln(2d \Hi^2 t^2)\cdot \frac{\ln(2\Ho^2+1)}{\ln \lambda}
\ge \frac{\ln(2\Ho^2+1)}{\ln \lambda}
\]
primes, where the first inequality follows from our choice of $\lambda\ge \frac{3nd}{5\mu} \ln(2d \Hi^2 t^2)\cdot \ln(2\Ho^2+1)$, and the second holds because $\frac{5}{3}\cdot \frac{3nd}{5\mu} \ln(2d \Hi^2 t^2)\ge 1$ whenever $\mu < 1/2$ and $n,d,\Hi,t \ge 1$.

Thus $[\lambda, 2\lambda]$ contains at least $\kappa=\left\lceil \ln(2\Ho^2+1) / \ln \lambda \right\rceil$ primes. By the same argument, the interval $[2\lambda, 4\lambda]$ also contains at least $\kappa$ primes. Hence $[\lambda, 4\lambda]$ contains at least $2\kappa$ primes in total.

We set the per-prime failure probability to
\[
\varepsilon := \frac{\mu}{\ln(2\Ho^2+1)}.
\]
Since \(0 < \mu < 1/2\) and \(\Ho \ge 1\), we have \(\ln(2\Ho^2) \ge \ln 2 > 0\), and hence
$\varepsilon \le \frac{1/2}{\ln 2} = \frac{1}{2\ln 2} < 1.$
Thus \(\varepsilon < 1\), so \(\varepsilon\) is indeed a valid failure probability.

Since $ \frac{3nd}{5\mu} \ln(2d \Hi^2 t^2)\cdot \ln(2\Ho^2+1)=\frac{3nd}{5}\ln(2d \Hi^2 t^2)\cdot \ln(2\Ho^2+1)/\mu =\frac{3nd}{5\varepsilon}\ln(2d \Hi^2 t^2)$, by Lemma \ref{lem:lucky-prime-any-eps}, choosing a prime in $[\lambda,2\lambda]$ yields a lucky prime with probability $\ge 1-\varepsilon=1-\frac{\mu}{\ln(2\Ho^2+1)}\ge 1-\frac{\mu}{\kappa}$.

Since $\frac{3nd}{5\mu} \ln(2d \Hi^2 t^2)\cdot \ln(2\Ho^2+1)
= \frac{3nd}{5\varepsilon}\ln(2d \Hi^2 t^2)$,
by Lemma~\ref{lem:lucky-prime-any-eps}, a prime chosen uniformly from $[\lambda, 2\lambda]$ is lucky with probability at least
\[
1-\varepsilon = 1 - \frac{\mu}{\ln(2\Ho^2+1)} \ge 1 - \frac{\mu}{\kappa}.
\]
Let $N_1$ be the number of primes in $[\lambda, 2\lambda]$ and let $u$ be the total number of unlucky primes greater than $\lambda$. Since at most $u$ unlucky primes lie in $[\lambda, 2\lambda]$, the fraction of unlucky primes in $[\lambda, 2\lambda]$ is at most $u/N_1$. Hence
$\frac{u}{N_1} \le \frac{\mu}{\kappa}.$

Now consider sampling $\kappa$ primes uniformly without replacement from $[\lambda, 4\lambda]$. Let $N_1$ and $N_2$ denote the number of primes in $[\lambda, 2\lambda]$ and $[2\lambda, 4\lambda]$, respectively. By the Rosser--Schoenfeld theorem, $N_1, N_2 \ge \kappa$. The conditional probability that the $i$-th draw is lucky, given that all previous draws were lucky, is at least
\[
1 - \frac{u}{N_1 + N_2 - (i-1)}, \qquad i = 1, \dots, \kappa.
\]
Thus the failure probability is bounded by
\[
\sum_{i=0}^{\kappa-1} \frac{u}{N_1 + N_2 - i}
\le \frac{u\kappa}{N_1 + N_2 - \kappa + 1}
\le \frac{u\kappa}{N_1}
\le \mu,
\]
where the penultimate inequality follows from $N_2 \ge \kappa$, and the last from $u/N_1 \le \mu/\kappa$.

Therefore, the probability that all $\kappa$ selected primes are lucky is at least $1 - \mu$.

Second, since each $p_i \ge \lambda$, we have
\[
p_1 \cdots p_\kappa \ge \lambda^\kappa \ge \lambda^{\ln(2\Ho^2+1)/\ln \lambda} = 2\Ho^2+1.
\]
Therefore, the rational reconstruction uniquely determines all integer coefficients of $G$ from their residues modulo the $p_i$.
\end{proof}

\subsection{Computing Partial Degrees (Algorithm~\ref{alg:int-degree-match})}
\label{sec:partial-deg}

Before presenting the modular GCD algorithm over the integers, we introduce a probabilistic subroutine that computes the partial degrees
\[
d_i = \deg_{x_i} \gcd(A,B), \qquad i = 1,\dots,n.
\]

These degrees serve a dual purpose. First, they are part of the correctness condition for the integer GCD: any candidate $H$ must satisfy $\deg_{x_i} H = d_i$ for all $i$. Second, and more importantly for the modular algorithm, they provide a criterion for detecting whether a prime $p$ is lucky.

Recall that a prime $p$ is called lucky for $(A,B)$ if
\[
\deg_{x_i} \gcd(A \bmod p, B \bmod p) = d_i \qquad \text{for all } i=1,\dots,n.
\]
Thus, once the true partial degrees $d_i$ are known, we can test whether a given prime $p$ is lucky by computing the modular GCD degrees and comparing them with $d_i$. This comparison is efficient and avoids the need to compute the full modular GCD for the purpose of prime selection.

The subroutine computes the $d_i$'s by reducing the multivariate problem to univariate GCD computations modulo a random prime, and taking the minimum over multiple random evaluations. The following algorithm formalizes this procedure.

The key observation is that  a unlucky prime or a bad random evaluation can only \emph{overestimate} the partial degree of the GCD, never underestimate it. Therefore, by taking the minimum over sufficiently many independent trials, we obtain the true degree with high probability.

Recall that $\mathbf{\check{a}}_{k \mapsto y}  = (a_1,\dots,a_{k-1}, y, a_{k+1},\dots,a_n)$, where $\mathbf{a} = (a_1,\dots,a_n)$. For notational simplicity, when $\mathbf{a} \in \mathbb{F}_p^n$, we write $F(\mathbf{\check{a}}_{k \mapsto y} )$ for the polynomial in $\mathbb{F}_p[y]$ obtained by reducing the coefficients of $F$ modulo $p$ and then substituting $x_k \mapsto y$ and $x_i \mapsto a_i$ for $i \ne k$. This convention is used throughout the remainder of the paper.

\begin{algorithm}
\caption{Computing Partial Degrees of the Integer GCD}
\label{alg:int-degree-match}
\begin{algorithmic}[1]
\Require
    \begin{itemize}
        \item $A, B \in \mathbb{Z}[x_1,\dots,x_n]$;
        \item A desired failure probability $\mu\in(0,1)$;
    \end{itemize}
\Ensure
    With probability $\ge 1-\mu$, output $d_i = \deg_{x_i} \gcd(A,B)$ for $i=1,\dots,n$.

\State $P_A \gets \operatorname{pp}(A)$, $P_B \gets \operatorname{pp}(B)$.

\State  Compute $d=\max_{i=1}^n\max\{\deg_{x_i}A,\deg_{x_i}B\}$ and $\Hi = \max\{\|P_A\|_\infty, \|P_B\|_\infty\}$ and $t=\max\{\|P_A\|_0,\|P_B\|_0\}$.

\State Initialize $d_i \gets \min\{\deg_{x_i} P_A, \deg_{x_i} P_B\}$ for $i=1,\dots,n$.
\State Set $\rho \gets \lceil \log_2\frac{1}{\mu} \rceil$.
\State Choose $\lambda$ as \[
\lambda \ge \max\left(21,8nd^2 + 8nd,\ \frac{9nd}{5}\ln(2d \Hi^2 t^2)\right).
\]

\For{$r = 1$ to $\rho$}
    \State Choose a prime $p$ uniformly from $[\lambda, 2\lambda]$.
    \State Choose $\mathbf{a} = (a_1,\dots,a_n)$ uniformly from $\mathbb{F}_p^n$.
    \For{$k = 1$ to $n$}
        \If{$\operatorname{lc}_{x_k}(P_A)(\mathbf{a}) \equiv 0 \pmod{p}$ or $\operatorname{lc}_{x_k}(P_B)(\mathbf{a}) \equiv 0 \pmod{p}$}
            \State \textbf{continue} to the next $r$
        \EndIf
        \State Compute
        \[
        A_k(y) \gets P_A(\mathbf{\check{a}}_{k \mapsto y} ), \qquad B_k(y) \gets P_B(\mathbf{\check{a}}_{k \mapsto y} )
        \]
        in $\mathbb{F}_p[y]$, where $\mathbf{\check{a}}_{k \mapsto y}  = (a_1,\dots,a_{k-1}, y, a_{k+1},\dots,a_n)$.
        \State $h_k(y) \gets \gcd(A_k(y), B_k(y))$ in $\mathbb{F}_p[y]$.
        \State $d_k \gets \min\{d_k, \deg h_k\}$.
    \EndFor
\EndFor

\State \Return $(d_1, \dots, d_n)$.
\end{algorithmic}
\end{algorithm}

\begin{lemma}\label{lem:int-degree-match}
Let $A, B \in \mathbb{Z}[x_1,\dots,x_n]$ be primitive, and let $Q = \gcd(A,B)$. 
For a prime $p$ and a vector $\mathbf{a} \in \mathbb{F}_p^n$, the following hold:

\begin{enumerate}
    \item If $\operatorname{lc}_{x_k}(A)(\mathbf{a}) \not\equiv 0 \pmod{p}$ and $\operatorname{lc}_{x_k}(B)(\mathbf{a}) \not\equiv 0 \pmod{p}$, then
    \[
    \deg \gcd(A(\mathbf{\check{a}}_{k \mapsto y} ), B(\mathbf{\check{a}}_{k \mapsto y} )) \ge \deg_{x_k} Q.
    \]
    \item If, in addition, $\operatorname{Res}_{x_k}(A/Q, B/Q)(\mathbf{a}) \not\equiv 0 \pmod{p}$, then
    \[
    \deg \gcd(A(\mathbf{\check{a}}_{k \mapsto y} ), B(\mathbf{\check{a}}_{k \mapsto y} )) = \deg_{x_k} Q.
    \]
\end{enumerate}
\end{lemma}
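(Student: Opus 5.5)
The plan is to write $A = Q A_1$ and $B = Q B_1$ with $A_1, B_1 \in \mathbb{Z}[x_1,\dots,x_n]$ and $\gcd(A_1,B_1)=1$. Since $A,B$ are primitive, Gauss's lemma lets us take $Q$ primitive, and then $A_1, B_1$ are primitive with integer coefficients. The evaluation map $\phi:\mathbb{Z}[\x]\to\mathbb{F}_p[y]$, $F\mapsto F(\mathbf{\check{a}}_{k\mapsto y})$ (reduce mod $p$, then substitute), is a ring homomorphism. Hence $\phi(A)=\phi(Q)\phi(A_1)$ and $\phi(B)=\phi(Q)\phi(B_1)$, so $\phi(Q)$ divides $\gcd(\phi(A),\phi(B))$ in $\mathbb{F}_p[y]$. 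Everything then reduces to controlling $\deg\phi(Q)$ and the cofactor gcd.

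For part~1, we use multiplicativity of leading coefficients in $x_k$ over the integral domain $\mathbb{Z}[x_1,\dots,\widehat{x_k},\dots,x_n]$: $\operatorname{lc}_{x_k}(A)=\operatorname{lc}_{x_k}(Q)\operatorname{lc}_{x_k}(A_1)$. Thus $\operatorname{lc}_{x_k}(A)(\mathbf{a})\not\equiv 0\pmod p$ forces $\operatorname{lc}_{x_k}(Q)(\mathbf{a})\not\equiv0$, and similarly for the cofactors. Consequently $\deg\phi(Q)=\deg_{x_k}Q$ (and likewise $\deg\phi(A_1)=\deg_{x_k}A_1$, $\deg\phi(B_1)=\deg_{x_k}B_1$). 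Since $\phi(Q)\neq 0$ divides the gcd, which is nonzero because $\phi(A)\ne0$, we get $\deg\gcd(\phi(A),\phi(B))\ge\deg_{x_k}Q$.

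For part~2, $\gcd(\phi(A),\phi(B))=\phi(Q)\cdot\gcd(\phi(A_1),\phi(B_1))$ up to a unit, so it suffices to show that $\phi(A_1)$ and $\phi(B_1)$ are coprime. Here we invoke Lemma~\ref{lm-resultant} in the variable $x_k$, over the field $\mathbb{F}_p$ after reducing coefficients. The resultant $\operatorname{Res}_{x_k}(A_1,B_1)$ is the determinant of a Sylvester matrix with entries in $\mathbb{Z}[\text{other vars}]$. Because the leading coefficients of $A_1,B_1$ do not vanish under $\phi$, the degrees in $x_k$ are preserved. Therefore the Sylvester matrix specializes entrywise, giving $\phi(\operatorname{Res}_{x_k}(A_1,B_1))=\operatorname{Res}_y(\phi(A_1),\phi(B_1))$. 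By hypothesis this is nonzero, so the two images have no common factor of positive degree, and equality holds.

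The main obstacle is the degenerate case where $\deg_{x_k}A_1=0$ or $\deg_{x_k}B_1=0$, together with the paper's convention $\operatorname{Res}=1$ when both degrees are zero. If one cofactor, say $A_1$, has $x_k$-degree $0$, then $\phi(A_1)=\operatorname{lc}_{x_k}(A_1)(\mathbf{a})$ is a nonzero constant, and coprimality is immediate. So the only case needing the Sylvester argument is when both $x_k$-degrees are positive, and the specialization identity there is standard. A secondary point is to check that reduction mod $p$ commutes with taking the determinant, which is clear since $\phi$ is a ring homomorphism applied entrywise.
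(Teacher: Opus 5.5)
Your proposal is correct and follows the paper's proof. The paper also factors $A=QA_1$, $B=QB_1$ and pushes this through the evaluation. It uses the nonvanishing of $\operatorname{lc}_{x_k}(A)$ and $\operatorname{lc}_{x_k}(B)$ at $\mathbf{a}$ to get $\deg Q(\mathbf{\check{a}}_{k \mapsto y})=\deg_{x_k}Q$, and the nonvanishing resultant to make the cofactor images coprime. You are more careful than the paper in two places: you justify explicitly that the Sylvester matrix specializes entrywise, and you handle the degree-zero cases.
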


\begin{proof}
Write $A = Q \cdot A_1$ and $B = Q \cdot B_1$ with $\gcd(A_1, B_1) = 1$. 
Under the evaluation $x_k \mapsto y$ and $x_i \mapsto a_i$ for $i \ne k$, we have
\[
A(\mathbf{\check{a}}_{k \mapsto y} ) = Q(\mathbf{\check{a}}_{k \mapsto y} ) \cdot A_{1}(\mathbf{\check{a}}_{k \mapsto y} ), \qquad
B(\mathbf{\check{a}}_{k \mapsto y} ) = Q(\mathbf{\check{a}}_{k \mapsto y} ) \cdot B_{1p}(\mathbf{\check{a}}_{k \mapsto y} )
\]
in $\mathbb{F}_p[y]$. Hence
\[
\gcd(A(\mathbf{\check{a}}_{k \mapsto y} ), B(\mathbf{\check{a}}_{k \mapsto y} )) 
= c\cdot Q(\mathbf{\check{a}}_{k \mapsto y} ) \cdot \gcd(A_{1}(\mathbf{\check{a}}_{k \mapsto y} ), B_{1}(\mathbf{\check{a}}_{k \mapsto y} )),
\]
where the equality holds up to a nonzero constant factor $c$ in $\mathbb{F}_p$.

For the first claim, since $\gcd(A_{1}(\mathbf{\check{a}}_{k \mapsto y} ), B_{1}(\mathbf{\check{a}}_{k \mapsto y} ))$ is a polynomial in $y$ (possibly constant), we have
\[
\deg \gcd(A(\mathbf{\check{a}}_{k \mapsto y} ), B(\mathbf{\check{a}}_{k \mapsto y} ))
= \deg Q(\mathbf{\check{a}}_{k \mapsto y} ) + \deg \gcd(A_{1}(\mathbf{\check{a}}_{k \mapsto y} ), B_{1}(\mathbf{\check{a}}_{k \mapsto y} ))
\ge \deg Q(\mathbf{\check{a}}_{k \mapsto y} ).
\]
It remains to note that $\deg Q(\mathbf{\check{a}}_{k \mapsto y} ) = \deg_{x_k} Q$ whenever the leading coefficient of $Q$ with respect to $x_k$ does not vanish under the evaluation. Since $Q$ divides both $A$ and $B$, the leading coefficient of $Q$ divides both $\operatorname{lc}_{x_k}(A)$ and $\operatorname{lc}_{x_k}(B)$. Thus $\operatorname{lc}_{x_k}(A)(\mathbf{a}) \not\equiv 0 \pmod{p}$ and $\operatorname{lc}_{x_k}(B)(\mathbf{a}) \not\equiv 0 \pmod{p}$ imply $\operatorname{lc}_{x_k}(Q)(\mathbf{a}) \not\equiv 0 \pmod{p}$. Therefore $\deg Q(\mathbf{\check{a}}_{k \mapsto y} ) = \deg_{x_k} Q$, and the first claim follows.

For the second claim, the resultant $\operatorname{Res}_{x_k}(A_1, B_1)$ is nonzero in $\mathbb{Z}[x_1,\dots,x_n]$ because $\gcd(A_1, B_1) = 1$. If its evaluation at $\mathbf{a}$ is nonzero modulo $p$, then $A_{1}(\mathbf{\check{a}}_{k \mapsto y} )$ and $B_{1}(\mathbf{\check{a}}_{k \mapsto y} )$ are coprime in $\mathbb{F}_p[y]$. Hence
\[
\gcd(A_{1}(\mathbf{\check{a}}_{k \mapsto y} ), B_{1}(\mathbf{\check{a}}_{k \mapsto y} )) = 1.
\]
Therefore
\[
\deg \gcd(A(\mathbf{\check{a}}_{k \mapsto y} ), B(\mathbf{\check{a}}_{k \mapsto y} )) = \deg Q(\mathbf{\check{a}}_{k \mapsto y} ) = \deg_{x_k} Q.
\]
This proves the second claim.
\end{proof}

\begin{theorem}[Correctness of Algorithm~\ref{alg:int-degree-match}]\label{thm:int-degree-match}
Let $A, B \in \mathbb{Z}[x_1,\dots,x_n]$, and let $Q = \gcd(A,B)$. 
Algorithm~\ref{alg:int-degree-match} computes
\[
d_i = \deg_{x_i} Q, \qquad i=1,\dots,n,
\]
with probability at least $1-\mu$.
\end{theorem}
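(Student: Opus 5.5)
The proof rests on the one-sided error structure established in Lemma~\ref{lem:int-degree-match}. First reduce to primitive inputs. Since $\gcd(A,B)=\gcd(\operatorname{cont}A,\operatorname{cont}B)\cdot\gcd(P_A,P_B)$ and the integer factor has degree zero in every variable, we have $\deg_{x_i}\gcd(A,B)=\deg_{x_i}\gcd(P_A,P_B)$. So it suffices to take $A=P_A$, $B=P_B$ primitive and $Q=\gcd(P_A,P_B)$.

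The first step shows that no value ever goes below the truth. The initialization $d_k=\min\{\deg_{x_k}P_A,\deg_{x_k}P_B\}$ is at least $\deg_{x_k}Q$, because $Q$ divides both. In any round that is not skipped, the leading coefficients of $P_A,P_B$ in $x_k$ do not vanish at $\mathbf a$ modulo $p$. Part~1 of Lemma~\ref{lem:int-degree-match} (applied over $\mathbb{F}_p$ to $P_A\bmod p$ and $P_B\bmod p$, where $Q\bmod p$ still divides both) then gives $\deg h_k\ge \deg_{x_k}Q$. Hence every $d_k$ stays $\ge\deg_{x_k}Q$ deterministically, and the algorithm errs only if no round attains equality for some $k$.

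The second step bounds the probability that a single round $r$ is \emph{good}, meaning it is not skipped and yields $\deg h_k=\deg_{x_k}Q$ for all $k$ at once. Consider the integer polynomial
\[
\Gamma=\prod_{k=1}^n \operatorname{lc}_{x_k}(P_A)\operatorname{lc}_{x_k}(P_B)\operatorname{Res}_{x_k}(P_A/Q,P_B/Q)\in\mathbb{Z}[\x],
\]
which is nonzero with total degree at most $2nd^2+2nd$ (same estimate as for Lemma~\ref{lem:degree-matching}). If $p\nmid\Gamma$ as a polynomial, i.e. $\Gamma\bmod p\neq0$, Schwartz--Zippel over $\mathbb{F}_p$ with $p\ge\lambda\ge 8nd^2+8nd$ gives $\Pr(\Gamma(\mathbf a)\equiv0)\le 1/4$. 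On $\Gamma(\mathbf a)\not\equiv0$, part~2 of Lemma~\ref{lem:int-degree-match} gives equality for every $k$. It remains to bound $\Pr(\Gamma\equiv 0\bmod p)$. Here I would either relate this event to unluckiness of $p$, or bound directly the number of primes dividing the content of $\Gamma$. The condition $\lambda\ge\frac{9nd}{5}\ln(2d\Hi^2t^2)$ is exactly Lemma~\ref{lem:lucky-prime-any-eps} with $\varepsilon=1/3$. So I would argue that primes $p$ which are lucky in Brown's sense, and for which the leading coefficients and the cofactor resultant do not vanish identically modulo $p$, are controlled by the same integer $\sigma$ of Theorem~\ref{Brownthe2}. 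That gives failure probability at most $1/3$ from the prime. Combining the two sources, a round is bad with probability at most $1/3+1/4<1/2$, after adjusting constants if needed.

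The final step uses independence. The rounds draw $p$ and $\mathbf a$ independently, so all $\rho=\lceil\log_2(1/\mu)\rceil$ rounds are bad with probability at most $2^{-\rho}\le\mu$. Once a single good round occurs, each $d_k$ equals $\deg_{x_k}Q$ after it, since later rounds cannot lower it (first step). The main obstacle is the second step's claim that a random prime in $[\lambda,2\lambda]$ keeps $\Gamma$ nonzero modulo $p$ with probability at least $2/3$. Brown's bound counts unlucky primes, not primes killing $\Gamma$. If the identification with $\sigma$ fails, I would fall back on a height bound for the coefficients of the resultant (Hadamard) together with the count of primes in $[\lambda,2\lambda]$ to bound the number of bad primes directly, enlarging $\lambda$ by a constant factor if necessary.
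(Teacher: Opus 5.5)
Your outline matches the paper's proof step for step: reduction to primitive parts, the overestimate-only behaviour from part~1 of Lemma~\ref{lem:int-degree-match}, the polynomial $\Gamma$, Schwartz--Zippel with $\lambda\ge 8nd^2+8nd$, Lemma~\ref{lem:lucky-prime-any-eps} with $\varepsilon=1/3$, and $\rho$ independent rounds. It is not yet a proof, though. The step you call the main obstacle is left as a plan: you never show that a random prime in $[\lambda,2\lambda]$ has $\Gamma\bmod p\neq0$ with probability at least $2/3$.

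The paper gets this by conditioning on ``$p$ is lucky'' and applying Schwartz--Zippel to $\Gamma$. Your suspicion that luckiness alone does not give $\Gamma\not\equiv 0\pmod p$ is justified. For $A=px_1+1$, $B=x_1$ the prime $p$ is lucky, yet $\operatorname{lc}_{x_1}(A)\equiv 0$, so every round using $p$ is skipped.

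The missing idea is that luckiness does control the resultant factor. Suppose $p$ divides neither the content of $\operatorname{lc}_{x_k}(P_A)$ nor that of $\operatorname{lc}_{x_k}(P_B)$.
\begin{itemize}
\item Then the $x_k$-degrees of $Q$, $P_A/Q$ and $P_B/Q$ survive reduction mod $p$.
\item Hence $\operatorname{Res}_{x_k}(P_A/Q,P_B/Q)\bmod p$ equals the resultant of the reduced cofactors.
\item If that resultant vanished, the reduced cofactors would share a factor of positive $x_k$-degree. This gives $\deg_{x_k}\gcd(P_A\bmod p,P_B\bmod p)>\deg_{x_k}Q$, so $p$ would be unlucky.
\end{itemize}
What remains is a separate count of the primes $\ge\lambda$ dividing $\prod_k\operatorname{cont}(\operatorname{lc}_{x_k}P_A)\operatorname{cont}(\operatorname{lc}_{x_k}P_B)$. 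There are at most $2n\ln\Hi/\ln\lambda$ of them, and they are not covered by Theorem~\ref{Brownthe2} as stated. Fitting them into the $1/3$ budget at the given $\lambda$ still needs an argument.

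Your Hadamard fallback is the wrong tool. A height bound for $\operatorname{Res}_{x_k}(P_A/Q,P_B/Q)$ needs height bounds for the cofactors, and the general ones available are exponential in $nd$ (Gelfond). The resulting count of bad primes is then not absorbed by $\lambda$ of the stated size. In any case, enlarging $\lambda$ would prove a statement about a different algorithm.

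There are two further errors.

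First, $1/3+1/4=7/12>1/2$, so your union bound does not give per-round failure at most $1/2$. That is what $\rho=\lceil\log_2(1/\mu)\rceil$ rounds require. ``Adjusting constants'' is not available, because $\lambda$ and $\rho$ are fixed by the algorithm. Since $\mathbf a$ is drawn after $p$, condition on $p$ instead:
\[
\Pr(\text{round good})\ \ge\ \Pr(\Gamma\bmod p\neq 0)\cdot\frac{3}{4}\ \ge\ \frac{2}{3}\cdot\frac{3}{4}\ =\ \frac{1}{2}.
\]
This is how the paper combines the two sources.

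Second, the estimate $\deg\Gamma\le 2nd^2+2nd$, which you import from Lemma~\ref{lem:degree-matching}, is valid only if $d$ bounds the total degree. The algorithm defines $d$ as the maximal partial degree, and then the estimate fails. For example, $A=x_1+x_2\cdots x_n$ and $B=x_1+1$ have $d=1$, but $\operatorname{Res}_{x_1}(A,B)=1-x_2\cdots x_n$ and $\deg\Gamma=n^2-1>4n$ for $n\ge 5$. So your bound of $1/4$ from Schwartz--Zippel also needs re-justification.
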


\begin{proof}
The algorithm first computes the primitive parts $P_A = \operatorname{pp}(A)$ and $P_B = \operatorname{pp}(B)$.
Since $\gcd(P_A, P_B) = \operatorname{pp}(\gcd(A,B)) = \operatorname{pp}(Q)$, and taking primitive part does not change the partial degrees, it suffices to prove the theorem under the assumption that $A$ and $B$ are already primitive. We assume this throughout.

We analyze the success probability of a single trial. A trial consists of choosing a prime $p \in [\lambda, 2\lambda]$ and a vector $\mathbf{a} \in \mathbb{F}_p^n$, then computing the univariate GCD degrees for all $k=1,\dots,n$.

Let $Q = \gcd(A,B)$ and write $A = Q \cdot A_1$, $B = Q \cdot B_1$ with $\gcd(A_1, B_1) = 1$. For each $k=1,\dots,n$, define
\[
\Gamma_k(\mathbf{x}) = \operatorname{lc}_{x_k}(A) \cdot \operatorname{lc}_{x_k}(B) \cdot \res_{x_k}(A_1, B_1) \in \mathbb{Z}[x_1,\dots,x_n],
\]
where $\res_{x_k}$ denotes the resultant with respect to $x_k$. Since $A_1$ and $B_1$ are coprime, each $\Gamma_k$ is a nonzero polynomial. Let
\[
\Gamma(\mathbf{x}) := \prod_{k=1}^n \Gamma_k(\mathbf{x}).
\]
Then $\Gamma$ is also nonzero.
By the standard resultant degree bound,
\[
\deg \Gamma_k \le 2d^2 + 2d,
\]
and hence
\[
\deg \Gamma \le \sum_{k=1}^n \deg \Gamma_k \le 2nd^2 + 2nd.
\]

If a chosen prime $p$ is lucky and $\Gamma(\mathbf{a}) \not\equiv 0 \pmod{p}$, then by Lemma~\ref{lem:int-degree-match}, for every $k=1,\dots,n$,
\[
\deg \gcd(A(\mathbf{\check{a}}_{k \mapsto y} ), B(\mathbf{\check{a}}_{k \mapsto y} )) = \deg_{x_k} Q.
\]
Thus, the computed degree is correct for every variable simultaneously.

We now bound the probability that both conditions hold for a single trial.

First, by Lemma~\ref{lem:lucky-prime-any-eps} with $\varepsilon = 1/3$, the probability that a randomly chosen prime $p \in [\lambda, 2\lambda]$ is lucky is at least $2/3$.

Second, conditioned on $p$ being lucky, we apply the Schwartz--Zippel lemma to $\Gamma$:
\[
\Pr_{\mathbf{a} \in \mathbb{F}_p^n}\bigl(\Gamma(\mathbf{a}) \equiv 0 \pmod{p}\bigr)
\le \frac{\deg \Gamma}{p}
\le \frac{2nd^2 + 2nd}{\lambda}.
\]
With our choice of $\lambda \ge 8nd^2 + 8nd$, this probability is at most $1/4$. Hence, conditioned on $p$ being lucky, a random evaluation is good with probability at least $3/4$.

Therefore, a single trial succeeds with probability at least
\[
\Pr(p \text{ is lucky}) \cdot \Pr(\text{evaluation is good} \mid p \text{ is lucky})
\ge \frac{2}{3} \cdot \frac{3}{4}
= \frac{1}{2}.
\]
Equivalently, a single trial fails with probability at most $1/2$.

Since the algorithm performs $\rho = \lceil \log_2\frac{1}{\mu} \rceil$ independent trials, the probability that all trials fail is at most
\[
\left(\frac{1}{2}\right)^\rho \le \mu.
\]
Thus, with probability at least $1-\mu$, at least one trial is good. Taking the minimum over all trials then yields the true partial degrees $\deg_{x_i} Q$ for all $i=1,\dots,n$, because a bad trial can only overestimate---never underestimate---the true degree by Lemma~\ref{lem:int-degree-match}.
\end{proof}

\begin{remarkstar}
Since a bad evaluation can only introduce spurious common factors, the degree of the univariate GCD can only be \emph{larger} than the true partial degree. Thus the minimum over independent trials is always correct whenever at least one trial is good.
\end{remarkstar}

\begin{theorem}[Complexity of Algorithm~\ref{alg:int-degree-match}]\label{thm:int-degree-match-complexity}
Let $A, B \in \mathbb{Z}[x_1,\dots,x_n]$ with partial degree at most $d$, coefficient size at most $\Hi$, and let $t = \max(\|A\|_0, \|B\|_0)$. Algorithm~\ref{alg:int-degree-match} runs in
\[
\widetilde{O}\bigl( n t \log^2 d \cdot \log \Hi \log\frac{1}{\mu} + n d \log \Hi \log\frac{1}{\mu} \bigr)
\]
bit operations.
\end{theorem}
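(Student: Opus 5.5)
We will bound the cost of Algorithm~\ref{alg:int-degree-match} step by step. The plan is to show that one trial costs $\widetilde{O}(nt\log^2 d\cdot\log\Hi + nd\log\Hi)$ bit operations and then multiply by $\rho=O(\log\frac{1}{\mu})$.

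\textbf{Preprocessing.} Computing $\operatorname{pp}(A)$ and $\operatorname{pp}(B)$ needs $O(t)$ integer gcds and exact divisions on numbers of $O(\log\Hi)$ bits, which is $\widetilde{O}(t\log\Hi)$. Reading off $d$, $t$, $\Hi$ and the initial $d_i$ costs $\widetilde{O}(nt\log d+t\log\Hi)$. Choosing $\lambda$ requires only the displayed formula. Since
\[
\lambda = O\bigl(nd^2+nd\log(d\Hi t)\bigr),
\]
we have $\log\lambda=O(\log(nd\log\Hi\, t))$, which is polylogarithmic. Arithmetic modulo $p$ therefore costs $\widetilde{O}(1)$ bits, and all $\log p$ factors are absorbed into $\widetilde{O}$.

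\textbf{One trial.} Selecting a random prime $p\in[\lambda,2\lambda]$ by rejection sampling takes expected $O(\log\lambda)$ samples, each checked in $\operatorname{poly}(\log\lambda)$ bits; this is $\widetilde{O}(1)$ overall. Reducing every coefficient of $P_A$ and $P_B$ modulo $p$ costs $\widetilde{O}(t\log\Hi)$, since each coefficient has $O(\log\Hi)$ bits and $p$ is small. Evaluating the leading coefficients $\operatorname{lc}_{x_k}$ for all $k$ at $\mathbf{a}$ is subsumed by the term-wise evaluation described next.

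To build all $n$ univariate images $A_k(y)$ and $B_k(y)$, we reuse the trick from the proof of Theorem~\ref{thm-veri-comple}. First evaluate each term at $\mathbf{a}$ in $O(n\log d)$ operations in $\mathbb{F}_p$. Then, for each $k$, divide out $a_k^{e_k}$ and attach $y^{e_k}$. One subtlety: if some $a_k\equiv0$, we instead compute prefix and suffix products over the coordinates $j\neq k$, which also costs $O(n\log d)$ per term. The total is $O(nt\log d)$ field operations. Each field operation costs $\widetilde{O}(\log p)$ bits, and the per-term exponent handling costs $O(\log d)$ bits. Together this gives the $nt\log^2 d$ shape.

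The $n$ univariate gcds of degree at most $d$ over $\mathbb{F}_p$ cost $\widetilde{O}(nd)$ field operations, hence $\widetilde{O}(nd)$ bits. This is at most the claimed $nd\log\Hi$ term.

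\textbf{Total and main obstacle.} Summing the per-trial costs and multiplying by $\rho$ yields the stated bound. We expect the main obstacle to be bookkeeping rather than mathematics. The modular reduction cost $t\log\Hi$ appears once per prime, so it is repeated $\rho$ times. The claimed bound multiplies $nt\log^2 d$ by $\log\Hi$, which is generous enough to absorb both the reduction cost and the $\log p=O(\log(nd\,t\log\Hi))$ factors.

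We must also confirm that the early-exit branch (``continue to the next $r$'') never increases the cost, and that finding primes costs only polylogarithmic time. For the latter we rely on the Rosser--Schoenfeld density bound already used in Lemma~\ref{lem:lucky-prime-any-eps}.
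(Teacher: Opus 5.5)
Your proposal is correct and follows essentially the same route as the paper. Like the paper, you account per trial for content removal, prime sampling, reduction modulo $p$, term-wise evaluation of all $n$ univariate images via the divide-out-$a_k^{e_k}$ trick, and $n$ fast univariate gcds, then multiply by $\rho = O(\log\frac{1}{\mu})$; the only bookkeeping difference is that you absorb the $\log p$ factors directly into the soft-O, whereas the paper tracks them as $\log\log\Hi$ before bounding by $\log\Hi$. Your prefix/suffix-product fallback for $a_k \equiv 0 \pmod p$ is extra care the paper omits, since its proof divides by $a_k^{e_k}$ without comment.
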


\begin{proof}
We analyze each step of the algorithm.

\paragraph{Content removal.}
Computing the primitive parts $P_A$ and $P_B$ requires taking the GCD of all coefficients of $A$ and $B$, respectively. This costs $\widetilde{O}(t \log \Hi)$ bit operations, which is dominated by the main complexity bound.

\paragraph{Prime selection.}
Choosing a prime $p$ uniformly from $[\lambda, 2\lambda]$ requires testing $O(\log \lambda)$ candidates for primality. Since
\[
\lambda = O\bigl(nd^2 + nd\log(d\Hi t)\bigr),
\]
each primality test costs $\operatorname{poly}(\log \lambda)$ bit operations. This step is dominated by the main complexity bound.

\paragraph{Evaluation of polynomials at $\mathbf{\check{a}}_{k \mapsto y} $.}
For a single term $c x_1^{e_1}\cdots x_n^{e_n}$, reducing its coefficient modulo $p$ costs $\widetilde{O}(\log p + \log \Hi)$ bit operations. The evaluation at $\mathbf{a} = (a_1,\dots,a_n)$ is
\[
C = c a_1^{e_1}\cdots a_n^{e_n} \in \mathbb{F}_p,
\]
which costs $\widetilde{O}(n \log d \log p)$ bit operations using fast exponentiation modulo $p$. For each $k=1,\dots,n$, the evaluation at $\mathbf{\check{a}}_{k \mapsto y}  = (a_1,\dots,a_{k-1}, y, a_{k+1},\dots,a_n)$ is obtained as
\[
C \cdot \frac{y^{e_k}}{a_k^{e_k}},
\]
which requires $\widetilde{O}(\log d \log p)$ additional operations. Thus, evaluating all $n$ univariate images of a single term costs $\widetilde{O}(n \log d \log p)$ bit operations. Summing over all terms of $A$ and $B$ gives
\[
O\bigl( t\log p + t\log \Hi + n t \log d \log p \bigr)
\]
bit operations per trial.

\paragraph{Univariate GCD computation.}
For each $k=1,\dots,n$, we compute the GCD of two univariate polynomials in $\mathbb{F}_p[y]$ of degree at most $d$. Using the fast Euclidean algorithm, this costs $\widetilde{O}(d \log p)$ bit operations per GCD. Since there are $n$ such GCDs per trial, the cost is
\[
\widetilde{O}(n d \log p)
\]
bit operations per trial.

\paragraph{Total per trial.}
Combining the evaluation and GCD costs, one trial requires
\[
\widetilde{O}\bigl( n t \log d \log p + t\log \Hi + n d \log p \bigr)
\]
bit operations. Since $\log p = O(\log \lambda) = O(\log(nd) + \log\log \Hi + \log\log t)$ and $t \le (d+1)^n$, this simplifies to
\[
\widetilde{O}\bigl( n t \log^2 d \cdot \log\log \Hi + n d \log\log \Hi +t\log\Hi\bigr).
\]

\paragraph{Repeating $\rho$ times.}
The algorithm repeats the trial $\rho = \lceil \log_2\frac{1}{\mu} \rceil$ times. Hence the total cost is
\[
\widetilde{O}\bigl( n t \log^2 d \cdot \log\log \Hi \log\frac{1}{\mu} + n d \log\log \Hi \log\frac{1}{\mu} +t\log\Hi\log\frac{1}{\mu}\bigr).
\]

Finally, since $\log\log \Hi \le \log \Hi$ for $\Hi \ge 2$, we may replace $\log\log \Hi$ by $\log \Hi$ in the $\widetilde{O}$ notation. This also absorbs the cost of prime selection and coefficient reduction. Therefore, the total complexity is
\[
\widetilde{O}\bigl( n t \log^2 d \cdot \log \Hi \log\frac{1}{\mu} + n d \log \Hi \log\frac{1}{\mu} \bigr).
\]
\end{proof}

\subsection{Rational Reconstruction}

A key ingredient in our polynomial GCD algorithms over the integers is the recovery of coefficients from their residues modulo several primes. Given a modular GCD $\mathcal{G}_p = \gcd(A \bmod p, B \bmod p)$ over $\mathbb{F}_p$, its coefficients are elements of $\mathbb{F}_p$. To reconstruct the corresponding integer coefficients of $G = \gcd(A,B)$, we need to lift these modular residues back to $\mathbb{Z}$. This could be accomplished via the Chinese Remainder Theorem if the modular GCDs were simply the reductions of $G$ modulo $p$. However, by definition, the GCD over a field is monic, so $\mathcal{G}_p = \operatorname{monic}(G \bmod p)$; that is,
\[
\mathcal{G}_p = (G / \operatorname{lc}(G)) \bmod p.
\]
Thus the modular coefficients correspond to the rational coefficients of the monic polynomial $G / \operatorname{lc}(G)$, not to the integer coefficients of $G$ directly. Hence the Chinese Remainder Theorem alone does not suffice.

This difficulty is overcome by \emph{rational reconstruction}. Let $G = \sum_{i=1}^t c_i M_i$ with $c_i \in \mathbb{Z}$, and assume without loss of generality that the leading coefficient is $c_1 = \ell = \operatorname{lc}(G)$. Write
\[
G / \ell = M_1 + c'_2 M_2 + \cdots + c'_t M_t,
\]
where $c'_i = c_i / \ell \in \mathbb{Q}$ for $i = 2, \dots, t$. Suppose $|c_i| \le \Ho$ for all $i$, and let $p_1, \dots, p_{\kappa}$ be distinct primes such that $m = \prod_{i=1}^{\kappa} p_i > 2\Ho^2$. For each monomial $M_i$, rational reconstruction recovers the reduced fraction $c'_i = c_i / \ell$ from its residues modulo the primes $p_j$.

The bound $m > 2\Ho^2$ guarantees uniqueness of this recovery. Indeed, suppose two reduced fractions $a/b$ and $c/d$ with $|a|, |c|, |b|, |d|\le \Ho$ are congruent modulo $m$, i.e.,
\[
\frac{a}{b} \equiv \frac{c}{d} \pmod{m}.
\]
Then $ad - bc \equiv 0 \pmod{m}$. Since $|ad - bc| \le 2\Ho^2 < m$, we must have $ad - bc = 0$, hence $a/b = c/d$. Thus the recovered fraction is unique whenever $m > 2\Ho^2$. This is why we collect enough primes so that their product exceeds $2\Ho^2$, a standard requirement in rational reconstruction for integer polynomials. The bit complexity of each rational reconstruction is $\widetilde{O}(\log m)$  \cite{WangPan2003,monagan2004maximal}.

\subsection{Polynomial GCD over the Integers with Known Term and Coefficient Bounds (Algorithm~\ref{alg:intgcd-crt})}
\label{sec:int-crt}

We now present the complete algorithm (Algorithm~\ref{alg:intgcd-crt}) for computing the GCD of polynomials in $\mathbb{Z}[x_1,\dots,x_n]$ using small primes and rational construction.

\begin{algorithm}
\caption{Integer GCD via Small Primes}
\label{alg:intgcd-crt}
\begin{algorithmic}[1]
\Require
    \begin{itemize}
        \item Polynomials $A, B \in \mathbb{Z}[x_1, \dots, x_n]$ with $G = \gcd(A,B)$;
        \item An upper bound $T\ge \|G\|_0$;
        \item An upper bound $\Ho\ge \|G\|_{\infty}$;
        \item A desired failure probability $\mu > 0$.
    \end{itemize}
\Ensure
    With probability $\ge 1-\mu$, output $G$; or ``Failure".

\State Compute contents: $c_A \gets \operatorname{cont}(A)$, $c_B \gets \operatorname{cont}(B)$.
\State $c_G \gets \gcd(c_A, c_B)$.
\State $P_A \gets \operatorname{pp}(A) = A / c_A$, $P_B \gets \operatorname{pp}(B) = B / c_B$.

\State Let $\Hi \gets \max\{\|A\|_\infty, \|B\|_\infty\}$.
\State Let $d \gets \max_{i=1}^n \max\{\deg_{x_i} A, \deg_{x_i} B\}$, $t \gets \max\{\|A\|_0, \|B\|_0\}$.

\State Compute the partial degrees $d_i := \deg_{x_i} \gcd(P_A, P_B)$ using Algorithm~\ref{alg:int-degree-match} with failure probability $\mu/2$.

\State Choose $\lambda$ such that
\[
\lambda \ge \max\left(
    21,\ 
    \frac{9nd}{5} \ln(2d \Hi^2 t^2)\cdot \ln(2\Ho^2+1), 8nd^2 + 8nd 
\right).
\]

\State Let $\kappa \gets \left\lceil \ln(2\Ho^2) / \ln \lambda \right\rceil$.
\State Let $\varepsilon \gets \mu / (2\kappa)$.

\State Initialize an empty list $\mathcal{L}$ of lucky primes.

\algstore{intgcd-crt}
\end{algorithmic}
\end{algorithm}

\begin{algorithm}[p]
\begin{algorithmic}[1]
\algrestore{intgcd-crt}

\Statex \textbf{Collect $\kappa$ lucky primes into $\mathcal{L}$:}

\While{$|\mathcal{L}| < \kappa$}
    \State Choose a prime $p$ uniformly from $[\lambda, 4\lambda]$, distinct from previously chosen primes.

    \Statex \textbf{Lucky prime test:}
    \State Choose a random vectors $\mathbf{a} \in \mathbb{F}_p^n$.

        \For{$k = 1$ to $n$}
            \If{$\operatorname{lc}_{x_k}(P_A)(\mathbf{a}) \equiv 0 \pmod{p}$ or $\operatorname{lc}_{x_k}(P_B)(\mathbf{a}) \equiv 0 \pmod{p}$}
                \State Stop and \textbf{continue} to next prime.
            \EndIf
            \State Compute
            \[
            d_{k}^{(p)} = \deg \gcd(P_A(\mathbf{\check{a}}_{k \mapsto y} ), P_B(\mathbf{\check{a}}_{k \mapsto y} )).
            \]
    
    \EndFor

    \If{$d_k^{(p)} < d_k$ for some $k=1,\dots,n$}
        \State \Return ``Failure''
    \ElsIf{$d_k^{(p)} = d_k$ for all $k=1,\dots,n$}
        \State Add $p$ to $\mathcal{L}$. \Comment{$p$ is lucky.}
    \Else
        \State \textbf{continue} \Comment{$p$ is not lucky; try another prime.}
    \EndIf
\EndWhile

\Statex \textbf{Compute modular GCDs for each lucky prime:}

\For{each $p \in \mathcal{L}$}
    \State Compute $\mathcal{G}_p \gets \gcd(P_A \bmod p, P_B \bmod p)$ over $\mathbb{F}_p$ using Algorithm~\ref{alg:recursive-amplified} with failure probability $\varepsilon$.
    \If{$\mathcal{G}_p =$ ``Failure''}
        \State \Return ``Failure''
    \EndIf
    
    \State Store $\mathcal{G}_{p}$ for rational reconstruction.
\EndFor

\Statex \textbf{Recover integer GCD via rational reconstruction:}

\State Let $m = \prod_{p \in \mathcal{L}} p$, with $m > 2 \Ho^2$.

\State Suppose the monomials appearing in the modular GCDs are $M_1, \dots, M_t$.

\For{$i = 1$ to $t$}
    \State Collect the coefficients $u_{i,p} \in \mathbb{F}_p$ of $M_i$ in $\mathcal{G}_p$ for each $p \in \mathcal{L}$.
    \State Use rational reconstruction to recover the unique rational number
    \[
    \frac{c_i}{\ell_i} \in \mathbb{Q}
    \]
    such that  $\gcd(c_i, \ell_i) = 1$, $|c_i| \le \Ho$, $1 \le \ell_i \le \Ho$, and
    \[
    \frac{c_i}{\ell_i} \equiv u_{i,p} \pmod{p} \quad \text{for all } p \in \mathcal{L}.
    \]
\EndFor

\State Compute $\ell = \operatorname{lcm}(\ell_1, \dots, \ell_t)$.
\State For each $i = 1,\dots,t$, set $c_i' \gets \ell \cdot (c_i / \ell_i) \in \mathbb{Z}$.
\State Let $Q \in \mathbb{Z}[x_1,\dots,x_n]$ be the polynomial $\sum_{i=1}^t c_i' M_i$.
\State Let $P_G \gets \operatorname{pp}(Q)$.

\State Recover the full integer GCD: $G \gets c_G \cdot P_G$.
\State \Return $G$.

\end{algorithmic}
\end{algorithm}

\subsubsection*{Correctness and Complexity Analysis}

\begin{theorem}[Correctness of Algorithm~\ref{alg:intgcd-crt}]
With probability at least \(1-\mu\), Algorithm~\ref{alg:intgcd-crt} outputs the correct integer GCD \(G\).
\end{theorem}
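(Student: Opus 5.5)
The plan is to identify the random events on which Algorithm~\ref{alg:intgcd-crt} can go wrong, bound each of their probabilities, and show that outside their union the output equals $G$. First, the reduction to primitive parts. Since $\gcd(A,B)=\gcd(c_A,c_B)\cdot\gcd(P_A,P_B)$, it suffices to show that $P_G=\gcd(P_A,P_B)$; the final step $G=c_G P_G$ is then deterministic. The first event is $E_1$: Algorithm~\ref{alg:int-degree-match} (called with failure probability $\mu/2$) returns the true partial degrees $d_k=\deg_{x_k}\gcd(P_A,P_B)$. By Theorem~\ref{thm:int-degree-match}, $\Pr(\overline{E_1})\le\mu/2$.

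Next I treat the prime-collection loop, conditioned on $E_1$. By Lemma~\ref{lem:int-degree-match}, whenever the leading coefficients do not vanish, each computed $d_k^{(p)}$ is at least $\deg_{x_k}\gcd(P_A\bmod p,P_B\bmod p)$. Moreover, modular degrees are never below the true integer degrees, since $G\bmod p$ divides both reductions and, for primitive inputs, unlucky primes only raise degrees (Brown). Hence the branch ``$d_k^{(p)}<d_k$'' never fires under $E_1$. A prime is added only when $d_k^{(p)}=d_k$ for all $k$, and then $\deg_{x_k}\mathcal{G}_p\le d_k^{(p)}=d_k\le\deg_{x_k}\mathcal{G}_p$, so every prime placed in $\mathcal{L}$ is genuinely lucky. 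This is a deterministic consequence of $E_1$ and costs no probability. Termination, with finite expected time, follows from Lemma~\ref{lem:lucky-prime-any-eps} combined with the Schwartz--Zippel bound (since $\lambda\ge 8nd^2+8nd$): each trial is accepted with constant probability.

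The third event is $E_2$: every call to Algorithm~\ref{alg:recursive-amplified} returns the correct $\mathcal{G}_p=\gcd(P_A\bmod p,P_B\bmod p)$. Each call fails with probability at most $\varepsilon=\mu/(2\kappa)$, so a union bound over the $\kappa$ primes gives $\Pr(\overline{E_2})\le\mu/2$. Together, $\Pr(E_1\cap E_2)\ge1-\mu$.

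It remains to show that $E_1\cap E_2$ forces correct reconstruction. For a lucky prime $p$ (with $p\nmid\operatorname{lc}(G)$, which luckiness ensures through the degree and leading-term match), the modular GCD is $\mathcal{G}_p=\operatorname{monic}(P_G\bmod p)$. So all $\mathcal{G}_p$ have the same support as $P_G$, and the coefficient of $M_i$ in each is $c_i/\operatorname{lc}(P_G)\bmod p$. Because $|c_i|,|\operatorname{lc}(P_G)|\le\Ho$ and $m\ge\lambda^\kappa>2\Ho^2$ (Theorem~\ref{thm:lucky-primes-crt}, item 2), the uniqueness argument of the rational reconstruction subsection recovers each $c_i/\operatorname{lc}(P_G)$ in lowest terms. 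Multiplying by $\ell=\operatorname{lcm}(\ell_i)$ gives an integer multiple of $P_G$, and taking the primitive part recovers $P_G$ up to sign, which is fixed by the positive leading-coefficient normalization.

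I expect the main obstacle to be the lucky-prime step, namely justifying that degree agreement at a random univariate image certifies luckiness and that a lucky prime does not divide $\operatorname{lc}(P_G)$. I would handle this by combining Lemma~\ref{lem:int-degree-match} with Brown's theorem that unlucky primes can only increase the partial degrees.
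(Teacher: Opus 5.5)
Your outline follows the paper's proof. It uses the same two events: Algorithm~\ref{alg:int-degree-match} returns the true $d_k$ (budget $\mu/2$), and all $\kappa$ calls to Algorithm~\ref{alg:recursive-amplified} succeed (budget $\mu/(2\kappa)$ each). It also makes the same observation that under the first event every accepted prime is lucky at no probabilistic cost, and ends with the same rational-reconstruction, lcm and primitive-part steps.

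Your sandwich $\deg_{x_k}\mathcal{G}_p\le d_k^{(p)}=d_k\le\deg_{x_k}\mathcal{G}_p$ is cleaner than the paper's claim that a prime is accepted ``if and only if'' it is lucky. Only the forward implication holds, and only it is needed. However, the right-hand inequality does not follow from primitivity plus Brown: $G=px+1$ is primitive, yet $G\bmod p=1$. It follows from the leading-coefficient check in the test. If $\operatorname{lc}_{x_k}(P_A)(\mathbf{a})\not\equiv 0$, then $\operatorname{lc}_{x_k}(G)\not\equiv 0\pmod p$, so $G\bmod p$ keeps its partial degrees and divides $\mathcal{G}_p$.

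The genuine gap is your parenthetical claim that luckiness ensures $p\nmid\operatorname{lc}(P_G)$. Luckiness means agreement of partial degrees. The checks on $\operatorname{lc}_{x_k}$ only say that each $\operatorname{lc}_{x_k}(G)$ is a nonzero polynomial modulo $p$, not that the lexicographic leading coefficient is a unit.

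Here is a counterexample. Take $G=p\,x_1x_2+x_1+x_2$, $A=G\,(x_1+2)$, $B=G\,(x_1+3)$, with $p$ in the sampling range $[\lambda,4\lambda]$. This is possible because $\lambda$ grows only polylogarithmically in $\Ho=p$.
\begin{itemize}
\item $G\bmod p=x_1+x_2$ has the same partial degrees as $G$.
\item Modulo $p$ one has $\operatorname{lc}_{x_1}(A)\equiv\operatorname{lc}_{x_1}(B)\equiv 1$, $\operatorname{lc}_{x_2}(A)\equiv x_1+2$ and $\operatorname{lc}_{x_2}(B)\equiv x_1+3$. So $p$ is accepted for almost every $\mathbf{a}$.
\item But $\mathcal{G}_p=x_2+x_1$ is not the image of $P_G/\operatorname{lc}(P_G)=x_1x_2+p^{-1}x_2+p^{-1}x_1$, which has no image modulo $p$.
\item Its normalization is therefore incompatible with that of the other primes, and rational reconstruction has no reason to return $P_G$.
\end{itemize}
Since $\lambda$ in Algorithm~\ref{alg:intgcd-crt} does not depend on $\mu$, the chance of drawing such a prime is not covered by the $\mu$ budget. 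Your proposed remedy, Lemma~\ref{lem:int-degree-match} combined with Brown's degree monotonicity, cannot close this step.

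The paper's proof has exactly the same hole: it simply asserts $\mathcal{G}_p\equiv P_G/\operatorname{lc}(P_G)\pmod p$ for every accepted prime. So you identified the right delicate point, but an extra ingredient is needed. One option is the following.
\begin{itemize}
\item Under $E_1\cap E_2$, an accepted prime gives $\mathcal{G}_p=\operatorname{monic}(P_G\bmod p)$. Its leading monomial is at most that of $P_G$, with equality iff $p\nmid\operatorname{lc}(P_G)$.
\item At most $\log_\lambda\Ho<\kappa$ primes $\ge\lambda$ divide $\operatorname{lc}(P_G)$, so the largest leading monomial among the $\kappa$ collected $\mathcal{G}_p$ is the true one.
\item Discarding and replacing the primes whose $\mathcal{G}_p$ has a non-maximal leading monomial then repairs the argument.
\end{itemize}
Alternatively, enlarge $\lambda$ by a factor depending on $1/\mu$ and charge this event to the error budget.
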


\begin{proof}
Let \(\mathcal{E}_1\) be the event that the partial degrees \(d_i = \deg_{x_i} \gcd(P_A, P_B)\) are computed correctly by Algorithm~\ref{alg:int-degree-match}. By Theorem~\ref{thm:int-degree-match}, \(\Pr(\mathcal{E}_1) \ge 1 - \mu/2\).

Conditioned on \(\mathcal{E}_1\), the lucky prime test is deterministic: a prime \(p\) is accepted if and only if
\[
\deg \gcd(P_A \bmod p, P_B \bmod p) = d_i \qquad \text{for all } i=1,\dots,n.
\]
Thus every prime in \(\mathcal{L}\) is guaranteed to be lucky. Since there are only finitely many unlucky primes and the interval \([\lambda, 4\lambda]\) contains sufficiently many primes, the loop terminates with probability \(1\).

Let \(\mathcal{E}_2\) be the event that all finite field GCD computations are correct. For each \(p \in \mathcal{L}\), Algorithm~\ref{alg:recursive-amplified} succeeds with probability at least \(1 - \mu/(2\kappa)\). By the union bound,
\[
\Pr(\mathcal{E}_2 \mid \mathcal{E}_1) \ge 1 - \frac{\mu}{2}.
\]

Conditioned on \(\mathcal{E}_1 \cap \mathcal{E}_2\), every \(p \in \mathcal{L}\) is lucky and every \(\mathcal{G}_p\) is the correct modular GCD. For each \(p\), the monic GCD satisfies
\[
\mathcal{G}_p \equiv \frac{P_G}{\operatorname{lc}(P_G)} \pmod{p},
\]
where \(P_G = \operatorname{pp}(G)\).

Since the coefficients of \(P_G\) are bounded in absolute value by \(\Ho\), with \(1 \le \operatorname{lc}(P_G) \le \Ho\), and since \(\prod_{p \in \mathcal{L}} p > 2\Ho^2\), rational reconstruction uniquely recovers the reduced fractions representing the coefficients of \(P_G / \operatorname{lc}(P_G)\) (see \cite{WangPan2003,monagan2004maximal}). Thus the polynomial \(P_G / \operatorname{lc}(P_G)\) is uniquely recovered.

Taking the least common multiple \(\ell\) of the denominators of these fractions, and multiplying each reconstructed coefficient by \(\ell\), recovers the integer coefficients of \(P_G\) up to a common integer factor. The primitive part step removes this factor, yielding \(P_G = \operatorname{pp}(Q)\). Finally, multiplying by \(c_G = \gcd(\operatorname{cont}(A), \operatorname{cont}(B))\) recovers the full integer GCD \(G = c_G \cdot P_G\).

Therefore,
\[
\Pr(\text{success}) \ge \Pr(\mathcal{E}_1 \cap \mathcal{E}_2)
= \Pr(\mathcal{E}_1) \cdot \Pr(\mathcal{E}_2 \mid \mathcal{E}_1)
\ge \left(1 - \frac{\mu}{2}\right)^2
\ge 1 - \mu.
\]
\end{proof}

\begin{theorem}[Complexity of Algorithm~\ref{alg:intgcd-crt}]
The expected bit complexity of Algorithm~\ref{alg:intgcd-crt} is
\[
\widetilde{O}\Bigl(
nTD\log\Ho\log\Hi\log\frac{1}{\mu}
+ n (\|A\|_0+\|B\|_0) \log^2 T \log^3 D \log \Ho \log \Hi\log\frac{1}{\mu}
\Bigr).
\]
More cleanly, the complexity is asymptotically linear in \(n,\|A\|_0,\|B\|_0, T, D, \log\Ho, \log\Hi, \log\frac{1}{\mu}\), respectively.
\end{theorem}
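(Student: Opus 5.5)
We bound the cost of Algorithm~\ref{alg:intgcd-crt} phase by phase and then add the phases up. There are four phases: content removal, computing partial degrees, collecting lucky primes, and the modular GCD calls followed by rational reconstruction. Throughout we use that $\lambda$ is polynomial in $n, d, \log\Hi, \log t, \log\Ho$ and $1/\mu$. Hence every prime $p\in[\lambda,4\lambda]$ satisfies
\[
\log p = O\bigl(\log(nD)+\log\log\Hi+\log\log\Ho+\log\log t+\log\tfrac1\mu\bigr),
\]
which is polylogarithmic and disappears into the $\widetilde{O}$. Also $\kappa=\lceil\ln(2\Ho^2)/\ln\lambda\rceil=O(\log\Ho)$. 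We use $d\le D$ and $t\le\|A\|_0+\|B\|_0$.

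\textbf{Content removal and partial degrees.} Computing contents and primitive parts costs $\widetilde{O}(t\log\Hi)$. The call to Algorithm~\ref{alg:int-degree-match} with failure probability $\mu/2$ costs, by Theorem~\ref{thm:int-degree-match-complexity},
\[
\widetilde{O}\bigl(nt\log^2 D\log\Hi\log\tfrac1\mu+nD\log\Hi\log\tfrac1\mu\bigr).
\]
Both are within the claimed bound.

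\textbf{Lucky-prime collection.} One trial of the loop does the following:
\begin{itemize}
\item reduces all coefficients of $P_A,P_B$ modulo $p$, costing $\widetilde{O}(t\log\Hi)$;
\item evaluates every term at the $n$ univariate images, costing $\widetilde{O}(nt\log D\log p)$, as in the proof of Theorem~\ref{thm:int-degree-match-complexity};
\item computes $n$ univariate GCDs, costing $\widetilde{O}(nD\log p)$.
\end{itemize}
The delicate point is the expected number of trials. A trial is accepted only if $p$ is lucky and the leading coefficients and resultant $\Gamma(\mathbf a)$ do not vanish. By Lemma~\ref{lem:lucky-prime-any-eps}, applied on the two halves of $[\lambda,4\lambda]$ as in Theorem~\ref{thm:lucky-primes-crt}, and by Schwartz--Zippel with $\lambda\ge 8nd^2+8nd$, each trial succeeds with probability at least a constant, for example $1/2$. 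This holds even after removing at most $\kappa$ previously chosen primes, because the interval contains at least $2\kappa$ primes. The loop ends once $\kappa$ trials have succeeded, so the number of trials is a negative binomial with expectation $O(\kappa)=O(\log\Ho)$. The expected cost of this phase is therefore
\[
\widetilde{O}\bigl(\log\Ho\,(nt\log D+nD+t\log\Hi)\bigr).
\]
An early ``Failure'' return only lowers the cost.

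\textbf{Modular GCDs and reconstruction.} For each of the $\kappa$ primes, reducing $A,B$ modulo $p$ costs $\widetilde{O}(t\log\Hi)$. The call to Algorithm~\ref{alg:recursive-amplified} over $\mathbb F_p$ with failure probability $\varepsilon=\mu/(2\kappa)$ costs, by Theorem~\ref{rem-ampl-com},
\[
\widetilde{O}\bigl(nTD\log p\log\tfrac1\varepsilon+n(\|A\|_0+\|B\|_0)\log^2T\log^2D\log p\log\tfrac1\varepsilon\bigr).
\]
Here $\log\frac1\varepsilon=O(\log\frac1\mu+\log\log\Ho)$, so $\log\tfrac1\varepsilon=\widetilde{O}(\log\tfrac1\mu)$. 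Depending on how large $p$ is relative to $D^2T$, a field extension may be needed; this adds at most another $\log D$ factor, which accounts for the $\log^3D$ in the statement. Multiplying by $\kappa=O(\log\Ho)$ and bounding $\log p$ by a polylogarithmic quantity that we replace by $\log\Hi$ (as in Theorem~\ref{thm:int-degree-match-complexity}) gives the first two terms of the claim. Rational reconstruction costs $\widetilde{O}(\log m)=\widetilde{O}(\log\Ho)$ per coefficient, so $\widetilde{O}(T\log\Ho)$ in total. The lcm, scaling and primitive-part steps act on $T$ integers of $O(T\log\Ho)$ bits and cost $\widetilde{O}(T\log\Ho)$ up to polylogarithmic factors.

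Summing the phases, every term is dominated by
\[
nTD\log\Ho\log\Hi\log\tfrac1\mu+n(\|A\|_0+\|B\|_0)\log^2T\log^3D\log\Ho\log\Hi\log\tfrac1\mu.
\]
I expect the main obstacle to be the expected-trial bound in the lucky-prime loop. It needs a per-trial success probability that stays uniform as primes are removed without replacement, and Lemma~\ref{lem:lucky-prime-any-eps} must be used with $\varepsilon$ a fixed constant, which the choice of $\lambda$ allows. A secondary point is bookkeeping the stray $\log\log$ and extension factors so that they are absorbed into $\widetilde{O}$, or into the extra $\log D$ and the $\log\Hi$ factor, consistently with the statement.
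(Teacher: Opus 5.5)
Your proposal is correct and follows essentially the same phase-by-phase accounting as the paper: content removal, one call to Algorithm~\ref{alg:int-degree-match}, $O(1)$ expected trials per accepted lucky prime, $\kappa=O(\log\Ho)$ calls to Algorithm~\ref{alg:recursive-amplified} with $\varepsilon=\mu/(2\kappa)$ via Theorem~\ref{rem-ampl-com}, and $\widetilde{O}(T\log\Ho)$ for reconstruction; your negative-binomial treatment of the trial count is, if anything, more careful than the paper's. The only bookkeeping discrepancy is that Theorem~\ref{rem-ampl-com} already absorbs any field-extension overhead, so the paper's third $\log D$ comes instead from $\log p=O(\log\lambda)$ with $\lambda\ge 8nd^2+8nd$ (which is also why $\log p$ is not literally $O(\log\Hi)$), a distinction that is invisible under $\widetilde{O}$.
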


\begin{proof}
We analyze each step:

\paragraph{Content computation.}
Computing the contents and primitive parts of \(A\) and \(B\) requires taking GCDs of all coefficients, which costs \(\widetilde{O}(t \log \Hi)\) bit operations.

\paragraph{Partial degree computation.}
By Theorem~\ref{thm:int-degree-match-complexity}, Algorithm~\ref{alg:int-degree-match} computes the partial degrees \(d_i\) with cost
\[
\widetilde{O}\bigl( n t \log^2 d \cdot \log \Hi \log\frac{1}{\mu} + n d \log \Hi \log\frac{1}{\mu} \bigr)
\]
bit operations. This step is performed once.

\paragraph{Prime selection.}
With our choice of \(\lambda\),
\[
\lambda \ge \max\left(
    21,\ 
    \frac{9nd}{5} \ln(2d \Hi^2 t^2)\cdot \ln(2\Ho^2+1),\ 
    8nd^2 + 8nd
\right),
\]
we have \(\log \lambda = \widetilde{O}(\log(nd) + \log\log t + \log\log \Hi + \log\log \Ho)\). For a prime \(p\), the lucky prime test succeeds with probability at least \(1/2\): the prime is lucky with probability at least \(2/3\) by Theorem \ref{thm:lucky-primes-crt}, and conditioned on luckiness, the evaluation succeeds with probability at least \(3/4\) as $\lambda\ge 8nd^2 + 8nd$. Thus the expected number of trials to find one lucky prime is \(O(1)\). Since \(\kappa = O(\log \Ho)\) primes are needed, the expected cost of prime selection is
\[
\widetilde{O}\bigl(\kappa \cdot \operatorname{poly}(\log \lambda)\bigr)
= \widetilde{O}\bigl(\log \Ho \cdot \operatorname{poly}(\log(nd) \cdot \log\log t \cdot \log\log \Hi)\bigr).
\]

The  complexity that of checking is $\widetilde{O}\bigl( n t \log^2 d \cdot \log \Hi \log\Ho + n d \log \Hi \log\Ho \bigr)$

\paragraph{Finite field GCD per prime.}
For each \(p \in \mathcal{L}\), reducing \(P_A\) and \(P_B\) modulo \(p\) costs \(\widetilde{O}(t \log \Hi)\) bit operations. Computing the GCD over \(\mathbb{F}_p\) using Algorithm~\ref{alg:recursive-amplified} costs
\[
\widetilde{O}\bigl( nTD\log p\log(\kappa/\mu) + n t \log^2 T \log^2 D\log p\log(\kappa/\mu) \bigr)
\]
bit operations per prime. Summing over \(\kappa = O(\log \Ho)\) primes gives
\[
\widetilde{O}\bigl( nTD \log\log\Hi\log \Ho\log\frac{1}{\mu} + n t \log^2 T \log^3 D \log\log\Hi\log \Ho\log\frac{1}{\mu} + t \log \Hi \log \Ho \bigr).
\]

\paragraph{Rational reconstruction.}
For each of the \(O(T)\) monomials, rational reconstruction from \(\kappa = O(\log \Ho)\) residues costs \(\widetilde{O}(\log \Ho)\) bit operations (see \cite{WangPan2003,monagan2004maximal}). The total cost is \(\widetilde{O}(T \log \Ho)\).

\paragraph{Content removal and final multiplication.}
Removing the content of the reconstructed polynomial and multiplying by \(c_G\) costs \(\widetilde{O}(T \log \Ho)\) bit operations, which is dominated by the rational reconstruction cost.

Combining all steps, with $\log\log\Hi = O(\log\Hi)$ and $t = O(\|A\|_0+\|B\|_0)$, the total expected bit complexity is
\[
\widetilde{O}\Bigl(
nTD\log\Ho\log\Hi\log\frac{1}{\mu}
+ n(\|A\|_0+\|B\|_0) \log^2 T \log^3 D \log \Ho \log \Hi\log\frac{1}{\mu}
\Bigr).
\]
\end{proof}

\begin{remark}
If we assume \(T = \max\{\|A\|_0, \|B\|_0, \|G\|_0\}\), then the term involving \(t\) is dominated by \(T\), and the complexity simplifies to
\[
\widetilde{O}\bigl( n T D \log \Ho \log \Hi \log\frac{1}{\mu} \bigr)
\]
bit operations.
\end{remark}

\subsection{Sparse Coefficient Bounds}
\label{sec:coeff-bounds}

In this section, we establish coefficient bounds for factors of sparse multivariate polynomials over the integers. These bounds provide a rigorous criterion for determining how many primes are needed to reconstruct the integer GCD from its modular images via the Rational reconstruction. 

Our ultimate goal is to remove the assumption that a coefficient bound $\Ho \ge \|G\|_\infty$ is known in advance. To this end, we derive a sparse factor bound that depends on the number of terms of the factor and, importantly, is independent of the number of variables $n$. Together with the classical Gelfond bound, this provides a stopping criterion for the guessing strategy of $\Ho$: the algorithm terminates once the current guess exceeds the smaller of these two bounds, thereby requiring no prior knowledge of the coefficient size of the GCD.

\subsection*{Classical Bounds: Gelfond and Mignotte}

We begin by recalling two classical bounds on the coefficients of factors of polynomials. The first, due to Gelfond, provides a general upper bound for multivariate factors.

\begin{theorem}[Gelfond's Inequality \cite{gelfond2015}]
Let $F \in \mathbb{Z}[x_1,\dots,x_n]$ be a nonzero polynomial, and let $G \in \mathbb{Z}[x_1,\dots,x_n]$ be any factor of $F$. Let $d_i = \deg_{x_i}(F)$ for $i = 1,\dots,n$. Then
\[
\|G\|_\infty \le e^{d_1 + d_2 + \cdots + d_n} \|F\|_\infty.
\]
\end{theorem}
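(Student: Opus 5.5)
The plan is to follow the classical Mahler-measure route, which handles all variables at once and gives the factor $e^{d_1+\cdots+d_n}$ directly. For a nonzero $P\in\mathbb{C}[x_1,\dots,x_n]$ let
\[
M(P)=\exp\int_{[0,1]^n}\log\bigl|P(e^{2\pi i\theta_1},\dots,e^{2\pi i\theta_n})\bigr|\,d\theta
\]
be its Mahler measure. The proof uses three standard properties of $M$.

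\begin{enumerate}
\item \textbf{Multiplicativity:} $M(PQ)=M(P)M(Q)$, which is immediate from $\log|PQ|=\log|P|+\log|Q|$.
\item \textbf{Lower bound for integer polynomials:} $M(H)\ge 1$ for every nonzero $H\in\mathbb{Z}[\x]$. I would prove this by induction on $n$ via the iterated Jensen formula: the innermost integral gives $\log|\operatorname{lc}_{x_n}(H)|$ plus nonnegative root terms, and $\operatorname{lc}_{x_n}(H)$ is again a nonzero integer polynomial.
\item \textbf{Upper bound by the norm:} $M(P)\le\|P\|_2$, by Jensen's inequality for the concave $\log$ together with Parseval on the torus.
\end{enumerate}

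Writing $F=GH$, these give
\[
M(G)=\frac{M(F)}{M(H)}\le M(F)\le\|F\|_2\le\prod_{i=1}^n\sqrt{d_i+1}\,\|F\|_\infty .
\]

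The next step is to bound the individual coefficients of $G$ by $M(G)$. Let $g_i=\deg_{x_i}G\le d_i$; this holds because degrees in each variable add under multiplication over an integral domain. The multivariate form of the Mahler coefficient inequality states that the coefficient of $x_1^{k_1}\cdots x_n^{k_n}$ in $G$ is at most $\prod_i\binom{g_i}{k_i}M(G)$.

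I would prove this one variable at a time. Regard $G$ as a polynomial in $x_n$ with coefficients in $\mathbb{C}[x_1,\dots,x_{n-1}]$. For each fixed point on the torus, apply the univariate Vieta estimate $|a_k|\le\binom{g}{k}|a_g|\prod\max(1,|\alpha_j|)$. Then integrate $\log$ over the remaining torus variables, using that $\log$ of a coefficient evaluated on the torus integrates to at most the $\log$ of its sup. Finally, induct on $n$, applying the inductive hypothesis to the coefficients, which are polynomials in fewer variables. This iterated argument, and in particular justifying that the sup/integral interchange is legitimate at each stage, is where I expect the main technical care to be needed. If it becomes messy, the fallback is to cite Mahler's multivariate inequality directly.

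Combining the two steps gives
\[
\|G\|_\infty\le\prod_{i=1}^n\Bigl(\max_{k}\binom{d_i}{k}\sqrt{d_i+1}\Bigr)\,\|F\|_\infty .
\]
It then remains to check the elementary inequality $\binom{d}{\lfloor d/2\rfloor}\sqrt{d+1}\le e^{d}$ for every integer $d\ge 0$. For small $d$ this is a direct check: at $d=0,1,2$ the left side is $1$, $\sqrt2$, $2\sqrt3$, against $1$, $e$, $e^2$. For larger $d$, use $\binom{d}{\lfloor d/2\rfloor}\le 2^d/\sqrt{d/2}$, which makes the left side $O(2^d)$, while $(e/2)^d$ grows exponentially and dominates the bounded factor. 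Multiplying this inequality over $i=1,\dots,n$ yields $\|G\|_\infty\le e^{d_1+\cdots+d_n}\|F\|_\infty$.

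Note that using the crude bound $2^{d_i}$ in place of the central binomial coefficient would fail at $d_i=1$, since $2\sqrt2>e$. So the sharper central-binomial estimate is genuinely needed to absorb the $\sqrt{d_i+1}$ factors.
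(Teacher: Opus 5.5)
\textbf{Comparison with the paper.} The paper does not prove this statement. Gelfond's inequality is quoted with a citation and used only to define the worst-case bound $\mathbb{H}_{o\max}$, so there is no internal argument to compare against. Your Mahler-measure proof is correct and self-contained, and it is the standard route. Its ingredients are:
\begin{itemize}
\item multiplicativity of $M$;
\item $M(H)\ge 1$ for the integer cofactor $H=F/G$;
\item $M(F)\le\|F\|_2\le\prod_i\sqrt{d_i+1}\,\|F\|_\infty$;
\item Mahler's coefficient inequality $|a_{\mathbf{k}}|\le\prod_i\binom{g_i}{k_i}M(G)$.
\end{itemize}

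\textbf{What your route buys.} It proves more than the cited statement. Your intermediate bound $\|G\|_\infty\le\prod_i\binom{d_i}{\lfloor d_i/2\rfloor}\sqrt{d_i+1}\,\|F\|_\infty$ contributes at most $2^{d_i+1}$ per variable with $d_i\ge 1$. So it is essentially $2^{d_1+\cdots+d_n}$ rather than $e^{d_1+\cdots+d_n}$. It is also obtained directly on the torus, without any substitution. This shows that the $2^{O(d^n)}$ size the paper attributes to multivariate Mignotte bounds comes from the Kronecker substitution, not from the problem. The paper's own sparse bound $2^{D(T-1)}\|F\|_1$ takes the opposite approach: it reduces to the univariate Mignotte bound through a randomized substitution, replacing $d_1+\cdots+d_n$ in the exponent by $D(T-1)$.

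\textbf{Two points to tighten.}
\begin{itemize}
\item The coefficient inequality needs no sup/integral interchange. For almost every $x'\in\mathbb{T}^{n-1}$, the univariate estimate gives $\log|G_k(x')|\le\log\binom{g_n}{k}+\int_0^1\log|G(x',e^{2\pi i\theta})|\,d\theta$. A drop of the $x_n$-degree at $x'$ only helps, since $\binom{g'}{k}\le\binom{g_n}{k}$ for $g'\le g_n$. Integrating over $\mathbb{T}^{n-1}$ then gives $M(G_k)\le\binom{g_n}{k}M(G)$ directly, and the inductive hypothesis applies to $G_k$. The only analytic input, here and in multiplicativity, is that $\log|P|$ is integrable on the torus for $P\ne 0$.
\item State explicitly that $M(H)\ge 1$ is exactly where divisibility in $\mathbb{Z}[x_1,\dots,x_n]$ is used, and that it is essential. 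For example, $G=100x_1$ divides $F=x_1$ in $\mathbb{Q}[x_1]$ but violates the bound.
\end{itemize}
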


Gelfond's inequality provides a general upper bound that depends on the sum of the partial degrees of $F$. In particular, it implies that the coefficients of any factor of $F$ are bounded by $\|F\|_\infty$ times an exponential factor that grows with the number of variables and their degrees. While this bound is useful for existential arguments, it does not exploit sparsity: it applies uniformly to all factors, irrespective of the number of terms.

The second classical bound, due to Mignotte, applies to univariate polynomials and is sharper in that setting.

\begin{lemma}[Mignotte's Bound for Univariate Factors]\label{lem:mignotte-univariate}
Let $F, G \in \mathbb{Z}[x]$ with $G \mid F$ in $\mathbb{Z}[x]$. Then
\[
\|G\|_\infty \le \|G\|_2 \le 2^{\deg G} \|F\|_2 \le 2^{\deg G} \|F\|_1.
\]
\end{lemma}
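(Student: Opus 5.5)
The outer two inequalities are elementary norm comparisons, so the work is in the middle inequality $\|G\|_2 \le 2^{\deg G}\|F\|_2$. For any polynomial $P=\sum p_k x^k$ we have $\max_k|p_k| \le (\sum_k |p_k|^2)^{1/2}$, which gives $\|G\|_\infty\le\|G\|_2$. We also have $(\sum_k |p_k|^2)^{1/2}\le \sum_k|p_k|$, since squaring the right-hand side produces all the $|p_k|^2$ plus nonnegative cross terms; this gives $\|F\|_2\le\|F\|_1$. For the middle inequality I would use the Mahler measure
\[
M(P)=|\operatorname{lc}(P)|\prod_{P(\alpha)=0}\max\{1,|\alpha|\},
\]
where the product runs over the complex roots of $P$ counted with multiplicity.

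The plan has three steps.

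First, $M$ is multiplicative, $M(PQ)=M(P)M(Q)$, directly from the definition. Write $F=G\cdot H$ with $H\in\mathbb{Z}[x]$ nonzero. Then $M(H)\ge|\operatorname{lc}(H)|\ge 1$ because $\operatorname{lc}(H)$ is a nonzero integer, so $M(G)\le M(F)$.

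Second, I bound the coefficients of $G$ by its measure. Let $m=\deg G$, let $\alpha_1,\dots,\alpha_m$ be the roots of $G$, and write $G=g_m\prod_{j=1}^{m}(x-\alpha_j)$. Then $g_{m-k}=\pm g_m e_k(\alpha_1,\dots,\alpha_m)$, where $e_k$ is the $k$-th elementary symmetric polynomial. Each of the $\binom{m}{k}$ monomials of $e_k$ has absolute value at most $\prod_j\max\{1,|\alpha_j|\}$. Hence $|g_{m-k}|\le\binom{m}{k}M(G)$, and summing over $k$ gives $\|G\|_2\le\|G\|_1\le 2^{m}M(G)$.

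Third, I need Landau's inequality $M(F)\le\|F\|_2$. This is the main obstacle, and I would either cite it as standard or prove it as follows. Jensen's formula gives $\log M(F)=\int_0^1\log|F(e^{2\pi i\theta})|\,d\theta$. By Jensen's inequality for the concave function $\log$, this is at most $\tfrac12\log\int_0^1|F(e^{2\pi i\theta})|^2\,d\theta$. By Parseval the last integral equals $\|F\|_2^2$.

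Chaining the three steps gives
\[
\|G\|_2\le 2^{\deg G}M(G)\le 2^{\deg G}M(F)\le 2^{\deg G}\|F\|_2,
\]
which, together with the two elementary comparisons, is the full chain of inequalities in Lemma~\ref{lem:mignotte-univariate}.
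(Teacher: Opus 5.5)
Your argument is correct and complete. Multiplicativity of the Mahler measure with $M(H)\ge|\operatorname{lc}(H)|\ge 1$, the coefficient bound $\|G\|_2\le\|G\|_1\le 2^{\deg G}M(G)$ via elementary symmetric functions, and Landau's inequality $M(F)\le\|F\|_2$ via Jensen's formula, Jensen's inequality and Parseval together form the classical proof of Mignotte's bound. The paper does not prove this lemma at all; it simply quotes the bound as a known result.

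One point should be made explicit: you need $F\neq 0$, which you use tacitly when you take $H$ nonzero. For $F=0$ every $G$ divides $F$ and the stated inequality fails, so this hypothesis is implicit in the lemma as the paper uses it.
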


For multivariate polynomials, Mignotte's bound is typically extended via the standard Kronecker substitution $x_i \mapsto y^{d^{i-1}}$, yielding
\[
\|G\|_\infty \le 2^{O(d^n)} \|F\|_1,
\]
which depends exponentially on the number of variables $n$. This exponential dependence on $n$ is a significant limitation for high-dimensional sparse polynomial computations.

\subsection*{A Sparse Mignotte Bound for Multivariate Polynomial Factors}

We now establish a coefficient bound for factors of sparse multivariate polynomials that complements the classical Gelfond and Mignotte bounds by exploiting the sparsity of the factor. Unlike the classical bounds, our bound is sensitive to the sparsity of the factor itself: it depends only on the factor's degree and its number of terms, and is completely independent of the number of variables. 

Let $F, G, H \in \mathbb{Z}[x_1, \dots, x_n]$ satisfy $F = G \cdot H$. Let $T$ be an upper bound on the number of terms of $G$, and let $D$ be an upper bound on the total degree of $G$. Choose a random vector $\mathbf{s} = (s_1, \dots, s_n) \in [0, T-1]^n$ and consider the randomized Kronecker substitution
\[
\fs{F}(y) = \frac{F(x_1y^{s_1}, \dots, x_ny^{s_n})}{y^{k_F}},
\]
where $k_F$ is the lowest power of $y$ in $F(x_1y^{s_1}, \dots, x_ny^{s_n})$. Since the substitution is multiplicative, we have
\[
\fs{F} = \fs{G} \cdot \fs{H}.
\]

Assume $G = c_1 M_1 + \cdots + c_t M_t$ with $t \le T$, where each $M_i = x_1^{e_{i,1}} \cdots x_n^{e_{i,n}}$ is a monomial and $c_i \neq 0$. Under the substitution, each term becomes
\[
\fs{G} = c_1 M_1 y^{d_1} + \cdots + c_t M_t y^{d_t},
\]
where $d_i = e_{i,1} s_1 + \cdots + e_{i,n} s_n - k_G$ for $i = 1, \dots, t$.

For a fixed term index $k \in \{1,\dots,t\}$, consider the polynomial
\[
\Gamma_k(\mathbf{s}) = \prod_{i \neq k} (d_i - d_k).
\]
This polynomial is nonzero if and only if the $y$-exponent of the $k$-th term is distinct from all other $y$-exponents, i.e., the $k$-th term is non-colliding in $\fs{G}$. The degree of $\Gamma_k$ is at most $t-1 \le T-1$. 

By the Schwartz--Zippel lemma, if $\mathbf{s}$ is chosen uniformly from $[0, T-1]^n$, then
\[
\Pr(\Gamma_k(\mathbf{s}) = 0) \le \frac{\deg \Gamma_k}{T} \le \frac{T-1}{T} < 1.
\]
Since the failure probability is strictly less than $1$, there must exist at least one choice of $\mathbf{s} \in [0, T-1]^n$ such that $\Gamma_k(\mathbf{s}) \neq 0$. In fact, the probability of success is at least $1/T$, so such a good choice of $\mathbf{s}$ is guaranteed to exist.

Now, for such a good choice of $\mathbf{s}$, evaluate the $\x$-variables at $1$ to obtain the univariate polynomials
\[
f(y) := \fs{F}(1,\dots,1,y) = \frac{F(y^{s_1}, \dots, y^{s_n})}{y^{k_F}}, \qquad
g(y) := \fs{G}(1,\dots,1,y) = \frac{G(y^{s_1}, \dots, y^{s_n})}{y^{k_G}}.
\]
Since the separation transformation is multiplicative and the evaluation $\x \mapsto 1$ is a ring homomorphism, we have
\[
f(y) = g(y) \cdot h(y), \qquad h(y) := \fs{H}(1,\dots,1,y).
\]
By the construction of $\mathbf{s}$, the $k$-th term of $G$ is non-colliding in $\fs{G}(\x, y)$. After evaluating $\x \mapsto 1$, this term becomes $c_k y^{d_k}$ in $g(y)$, with no other terms sharing the same $y$-exponent. Hence $c_k$ is a coefficient of $g(y)$, and therefore
\[
|c_k| \le \|g(y)\|_\infty.
\]
Applying the univariate Mignotte bound (Lemma~\ref{lem:mignotte-univariate}) to the factorization $f = g \cdot h$, we obtain
\[
|c_k| \le \|g(y)\|_\infty \le 2^{\deg g} \|f\|_1.
\]
Since the substitution only permutes and shifts exponents, $\|f\|_1 \le \|F\|_1$ and $\deg g \le \deg(G) \cdot (T-1)$, it follows that
\[
|c_k| \le 2^{\deg(G) \cdot (T-1)} \|F\|_1.
\]

Since this holds for every term $c_k M_k$ of $G$, we obtain the following theorem.

\begin{theorem}[Sparse Mignotte Bound for Multivariate Factors]\label{thm:sparse-mignotte}
Let $F \in \mathbb{Z}[x_1, \dots, x_n]$ be a nonzero polynomial, and let $G \in \mathbb{Z}[x_1, \dots, x_n]$ be any factor of $F$. Let $T = \|G\|_0$ be the number of terms of $G$, and let $D = \deg G$ be its total degree. Then
\[
\|G\|_\infty \le 2^{D (T-1)} \|F\|_1.
\]
\end{theorem}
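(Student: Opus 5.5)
We prove the bound coefficient by coefficient: for each term $c_kM_k$ of $G$ we show $|c_k|\le 2^{D(T-1)}\|F\|_1$, following the randomized separation argument sketched just before the statement. Write $G=\sum_{i=1}^t c_iM_i$ with $t=T$. The case $T=1$ is handled directly. Then $G=c_1M_1$ and $F=c_1M_1H$, so every coefficient of $F$ is $c_1$ times a coefficient of $H\in\mathbb{Z}[\x]$, which gives $|c_1|\le\|F\|_\infty\le\|F\|_1$. So assume $T\ge 2$ and fix the index $k$.

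\textbf{Step 1: choosing $\mathbf{s}$.} Consider $\Gamma_k(\mathbf{s})=\prod_{i\ne k}(d_i-d_k)$. Each factor is a nonzero linear form, since the $M_i$ are distinct, so $\deg\Gamma_k\le T-1$. By the Schwartz--Zippel lemma (Theorem~\ref{thm:sz}) with $S=\{0,\dots,T-1\}$, the polynomial $\Gamma_k$ does not vanish identically on $S^n$. Fix $\mathbf{s}\in[0,T-1]^n$ with $\Gamma_k(\mathbf{s})\ne0$. Then $c_kM_k\in\operatorname{NC}(G,\mathbf{s})$, and since $s_i\le T-1$ and $\deg G\le D$, we get $\deg_y\fs{G}\le D(T-1)$. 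We also have $\fs{F}=\fs{G}\,\fs{H}$, because the substitution is multiplicative.

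\textbf{Step 2: specializing $\x$.} This is the step I expect to be the main obstacle. The preceding sketch sets $\x=\mathbf{1}$, but this can fail: $F(y^{s_1},\dots,y^{s_n})$ may vanish identically through cancellation (for example when $H$ contains a factor like $x_1-x_2$ with $s_1=s_2$). In that case $f=0$ and Mignotte's bound yields nothing. To avoid this, I would specialize instead to a point $\b=(b_1,\dots,b_n)$ on the complex unit torus $|b_j|=1$, chosen so that $\fs{H}(\b,y)\ne0$. Such a point exists: $\fs{H}\ne0$ has some nonzero coefficient $P(\x)\in\mathbb{Z}[\x]$ of a power of $y$, and a nonzero polynomial cannot vanish on the whole torus, since its Fourier coefficients there are its coefficients. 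Set $f(y)=\fs{F}(\b,y)$, $g(y)=\fs{G}(\b,y)$ and $h(y)=\fs{H}(\b,y)$, all in $\mathbb{C}[y]$. Then $f=gh\ne0$, because $g$ is also nonzero by Step 3.

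\textbf{Step 3: reading off $c_k$ and applying Mignotte.} Because the $k$-th term is non-colliding, the coefficient of $y^{d_k}$ in $g$ is exactly $c_k\b^{\mathbf{e}_k}$, and $|c_k\b^{\mathbf{e}_k}|=|c_k|$. Hence $g\ne0$ and $|c_k|\le\|g\|_\infty$. Moreover $\|f\|_1\le\|F\|_1$ by the triangle inequality: each term of $F$ contributes a summand of absolute value $|a|$ to a single $y$-power, and merging terms can only lower the $\ell_1$-norm.

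Lemma~\ref{lem:mignotte-univariate} is stated only over $\mathbb{Z}$, so I would invoke its complex form via Mahler measure. For $g\mid f$ in $\mathbb{C}[y]$ we have $|\,\text{coeff}_j(g)|\le\binom{\deg g}{j}M(g)$, which gives $\|g\|_\infty\le 2^{\deg g}M(g)$. Next, $M(g)\le M(g)M(h)=M(f)$ once $M(h)\ge1$ is secured. This holds after normalization: by Step 2 we may also choose $\b$ so that the leading $y$-coefficient of $h$ has modulus at least $1$ (or we can scale via Gauss-lemma bookkeeping). If that is awkward, the fallback is to use $M(g)\le M(f)/|\operatorname{lc}(h)|$ and to pick $\b$ maximizing the modulus of the $y$-leading coefficient polynomial of $\fs{H}$ on the torus. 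By Parseval, its sup-norm there is at least its $\ell_2$-norm, which is $\ge1$ since it has integer coefficients. Finally $M(f)\le\|f\|_2\le\|f\|_1$ by Landau's inequality.

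Combining these gives $|c_k|\le 2^{D(T-1)}\|F\|_1$. Since $k$ was arbitrary, $\|G\|_\infty\le 2^{D(T-1)}\|F\|_1$.
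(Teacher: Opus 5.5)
Your proof is correct, and it departs from the paper's exactly at the point you flagged.

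The paper takes a good $\mathbf{s}\in[0,T-1]^n$, sets $\x=\mathbf{1}$, and applies the integer Mignotte bound (Lemma~\ref{lem:mignotte-univariate}) to $f=gh$ in $\mathbb{Z}[y]$. Staying in $\mathbb{Z}[y]$ makes the normalization $M(h)\ge 1$ automatic. However, the argument silently needs $h=\fs{H}(\mathbf{1},y)\neq 0$. That can fail for \emph{every} admissible $\mathbf{s}$, so re-choosing $\mathbf{s}$ does not rescue it. Take $n=3$, $G=2x_1+1$ (so $T=2$) and $H=(x_1-x_2)(x_1-x_3)(x_2-x_3)$. Every $\mathbf{s}\in\{0,1\}^3$ has two equal coordinates, hence $h=f=0$, and the Mignotte step would assert $|c_k|\le 0$.

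Your specialization to a point of the unit torus avoids this collapse while preserving $|c_k\b^{\mathbf{e}_k}|=|c_k|$ and $\|f\|_1\le\|F\|_1$. The price is leaving $\mathbb{Z}$. You must use the Mahler-measure form of Mignotte, namely $|g_j|\le\binom{\deg g}{j}M(g)$, $M(f)=M(g)M(h)$ and $M(f)\le\|f\|_2$, and you must produce $M(h)\ge 1$ by hand.

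Your ``fallback'' does precisely that, and it should be the argument, stated once in Step~2. Take $\b$ maximizing $|\operatorname{lc}_y(\fs{H})|$ on the torus. By Parseval and integrality this maximum is at least $\|\operatorname{lc}_y(\fs{H})\|_2\ge 1$. That single choice gives $h\neq 0$, $\operatorname{lc}(h)=\operatorname{lc}_y(\fs{H})(\b)$, and $M(h)\ge|\operatorname{lc}(h)|\ge 1$ all at once. Drop the vague ``Gauss-lemma bookkeeping'' alternative.

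With that cleanup your argument proves the theorem for all $F$. The paper's proof as written only covers those $F$ for which some good $\mathbf{s}$ also keeps $H(y^{s_1},\dots,y^{s_n})$ nonzero.
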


\begin{proof}
The proof follows from the above construction: for each term of $G$, we choose a randomized Kronecker substitution that isolates that term without collision, apply the univariate Mignotte bound, and take the maximum over all terms. The bound $2^{D(T-1)} \|F\|_1$ is uniform for all terms of $G$.
\end{proof}

\begin{remark}
Comparing Theorem~\ref{thm:sparse-mignotte} with the classical bounds:

\begin{itemize}
    \item Gelfond's inequality gives $\|G\|_\infty \le e^{d_1+\cdots+d_n}\|F\|_\infty$, which depends on the sum of partial degrees of $F$ and is independent of the sparsity of $G$.
    \item The standard multivariate Mignotte bound via Kronecker substitution gives $\|G\|_\infty \le 2^{O(d^n)}\|F\|_1$, which depends exponentially on the number of variables $n$.
    \item Our sparse Mignotte bound gives $\|G\|_\infty \le 2^{D(T-1)}\|F\|_1$, which is completely independent of $n$ and depends only on the total degree $D$ and the term count $T$ of $G$ itself.
\end{itemize}

This reveals a fundamental structural property of sparse polynomials: the coefficient size of a factor is controlled by its own degree and its own sparsity, not by the ambient dimension. In the extreme case where $T = 1$, i.e., $G$ is a single term, our bound gives $\|G\|_\infty \le \|F\|_1$, and this is tight when $\|F\|_0 = O(1)$. 
In contrast, Gelfond's inequality would give $\|G\|_\infty \le e^{d_1+\cdots+d_n}\|F\|_\infty$, which can be arbitrarily loose for high-dimensional inputs.
\end{remark}

\begin{corollary}\label{cor:gcd-coeff-bound}
Let $A, B \in \mathbb{Z}[x_1, \dots, x_n]$ and let $G = \gcd(A, B)$. Suppose $\|G\|_0 \le T$ and $\deg(G) \le D$. Then
\[
\|G\|_\infty \le 2^{D(T-1)} \min\{\|A\|_1,\|B\|_1\}.
\]
\end{corollary}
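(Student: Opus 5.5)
This is a direct application of Theorem~\ref{thm:sparse-mignotte} once to $A$ and once to $B$. Since $G = \gcd(A,B)$ divides both $A$ and $B$ in $\mathbb{Z}[x_1,\dots,x_n]$, the polynomial $G$ is a factor of each of them. Taking $F = A$ in Theorem~\ref{thm:sparse-mignotte} gives
\[
\|G\|_\infty \le 2^{\deg(G)(\|G\|_0-1)}\|A\|_1 .
\]
Taking $F = B$ gives the same inequality with $\|B\|_1$ in place of $\|A\|_1$.

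Next I would replace the exact quantities $\deg G$ and $\|G\|_0$ by the given bounds $D$ and $T$. The function $(a,b)\mapsto 2^{a(b-1)}$ is nondecreasing in each argument for $a \ge 0$ and $b \ge 1$. Hence $2^{\deg(G)(\|G\|_0-1)} \le 2^{D(T-1)}$, using $\|G\|_0 \ge 1$ (true because $A$ and $B$ are nonzero) and $\deg G \le D$. Both $\|A\|_1$ and $\|B\|_1$ are then bounded by the common factor $2^{D(T-1)}$ times themselves, so taking the smaller of the two yields
\[
\|G\|_\infty \le 2^{D(T-1)}\min\{\|A\|_1,\|B\|_1\}.
\]

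The only point to check is degenerate cases. If $A$ or $B$ is zero, the gcd convention differs, but the corollary implicitly assumes nonzero inputs, as the integer GCD definition does. If $T = 1$, the exponent vanishes and the bound becomes $\|G\|_\infty \le \min\{\|A\|_1,\|B\|_1\}$, which Theorem~\ref{thm:sparse-mignotte} covers directly. I expect no real obstacle; the main care is the monotonicity step that replaces $\deg G$ and $\|G\|_0$ by the upper bounds $D$ and $T$.
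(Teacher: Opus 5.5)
Your proposal is correct and matches how the paper obtains the corollary, which it leaves implicit after Theorem~\ref{thm:sparse-mignotte}: $G$ divides both $A$ and $B$, so you apply the theorem with $F=A$ and with $F=B$. You then use monotonicity of $2^{a(b-1)}$ (for $a\ge 0$, $b\ge 1$) to replace $\deg G$ and $\|G\|_0$ by the bounds $D$ and $T$, and take the minimum. Your observation that $A$ and $B$ must be nonzero (otherwise the $\min$ could be $0$) correctly reflects the implicit hypothesis from the paper's definition of the integer GCD.
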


This corollary is used in our integer GCD algorithm to determine the number of primes required for the rational reconstruction of the coefficients of $G$.
In a recent breakthrough, Nahshon and Shpilka~\cite{NahshonShpilka2026} showed that if a cofactor $A/G$ or $B/G$ is sparse, then the height of $G$ can be bounded in terms of the sparsity of $A/G$ or $B/G$, potentially leading to sharper bounds when the cofactor is sparse.

\subsection{Divisibility Testing over Integers (Algorithm~\ref{alg:int-divisibility-test})}
\label{sec:int-divisibility}

Given polynomials $A, B, H \in \mathbb{Z}[x_1,\dots,x_n]$, we wish to verify whether $H = \gcd(A,B)$ with high probability, using a randomized reduction to univariate divisibility tests modulo a suitably chosen prime.

In Section~\ref{sec:verification}, we developed a probabilistic GCD verification algorithm over a field. In this section, we extend this approach to the integer setting. To remove the assumptions that a term bound $T$ and a coefficient bound $\Ho$ are known in advance, we employ a guessing strategy as in the field case: we guess $T$ and $\Ho$ via doubling, compute a candidate GCD, and verify it probabilistically. For this strategy to work over the integers, we need two independent verification subroutines that operate without prior knowledge of $T$ or $\Ho$:

\begin{enumerate}
    \item computing the partial degrees $\deg_{x_i} \gcd(A,B)$ for all $i$ (Section~\ref{sec:partial-deg}), and
    \item testing divisibility $H \mid F$ for primitive integer polynomials.
\end{enumerate}

The partial-degree computation has already been presented in Section~\ref{sec:partial-deg}. In this section, we focus on the second subroutine: testing whether a given primitive polynomial $H$ divides another primitive polynomial $F$ in $\mathbb{Z}[x_1,\dots,x_n]$.

\subsection*{Why Two Separate Subroutines?}

The verification algorithm is decomposed into two independent subroutines, each exploiting a different one-sided probabilistic guarantee. For partial-degree computation, random evaluations can only overestimate the true degree, so taking the minimum over trials yields the correct value as soon as one good trial occurs. For divisibility testing, if $H \mid F$ then divisibility persists under every modular evaluation, so a negative answer is always correct. A positive answer, however, is only correct with high probability, since $H \nmid F$ may become divisible after modular evaluation. Repeating the test with independent random choices drives the false-positive probability arbitrarily low.
This separation preserves the strongest possible error guarantee for each task and simplifies the analysis.

\subsection*{Divisibility Testing for Integer Polynomials}\label{subsec:int-divisibility}

\begin{theorem}\label{thm:int-divisibility}
Let $H, F \in \mathbb{Z}[x_1, \dots, x_n]$ be primitive polynomials with $\deg_{x_i} H \le \deg_{x_i} F$ for all $i=1,\dots,n$, and suppose $H \nmid F$. Let $d$ be a bound on the partial degree of $H$ and $F$, and let $\mathbb{H} \ge \max\{\|H\|_\infty, \|F\|_\infty\}$. Let $t = \max(\|H\|_0, \|F\|_0)$ and choose $\lambda$ such that
\[
\lambda \ge \max\left\{
    21,\ 
    \frac{9 n d}{5} \ln(2 d \mathbb{H}^2 t^2),\ 
    8  d^2 + 8  d
\right\}.
\]
If a prime $p$ is chosen uniformly from $[\lambda, 2\lambda]$ and a vector $\mathbf{a} = (a_1, \dots, a_n)$ is chosen uniformly from $\mathbb{F}_p^n$, then with probability at least $1/2$, there exists $k \in \{1,\dots,n\}$ such that
\[
H(\mathbf{\check{a}}_{k \mapsto y} ) \nmid F(\mathbf{\check{a}}_{k \mapsto y} ) \quad \text{in } \mathbb{F}_p[y],
\]
where $\mathbf{\check{a}}_{k \mapsto y}  := (a_1, \dots, a_{k-1}, y, a_{k+1}, \dots, a_n)$.
\end{theorem}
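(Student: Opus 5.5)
The plan is to combine the field-case argument of Lemma~\ref{lem:divisibility-test} with a lucky-prime argument in the spirit of Lemma~\ref{lem:lucky-prime-any-eps}. The failure is split into two events: a bad prime (one that destroys the non-divisibility) and a bad evaluation point $\mathbf{a}$ over $\mathbb{F}_p$. Each will be bounded by a constant, and the constants will multiply to give success probability at least $1/2$.

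First, we reduce to the GCD structure over $\mathbb{Z}$. Let $Q=\gcd(H,F)\in\mathbb{Z}[\x]$, primitive since $H,F$ are. As in Lemma~\ref{lem:divisibility-test}, $H\nmid F$ forces some index $k$ with $\deg_{x_k}Q<\deg_{x_k}H$. Otherwise $Q$ and $H$ would agree in every partial degree, making $H$ an associate of $Q$ (primitivity removes integer constants), hence $H\mid F$. We fix this $k$ and write $H=Q\tilde H$ and $F=Q\tilde F$ with $\gcd(\tilde H,\tilde F)=1$.

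Next, we handle the prime. We call $p$ good for the pair $(H,F)$ if it is lucky in Brown's sense, i.e.\ $\deg_{x_i}\gcd(H\bmod p,F\bmod p)=\deg_{x_i}Q$ for all $i$. The factor $9nd/5$ in $\lambda$ is exactly the condition of Lemma~\ref{lem:lucky-prime-any-eps} with $\varepsilon=1/3$ (applied to the pair $H,F$, with $\Hi$ replaced by $\mathbb{H}$). So $p$ is lucky with probability at least $2/3$. For a lucky $p$, the modular GCD $Q_p=\gcd(H_p,F_p)$ still has $\deg_{x_k}Q_p=\deg_{x_k}Q<\deg_{x_k}H$. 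We also need $\deg_{x_k}H_p=\deg_{x_k}H$, i.e.\ $p\nmid\operatorname{lc}_{x_k}(H)$ identically. This either follows from luckiness or can be absorbed into the unlucky set, since the relevant integer is bounded by $\mathbb{H}^{O(1)}$-sized quantities already counted in Brown's $\sigma$. I would check this detail, but it is where the argument is least rigid. In particular, $H_p\nmid F_p$ over $\mathbb{F}_p$.

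Finally, conditioned on a lucky $p$, we run the argument of Lemma~\ref{lem:divisibility-test} over $\mathbb{F}_p$. Write $H_p=Q_p\tilde H_p$ and $F_p=Q_p\tilde F_p$ with coprime cofactors, and set $\Gamma_k=\operatorname{lc}_{x_k}(F_p)\operatorname{lc}_{x_k}(H_p)\operatorname{Res}_{x_k}(\tilde F_p,\tilde H_p)$. This is a nonzero polynomial of degree at most $2d^2+2d$. By Schwartz--Zippel over $\mathbb{F}_p$ with $p\ge\lambda\ge 8d^2+8d$, $\Gamma_k(\mathbf{a})\ne0$ with probability at least $3/4$. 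When it holds, the univariate GCD of the specializations has degree $\deg_{x_k}Q_p<\deg_{x_k}H_p=\deg H(\mathbf{\check{a}}_{k\mapsto y})$, so $H(\mathbf{\check{a}}_{k\mapsto y})\nmid F(\mathbf{\check{a}}_{k\mapsto y})$. Multiplying gives success probability at least $\frac23\cdot\frac34=\frac12$.

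The main obstacle is the prime step. Brown's luckiness is defined via partial degrees of the modular GCD and does not by itself guarantee that $\operatorname{lc}_{x_k}(H)$ survives reduction modulo $p$. I would first try to show that the degree-preservation condition is implied by, or costs no more than, the unlucky-prime count already built into $\lambda$.
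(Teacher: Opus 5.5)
Your architecture is the paper's. It also takes $Q=\gcd(H,F)$ and an index $k_0$ with $\deg_{x_{k_0}}Q<\deg_{x_{k_0}}H$, gets a lucky prime with probability at least $2/3$ from Lemma~\ref{lem:lucky-prime-any-eps} with $\varepsilon=1/3$, and gets a good $\mathbf{a}$ with probability at least $3/4$ by Schwartz--Zippel. The only difference is where $\Gamma$ lives: the paper forms $\Gamma=\operatorname{lc}_{x_{k_0}}(H)\operatorname{lc}_{x_{k_0}}(F)\operatorname{Res}_{x_{k_0}}(H/Q,F/Q)$ over $\mathbb{Z}$ and reduces it mod $p$, whereas you form it over $\mathbb{F}_p$. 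The step you flag, however, is a genuine gap, and your first proposed resolution is false. Take $n=1$, $H=cx+1$, $F=x+2$ and a prime $p\mid c$. Then $Q=1$ and $\gcd(H_p,F_p)=\gcd(1,x+2)=1$, so $p$ is lucky. Yet $H_p=1$ divides $F_p$, and no choice of $\mathbf{a}$ detects $H\nmid F$. Your second resolution does not work either: such primes are lucky, so they are not among the $u$ primes bounded in Theorem~\ref{Brownthe2}. The paper's proof has exactly the same hole, only hidden. For such $p$ its $\Gamma$ vanishes identically mod $p$, since it contains the factor $\operatorname{lc}_{x_{k_0}}(H)$, so the Schwartz--Zippel bound it invokes does not apply.

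The fix is a separate count, which is the correct form of your second idea. Suppose $p$ is lucky and $H_p\mid F_p$. Then $\gcd(H_p,F_p)$ is an associate of $H_p$, so $\deg_{x_i}H_p=\deg_{x_i}Q$ for all $i$, and in particular $\deg_{x_k}H_p<\deg_{x_k}H$. That is, $p$ divides $\operatorname{cont}(\operatorname{lc}_{x_k}(H))$, a fixed positive integer at most $\mathbb{H}$. At most $\ln\mathbb{H}/\ln\lambda$ primes $p\ge\lambda$ do so. Using the prime count from Lemma~\ref{lem:lucky-prime-any-eps} and $\lambda\ge\frac{9nd}{5}\ln(2d\mathbb{H}^2t^2)\ge\frac{18nd}{5}\ln\mathbb{H}$, this is at most a $\frac{1}{6nd}$ fraction of the primes in $[\lambda,2\lambda]$. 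For lucky $p$ outside this set, $\deg_{x_k}Q_p=\deg_{x_k}Q<\deg_{x_k}H=\deg_{x_k}H_p$. Your $\mathbb{F}_p$-version of Lemma~\ref{lem:divisibility-test} then goes through with a $\Gamma_k$ that is nonzero by construction. This is a real advantage over reducing an integer $\Gamma$: only degree loss on the $H$ side has to be excluded. But the bookkeeping now yields only $\bigl(\frac{2}{3}-\frac{1}{6nd}\bigr)\cdot\frac{3}{4}<\frac{1}{2}$. So the constant $\frac12$ in the statement requires a slightly larger $\lambda$, e.g.\ a bigger constant in front of $nd\ln(2d\mathbb{H}^2t^2)$. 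For Algorithm~\ref{alg:int-divisibility-test}, any fixed per-trial success probability suffices after adjusting $\rho$.
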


\begin{proof}
Let $Q = \gcd(H, F)$ in $\mathbb{Z}[x_1,\dots,x_n]$. Since $H$ is primitive and $H \nmid F$, there exists some $k_0 \in \{1,\dots,n\}$ such that
\[
\deg_{x_{k_0}} Q < \deg_{x_{k_0}} H.
\]
Indeed, if $\deg_{x_i} Q = \deg_{x_i} H$ for all $i=1,\dots,n$, then $Q$ and $H$ have the same degree in every variable. Since $Q$ divides $H$ and both are primitive, this would imply $Q = \pm H$, contradicting $H \nmid F$ (as $Q$ is a common divisor of $H$ and $F$). Hence at least one variable must have strict inequality.

Let $H_1 = H/Q$ and $F_1 = F/Q$. Then $\gcd(H_1, F_1) = 1$. Define
\[
\Gamma(\mathbf{x}) = \operatorname{lc}_{x_{k_0}}(H) \cdot \operatorname{lc}_{x_{k_0}}(F) \cdot \operatorname{Res}_{x_{k_0}}(H_1, F_1) \in \mathbb{Z}[x_1,\dots,x_n].
\]
Since $H_1$ and $F_1$ are coprime, $\Gamma$ is a nonzero polynomial. By the standard resultant degree bound,
\[
\deg \Gamma \le 2d^2 + 2d.
\]

If a chosen prime $p$ is lucky for $(H, F)$ and $\Gamma(\mathbf{a}) \not\equiv 0 \pmod{p}$, then the leading coefficients do not vanish and the resultant is nonzero modulo $p$. Consequently,
\[
\gcd(H(\mathbf{\check{a}}_{k_0 \mapsto y}), F(\mathbf{\check{a}}_{k_0 \mapsto y}))
= Q(\mathbf{\check{a}}_{k_0 \mapsto y}) \cdot c
\]
for some $c \in \mathbb{F}_p^*$, and hence
\[
\deg \gcd(H(\mathbf{\check{a}}_{k_0 \mapsto y}), F(\mathbf{\check{a}}_{k_0 \mapsto y}))
= \deg_{x_{k_0}} Q
< \deg_{x_{k_0}} H
= \deg H(\mathbf{\check{a}}_{k_0 \mapsto y}).
\]
Therefore $H(\mathbf{\check{a}}_{k_0 \mapsto y}) \nmid F(\mathbf{\check{a}}_{k_0 \mapsto y})$.

We now bound the probability that both conditions hold.

First, by Lemma~\ref{lem:lucky-prime-any-eps}, the probability that a randomly chosen prime $p \in [\lambda, 2\lambda]$ is lucky for $(H, F)$ is at least $2/3$.

Second, conditioned on $p$ being lucky, we apply the Schwartz--Zippel lemma to $\Gamma$:
\[
\Pr_{\mathbf{a} \in \mathbb{F}_p^n}\bigl(\Gamma(\mathbf{a}) \equiv 0 \pmod{p}\bigr)
\le \frac{\deg \Gamma}{p}
\le \frac{2d^2 + 2d}{\lambda}.
\]
With our choice of $\lambda \ge 8d^2 + 8d$, this probability is at most $1/4$. Hence, conditioned on $p$ being lucky, the evaluation is good with probability at least $3/4$.

Therefore, a single trial succeeds in detecting $H \nmid F$ with probability at least
\[
\Pr(p \text{ is lucky}) \cdot \Pr(\text{evaluation is good} \mid p \text{ is lucky})
\ge \frac{2}{3} \cdot \frac{3}{4}
= \frac{1}{2}.
\]

Thus, whenever $H \nmid F$, a single trial detects this fact with probability at least $1/2$.
\end{proof}

\begin{remark}
If $H \mid F$, then for every prime $p$ and every evaluation point $\mathbf{a}$, we have
\[
H(\mathbf{\check{a}}_{k \mapsto y} ) \mid F(\mathbf{\check{a}}_{k \mapsto y} ) \quad \text{in } \mathbb{F}_p[y]
\]
for all $k=1,\dots,n$. Hence the divisibility test never produces a false negative: it always returns ``true'' when $H \mid F$. This one-sided guarantee is the dual of the partial-degree computation, where bad evaluations can only overestimate the true degree.
\end{remark}

The following algorithm (Algorithm \ref{alg:int-divisibility-test}) tests whether a polynomial $H$ divides another polynomial $F$ in $\mathbb{Z}[x_1,\dots,x_n]$, using a randomized reduction to univariate divisibility tests modulo a random prime.

\begin{algorithm}
\caption{Probabilistic Divisibility Test over the Integers}
\label{alg:int-divisibility-test}
\begin{algorithmic}[1]
\Require
    \begin{itemize}
        \item $H, F \in \mathbb{Z}[x_1,\dots,x_n]$;
        \item A desired failure probability $\mu > 0$;
    \end{itemize}
\Ensure
    If $H \nmid F$, with probability $\ge 1-\mu$, return ``false''; if $H \mid F$, return ``true''.

\State Compute the contents $c_H \gets \operatorname{cont}(H)$ and $c_F \gets \operatorname{cont}(F)$. If $c_H \nmid c_F$ in $\mathbb{Z}$, return ``false''.

\State Let $P_H \gets \operatorname{pp}(H)$ and $P_F \gets \operatorname{pp}(F)$.

\State Compute $d$ on the partial degree bound of $H,F$ and $\mathbb{H} = \max\{\|H\|_\infty, \|F\|_\infty\}$.
\State Set $t = \max\{\|H\|_0, \|F\|_0\}$.
\State Set $\rho \gets \lceil \log_2\frac{1}{\mu} \rceil$.
\State Choose $\lambda$ as
\[
\lambda \ge \max\left\{
    21,\ 
    \frac{9 n d}{5} \ln(2 d \mathbb{H}^2 t^2),\ 
    8 d^2 + 8 d
\right\}.
\]

\For{$r = 1$ to $\rho$}
    \State Choose a prime $p$ uniformly from $[\lambda, 2\lambda]$.
    \State Choose $\mathbf{a} = (a_1,\dots,a_n)$ uniformly from $\mathbb{F}_p^n$.
    \For{$k = 1$ to $n$}
        \If{$\operatorname{lc}_{x_k}(P_H)(\mathbf{a}) \equiv 0 \pmod{p}$ or $\operatorname{lc}_{x_k}(P_F)(\mathbf{a}) \equiv 0 \pmod{p}$}
            \State \textbf{continue} to the next $r$
        \EndIf
        \State Compute
        \[
        u_k(y) \gets P_H(\mathbf{\check{a}}_{k \mapsto y} ), \qquad v_k(y) \gets P_F(\mathbf{\check{a}}_{k \mapsto y} )
        \]
        in $\mathbb{F}_p[y]$, where $\mathbf{\check{a}}_{k \mapsto y}  = (a_1,\dots,a_{k-1}, y, a_{k+1},\dots,a_n)$.
        \If{$u_k \nmid v_k$ in $\mathbb{F}_p[y]$}
            \State \Return ``false''
        \EndIf
    \EndFor
\EndFor

\State \Return ``true''
\end{algorithmic}
\end{algorithm}

\begin{theorem}\label{thm:int-divisibility-test-complexity}[Correctness of Algorithm~\ref{alg:int-divisibility-test}]
Let $H, F \in \mathbb{Z}[x_1,\dots,x_n]$. Algorithm~\ref{alg:int-divisibility-test} returns ``true'' if $H \mid F$ with probability $1$, and returns ``false'' if $H \nmid F$ with probability at least $1-\mu$.
\end{theorem}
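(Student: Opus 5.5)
The plan is to treat the two directions separately, since the algorithm has one-sided error. Throughout, the key reduction is to primitive parts.

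\textbf{Case $H \mid F$.} First I will check that no step of Algorithm~\ref{alg:int-divisibility-test} can return ``false''.

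Write $F = HK$ with $K\in\mathbb{Z}[x_1,\dots,x_n]$. By Gauss's lemma, $\operatorname{cont}(F)=\operatorname{cont}(H)\operatorname{cont}(K)$, so the content test $c_H \mid c_F$ passes. Gauss's lemma also gives $P_F = P_H\cdot \operatorname{pp}(K)$ up to sign.

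Reduction modulo $p$ and the substitution $x_i\mapsto a_i$ ($i\neq k$), $x_k\mapsto y$ together form a ring homomorphism $\mathbb{Z}[\x]\to\mathbb{F}_p[y]$. It therefore preserves this factorization, so $u_k \mid v_k$ in $\mathbb{F}_p[y]$ for every $p$, every $\mathbf{a}$ and every $k$. Skipped iterations (vanishing leading coefficients) never return ``false''. Hence the output is ``true'' with probability $1$.

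\textbf{Case $H\nmid F$.} If $c_H\nmid c_F$, the algorithm returns ``false'' deterministically. Otherwise $c_H\mid c_F$, and I claim $P_H\nmid P_F$. Indeed, $P_H\mid P_F$ together with $c_H\mid c_F$ would give $H = c_H P_H \mid c_F P_F = F$, a contradiction.

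If some $\deg_{x_i} P_H > \deg_{x_i} P_F$, the univariate image in that variable will have larger degree whenever the leading coefficients survive. Such an image cannot divide, since $v_k\neq 0$ under the non-vanishing check. I will fold this subcase into the probability bound below, or observe that Theorem~\ref{thm:int-divisibility} only needs the degree hypothesis in order to locate $k_0$, which can instead be taken as that variable.

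Otherwise the hypotheses of Theorem~\ref{thm:int-divisibility} hold for the primitive pair $(P_H,P_F)$, with $\lambda$ chosen exactly as in the algorithm. So a single round, with fresh random $p\in[\lambda,2\lambda]$ and $\mathbf{a}\in\mathbb{F}_p^n$, finds a $k$ with $u_k\nmid v_k$ with probability at least $1/2$.

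\textbf{Main obstacle.} The delicate point is the interaction with the ``continue to the next $r$'' step. Theorem~\ref{thm:int-divisibility} guarantees non-divisibility only for the particular $k_0$, and the loop may abort at an earlier $k$ because some other leading coefficient vanishes. I would handle this by enlarging the polynomial $\Gamma$ in the proof of Theorem~\ref{thm:int-divisibility} to
\[
\prod_{k}\operatorname{lc}_{x_k}(P_H)\operatorname{lc}_{x_k}(P_F)\cdot\operatorname{Res}_{x_{k_0}}(H_1,F_1).
\]
This polynomial has degree at most $2nd+2d^2$, which is still at most $\lambda/4$ once $\lambda \ge 8nd^2+8nd$. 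Alternatively, I would note that the event ``$\Gamma(\mathbf{a})\neq 0$ for the enlarged $\Gamma$'' already forces all leading coefficients to be nonzero, so no skip occurs. Either way a round fails with probability at most $1/2$.

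The $\rho=\lceil\log_2(1/\mu)\rceil$ rounds are independent, so the probability that every round fails to detect non-divisibility is at most $2^{-\rho}\le\mu$. The algorithm therefore returns ``false'' with probability at least $1-\mu$.
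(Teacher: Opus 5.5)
Your overall strategy is the paper's. When $H\mid F$, the specialization $\mathbb{Z}[\x]\to\mathbb{F}_p[y]$ is a ring homomorphism, so no round can return ``false''. When $H\nmid F$, Theorem~\ref{thm:int-divisibility} gives per-round detection probability $\tfrac12$, and $\rho$ independent rounds give failure probability at most $2^{-\rho}\le\mu$. You are more careful than the paper's two-line proof, which ignores three things: the content step; the hypothesis $\deg_{x_i}H\le\deg_{x_i}F$ of Theorem~\ref{thm:int-divisibility} (this algorithm has no quick degree check); and the fact that ``continue to the next $r$'' can abort a round before the index $k_0$ is ever tested. Your handling of the first two is fine. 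You are also right that the third is a real hole: Theorem~\ref{thm:int-divisibility} only asserts that a non-dividing index exists, not that the loop reaches it.

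The gap is in your repair of that third point. Algorithm~\ref{alg:int-divisibility-test} only takes $\lambda\ge\max\{21,\tfrac{9nd}{5}\ln(2d\mathbb{H}^2t^2),\,8d^2+8d\}$. The bound $8nd^2+8nd$ you invoke belongs to Algorithm~\ref{alg:int-degree-match}, not to this one. With the algorithm's actual $\lambda$, your enlarged $\Gamma$ (degree up to $2nd+2d^2$) is guaranteed to have degree at most $\lambda/4$ only if, roughly, $\tfrac95\ln(2d\mathbb{H}^2t^2)\ge 8$. That fails for small $d,\mathbb{H},t$ and large $n$. For example, with $d=1$, $\mathbb{H}=1$, $t=2$, $\lambda$ may be about $3.74n$, so Schwartz--Zippel bounds the vanishing probability only by roughly $0.53$. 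The per-round success you can certify is then about $\tfrac23\cdot 0.47<\tfrac12$, and $2^{-\rho}\le\mu$ no longer follows. Your ``alternatively'' restates the same argument; it is not a second route.

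To close the proof, something must change. You can enlarge $\lambda$ to at least $8nd+8d^2$ under the paper's degree conventions. You can change the skip to ``continue to the next $k$'', so that only the leading coefficients at $k_0$ matter and the paper's $\Gamma$ with $\lambda\ge 8d^2+8d$ suffices. Or you can enlarge $\rho$ to match a smaller certified per-round probability.

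Separately, you inherit a second issue from the paper's proof of Theorem~\ref{thm:int-divisibility}. It applies Schwartz--Zippel over $\mathbb{F}_p$ without checking that $\Gamma\not\equiv 0\pmod p$ as a polynomial. Luckiness does not ensure this, since a lucky $p$ can divide the content of some $\operatorname{lc}_{x_k}$, so such primes must be counted separately.
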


\begin{proof}
If $H \mid F$, then for every prime $p$ and every evaluation $\mathbf{a}$, we have $H(\mathbf{a}_{x_k}) \mid F(\mathbf{a}_{x_k})$ in $\mathbb{F}_p[y]$. Hence the algorithm always returns ``true''.

If $H \nmid F$, by Theorem~\ref{thm:int-divisibility}, a single trial detects the failure with probability at least $1/2$. Repeating $\rho = \lceil \log_2\frac{1}{\mu} \rceil$ times ensures that the failure probability is at most $(1/2)^\rho \le \mu$.
\end{proof}

\begin{theorem}[Complexity of Algorithm~\ref{alg:int-divisibility-test}]
Let $H, F \in \mathbb{Z}[x_1,\dots,x_n]$ with partial degree at most $d$, coefficient size at most $\mathbb{H}$, and let $t = \max\{\|H\|_0, \|F\|_0\}$. Algorithm~\ref{alg:int-divisibility-test} runs in
\[
\widetilde{O}\bigl( n t \log^2 d \cdot \log\log \mathbb{H} \log\frac{1}{\mu} + n d \log\log \mathbb{H} \log\frac{1}{\mu}+(\|H\|_0\cdot \log\|H\|_{\infty}+ \|F\|_0\cdot \log \|F\|_{\infty})\log\frac{1}{\mu}\bigr)
\]
bit operations.
\end{theorem}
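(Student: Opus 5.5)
The plan is to bound the cost of Algorithm~\ref{alg:int-divisibility-test} step by step. The bound mirrors Theorem~\ref{thm:int-degree-match-complexity}, with univariate divisibility tests in place of univariate GCDs.

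\textbf{Content step.} First we account for the content computation. Computing $\operatorname{cont}(H)$ and $\operatorname{cont}(F)$ takes $\|H\|_0$ and $\|F\|_0$ integer GCDs, respectively, on integers of bit size $\log\|H\|_\infty$ and $\log\|F\|_\infty$. With fast integer GCD this costs $\widetilde{O}(\|H\|_0\log\|H\|_\infty+\|F\|_0\log\|F\|_\infty)$. Dividing out to obtain $P_H$, $P_F$, and the test $c_H\mid c_F$, have the same cost. Computing $d$, $\mathbb{H}$ and $t$ is a linear scan of $O(nt\log d)$ exponent bits. Since $\lambda=O(nd^2+nd\log(d\mathbb{H}t))$, we have $\log\lambda=\widetilde{O}(\log(nd)+\log\log\mathbb{H}+\log\log t)$. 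Each prime selection costs $\operatorname{poly}(\log\lambda)$ via $O(\log\lambda)$ expected primality tests, which is polylogarithmic and absorbed in $\widetilde{O}$.

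\textbf{Cost per trial.} For one of the $\rho$ trials, we reduce every coefficient of $P_H$ and $P_F$ modulo $p$. This costs $\widetilde{O}((\|H\|_0\log\|H\|_\infty+\|F\|_0\log\|F\|_\infty)+t\log p)$ bit operations. It is the source of the third summand once multiplied by $\rho=O(\log\frac1\mu)$. Reduction is redone per prime because the primes change between trials. Next, exactly as in the proof of Theorem~\ref{thm:int-degree-match-complexity}:
\begin{itemize}
\item each term $c\x^{\mathbf e}$ is evaluated at $\mathbf a$ with $O(n\log d)$ multiplications in $\mathbb{F}_p$, giving $C$;
\item all $n$ images $C\,y^{e_k}a_k^{-e_k}$ at $\mathbf{\check{a}}_{k\mapsto y}$ are then formed with $O(n\log d)$ further operations.
\end{itemize}
This gives $\widetilde{O}(nt\log d\log p)$ bits. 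The leading-coefficient checks $\operatorname{lc}_{x_k}(P_H)(\mathbf a)$ come for free from the same images, since they are the top coefficients of $u_k$ and $v_k$. Finally, the $n$ univariate divisibility tests $u_k\mid v_k$ in $\mathbb{F}_p[y]$, with degrees at most $d$, cost $\widetilde{O}(d\log p)$ each by fast division with remainder, so $\widetilde{O}(nd\log p)$ in total.

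\textbf{Summing up.} Summing the per-trial costs, substituting $\log p=O(\log\lambda)$, and multiplying by $\rho$ gives the total. The factors $\log(nd)$ and $\log\log t$ in $\log p$ are swallowed by $\widetilde{O}$, and the factor $\log\log\mathbb{H}$ is kept explicitly. This yields $\widetilde{O}(nt\log^2 d\,\log\log\mathbb{H}\log\frac1\mu+nd\log\log\mathbb{H}\log\frac1\mu+(\|H\|_0\log\|H\|_\infty+\|F\|_0\log\|F\|_\infty)\log\frac1\mu)$. Here one $\log d$ comes from fast exponentiation and the other is the conservative accounting of $\log p\ge\log d$.

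\textbf{Main obstacle.} The main difficulty is bookkeeping rather than mathematics. We must ensure that coefficient reduction does not produce a $\log\mathbb{H}$ factor multiplied by $n$ or $d$. The fix is to do the reduction once per term per prime, before the $n$-fold image construction. The images then work only with $\mathbb{F}_p$ elements of $O(\log\lambda)$ bits, so $\log\mathbb{H}$ enters the dominant terms only through $\log\log\mathbb{H}$.
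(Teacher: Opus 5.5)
Your proposal is correct and follows the paper's proof essentially step for step: content and primitive-part cost, polylogarithmic prime selection, per-trial coefficient reduction modulo $p$, the $O(n\log d)$-per-term evaluation with images $C\,y^{e_k}a_k^{-e_k}$, $\widetilde{O}(nd\log p)$ for the $n$ univariate divisibility tests, and finally substituting $\log p=O(\log\lambda)$ and multiplying by $\rho$. The only caveat, which you share with the paper, is that the image formula needs $a_k\neq 0$ in $\mathbb{F}_p$; you can remove this at the same $O(n\log d)$ cost per term by computing the leave-one-out products $\prod_{j\neq k}a_j^{e_j}$ with prefix and suffix products.
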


\begin{proof}
We analyze each step of the algorithm.

\paragraph{Content removal and primitivation.}
Computing the contents $c_H = \operatorname{cont}(H)$ and $c_F = \operatorname{cont}(F)$ requires extracting the GCD of all coefficients of $H$ and $F$, respectively, costing $\widetilde{O}(\|H\|_0\cdot \log\|H\|_{\infty}+ \|F\|_0\cdot \log \|F\|_{\infty})$ bit operations. Computing the primitive parts $P_H$ and $P_F$ requires dividing each coefficient by the corresponding content, also costing $\widetilde{O}(\|H\|_0\cdot \log\|H\|_{\infty}+ \|F\|_0\cdot \log \|F\|_{\infty})$  bit operations. The content divisibility check $c_H \mid c_F$ in $\mathbb{Z}$ costs $\widetilde{O}(\log \mathbb{H})$ bit operations. 

\paragraph{Prime selection.}
Choosing a prime $p$ uniformly from $[\lambda, 2\lambda]$ requires testing $O(\log \lambda)$ candidate integers for primality. Since
\[
\lambda = O\bigl(d^2 + nd\log(d\mathbb{H} t)\bigr),
\]
each primality test costs $O(\operatorname{poly}(\log \lambda))$ bit operations using standard algorithms. This step contributes only a lower-order term and is dominated by the main complexity bound.

\paragraph{Evaluation of polynomials at $\mathbf{\check{a}}_{k \mapsto y} $.}
For a single term $c x_1^{e_1}\cdots x_n^{e_n}$ with $c \in \mathbb{Z}$, reducing its coefficient modulo $p$ costs $O(\log p + \log |c|)$ bit operations. The evaluation at $\mathbf{a} = (a_1,\dots,a_n)$ is
\[
C = c \cdot a_1^{e_1}\cdots a_n^{e_n} \in \mathbb{F}_p,
\]
which costs $\widetilde{O}(n \log d \log p)$ bit operations using fast exponentiation modulo $p$. For each $k=1,\dots,n$, the evaluation at $\mathbf{\check{a}}_{k \mapsto y}  = (a_1,\dots,a_{k-1}, y, a_{k+1},\dots,a_n)$ is obtained as
\[
C \cdot \frac{y^{e_k}}{a_k^{e_k}},
\]
which requires $O(\log d \log p)$ additional operations. Thus, evaluating all $n$ univariate images of a single term costs $O(n \log d \log p)$ bit operations. Summing over all terms of $H$ and $F$ gives
\[
\widetilde{O}\bigl(\|H\|_0\cdot \log\|H\|_{\infty}+ \|F\|_0\cdot \log \|F\|_{\infty} + n t \log d \log p \bigr)
\]
bit operations per trial.

\paragraph{Univariate divisibility tests.}
For each $k=1,\dots,n$, we need to test whether $u_k(y) \mid v_k(y)$ in $\mathbb{F}_p[y]$, where $\deg u_k, \deg v_k \le d$. The total cost per trial is
\[
\widetilde{O}(n d \log p)
\]
bit operations.

\paragraph{Total per trial.}
Combining the evaluation and divisibility test costs, one trial requires
\[
\widetilde{O}\bigl( n t \log d \log p + \|H\|_0\cdot \log\|H\|_{\infty}+ \|F\|_0\cdot \log \|F\|_{\infty}  + n d \log p \bigr)
\]
bit operations. Since $\log p = O(\log \lambda) = O(\log(nd) + \log d + \log\log \mathbb{H} + \log\log t)$ and $t \le (d+1)^n$, this simplifies to
\[
\widetilde{O}\bigl( n t \log^2 d \cdot \log\log \mathbb{H} + n d \log\log \mathbb{H} + \|H\|_0\cdot \log\|H\|_{\infty}+ \|F\|_0\cdot \log \|F\|_{\infty}\bigr).
\]

\paragraph{Repeating $\rho$ times.}
The algorithm repeats the trial $\rho = \lceil \log_2\frac{1}{\mu} \rceil$ times. Hence the total cost is
\[
\widetilde{O}\bigl( n t \log^2 d \cdot \log\log \mathbb{H} \log\frac{1}{\mu} + n d \log\log \mathbb{H} \log\frac{1}{\mu}+(\|H\|_0\cdot \log\|H\|_{\infty}+ \|F\|_0\cdot \log \|F\|_{\infty})\log\frac{1}{\mu} \bigr).
\]

\end{proof}

\subsection{Polynomial GCD over the Integers without Priori Bounds (Algorithms~\ref{alg:intgcd-guess})}
\label{sec:int-no-bounds}

In this section, we present a GCD algorithm for integer polynomials that removes the assumption of a priori knowledge of both the term bound $T$ and the coefficient bound $\Ho$. As in the field case, we employ a doubling strategy: we guess $T = 2^0, 2^1, 2^2, \dots$ until verification succeeds. 

To keep the analysis tractable and achieve quasi-linear complexity, we separate the removal of the two bounds. We first assume that a term bound $T \ge \|G\|_0$ is given (Algorithm \ref{alg:intgcd-no-bounds}), and show how to eliminate the coefficient bound $\Ho$ with only a logarithmic overhead in the expected complexity. The term bound $T$ itself is then handled by an outer guessing loop, which we analyze in the next section.

A naive approach to eliminating $\Ho$ would be to use the Gelfond bound $\mathbb{H}_{o\max} = e^{nd} \Hi$ as a worst-case coefficient bound, which is independent of the true GCD. However, this would require choosing primes on the order of $\log \mathbb{H}_{o\max} = O(nd + \log \Hi)$, and consequently $\kappa = O(\log \mathbb{H}_{o\max}) = O(nd + \log \Hi)$ primes. Since each finite-field GCD costs $\tilde{O}(nTD\log p)$, this would yield a complexity of $\tilde{O}(n^2 T D^2\log p)$, which loses the desired linearity in $n$ and $D$. 

Instead, we take an \emph{adaptive} approach: we start with a small guess $\H = 2$ for the coefficient bound of the GCD, square it in each iteration (i.e., $\H \gets \H^2$), and stop once the correct GCD is recovered and verified. This squaring strategy ensures that the number of iterations is only $O(\log\log \Ho)$, while the number of primes needed per iteration is $O(\log \Ho)$; their product remains $\widetilde{O}(\log \Ho)$, preserving the overall linear complexity in $\log \Ho$.

\subsection*{Algorithm Framework}

We now give a high-level description of the algorithm; the detailed pseudocode is presented in Algorithm~\ref{alg:intgcd-no-bounds}.

The algorithm takes as input primitive polynomials $A, B \in \mathbb{Z}[x_1,\dots,x_n]$, a guessed term bound $T$, and a failure tolerance $\mu$. It proceeds in three nested levels:

\begin{enumerate}
    \item \textbf{Partial degree computation.} The algorithm first computes the partial degrees
    \[
    d_i = \deg_{x_i} \gcd(A,B), \qquad i=1,\dots,n,
    \]
    using Algorithm~\ref{alg:int-degree-match}. These degrees serve as a reference for detecting lucky primes.

    \item \textbf{Coefficient bound guessing.} Starting with a guess $\H = 2$ for the true coefficient bound $\Ho = \|G\|_\infty$, the algorithm squares $\H$ in each iteration and attempts to reconstruct $G$ using rational reconstruction. For a fixed $\H$, it collects $\kappa = O(\log \H)$ lucky primes $p$ satisfying
    \[
    \deg_{x_i} \gcd(A \bmod p, B \bmod p) = d_i \quad \text{for all } i.
    \]
    For each lucky prime, it computes the modular GCD $\mathcal{G}_p = \gcd(A \bmod p, B \bmod p)$ over $\mathbb{F}_p$. Once $\kappa$ primes are collected, rational reconstruction recovers a candidate polynomial $\mathcal{G}$ with coefficients bounded by $\H$. If $\|\mathcal{G}\|_\infty > \H$, the guess $\H$ was insufficient; the algorithm squares $\H$ and restarts the collection.

    \item \textbf{Verification.} After a candidate $\mathcal{G}$ is reconstructed, the algorithm verifies whether $\mathcal{G} \mid A$ and $\mathcal{G} \mid B$ using the probabilistic divisibility test (Algorithm~\ref{alg:int-divisibility-test}). If the tests pass, $\mathcal{G}$ is returned as the GCD. If they fail, the algorithm squares $\H$ and continues. If $\H$ exceeds either of the theoretical upper bounds from Gelfond's inequality or the sparse Mignotte bound, the algorithm terminates with ``Failure''.
\end{enumerate}

The guessed term bound $T$ is handled by an outer doubling loop: if the algorithm detects that $\|\mathcal{G}\|_0 > T$ during rational reconstruction or verification, it reports ``Failure''.

Below is a flowchart of the framework.

\begin{figure}[htbp]
\centering
\resizebox{0.9\textwidth}{!}{%
\begin{tikzpicture}[
    node distance=0.6cm and 1.0cm,
    box/.style={rectangle, draw, thick, minimum width=2.8cm, minimum height=0.9cm, align=center, rounded corners=2pt, font=\small},
    bluebox/.style={box, fill=blue!12, draw=blue!60},
    greenbox/.style={box, fill=green!12, draw=green!60},
    redbox/.style={box, fill=red!12, draw=red!60},
    orangebox/.style={box, fill=orange!12, draw=orange!60},
    arrow/.style={->, thick, >=stealth},
    dashedarrow/.style={->, thick, >=stealth, dashed},
    label/.style={font=\small\bfseries}
]

\node[box, fill=gray!10] (input) {Input:\\$A, B \in \mathbb{Z}[x_1,\dots,x_n]$\\ primitive};

\node[bluebox, below=of input] (partial) {Partial Degree Computation\\$\deg_{x_i}\gcd(A,B)$};
\draw[arrow] (input) -- (partial);

\node[orangebox, below=of partial] (guess) {\textbf{Outer Loop}\\Guess coefficient bound $\H$\\$\H \gets 2,4,16,\dots$};
\draw[arrow] (partial) -- (guess);

\node[greenbox, below=of guess] (collect) {\textbf{Inner Loop}\\Collect $\kappa$ lucky primes};
\draw[arrow] (guess) -- (collect);

\node[box, below=of collect, xshift=-3.2cm, minimum width=2.6cm] (lucky) {Lucky Prime Test\\compare degrees};
\node[box, below=of collect, xshift=0cm, minimum width=2.6cm] (modgcd) {Finite Field GCD\\$\mathcal{G}_p$};
\node[box, below=of collect, xshift=3.2cm, minimum width=2.6cm] (divcheck) {Modular Divisibility\\$\mathcal{G}_p \mid A_p,B_p$};
\draw[arrow] (collect) -- (lucky);
\draw[arrow] (collect) -- (modgcd);
\draw[arrow] (collect) -- (divcheck);

\node[redbox, below=of collect, yshift=-2.8cm] (reconstruct) {Rational Reconstruction\\$\mathcal{G} \gets \operatorname{pp}(Q)$};
\draw[arrow] (lucky) --(reconstruct);
\draw[arrow] (modgcd) -- (reconstruct);
\draw[arrow] (divcheck) -- (reconstruct);

\node[box, below=of reconstruct, minimum width=3.5cm, fill=green!20, draw=green!60] (verify) {Verification\\$\mathcal{G} \mid A$ and $\mathcal{G} \mid B$?};
\draw[arrow] (reconstruct) -- (verify);

\node[box, below=of verify, xshift=-2.0cm, minimum width=2.2cm, fill=red!20, draw=red!60] (output) {Output $\mathcal{G}$};
\node[box, below=of verify, xshift=2.0cm, minimum width=2.2cm, fill=orange!20, draw=orange!60] (continue) {$\H \gets \H^2$\\continue};
\draw[arrow] (verify) -- node[left, font=\small] {pass} (output);
\draw[arrow] (verify) -- node[right, font=\small] {fail} (continue);
\draw[dashedarrow] (continue.east) .. controls +(1.5,0) and +(1.5,0) .. (collect.east);
\draw[dashedarrow] (output.south) -- ++(0,-0.5);

\node[label, right=of guess, xshift=1.5cm, align=left] {Coefficient bound\\guessing strategy};
\node[label, right=of collect, xshift=2.2cm, align=left] {Parallel per prime\\$O(\log \Ho)$ primes};
\node[label, right=of reconstruct, xshift=1.8cm, align=left] {Coefficient bound+ \\rational reconstruction};


\end{tikzpicture}
}
\caption{Algorithm pipeline for Algorithm \ref{alg:intgcd-no-bounds}}
\label{fig:pipeline}
\end{figure}

\begin{algorithm}
\caption{Polynomial GCD Algorithm over the Integers with a Given Term Bound}
\label{alg:intgcd-no-bounds}
\begin{algorithmic}[1]
\Require
    \begin{itemize}
        \item $A, B \in \mathbb{Z}[x_1,\dots,x_n]$ primitive;
        \item A guessed term bound $T$ for $\|G\|_0$, where $G = \gcd(A,B)$;
        \item A desired failure probability $\mu > 0$.
    \end{itemize}
\Ensure
    \begin{itemize}
        \item If $T \ge \|G\|_0$, returns $G = \gcd(A,B)$ with probability at least $1-\mu$;
        \item If $T < \|G\|_0$, returns ``Failure'' with probability at least $1-\mu$.
    \end{itemize}

\State Let $\Hi := \max\{\|A\|_\infty, \|B\|_\infty\}$.
\State Let $d := \max_{1 \le i \le n} \max\{\deg_{x_i} A, \deg_{x_i} B\}$, let $D$ be a bound on the total degree of $A$ and $B$, and let $t := \max\{\|A\|_0, \|B\|_0\}$.
\State Let $\mathbb{H}_{o\max} := \min\left\{ e^{nd}\min\{\|A\|_{\infty},\|B\|_{\infty}\},\; 2^{D(T-1)}\min\{\|A\|_1,\|B\|_1 \}\right\}$.

\State Choose $\lambda$ such that
\[
\lambda \ge \max\left\{
    21,\ 
    \frac{12nd}{5} \ln(2d \Hi^2 t^2) \cdot \ln(2e^{2nd}\Hi^2+1),\ 
    8nd^2 + 8nd
\right\}.
\]\label{alg12-4}

\State $L_{\max} := \left\lceil \log_2 \log_2 \mathbb{H}_{o\max} \right\rceil$.  \Comment{Maximum number of outer loop iterations.}
\State $\kappa_{\max} := \left\lceil \frac{\log_2(2\mathbb{H}_{o\max}^2 + 1)}{\log_2 \lambda} \right\rceil$.  \Comment{Maximum number of primes.}

\State Set $\varepsilon_1 \gets \mu/3$, $\varepsilon_2 \gets \mu/(3 \kappa^2_{\max})$, $\varepsilon_3 \gets \mu/(6 L_{\max})$.

\State Compute the partial degrees $d_i = \deg_{x_i} \gcd(A,B)$ for $i=1,\dots,n$ using Algorithm~\ref{alg:int-degree-match} with failure probability $\varepsilon_1$.\label{alg-14-step8} \Comment{These degrees are used for detecting lucky primes.}

\State Initialize $\H \gets 2$ as a guess for the coefficient bound $\Ho$ of $G$.
\State Initialize $\mathcal{L} \gets \emptyset$.

\While{$\H \le \mathbb{H}_{o\max}$}

    \State $\kappa \gets \left\lceil \frac{\log_2(2\H^2 + 1)}{\log_2 \lambda} \right\rceil$.

    \Statex \textbf{Collect $\kappa$ lucky primes and compute modular GCDs:}

    \While{$|\mathcal{L}| < \kappa$}
    \State Choose a prime $p$ uniformly from $[\lambda, 4\lambda]$, distinct from previously chosen primes.

\algstore{intgcd}
\end{algorithmic}
\end{algorithm}

\begin{algorithm}[p]
\begin{algorithmic}[1]
\algrestore{intgcd}

    \Statex \textbf{Lucky prime test:}
    \State Choose a random vector $\mathbf{a} \in \mathbb{F}_p^n$.

    \For{$k = 1$ to $n$}
        \If{$\operatorname{lc}_{x_k}(A)(\mathbf{a}) \equiv 0 \pmod{p}$ or $\operatorname{lc}_{x_k}(B)(\mathbf{a}) \equiv 0 \pmod{p}$}
            \State \textbf{continue} to next prime.
        \EndIf
        \State Compute
        \(
        d^{(p)}_{k} = \deg \gcd(A(\mathbf{\check{a}}_{k \mapsto y} ), B(\mathbf{\check{a}}_{k \mapsto y} )).
        \)
    \EndFor

    \If{$d_k^{(p)} < d_k$ for some $k=1,\dots,n$}
        \State \Return ``Failure''
    \ElsIf{$d_k^{(p)} = d_k$ for all $k=1,\dots,n$}
        \State Add $p$ to $\mathcal{L}$. \Comment{$p$ is lucky.}

        \State Compute $A_p:=A \mod p$ and $B_p:=B \mod p$.
        
        \State Compute $\mathcal{G}_p = \gcd(A_p, B_p)$ over $\mathbb{F}_p$ using Algorithm~\ref{alg:recursive-amplified} with term bound $T$ and failure probability $\varepsilon_2$.
       
           \If{$\mathcal{G}_p = \text{``Failure''}$ or $\|\mathcal{G}_p\|_0 > T$}
        \State \Return ``Failure''
           \EndIf

         \textbf{Verify modular divisibility:} 
         \State Check whether $\mathcal{G}_p \mid A_p$ and $\mathcal{G}_p \mid B_p$ using Algorithm \ref{alg:divisibility-field} with failure probability $\varepsilon_2$ for each divisibility test.
          \If{$\mathcal{G}_p \nmid A_p$ or $\mathcal{G}_p \nmid B_p$}
        \State \Return ``Failure''
          \EndIf
        
    \EndIf
\EndWhile

\Statex \textbf{Reconstruct GCD via rational reconstruction:}

\State Let $m = \prod_{p \in \mathcal{L}} p$, with $m > 2\H^2$.

\State Let $\mathcal{M}$ be the union of all monomials appearing in the polynomials $\mathcal{G}_p$ for $p \in \mathcal{L}$.

\If{$|\mathcal{M}| > T$}
    \State \Return ``Failure''
\EndIf

\For{each monomial $M_i \in \mathcal{M}$}
    \State Collect its coefficients $u_{i,p} \in \mathbb{F}_p$ in $\mathcal{G}_p$ for each $p \in \mathcal{L}$.
    \State Use rational reconstruction to recover the unique reduced fraction
   $ \frac{c_i}{\ell_i} \in \mathbb{Q}$
    with $\gcd(c_i, \ell_i) = 1$, $|c_i| \le \H$, $1 \le \ell_i \le \H$, and
   $\frac{c_i}{\ell_i} \equiv u_{i,p} \pmod{p} \quad \text{for all } p \in \mathcal{L}.$
\EndFor

\State Compute $\ell = \operatorname{lcm}(\ell_1, \dots, \ell_t)$. For each $i = 1,\dots,t$, set $c_i' \gets \ell \cdot (c_i / \ell_i) \in \mathbb{Z}$.
\State Let $Q \in \mathbb{Z}[x_1,\dots,x_n]$ be the polynomial $\sum_{i=1}^t c_i' M_i$.
Let $\mathcal{G} \gets \operatorname{pp}(Q)$.

\If{$\|\mathcal{G}\|_{\infty} > \H$}
    \State $\H \gets \H^2$
    \State \textbf{continue}
\EndIf

\State \textbf{Verification:} Check whether $\mathcal{G} \mid A$ and $\mathcal{G} \mid B$ using Algorithm~\ref{alg:int-divisibility-test} with failure probability $\varepsilon_3$ for each divisibility test.
\If{the divisibility tests pass}
    \State \Return $\mathcal{G}$
\EndIf

\State $\H \gets \H^2$. \Comment{Square the coefficient bound guess.}

\EndWhile

\State \Return ``Failure''.
\end{algorithmic}
\end{algorithm}

\subsection*{Correctness Analysis}

We first fix notation for coefficient bounds. Recall that
\[
\Hi := \max\{\|A\|_\infty, \|B\|_\infty\}, \qquad \Ho := \|G\|_\infty.
\]
By Gelfond's inequality and the sparse Mignotte bound (Theorem~\ref{thm:sparse-mignotte}),
\[
\Ho \le \mathbb{H}_{o,\max}
:= \min\left\{ e^{nd}\min\{\|A\|_\infty,\|B\|_\infty\},\;
2^{D(T-1)}\min\{\|A\|_1,\|B\|_1\}\right\}.
\]

The algorithm guesses $\Ho$ by squaring: $\H = 2,4,16,\dots$ until $\H \ge \Ho$. If $\H$ exceeds $\mathbb{H}_{o,\max}$ without success, it returns ``Failure''.

In Step~\ref{alg12-4}, $\lambda$ is chosen as
\[
\lambda \ge \max\left\{
    21,\ 
    \frac{12nd}{5} \ln(2d \Hi^2 t^2) \cdot \ln(2e^{2nd}\Hi^2+1),\ 
    8nd^2 + 8nd
\right\}.
\]
The second term comes from applying Theorem~\ref{thm:lucky-primes-crt} with $\mu = 1/4$, replacing $\Ho$ by its Gelfond upper bound $e^{nd}\Hi$. Hence, even in the worst case, there are enough primes in $[\lambda,4\lambda]$ so that their product exceeds $2(e^{nd}\Hi)^2$. We use the Gelfond bound here instead of the sparse Mignotte bound, as the latter depends on $T$ and would complicate the analysis of $\lambda$. While the sparse Mignotte bound may be sharper in some cases, the Gelfond bound suffices for our purposes and keeps the analysis independent of $T$.

We now prove correctness in two cases: $T \ge T_0$ and $T < T_0$, where $T_0 := \|G\|_0$.

\begin{theorem}\label{thm:intgcd-no-bounds-correctness}[Correctness of Algorithm~\ref{alg:intgcd-no-bounds}]
Let $A, B \in \mathbb{Z}[x_1,\dots,x_n]$ be primitive and let $G = \gcd(A,B)$ with $\|G\|_0 = T_0$. For any guessed term bound $T$ and failure tolerance $\mu > 0$, Algorithm~\ref{alg:intgcd-no-bounds} satisfies:
\begin{enumerate}
    \item If $T \ge T_0$, it returns $G$ with probability at least $1-\mu$;
    \item If $T < T_0$, it returns ``Failure'' with probability at least $1-\mu$.
\end{enumerate}
\end{theorem}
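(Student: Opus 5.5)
The proof rests on defining a small number of good events, bounding their failure probabilities by $\varepsilon_1,\varepsilon_2,\varepsilon_3$, and then arguing deterministically on the intersection of these events. Let $\mathcal{E}_1$ be the event that Step~\ref{alg-14-step8} returns the true partial degrees $d_i=\deg_{x_i}G$. By Theorem~\ref{thm:int-degree-match}, $\Pr(\mathcal{E}_1)\ge 1-\mu/3$. On $\mathcal{E}_1$, Lemma~\ref{lem:int-degree-match} shows that a prime with $d_k^{(p)}<d_k$ can never occur: a bad evaluation or an unlucky prime can only overestimate the degrees. Hence the early ``Failure'' return in the lucky-prime test never fires, and every prime admitted to $\mathcal{L}$ is genuinely lucky. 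I also need the loop to find enough primes. Here I would invoke Theorem~\ref{thm:lucky-primes-crt} with $\mu=1/4$ and the Gelfond bound $e^{nd}\Hi$ in place of $\Ho$; this is exactly the role of the second term in the choice of $\lambda$. It guarantees that $[\lambda,4\lambda]$ contains at least $\kappa_{\max}$ lucky primes, so the inner while-loop terminates almost surely.

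\textbf{Case $T\ge T_0$.} Let $\mathcal{E}_2$ be the event that every modular call to Algorithm~\ref{alg:recursive-amplified}, and every modular divisibility test, behaves correctly. At most $\kappa_{\max}$ primes are ever placed into $\mathcal{L}$, because $\mathcal{L}$ is kept across iterations of the outer loop. Each prime involves $O(1)$ calls, each with failure probability $\varepsilon_2=\mu/(3\kappa_{\max}^2)$, so a union bound gives $\Pr(\bar{\mathcal{E}}_2)\le\mu/3$. On $\mathcal{E}_1\cap\mathcal{E}_2$, each $\mathcal{G}_p$ equals $\operatorname{monic}(G\bmod p)$. It therefore has at most $T_0\le T$ terms, the union of monomials $\mathcal{M}$ is exactly $\operatorname{supp}(G)$, and the modular divisibility checks (which have no false negatives) pass. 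Consequently no ``Failure'' is triggered. Once $\H\ge\Ho$, we have $m>2\H^2\ge 2\Ho^2$, so rational reconstruction returns $G/\operatorname{lc}(G)$ exactly, and $\operatorname{pp}(Q)=G$ because $A$ and $B$ are primitive, hence so is $G$. Then $\|\mathcal{G}\|_\infty=\Ho\le\H$, and the integer divisibility test passes with certainty. Since $\Ho\le\mathbb{H}_{o\max}$ and $\H$ is squared each round, this happens before the outer loop exits.

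The delicate point is the earlier rounds, with $\H<\Ho$. There a wrong candidate $\mathcal{G}$ with $\|\mathcal{G}\|_\infty\le\H$ might survive the norm check. I would argue that such a $\mathcal{G}$ cannot divide both $A$ and $B$ while differing from $G$. Indeed, if it did, it would be a common divisor agreeing with $G$ modulo the lucky primes, hence of the same partial degrees and support. Since $\mathcal{G}$ divides $G$ and they share partial degrees and are both primitive, we would get $\mathcal{G}=\pm G$, contradicting $\|\mathcal{G}\|_\infty\le\H<\Ho$. So each wrong candidate is rejected except with probability $\varepsilon_3$ per test. At most $L_{\max}$ rounds occur and each round runs two tests, so the total error here is at most $2L_{\max}\varepsilon_3=\mu/3$. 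Summing the three contributions gives success probability at least $1-\mu$.

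\textbf{Case $T<T_0$.} The goal is to show that the output is never an incorrect polynomial, so the only alternative is ``Failure''. On $\mathcal{E}_1\cap\mathcal{E}_2$, every correctly computed $\mathcal{G}_p$ has $T_0>T$ terms, which triggers the test $\|\mathcal{G}_p\|_0>T$. If instead a modular call errs and yields a wrong $\mathcal{G}_p$ with at most $T$ terms, the modular divisibility test rejects it unless that test also errs, which is again covered by $\varepsilon_2$. The remaining possibility is that some $\mathcal{G}$ reaches integer verification. Any $\mathcal{G}$ with at most $T<T_0$ terms that divides both $A$ and $B$ is a proper common divisor of $G$. On $\mathcal{E}_1$, its modular images would then have strictly smaller degrees than the lucky-prime images, contradicting how $\mathcal{G}$ was reconstructed from the $\mathcal{G}_p$. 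So the integer verification rejects it except with probability $\varepsilon_3$ per test.

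I expect this last step to be the main obstacle: showing cleanly that every candidate which survives the modular divisibility checks but is not $G$ is caught by Algorithm~\ref{alg:int-divisibility-test}. This requires relating $\mathcal{G}$, which is built from possibly erroneous or inconsistent $\mathcal{G}_p$, to a genuine proper divisor of $G$. I would handle it by first reducing to the event $\mathcal{E}_2$, where all $\mathcal{G}_p$ are exact, and only then doing the union-bound bookkeeping with $\varepsilon_1+\kappa_{\max}^2\varepsilon_2+2L_{\max}\varepsilon_3\le\mu$.
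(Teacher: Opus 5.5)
Your case $T\ge T_0$ follows the paper's argument: the same budgets $\varepsilon_1,\varepsilon_2,\varepsilon_3$, and the same primitivity argument that a candidate reconstructed from correct $\mathcal{G}_p$ while $\H<\Ho$ cannot divide both $A$ and $B$. The case $T<T_0$ has a genuine gap, in the step you flagged.

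First, the claim that every correctly computed $\mathcal{G}_p$ has $T_0>T$ terms is false. $\operatorname{monic}(G\bmod p)$ loses every term whose coefficient is divisible by $p$, and nothing prevents the collected primes from dividing coefficients of $G$, so the per-prime check may never fire. The paper treats this as a separate subcase and closes it with the post-reconstruction test $|\mathcal{M}|>T$. Once $\prod_{p\in\mathcal{L}}p>2\Ho^2$, no nonzero coefficient of $G$ is divisible by all collected primes, so $\mathcal{M}=\operatorname{supp}(G)$ has $T_0>T$ elements. Your final step also silently needs $|\mathcal{M}|\le T$ to know the candidate has at most $T$ terms.

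Second, and more seriously, $\mathcal{E}_2$ is not a high-probability event when $T<T_0$. Algorithm~\ref{alg:recursive-amplified} carries its $\varepsilon_2$ guarantee only when the supplied term bound is valid. At a prime with $\|G\bmod p\|_0>T$, which is the generic situation, nothing bounds its error probability. So you cannot ``first reduce to $\mathcal{E}_2$'', and $\varepsilon_1+\kappa_{\max}^2\varepsilon_2+2L_{\max}\varepsilon_3$ does not bound the bad event.

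What is needed instead is that every wrong output at such a prime is caught. The modular divisibility test cannot guarantee this, because a wrong $\mathcal{G}_p$ may be a proper divisor of the true modular GCD $G_p=\gcd(A_p,B_p)$. For example, whenever the final approximation $G^*$ in Algorithm~\ref{alg:recursive} is a single term, the normalization step returns the common monomial content $M$. This divides $A_p$ and $B_p$ and has $1\le T$ terms. If it happens at every prime, reconstruction yields $M$, which generically is a proper divisor of $G$ and then also passes Algorithm~\ref{alg:int-divisibility-test}. Your degree contradiction is unavailable there, because the $\mathcal{G}_p$ themselves have the wrong degrees.

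The missing ingredient is a comparison with the $d_i$. On $\mathcal{E}_1$ and for lucky $p$, a monic common divisor of $A_p,B_p$ with $\deg_{x_i}=d_i$ for all $i$ equals $G_p$. So testing $\deg_{x_i}\mathcal{G}_p=d_i$ together with divisibility rejects every wrong modular output except with probability $2\varepsilon_2$. Meanwhile a correct output at such a prime has more than $T$ terms and triggers ``Failure''.

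Algorithm~\ref{alg:intgcd-no-bounds} performs no such check. The paper's own Case 2 asserts the same unjustified step, namely that a wrong $\mathcal{G}_{p_i}$ fails $\mathcal{G}_{p_i}\mid A_{p_i}$ or $\mathcal{G}_{p_i}\mid B_{p_i}$, so following the paper will not close this hole. On the other hand, your global framing charges $2L_{\max}\varepsilon_3$ for the rounds with $\H<\Ho$. That is more careful than the paper's Case 2, which only looks at the round where $\H$ first reaches $\Ho$.

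Finally, in Case 1 the sentence ``since $\Ho\le\mathbb{H}_{o\max}$ and $\H$ is squared, this happens before the loop exits'' is false, because $2^{2^j}$ can jump from below $\Ho$ to above $\mathbb{H}_{o\max}$. For $A=B=17x+16$ and $T=2$ one has $\mathbb{H}_{o\max}=\min\{17e,66\}\approx 46$. So $\H$ only takes the values $2,4,16<\Ho=17$, and the algorithm returns ``Failure''. The paper makes the same implicit assumption; an extra round with $\H=\mathbb{H}_{o\max}$ repairs it.
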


\begin{proof}
We analyze the two cases separately.

\paragraph{Case 1: $T \ge T_0$ (correct term bound).}

Assume the partial degrees $d_i = \deg_{x_i} G$ are computed correctly in Step~\ref{alg-14-step8}; this holds with probability at least $1 - \varepsilon_1 = 1 - \mu/3$.

For a guess $\H < \Ho$, rational reconstruction yields a polynomial $\mathcal{G}$ with $\|\mathcal{G}\|_\infty \le \H < \Ho = \|G\|_\infty$, so $\mathcal{G} \ne G$. Since $\deg_{x_i}\mathcal{G} = \deg_{x_i}G$ for all $i$, if $\mathcal{G}$ divided both $A$ and $B$, it would be a common divisor with the same partial degrees as $G$; primitivity would force $\mathcal{G} = \pm G$, contradicting $\|\mathcal{G}\|_\infty < \|G\|_\infty$. Thus at least one divisibility test fails. By Theorem~\ref{thm:int-divisibility}, each test rejects a non-divisor with probability at least $1-\varepsilon_3$; over two tests per iteration and at most $L_{\max}$ iterations, the failure probability is at most $L_{\max} \cdot 2\varepsilon_3 = \mu/3$.

For $\H \ge \Ho$, conditional on correct $d_i$, the following parts are deterministic once their inputs are correct:
\begin{itemize}
    \item The lucky prime test has one-sided error: it never accepts a non-lucky prime, since acceptance requires $d_k^{(p)} = d_k$ for all $k$, exactly the definition of luckiness. It may reject lucky primes, but this only affects runtime.
    \item Rational reconstruction is deterministic and recovers $G$ exactly if all $\mathcal{G}_p$ are correct and $\prod p > 2\Ho^2$.
    \item Divisibility verification always passes if $\mathcal{G} = G$.
\end{itemize}
Thus the only probabilistic failure when $\H \ge \Ho$ is an incorrect finite-field GCD computation. For each of at most $\kappa_{\max}$ primes, this occurs with probability $\varepsilon_2 = \mu/(3\kappa_{\max}^2)$, so the union bound gives failure probability at most $\kappa_{\max}\varepsilon_2 = \mu/(3\kappa_{\max}) \le \mu/3$.

The algorithm succeeds if:
\begin{enumerate}
    \item all divisibility tests reject incorrect candidates for $\H < \Ho$ (probability $\ge 1-\mu/3$),
    \item all finite-field GCD computations are correct when $\H$ first reaches $\Ho$ (probability $\ge 1-\mu/3$),
    \item the partial degrees $d_i$ are correct (probability $\ge 1-\mu/3$).
\end{enumerate}
By the union bound, the total success probability is at least $(1-\mu/3)^3 \ge 1-\mu$.

\paragraph{Case 2: $T < T_0$ (incorrect term bound).}

Assume first that the partial degrees $d_i$ are correct, which occurs with probability at least $1 - \varepsilon_1 = 1 - \mu/3$.

Let $\H$ be the first guess such that $\H \ge \Ho$. At this point, the algorithm has collected $\kappa$ lucky primes $p_1,\dots,p_\kappa$ with $\prod p_i > 2\H^2 \ge 2\Ho^2$.

For each $p_i$, let $G_{p_i} := G \bmod p_i$ be the true modular GCD. Among the collected primes $p_1,\dots,p_{\kappa}$, there are two possible cases:

\begin{enumerate}
   \item \textbf{Some $G_{p_i}$ has $\|G_{p_i}\|_0 > T$.} 
In this case, when computing $\gcd(A_{p_i}, B_{p_i})$ using Algorithm~\ref{alg:recursive-amplified} with the supplied term bound $T$, since $T$ is not a correct bound, the algorithm either returns ``Failure'' or returns an incorrect polynomial $\mathcal{G}_{p_i}$ with $\|\mathcal{G}_{p_i}\|_0 \le T$. In the latter case, we have $\mathcal{G}_{p_i} \nmid A_{p_i}$ or $\mathcal{G}_{p_i} \nmid B_{p_i}$. The probabilistic modular divisibility checks $\mathcal{G}_{p_i} \mid A_{p_i}$ and $\mathcal{G}_{p_i} \mid B_{p_i}$ will detect the failure with probability at least $1 - \varepsilon_2$ for each test. By the union bound, the probability that both tests correctly detect the failure is at least $1 - 2\varepsilon_2$. Thus, in this case, the algorithm returns ``Failure'' with probability at least 
\[
1 - 2\varepsilon_2 = 1 - \frac{2\mu}{3\kappa_{\max}} \ge 1 - \frac{2\mu}{3}.
\]

\item \textbf{Every $G_{p_i}$ has $\|G_{p_i}\|_0 \le T$.} 
In this case, the individual term-count checks do not detect the failure. If all finite field GCD computations are correct, then each $\mathcal{G}_{p_i} = c_i \cdot G_{p_i}$, and rational reconstruction recovers the full polynomial $G$ exactly. Since $T < T_0 = \|G\|_0$, the final check $\|\mathcal{G}\|_0 > T$ triggers a ``Failure''.

Unlike the previous case, $\|G_{p_i}\|_0 \le T$ implies that Algorithm~\ref{alg:recursive-amplified} is invoked with a correct term bound $T$ for each prime (since the true modular GCD has at most $T$ terms). Thus its success probability is at least $1 - \varepsilon_2$ per prime. By the union bound, the probability that all finite field GCD computations are correct is at least
\[
1 - \kappa_{\max} \cdot \varepsilon_2 = 1 - \frac{\mu}{3\kappa_{\max}} \cdot \kappa_{\max} = 1 - \frac{\mu}{3}.
\]
Therefore, in this case, the algorithm returns ``Failure'' with probability at least $1 - \mu/3$.

\end{enumerate}

Thus, conditional on $d_i$ being correct, in either case the algorithm returns ``Failure'' with probability at least $1 - 2\mu/3$. Together with the probability that $d_i$ is correct ($\ge 1 - \mu/3$), the total success probability is at least
\[
(1 - \mu/3)(1 - 2\mu/3) \ge 1 - \mu.
\]
Thus the stated bound $1-\mu$ is satisfied.
\end{proof}

\subsection*{Expected Complexity Analysis}

We analyze the expected complexity of Algorithm~\ref{alg:intgcd-no-bounds}. 
On the successful path - that is, assuming all probabilistic subroutines return correct results --the outer loop squares $\H$ until it reaches $\Ho$; hence the number of iterations is $O(\log\log \Ho)$.

\begin{theorem}\label{thm:intgcd-no-bounds-complexity}[Expected Complexity of Algorithm~\ref{alg:intgcd-no-bounds}]
Let $A, B \in \mathbb{Z}[x_1,\dots,x_n]$ be primitive, and let $G = \gcd(A,B)$. 
For any failure tolerance $\mu > 0$, the expected bit complexity of Algorithm~\ref{alg:intgcd-no-bounds} is
\[
\widetilde{O}\Bigl(
nTD\log\Ho\log\Hi\log\frac{1}{\mu}
+ n t \log T\log^4 D \cdot \log \Hi \log\Ho\log\frac{1}{\mu}
\Bigr),
\]
where the expectation is over the random choices of primes, evaluation points, and finite-field GCD subroutines.
\end{theorem}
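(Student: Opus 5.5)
The plan is to bound the cost of Algorithm~\ref{alg:intgcd-no-bounds} phase by phase: the one-time partial-degree computation, then the outer loop over guesses $\H$, then the inner prime-collection loop within each guess, and finally reconstruction and verification. Probabilities of reaching later iterations will be accounted for separately. The one-time cost is that of Step~\ref{alg-14-step8}. By Theorem~\ref{thm:int-degree-match-complexity} with $\varepsilon_1=\mu/3$, this is $\widetilde{O}(nt\log^2 d\log\Hi\log\frac1\mu + nd\log\Hi\log\frac1\mu)$, which is absorbed by the claimed bound since $d\le D$. Next we record the size of $\lambda$. It is polynomial in $n,d,\log\Hi,\log t$, so $\log p=O(\log\lambda)=\widetilde{O}(\log(nD)+\log\log\Hi)$. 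Moreover $\kappa_{\max}=O(nd+\log\Hi)$ and $L_{\max}=O(\log(nd)+\log\log\Hi)$, so $\log(1/\varepsilon_2)$ and $\log(1/\varepsilon_3)$ are $O(\log\frac1\mu+\log(nD\log\Hi))$, i.e.\ $\log\frac1\mu$ up to polylogarithmic factors.

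For a fixed guess $\H$, the inner loop needs $\kappa=O(\log\H/\log\lambda)$ accepted primes. By Lemma~\ref{lem:lucky-prime-any-eps} and the Schwartz--Zippel argument of Theorem~\ref{thm:int-degree-match} (using $\lambda\ge 8nd^2+8nd$), each trial prime is accepted with probability at least $1/2$. Hence the expected number of trials is $O(\kappa)$. Each trial costs one lucky-prime test, which as in Theorem~\ref{thm:int-degree-match-complexity} costs $\widetilde{O}(nt\log^2 d\log p + t\log\Hi + nd\log p)$. Each accepted prime additionally costs the following.
\begin{itemize}
\item Reduction of $A,B$ modulo $p$: $\widetilde{O}(t\log\Hi)$.
\item One call to Algorithm~\ref{alg:recursive-amplified} over $\mathbb{F}_p$. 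By Theorem~\ref{rem-ampl-com} this is $\widetilde{O}(nTD\log p\log\frac1{\varepsilon_2}+nt\log^2T\log^2D\log p\log\frac1{\varepsilon_2})$.
\item Two divisibility tests by Lemma~\ref{lem:divisibility-field}, which are dominated by the call above.
\end{itemize}
The set $\mathcal{L}$ is kept across iterations of the outer loop, so only new primes are added. Therefore the total number of primes ever processed is $O(\kappa_{\mathrm{final}})$, where $\kappa_{\mathrm{final}}$ corresponds to the last guess $\H$. On the successful path this is the first square-iterate with $\H\ge\Ho$, so $\H\le\Ho^2$ and $\kappa_{\mathrm{final}}=O(\log\Ho)$. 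Summing the per-prime costs gives $\widetilde{O}(nTD\log\Ho\log\Hi\log\frac1\mu+nt\log^2T\log^2D\log\Hi\log\Ho\log\frac1\mu)$, after bounding $\log p\le\widetilde O(\log\Hi)$ as the paper does elsewhere.

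Per outer iteration, the remaining costs are rational reconstruction, $\widetilde{O}(T\log\H)$, and two integer divisibility tests. For the latter, the candidate $\mathcal{G}$ has at most $T$ terms and height at most $\H$, so Algorithm~\ref{alg:int-divisibility-test} costs $\widetilde{O}((nt\log^2d+nd)\log\log\Hi\log\frac1{\varepsilon_3}+(t\log\Hi+T\log\H)\log\frac1{\varepsilon_3})$. There are $O(\log\log\Ho)$ outer iterations on the good path and $\log\H$ doubles each time, so the sum over iterations is geometric and gives $\widetilde{O}(\log\Ho)$ in total. Combining everything yields the stated bound; the slack of $\log T\log^4D$ in the second term comfortably covers these polylogarithmic factors.

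The main obstacle is the expected cost off the good path, where the loop may overshoot $\Ho$ and run up to $\H_{o\max}$. I would handle this as in the tail analysis of Algorithm~\ref{alg:gcd-guess}. Once $\H\ge\Ho$, each further iteration is reached only if some subroutine failed, which has probability at most $2\varepsilon_3+\kappa_{\max}\varepsilon_2$. Continuing past that point costs at most $\widetilde{O}(\log\H_{o\max})=\widetilde{O}(nD+\log\Hi)$ extra primes per wasted iteration. The choices $\varepsilon_2=\mu/(3\kappa_{\max}^2)$ and $\varepsilon_3=\mu/(6L_{\max})$ should make the product of this probability and this cost $O(1)$ times the good-path cost. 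Here the factor $\kappa_{\max}^2$ is exactly what cancels the $nD$ blow-up, with only a $\log\kappa_{\max}$ factor appearing inside $\log\frac1{\varepsilon_2}$.
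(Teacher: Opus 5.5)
Your overall structure matches the paper's proof. You charge a one-time cost for Step~\ref{alg-14-step8}, and you note that $\mathcal{L}$ accumulates, so the good path processes only $O(\log\Ho)$ primes, each costing one call of Algorithm~\ref{alg:recursive-amplified} as in Theorem~\ref{rem-ampl-com}. You also take $\varepsilon_2=\mu/(3\kappa_{\max}^2)$, so that an overshoot to $\kappa_{\max}$ primes, weighted by its probability $\kappa_{\max}\varepsilon_2$, costs only $O(C_{\mathrm{GCD}})$. However, the error-path accounting, which you rightly identify as the crux, has two holes.

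First, you include $2\varepsilon_3$ in the probability of running past the first $\H\ge\Ho$. With $\varepsilon_3=\mu/(6L_{\max})$, this term contributes $2\varepsilon_3\cdot\kappa_{\max}=\mu\kappa_{\max}/(3L_{\max})$ expected extra field-GCD calls. Up to polylogarithmic factors, $\kappa_{\max}/L_{\max}$ is $\log\mathbb{H}_{o\max}$, which can be of order $nd$ while $\log\Ho=O(1)$. So the cancellation you assert fails for this term. The missing observation is that Algorithm~\ref{alg:int-divisibility-test} has one-sided error: it returns ``true'' with probability $1$ when $H\mid F$. Once $\H\ge\Ho$ and the $d_i$ and all $\mathcal{G}_p$ are correct, $\mathcal{G}=G$ passes verification deterministically. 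A verification error can only cause premature, and therefore cheaper, termination. Hence only the event that some $\mathcal{G}_p$ is wrong drives the overshoot. Verification and reconstruction can then simply be bounded in the worst case over all $L_{\max}$ iterations, as the paper does.

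Second, you do not treat the event that Step~\ref{alg-14-step8} returns wrong degrees, which has probability up to $\varepsilon_1=\mu/3$. This probability is too large to absorb a worst-case run of $\kappa_{\max}$ primes. On this event, your claim that each trial prime is accepted with probability at least $1/2$ is also false. The paper handles this branch via Lemma~\ref{lem:int-degree-match}: the computed $d_k$ can only overestimate, so if they are wrong, some $d_k$ is too large. Every trial with a lucky prime and a good point (probability $\ge 1/2$) then gives $d_k^{(p)}<d_k$ and returns ``Failure'' before the field GCD for that prime is computed. So this branch costs only $O(1)$ expected trials and field-GCD calls. With these two repairs, your argument yields the stated bound.
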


\begin{proof}
We analyze the cost of each major step, distinguishing between the successful path and the contribution of error paths.

\paragraph{Partial degree computation.}
By Theorem~\ref{thm:int-degree-match-complexity}, Algorithm~\ref{alg:int-degree-match} computes the partial degrees $d_i$ with cost
\[
C_{\deg} = \widetilde{O}\bigl( n t \log^2 d \cdot \log \Hi \log\frac{1}{\mu} + n d \log \Hi \log\frac{1}{\mu} \bigr)
\]
bit operations. This step is performed once.

\paragraph{Lucky prime selection and testing.}
For a candidate prime $p$, the lucky prime test proceeds as follows: choose a random vector $\mathbf{a} \in \mathbb{F}_p^n$; for each $k=1,\dots,n$, evaluate $A(\mathbf{\check{a}}_{k \mapsto y} )$ and $B(\mathbf{\check{a}}_{k \mapsto y} )$, and compute the univariate GCD degree $d_k^{(p)} = \deg \gcd(A(\mathbf{\check{a}}_{k \mapsto y} ), B(\mathbf{\check{a}}_{k \mapsto y} ))$. If $d_k^{(p)} = d_k$ for all $k$, the prime is accepted as lucky.

The test succeeds --- that is, it correctly identifies a lucky prime with a good evaluation point --- with probability at least $1/2$. Hence the expected number of trials to find one lucky prime is $O(1)$.

For a single trial, we first reduce $A$ and $B$ modulo $p$, which costs $\widetilde{O}(t\log \Hi + t\log p)$ bit operations. Evaluating all $n$ univariate images of all terms in $A$ and $B$ costs $\widetilde{O}(n t \log d \log p)$ bit operations, and computing the $n$ univariate GCDs costs $\widetilde{O}(n d \log p)$ bit operations. Since $\log p = O(\log \lambda)$, the cost of one lucky prime test, including primality testing, is
\[
C_{\text{lucky}} = \widetilde{O}\bigl( n t \log d \cdot \log \lambda + n d \log \lambda + t\log\Hi + \operatorname{poly}(\log \lambda) \bigr),
\]
where the $\operatorname{poly}(\log \lambda)$ term accounts for the cost of finding a prime in the interval $[\lambda, 4\lambda]$.

We now analyze the expected cost of collecting lucky primes.

On the successful path, where the partial degrees $d_i$ are correct (which occurs with probability at least $1 - \mu/3$), every prime accepted into $\mathcal{L}$ is guaranteed to be lucky. The test may reject a lucky prime, but this only affects the number of trials, not correctness. Since $\kappa = O(\log \H)$ primes are needed in total, and $\H$ reaches $\Ho$ after $O(\log\log \Ho)$ iterations, the expected cost of collecting lucky primes is
\[
\widetilde{O}\bigl( \log \Ho \cdot (n t \log d \cdot \log \lambda + n d \log \lambda + t\log\Hi+\operatorname{poly}(\log \lambda)) \bigr).
\]

It remains to consider the case where the partial degrees $d_i$ are correct, but some finite field GCD computation fails in the iteration where $\H$ first reaches or exceeds $\Ho$. In this case, although the coefficient bound is already sufficient, a failed modular GCD may corrupt the rational reconstruction, causing the algorithm to continue squaring $\H$ until it reaches the worst-case bound $\mathbb{H}_{o,\max}$, potentially requiring $\kappa_{\max}$ primes instead of $O(\log \Ho)$.

We set the failure probability of the modular divisibility check (Algorithm~\ref{alg:divisibility-field}) to
\[
\varepsilon_2 = \frac{\mu}{3\kappa^2_{\max}}.
\]

Now consider the iteration where $\H$ first reaches or exceeds $\Ho$, and suppose some $\mathcal{G}_p$ is computed incorrectly. In this event, the modular divisibility check $\mathcal{G}_p \mid A_p$ and $\mathcal{G}_p \mid B_p$ consists of two divisibility tests. Each test detects a failure with probability at least $1 - \varepsilon_2$, so by the union bound, the probability that both tests correctly detect the failure is at least $1 - 2\varepsilon_2$.

By the union bound over at most $\kappa_{\max}$ primes, the probability that an incorrect $\mathcal{G}_p$ escapes detection is at most
\[
\kappa_{\max} \cdot 2\varepsilon_2
= \kappa_{\max} \cdot 2 \cdot \frac{\mu}{3\kappa^2_{\max}}
= \frac{2\mu}{3\kappa_{\max}}.
\]
In this error event, the algorithm may need to collect up to $\kappa_{\max}$ primes instead of $O(\log \Ho)$. The additional number of lucky prime tests is at most $\kappa_{\max}$. Multiplying by the probability of the error event gives
\[
\kappa_{\max} \cdot \frac{2\mu}{3\kappa_{\max}} = \frac{2\mu}{3} = O(1).
\]
Thus, the contribution of this error path to the expected number of lucky prime tests is bounded by a constant, and hence its contribution to the expected cost is
\[
O(C_{\text{lucky}}),
\]
which is dominated by the correct-path cost.

If the partial degrees $d_i$ are incorrect (which occurs with probability $\mu/3$), the algorithm may accept non-lucky primes. However, once a lucky prime and a good evaluation point are found, the discrepancy $d_k^{(p)} < d_k$ will be detected, and the algorithm returns ``Failure'' immediately. The probability of detecting such an error in a single trial is at least $1/2$. Thus, if the algorithm performs $j$ lucky prime tests before stopping, the probability that the error remains undetected for the first $j-1$ trials is at most $(1/2)^{j-1}$. The expected cost contributed by this error path is therefore bounded by
\[
\sum_{j=1}^{\infty} \left(\frac{1}{2}\right)^{j-1} \cdot j \cdot C_{\text{lucky}}
= O(C_{\text{lucky}}),
\]
which is dominated by the cost of the correct-path case.

Therefore, the overall expected cost of prime selection and lucky prime testing is
\[
\widetilde{O}\bigl( \log \Ho \cdot (n t \log d \cdot \log \lambda + n d \log \lambda + t\log\Hi+\operatorname{poly}(\log \lambda)) \bigr).
\]

We note that the complexity analysis applies uniformly to both $T \ge T_0$ and $T < T_0$: the algorithm performs exactly the same computations in both cases. The only difference is that when $T < T_0$, the final check $\|\mathcal{G}\|_0 > T$ returns ``Failure'' instead of outputting $G$. Hence the expected complexity bound is independent of whether the guessed term bound is correct.

\paragraph{Finite field GCD computation and modular divisibility check.}
For each lucky prime $p \in \mathcal{L}$, Algorithm~\ref{alg:recursive-amplified} computes $\mathcal{G}_p = \gcd(A \bmod p, B \bmod p)$ over $\mathbb{F}_p$, followed by the modular divisibility check $\mathcal{G}_p \mid A_p$ and $\mathcal{G}_p \mid B_p$ using Algorithm~\ref{alg:divisibility-field}. These two steps are performed together for each prime.

On the successful path, where the partial degrees $d_i$ are correct and all computations are correct, the algorithm needs $\kappa = O(\log \Ho)$ primes in the final iteration when $\H$ first reaches or exceeds $\Ho$. By Theorem~\ref{rem-ampl-com}, each GCD computation costs
\[
C_{\text{GCD}} = \widetilde{O}\!\left( n T D \cdot \log \frac{\kappa_{\max}}{\mu}\cdot\log\lambda + n t \log^2 T \log^2 D \cdot \log \frac{\kappa_{\max}}{\mu}\cdot\log\lambda \right)
\]
bit operations, which is \[
C_{\text{GCD}} = \widetilde{O}\!\left( n T D \cdot \log \frac{1}{\mu}\cdot(\log\log \Hi)^2 + n t \log^2 T \log^4 D \cdot (\log\log \Hi)^2 \log\frac{1}{\mu} \right)
\]

By Lemma~\ref{lem:divisibility-field}, the modular divisibility check for each prime costs
\[
C_{\text{div}} = \widetilde{O}\bigl( n (t+T) \log^2 d \log\frac{\kappa_{\max}}{\mu}\log\lambda + n d \log\frac{\kappa_{\max}}{\mu}\log\lambda \bigr)
\]
bit operations, since all arithmetic is performed modulo $p = O(\lambda)$.
This is dominated by $C_{\text{GCD}}$, so the total cost per prime is $C_{\text{GCD}} + C_{\text{div}} = O(C_{\text{GCD}})$. Hence the total cost on the successful path is
\[
\widetilde{O}\bigl( \log \Ho \cdot C_{\text{GCD}} \bigr).
\]

We now bound the contribution of error paths, still assuming $d_i$ is correct. Suppose a finite field GCD computation fails when $\H$ first reaches or exceeds $\Ho$, producing an incorrect $\mathcal{G}_p$. The modular divisibility check consists of two tests: $\mathcal{G}_p \mid A_p$ and $\mathcal{G}_p \mid B_p$. By Lemma~\ref{lem:divisibility-field}, each test detects the failure with probability at least $1 - \varepsilon_2$, so by the union bound, the probability that both tests correctly detect the failure is at least $1 - 2\varepsilon_2$. We set
\[
\varepsilon_2 = \frac{\mu}{3\kappa^2_{\max}}.
\]
By the union bound over at most $\kappa_{\max}$ primes, the probability that an incorrect $\mathcal{G}_p$ escapes detection is at most
\[
\kappa_{\max} \cdot 2\varepsilon_2
= \kappa_{\max} \cdot 2 \cdot \frac{\mu}{3\kappa^2_{\max}}
= \frac{2\mu}{3\kappa_{\max}}.
\]
In this error event, the algorithm may need to compute up to $\kappa_{\max}$ primes instead of $O(\log \Ho)$. The additional cost is at most $\kappa_{\max} \cdot (C_{\text{GCD}} + C_{\text{div}}) = \kappa_{\max} \cdot O(C_{\text{GCD}})$. Thus the contribution of this error path to the expected complexity is at most
\[
\frac{2\mu}{3\kappa_{\max}} \cdot \kappa_{\max}\cdot O( C_{\text{GCD}})=\frac{2\mu}{3} \cdot  O( C_{\text{GCD}}),
\]
which is dominated by the successful path cost since $\mu\le 1$.

If the partial degrees $d_i$ are incorrect (which occurs with probability $\mu/3$), the algorithm may accept non-lucky primes. However, once a lucky prime and a good evaluation point are found, the discrepancy $d_k^{(p)} < d_k$ will be detected, and the algorithm returns ``Failure'' immediately. The probability of detecting such an error in a single trial is at least $1/2$. Thus, if the algorithm performs $j$ lucky prime tests before stopping, the probability that the error remains undetected for the first $j-1$ trials is at most $(1/2)^{j-1}$. The expected cost contributed by this error path is therefore bounded by
\[
\sum_{j=1}^{\infty} \left(\frac{1}{2}\right)^{j-1} \cdot j \cdot (C_{\text{GCD}} + C_{\text{div}})
= O(C_{\text{GCD}}),
\]
which is dominated by the cost of the correct-path case.

\paragraph{Rational reconstruction.}

For the rational reconstruction, we analyze the worst-case cost. The algorithm may iterate up to $L_{\max}$ times and, in the worst case, $\H$ reaches $\mathbb{H}_{o,\max}$. In each iteration, reconstructing $O(T)$ coefficients from $\kappa = O(\log \H)$ residues costs $\widetilde{O}(T \log \H)$ bit operations. Summing over all iterations gives
\[
\sum_{\H = 2,4,16,\dots}^{\mathbb{H}_{o,\max}} \widetilde{O}(T \log \H)
= \widetilde{O}(T \log \mathbb{H}_{o,\max}).
\]
Since $\mathbb{H}_{o,\max} \le e^{nd}\Hi$, this is bounded by
\[
\widetilde{O}\bigl(T \log(e^{nd}\Hi)\bigr)
= \widetilde{O}(nTd + T\log \Hi),
\]
which is dominated by the finite field GCD cost and hence absorbed into the overall complexity bound.

\paragraph{Divisibility verification.}
 In each outer iteration, Algorithm~\ref{alg:int-divisibility-test} checks whether $\mathcal{G} \mid A$ and $\mathcal{G} \mid B$. Denote $\H_k = 2^{2^k}$.
By Theorem~\ref{thm:int-divisibility-test-complexity}, each divisibility test on polynomials of size bounded by $\H_k$ costs
\[
\begin{aligned}
\widetilde{O}\Bigl(
& n (t+T) \log^2 d \cdot \log\log(\max(\Hi,\H_k)) \log(L_{\max}/\mu) \\
&+ n d \log\log(\max(\Hi,\H_k)) \log(L_{\max}/\mu) \\
&+ t\log\Hi \log(L_{\max}/\mu) 
+ T\log\H_k \log(L_{\max}/\mu) 
\Bigr).
\end{aligned}
\]

We use the worst-case bound $\H_k \le \mathbb{H}_{o,\max} = O(e^{nd}\Hi)$. Thus the cost of one test is bounded by
\[
\begin{aligned}
\widetilde{O}\Bigl(
& n (t+T) \log^2 d \cdot \log\log(e^{nd}\Hi) \cdot \log(L_{\max}/\mu)
+ n d \log\log(e^{nd}\Hi) \cdot \log(L_{\max}/\mu) \\
&+ t\log\Hi \log(L_{\max}/\mu)
+ T\log(e^{nd}\Hi) \log(L_{\max}/\mu)
\Bigr).
\end{aligned}
\]

Since there are at most $L_{\max}$ outer iterations, the total cost of divisibility verification is

\[
\begin{aligned}
\widetilde{O}\Bigl(
& L_{\max} n (t+T) \log^2 d \cdot \log\log(e^{nd}\Hi) \cdot \log(L_{\max}/\mu) \\
&+ L_{\max} n d \log\log(e^{nd}\Hi) \cdot \log(L_{\max}/\mu) \\
&+ L_{\max} t\log\Hi \log(L_{\max}/\mu) \\
&+ L_{\max} T\log(e^{nd}\Hi) \log(L_{\max}/\mu) 
\Bigr).
\end{aligned}
\]

Now $L_{\max} = \lceil \log_2 \log_2 \mathbb{H}_{o,\max} \rceil = O(\log(nd) + \log\log\Hi)$. Substituting this bound yields
\[
\begin{aligned}
\widetilde{O}\Bigl(
& n (t+T) \log^4 d \cdot (\log\log\Hi)^2 \log\frac{1}{\mu}
+ n d (\log\log\Hi)^2 \log\frac{1}{\mu} \\
&+ t\log\Hi \log(nd) \log\frac{1}{\mu}
+ nTd\log\Hi \log\frac{1}{\mu}
\Bigr).
\end{aligned}
\]

\paragraph{Combining all terms.}
Combining the costs of partial degree computation (Theorem~\ref{thm:int-degree-match-complexity}), lucky prime selection, finite field GCD computation with modular divisibility check, rational reconstruction, and divisibility verification, we obtain the following expected bit complexity:
\[
\widetilde{O}\Bigl(
nTD\log\Ho\log\Hi\log\frac{1}{\mu}
+ n t \log^2 T\log^4 D \cdot \log \Hi \log\Ho\log\frac{1}{\mu}
\Bigr).
\]
The expectation is taken over the random choices of primes, evaluation points, and the internal finite-field GCD subroutines.
\end{proof}

\subsection*{GCD Computation without a Priori Term Bound}

In this section, we remove the assumption that an upper bound $T$ on the number of terms of the GCD is supplied as input. 
We employ a doubling guess-and-verify strategy: the algorithm successively guesses $T = 2^k$ for $k = 0,1,2,\dots$, 
invokes Algorithm~\ref{alg:intgcd-no-bounds} with the current guess, and verifies each candidate. 
The complete procedure is given as Algorithm~\ref{alg:intgcd-guess}.

\begin{algorithm}[H]
\caption{Polynomial GCD Algorithm over the Integers}
\label{alg:intgcd-guess}
\begin{algorithmic}[1]
\Require
    \begin{itemize}
        \item $A, B \in \mathbb{Z}[x_1,\dots,x_n]$;
        \item A target error bound $0 < \mu < 1$.
    \end{itemize}
\Ensure
    The GCD $G = \gcd(A,B)$ with probability at least $1-\mu$, or ``Failure".

\State Compute contents: $c_A \gets \operatorname{cont}(A)$, $c_B \gets \operatorname{cont}(B)$.
\State $c_G \gets \gcd(c_A, c_B)$.
\State $P_A \gets \operatorname{pp}(A) = A / c_A$, $P_B \gets \operatorname{pp}(B) = B / c_B$.

\State Compute the partial degree bound $d = \max_{1 \le i \le n}  \max\{\deg_{x_i} P_A, \deg_{x_i}P_B\}$.

\For{each $k = 0, 1, 2, \dots, \lceil n\log_2(d+1) \rceil$}
    \State Set $T \gets 2^k$.
    \State Set $\delta_k \gets \mu / 3^{k+1}$.
    
    \State Run Algorithm~\ref{alg:intgcd-no-bounds} with inputs $P_A, P_B, T, \delta_k$ to obtain a result.
    
    \If{the result is a polynomial $\mathcal{G} \in \mathbb{Z}[x_1,\dots,x_n]$}
        \State \Return $c_G \cdot \mathcal{G}$.
    \ElsIf{the result is ``Failure''}
        \State \textbf{continue} to the next guess.
    \EndIf
\EndFor

\State \Return ``Failure''.
\end{algorithmic}
\end{algorithm}

\begin{theorem}[Correctness of Algorithm~\ref{alg:intgcd-guess}]
Let $A, B \in \mathbb{Z}[x_1,\dots,x_n]$ and let $G = \gcd(A,B)$. 
For any target error bound $0 < \mu < 1$, Algorithm~\ref{alg:intgcd-guess} returns $G$ with probability at least $1-\mu$.
\end{theorem}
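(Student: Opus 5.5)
The argument mirrors the probability analysis of Algorithm~\ref{alg:gcd-guess}: follow a single successful execution path and bound every way of leaving it. First reduce to the primitive case. Since $\gcd(A,B)=\gcd(\operatorname{cont}(A),\operatorname{cont}(B))\cdot\gcd(\operatorname{pp}(A),\operatorname{pp}(B))$, we have $G=c_G\cdot\gcd(P_A,P_B)$. It therefore suffices to show that, with probability at least $1-\mu$, the loop returns $G_0:=\gcd(P_A,P_B)$. Let $T_0=\|G_0\|_0$. Since $T_0\le (d+1)^n$, the index $k^*=\lceil\log_2 T_0\rceil$ satisfies $k^*\le\lceil n\log_2(d+1)\rceil$, so the guess $2^{k^*}\ge T_0$ is actually reached by the loop.

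Split the guesses at $k^*$. For each $k<k^*$ we have $T=2^k<T_0$, and Theorem~\ref{thm:intgcd-no-bounds-correctness} (case 2) shows that Algorithm~\ref{alg:intgcd-no-bounds} returns ``Failure'' with probability at least $1-\delta_k$. When it does, the outer loop simply continues. Let $\mathcal{B}_k$ be the bad event that the $k$-th call returns a polynomial. Then $\Pr(\mathcal{B}_k)\le\delta_k$; this holds conditionally on the past too, because each call uses fresh randomness. At $k=k^*$ we have $T\ge T_0$, and case 1 of the same theorem gives that the call returns exactly $G_0$ with probability at least $1-\delta_{k^*}$. If this happens, the algorithm outputs $c_G G_0=G$.

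Combining these, the union bound over $\mathcal{B}_0,\dots,\mathcal{B}_{k^*-1}$ and the failure at $k^*$ gives
\[
\Pr(\text{output}\neq G)\le\sum_{k=0}^{k^*}\delta_k=\sum_{k=0}^{k^*}\frac{\mu}{3^{k+1}}<\frac{\mu}{2}<\mu .
\]
Guesses with $k>k^*$ never need to be analysed: on the success path the algorithm has already returned at step $k^*$.

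The main obstacle is making sure the guarantee for $T<T_0$ is genuinely what is needed. That guarantee says ``returns Failure with probability $\ge 1-\mu$'', and we must check that it excludes returning an \emph{incorrect} polynomial with more than $\delta_k$ probability. I would reread case 2 of Theorem~\ref{thm:intgcd-no-bounds-correctness} to confirm that it covers every early-exit branch, including the iterations with $\mathcal{H}<\Ho$ where the final divisibility verification could accept a wrong candidate. If the bound there is only on eventually reaching ``Failure'', I would add the observation from case 1 that any accepted candidate with the correct partial degrees must equal $\pm G_0$, and that candidates with wrong degrees are rejected except with probability at most $2L_{\max}\varepsilon_3$. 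This keeps the per-guess error within $\delta_k$.
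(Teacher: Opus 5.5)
Your argument is the paper's proof: reduce to $\gcd(P_A,P_B)$, apply case~2 of Theorem~\ref{thm:intgcd-no-bounds-correctness} to each guess $k<k^*$ and case~1 at $k=k^*$, and take a union bound over $\delta_0,\dots,\delta_{k^*}$. Your estimate $\sum_{k\le k^*}\mu/3^{k+1}<\mu/2$ is correct; the paper's intermediate expression has a sign slip but reaches the same conclusion.

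The worry in your last paragraph is unnecessary. Case~2, as stated, already bounds by $\delta_k$ the probability that the call returns anything other than ``Failure''. Your fallback would also not work as stated. The integer divisibility test is one-sided, so it never rejects a wrong candidate that is a genuine proper common divisor of $A$ and $B$ (such a candidate has wrong partial degrees). Only a comparison against the computed $d_i$ could exclude such candidates, and Algorithm~\ref{alg:intgcd-no-bounds} never makes that comparison.
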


\begin{proof}
Let $T_0 := \|G\|_0$ and let $k^* := \lceil \log_2 T_0 \rceil$. 
Since $T_0 \le (d+1)^n$, we have $k^* \le \lceil n\log_2(d+1) \rceil$, so the true term bound is reached within the loop.

For $k < k^*$, the guessed bound $T = 2^k$ is strictly less than $T_0$. 
By Theorem~\ref{thm:intgcd-no-bounds-correctness}, Algorithm~\ref{alg:intgcd-no-bounds} returns ``Failure'' with probability at least $1 - \delta_k$.

For $k = k^*$, we have $T = 2^{k^*} \ge T_0$. 
By the same theorem, Algorithm~\ref{alg:intgcd-no-bounds} returns the correct GCD with probability at least $1 - \delta_{k^*}$.

Let $\mathcal{E}_k$ denote the event that the algorithm fails to detect $T$ is incorrect for $k < k^*$, and let $\mathcal{E}_{k^*}$ denote the event that it fails to compute the correct GCD at $k = k^*$. 
By the union bound,
\[
\Pr(\text{failure}) \le \sum_{k=0}^{k^*-1} \delta_k + \delta_{k^*}.
\]
Substituting $\delta_k = \mu / 3^{k+1}$,
\[
\Pr(\text{failure})
\le \sum_{k=0}^{k^*-1} \frac{\mu}{3^{k+1}} + \frac{\mu}{3^{k^*+1}}
= \frac{\mu}{3}\left(1 + \frac{1}{3} + \cdots + \frac{1}{3^{k^*-1}}\right) + \frac{\mu}{3^{k^*+1}}
= \frac{\mu}{2} + \frac{\mu}{2 \cdot 3^{k^*+1}}
< \mu.
\]
Thus the success probability is at least $1-\mu$.
\end{proof}

\begin{theorem}[Expected Complexity of Algorithm~\ref{alg:intgcd-guess}]
Let $A, B \in \mathbb{Z}[x_1,\dots,x_n]$ and let $G = \gcd(A,B)$ with $T_0 := \|G\|_0$. 
For any target error bound $0 < \mu < 1$, the expected bit complexity of Algorithm~\ref{alg:intgcd-guess} is
\[ \widetilde{O}\Bigl( 
nT_0 D\log\Ho\log\Hi\log\frac{1}{\mu}
+ n t \log^4 T_0\log^4 D \cdot \log \Hi \log\Ho\log\frac{1}{\mu}
\Bigr)
\]
where $t := \max\{\|A\|_0,\|B\|_0\}$, $\Hi := \max\{\|A\|_\infty,\|B\|_\infty\}$, and $\Ho := \|G\|_\infty$.
\end{theorem}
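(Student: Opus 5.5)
The plan mirrors the expected-cost argument already carried out for the field version (Algorithm~\ref{alg:gcd-guess}), now with Theorem~\ref{thm:intgcd-no-bounds-complexity} supplying the per-guess cost. For guess $k$ (so $T=2^k$, $\delta_k=\mu/3^{k+1}$) let $C_k$ be the expected cost of the call to Algorithm~\ref{alg:intgcd-no-bounds} on $P_A,P_B$. Theorem~\ref{thm:intgcd-no-bounds-complexity}, with $\log(1/\delta_k)=O(k+\log\frac1\mu)$, gives
\[
C_k=\widetilde{O}\Bigl(n2^kD\log\Ho\log\Hi\,(k+\log\tfrac1\mu)+nt\,k^2\log^4D\log\Hi\log\Ho\,(k+\log\tfrac1\mu)\Bigr).
\]
The content and primitive-part computation costs only $\widetilde{O}(t\log\Hi)$ and is absorbed. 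Removing content does not increase $\Hi$ or $\Ho$ in the relevant sense, since $\operatorname{pp}(G)$ has coefficients bounded by $\Ho$.

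Next, let $E_k$ be the event that the loop stops at guess $k$. Write
\[
\mathbb{E}[\text{cost}]=\sum_k C_k\Pr(E_k\cup\cdots),
\]
and split the sum at $k^*=\lceil\log_2T_0\rceil$. For $k\le k^*$, bound each probability by $1$. Since $C_k$ is increasing, $\sum_{k\le k^*}C_k\le (k^*+1)C_{k^*}$. Here $2^{k^*}\le 2T_0$, $k^*=O(\log T_0)$, and $k^*+\log\frac1\mu=O(\log T_0\log\frac1\mu)$. The first term of $C_{k^*}$ then becomes $\widetilde{O}(nT_0D\log\Ho\log\Hi\log\frac1\mu)$, because the polylogarithmic factors in $T_0$ are absorbed by $\widetilde{O}$. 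The second term picks up $k^{*2}\cdot k^*\cdot k^*$, which gives exactly the stated $\log^4T_0$ factor.

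For the tail $k>k^*$, the key claim is that reaching guess $k^*+s$ requires the call at guess $k^*+s-1$, which has $T\ge T_0$, to fail to return $G$. By Theorem~\ref{thm:intgcd-no-bounds-correctness}, conditioned on having reached that guess, this happens with probability at most $\delta_{k^*+s-1}$. Repeating the conditioning argument from the field case gives $\Pr(\text{reach }k^*+s)\le\delta_{k^*+s-1}$. The tail is therefore at most $\sum_{s\ge1}\frac{\mu}{3^{k^*+s}}C_{k^*+s}$. Its dominant part behaves like $(2/3)^{k^*+s}\operatorname{poly}(k^*+s)$ times quantities independent of $s$, so it converges (root test, as in the field case) to something no larger than the $k\le k^*$ bound.

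The main obstacle is a subtlety in this tail argument. Algorithm~\ref{alg:intgcd-no-bounds} may return a \emph{wrong} polynomial at some $k<k^*$; that only affects correctness, not cost, and is harmless here. The delicate point is that the expected cost of Algorithm~\ref{alg:intgcd-no-bounds} when $T<T_0$ must also obey the bound of Theorem~\ref{thm:intgcd-no-bounds-complexity}. I would rely on the remark in that proof that the computation is identical in both cases, so $C_k$ is valid for every $k$. I would also check that the $\mathbb{H}_{o\max}$-dependent worst-case terms (rational reconstruction and the $L_{\max}$ verification rounds) stay within the per-guess bound. These terms involve $D(T-1)$ through the sparse Mignotte cap, so they grow like $2^k$ and are handled by the same geometric decay.
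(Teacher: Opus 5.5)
Your proposal follows the paper's proof essentially step for step: the per-guess cost $C_k$ is taken from Theorem~\ref{thm:intgcd-no-bounds-complexity} with $\log(1/\delta_k)$, the split at $k^*$ uses $\sum_{k\le k^*}C_k\le (k^*+1)C_{k^*}$ to produce the $\log^4 T_0$ factor, and the tail bound $\Pr(\text{reach } k^*+s)\le\delta_{k^*+s-1}$ is summed against $3^{-(k^*+s)}$ so that the $2^{k}$ growth decays geometrically. One small correction to your side check: terms like $T\log\mathbb{H}_{o\max}$ stay $O(2^k)$ because of the Gelfond part of the minimum, which gives $\log\mathbb{H}_{o\max}=O(nd+\log\Hi)$ independently of $T$; the sparse Mignotte cap alone would make these terms quadratic in $T$ and break the geometric decay.
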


\begin{proof}
Algorithm~\ref{alg:intgcd-guess} invokes Algorithm~\ref{alg:intgcd-no-bounds} for each guess $T = 2^k$, $k = 0, 1, \dots, k^*$, until the correct term bound is found. 
By Theorem~\ref{thm:intgcd-no-bounds-complexity}, the cost at guess $k$ is
\[
C_k = 
\widetilde{O}\Bigl(
nTD\log\Ho\log\Hi\log(1/\delta_k)
+ n t \log^2 T\log^4 D \cdot \log \Hi \log\Ho\log(1/\delta_k)
\Bigr).
\]

Let $E_k$ be the event that the algorithm stops at guess $k$. The events $E_0, E_1, \dots, E_{k_{\max}}$ form a partition of the probability space, where $k_{\max}=\lceil n\log_2(d+1)\rceil$. When $E_k$ occurs, the algorithm performs $C_0 + C_1 + \cdots + C_k$ operations. Thus the expected cost is
\[
\mathbb{E}[\text{cost}]
= \sum_{k=0}^{k_{\max}} \Pr(E_k) \sum_{i=0}^{k} C_i.
\]
Equivalently,
\[
\mathbb{E}[\text{cost}]
= \sum_{k=0}^{k_{\max}} C_k \cdot \Pr(E_k \cup E_{k+1} \cup \cdots \cup E_{k_{\max}}).
\]

We split the sum into two parts: $k \le k^*$ and $k > k^*$.

For $k \le k^*$, the probabilities are bounded by $1$, so the contribution is at most
\[
\sum_{k=0}^{k^*} C_k 
= \widetilde{O}\Bigl( k^* \cdot C_{k^*} \Bigr)
= \widetilde{O}\Bigl( 
nT_0 D\log\Ho\log\Hi\log\frac{1}{\mu}
+ n t \log^4 T_0\log^4 D \cdot \log \Hi \log\Ho\log\frac{1}{\mu}
\Bigr),
\]
since $C_k$ is dominated by its last term.

For $k > k^*$, the probability of reaching guess $k$ decays exponentially.

Now consider the tail probability for $k = k^* + s$ where $s \ge 1$. We need to bound
\[
\Pr(E_{k^*+s}) + \Pr(E_{k^*+s+1}) + \cdots + \Pr(E_{k_{\max}})
= 1 - \Pr(E_0 \cup E_1 \cup \cdots \cup E_{k^*+s-1}).
\]

Let $A = E_0 \cup E_1 \cup \cdots \cup E_{k^*+s-2}$. Then
\[
\Pr(E_0 \cup E_1 \cup \cdots \cup E_{k^*+s-1})=\Pr(A \cup E_{k^*+s-1})
= \Pr(A) + \Pr(E_{k^*+s-1} \mid \bar{A}) \Pr(\bar{A}).
\]
Since $\Pr(E_{k^*+s-1} \mid \bar{A}) \ge 1 - \delta_{k^*+s-1}$ (conditioned on not having stopped earlier, the algorithm returns a correct polynomial at step $k^*+s-1$), we have
\[
\Pr(A \cup E_{k^*+s-1})
\ge \Pr(A) + (1 - \delta_{k^*+s-1}) \Pr(\bar{A}).
\]
Thus
\[
1 - \Pr(A \cup E_{k^*+s-1})
\le \delta_{k^*+s-1} \Pr(\bar{A})
\le \delta_{k^*+s-1}.
\]
Therefore,
\[
\Pr(E_{k^*+s}) + \Pr(E_{k^*+s+1}) + \cdots + \Pr(E_{k_{\max}})
\le \delta_{k^*+s-1}.
\]

Substituting this bound into the expected cost formula
\[
\mathbb{E}[\text{cost}]
= C_0 + C_1 \Pr(E_1 \cup \cdots \cup E_{k_{\max}}) + \cdots + C_{k_{\max}} \Pr(E_{k_{\max}}),
\]
we obtain
\[
\mathbb{E}[\text{cost}]
\le \sum_{k=0}^{k^*} C_k
+ C_{k^*+1} \cdot \delta_{k^*}
+ C_{k^*+2} \cdot \delta_{k^*+1}
+ \cdots + C_{k_{\max}} \cdot \delta_{k_{\max}-1}.
\]

Since $T = 2^k$ and $\delta_k = \varepsilon / 3^{k+1}$, both $T$ and $\log(1/\delta_k)$ grow monotonically with $k$; hence $C_k$ is increasing (up to the polylogarithmic factors absorbed in the $\widetilde{O}$ notation). Therefore,
for the tail, writing $s = k - k^*$, we have
\[
\sum_{k=k^*+1}^{k_{\max}} \delta_{k-1} C_k
=  \sum_{s=1}^{k_{\max}-k^*} \frac{\mu}{3^{k^*+s}} \, C_{k^*+s}.
\]

Recall that
\[
C_{k^*+s} = \widetilde{O}\!\left(
n 2^{k^*+s} D\log\Ho\log\Hi \log(1/\delta_{k^*+s})
+ nt \log^2(2^{k^*+s}) \log^4 D \log \Hi \log\Ho\log(1/\delta_{k^*+s})
\right).
\]

Since $\log(1/\delta_{k^*+s}) = \log(3^{k^*+s+1}/\varepsilon) = O((k^*+s)\cdot \log\frac{1}{\varepsilon})$, as the same method presented in Theorem \ref{thm:gcd-no-t},
the first term in the tail is bounded by
\[
\sum_{s=1}^{\infty}  \frac{\mu}{3^{k^*+s}}
\cdot n D 2^{k^*+s}
\cdot \log\Ho\log\Hi (k^*+s)^{O(1)}\cdot\log\frac{1}{\mu}
= \widetilde{O}\bigl(\mu \, n D \log\Ho\log\Hi \log\frac{1}{\mu}\bigr),
\]
and the second term is bounded by
\[
\sum_{s=1}^{\infty} \frac{\mu}{3^{k^*+s}}
\cdot nt
\cdot (k^*+s)^{O(1)}\log\frac{1}{\mu} \log\Ho\log\Hi \log^4 D
= \widetilde{O}\bigl(\mu \, nt \log\Ho\log\Hi \log^4 D \log\frac{1}{\mu}\bigr).
\]

In both cases, the summand contains the factor $(2/3)^{k^*+s}$ or $(2/3)^{k^*+s}$ times a polynomial in $k^*+s$; hence each series converges geometrically. Consequently, the expected cost is dominated by the $k \le k^*$ terms, yielding
\[
\mathbb{E}[\text{cost}]
= \widetilde{O}\Bigl( 
nT_0 D\log\Ho\log\Hi\log\frac{1}{\mu}
+ n t \log^4 T_0\log^4 D \cdot \log \Hi \log\Ho\log\frac{1}{\mu}
\Bigr).
\]
\end{proof}

\section{Experimental Results}
\label{sec:experiments}

\subsection{Implementation and Optimizations}

We have implemented the algorithms presented in this paper in Maple 2023. 
All experiments were performed on a machine with an Intel(R) Core(TM) i7-9700 CPU at 3.00 GHz and 8.00 GB of RAM, running Windows 11.

We focus our experiments on the integer GCD algorithm, as it integrates all components of our method: the field algorithm, modular reduction, rational reconstruction, and the adaptive guessing strategy, and thus best demonstrates the full power of our approach. Moreover, the integer polynomial GCD is one of the most commonly used operations in symbolic computation, making it a natural benchmark for practical evaluation.

To achieve higher performance, we have incorporated several optimizations into our implementation:

\begin{enumerate}
    \item \textbf{Collision detection via exponent bounds.} Instead of computing three modular GCDs at $\b, \b^2, \b^3$ and evaluating a determinant, we detect collisions by choosing a prime $p$ that is much larger than the partial degree bound $d$. During derivative recovery, if any computed exponent exceeds $d$, a collision is detected. This reduces the cost of collision detection from three univariate GCDs to a simple bound check.
    
    \item \textbf{Early termination on excessive collisions.} When the number of colliding terms exceeds a threshold, we abort the current lifting iteration and restart with a doubled term bound $T \leftarrow 2T$. This prevents the algorithm from wasting time on iterations that are unlikely to recover a sufficient number of terms.
    
    \item \textbf{Tighter initial coefficient bound.} Rather than starting the coefficient guessing loop from $\H = 2$, we initialize $\H$ to the input coefficient bound $\Hi = \max\{\|A\|_\infty, \|B\|_\infty\}$. This reduces the number of outer iterations and improves performance on inputs with large coefficients.
    
    \item \textbf{Deterministic divisibility checking.} For the final verification step, we use Maple's built-in exact division test rather than our probabilistic divisibility algorithm. This eliminates the small probability of false positives in the verification stage.
\end{enumerate}

\subsection*{Test Polynomial Generation}
For each test instance, we generate polynomials $A = G \cdot A_1$ and $B = G \cdot B_1$, where $G$ is a random sparse polynomial with the prescribed term count $T$ and total degree $D$, and $A_1, B_1$ are random sparse polynomials with term counts uniformly chosen from $[5,20]$ and degrees from $[10,50]$. All polynomials are in $n$ variables $x_1,\dots,x_n$. The coefficients are random integers in the range $[-99, 99]$. This construction ensures that the true GCD is exactly $G$, with term count $T$, with high probability.

For each data point, we report the average running time over $10$ independent runs, with the same $A,B$ used for both our algorithm and the built-in Maple \texttt{gcd} command. Garbage collection is forced before each timing measurement to reduce memory interference. All reported times are in seconds.

\subsection{Scalability with Respect to $n$, $T$, and $D$}

We first evaluate the scalability of our GCD algorithm (Algorithm~\ref{alg:intgcd-guess}) with respect to the three fundamental parameters: 
the number of variables $n$, the term count $T$, and the total degree $D$.

\paragraph{Scalability in $n$.}
We fixed $T = 50$ and $D = 800$, and varied $n$ from $2$ to $44$. 
Figure~\ref{fig:n-scalability} reports the running time as a function of $n$. 
Both algorithms exhibit linear growth in $n$, consistent with the theoretical bound $\widetilde{O}(n)$. Our algorithm has a slightly higher slope than the built-in \texttt{gcd}.

\begin{figure}[htbp]
\centering
\includegraphics[width=0.8\textwidth]{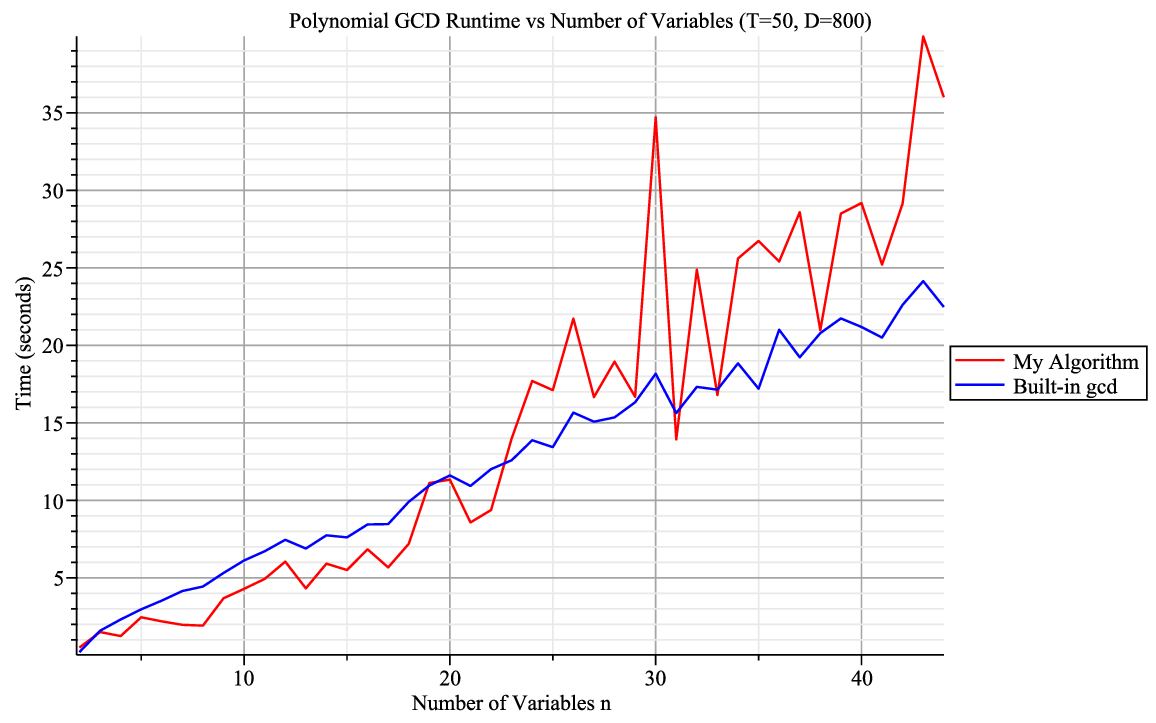}
\caption{Running time vs.\ the number of variables $n$, with $T=50$, $D=800$.}
\label{fig:n-scalability}
\end{figure}

\paragraph{Scalability in $T$.}
We fixed $n = 8$ and $D = 1000$, and varied $T$ from $10$ to $330$. 
Figure~\ref{fig:T-scalability} shows the running time as a function of $T$. 
The observed growth of our algorithm is asymptotically linear in $T$, matching the theoretical complexity bound; the built-in \texttt{gcd} exhibits similar linear behavior but with a slightly different slope. For small $T$ (sparse cases), our algorithm performs slightly better; as $T$ grows, the difference narrows.

\begin{figure}[htbp]
\centering
\includegraphics[width=0.8\textwidth]{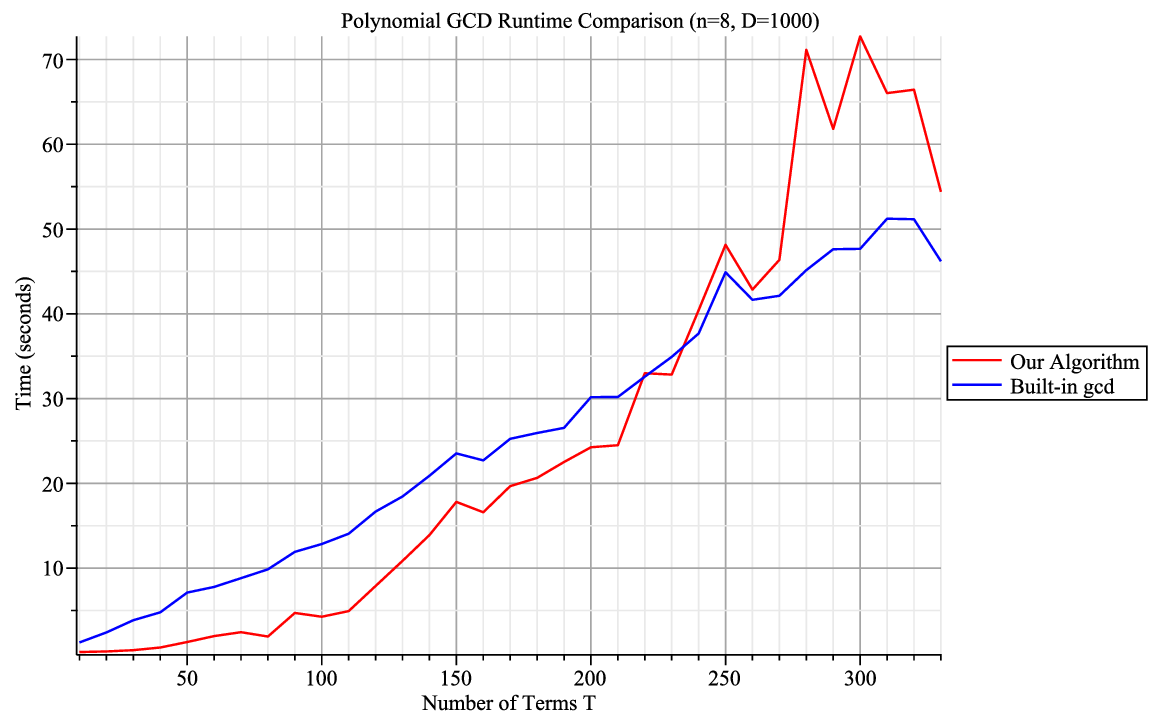}
\caption{Running time vs.\ the term count $T$, with $n=8$, $D=1000$.}
\label{fig:T-scalability}
\end{figure}

\paragraph{Scalability in $D$.}
We fixed $n = 8$ and $T = 100$, and varied $D$ from $500$ to $3000$. 
Figure~\ref{fig:D-scalability} reports the running time as a function of $D$. 
Our algorithm shows very mild growth in $D$, since it extracts derivative information by dividing coefficients. In contrast, the built-in \texttt{gcd} exhibits faster growth in $D$, confirming that classical algorithms suffer from increasing overhead as the degree grows. Our algorithm significantly outperforms the built-in \texttt{gcd} when $D$ is large.

\begin{figure}[htbp]
\centering
\includegraphics[width=0.8\textwidth]{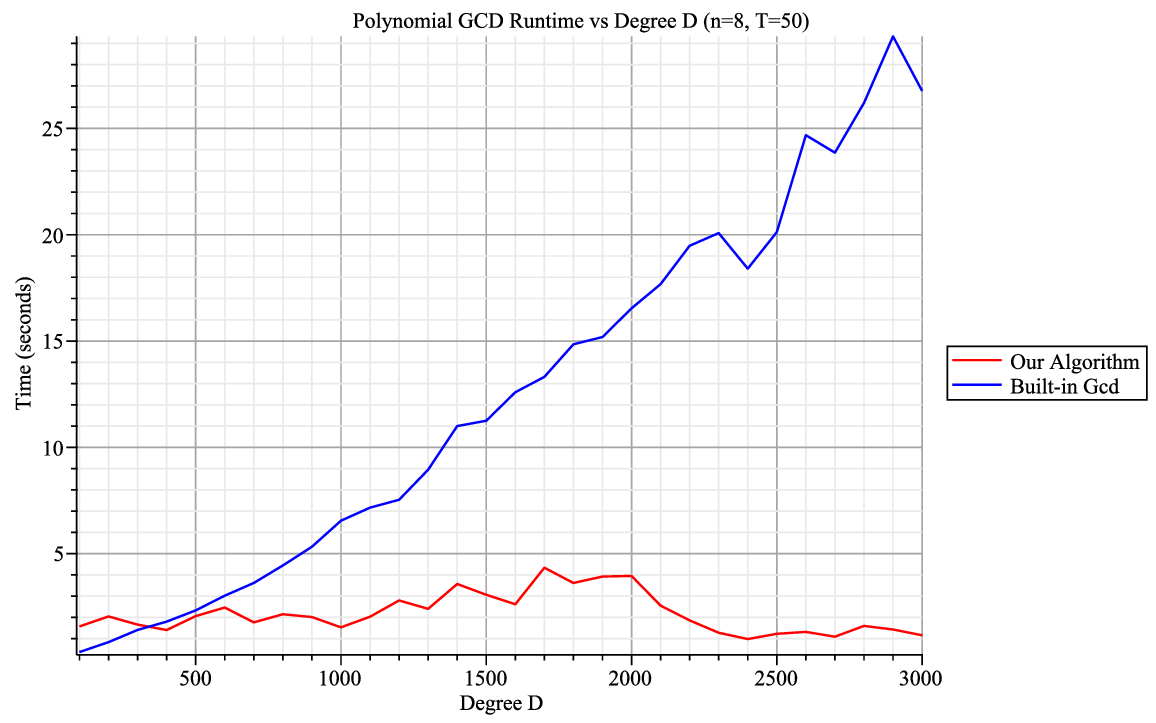}
\caption{Running time vs.\ the total degree $D$, with $n=8$, $T=100$.}
\label{fig:D-scalability}
\end{figure}

\section{Conclusion}
\label{sec:conclusion}
We have presented the first sparse polynomial GCD algorithm with expected complexity $\widetilde{O}(n\cdot T\cdot D)$ over fields of characteristic zero or sufficiently large, and the same linear guarantee over $\mathbb{Z}[x_1,\dots,x_n]$. The key innovation is a derivative-assisted Hensel lifting framework that performs a single $z^2$-lift per variable, reducing sequential depth from $O(D)$  or $O(n\log d)$ to $O(1)$ and enabling parallel extraction of derivative information. Collisions are handled iteratively, incurring only a logarithmic overhead. 

Maple experiments confirm the theoretical predictions: the algorithm is competitive with the built-in \texttt{gcd} in general and significantly faster when $D$ is large.

The derivative-driven lifting framework suggests broader applications in factorization, interpolation, and algebraic equation solving, which we leave for future work.

\section*{Appendix}

\begin{lemma}
Let $\K$ be a field, $\b=(b_1,\dots,b_n)\in\K^n$, and let
\[
g(\x,y)=\frac{g_1(\x,y)}{g_2(\x,y)}\in \K(\x,y)
\]
be a rational function with $g_1,g_2\in\K[\x,y]$. Then
\[
g(z+b_1,\dots,z+b_n,y)\equiv g(\b,y)+S(g)(\b,y)\cdot z \pmod{z^2}.
\]

\end{lemma}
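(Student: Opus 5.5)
The plan is to reduce the rational case to the polynomial case, Lemma~\ref{lm-4}, by working in the power series ring $\K(y)[[z]]$ (or modulo $z^2$). This only makes sense if $g(\b,y)$ is defined, so I will assume, as in Theorem~\ref{the-1}, that $g_2(\b,y)\neq 0$. Then $g_2(z+\b,y)$ has a nonzero constant term in $z$ and is therefore invertible in $\K(y)[[z]]$. Hence $g(z+\b,y)=g_1(z+\b,y)\,g_2(z+\b,y)^{-1}$ is a well-defined element, and its reduction modulo $z^2$ makes sense.

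First I apply Lemma~\ref{lm-4} to the polynomials $g_1$ and $g_2$:
\[
g_i(z+b_1,\dots,z+b_n,y)\equiv g_i(\b,y)+\mathrm S(g_i)(\b,y)\,z \pmod{z^2},\qquad i=1,2.
\]
Next I invert $g_2$ modulo $z^2$. Writing $a_0=g_2(\b,y)\neq0$ and $a_1=\mathrm S(g_2)(\b,y)$, we have
\[
(a_0+a_1z)^{-1}\equiv a_0^{-1}-a_1a_0^{-2}z \pmod{z^2}.
\]
Multiplying this by $g_1(\b,y)+\mathrm S(g_1)(\b,y)z$ gives the constant term $g_1(\b,y)/g_2(\b,y)=g(\b,y)$. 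The coefficient of $z$ is
\[
\frac{\mathrm S(g_1)(\b,y)\,g_2(\b,y)-g_1(\b,y)\,\mathrm S(g_2)(\b,y)}{g_2(\b,y)^2}.
\]

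Finally I identify this coefficient with $\mathrm S(g)(\b,y)$. The partial derivatives on $\K(\x,y)$ satisfy the quotient rule, so $\partial g/\partial x_k=(\partial_k g_1\,g_2-g_1\,\partial_k g_2)/g_2^2$. Summing over $k$ and using linearity of $\mathrm S$ (Lemma~\ref{lm-1}) gives $\mathrm S(g)=(\mathrm S(g_1)g_2-g_1\mathrm S(g_2))/g_2^2$. Evaluating at $\x=\b$ is legitimate because the denominator is $g_2(\b,y)^2\neq0$, and the result matches the $z$-coefficient above.

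The main obstacle is bookkeeping about well-definedness rather than algebra. I need to make explicit the hypothesis $g_2(\b,y)\neq0$, which the statement leaves implicit. I also need to check that the result is independent of the chosen representation $g_1/g_2$. This follows because if $g_1/g_2=g_1'/g_2'$, then $g_1g_2'=g_1'g_2$ as polynomials, and this identity survives the substitution $\x\mapsto z+\b$. The analogous statement with the $2z$ shift in the $k$-th variable follows by the same computation, using Lemma~\ref{2shift} in place of Lemma~\ref{lm-4}.
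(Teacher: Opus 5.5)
Your proposal is correct and follows the paper's proof: apply the polynomial case to $g_1$ and $g_2$, invert $g_2(\b,y)+S(g_2)(\b,y)\,z$ modulo $z^2$ using $g_2(\b,y)\neq 0$, and identify the resulting $z$-coefficient with the quotient-rule expression for $S(g)(\b,y)$. Your extra remarks on working in $\K(y)[[z]]$ and on independence of the representation $g_1/g_2$ make explicit the well-definedness that the paper assumes without comment when it writes ``since $g(\b,y)$ is well defined.''
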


\begin{proof}
Set $(g_1)_0:=g_1(\b,y)$ and $(g_2)_0:=g_2(\b,y)$. Since $g(\b,y)$ is well defined, we have $(g_2)_0\neq 0$.

By the polynomial case, expanding in $z$ to first order gives
\[
\begin{aligned}
g_1(z+b_1,\dots,z+b_n,y) &\equiv (g_1)_0 + S(g_1)(\b,y)\,z \pmod{z^2},\\
g_2(z+b_1,\dots,z+b_n,y) &\equiv (g_2)_0 + S(g_2)(\b,y)\,z \pmod{z^2}.
\end{aligned}
\]
Hence
\[
g(z+b_1,\dots,z+b_n,y)
= \frac{g_1(z+\b)}{g_2(z+\b)}
\equiv
\frac{(g_1)_0 + S(g_1)(\b,y)\,z}{(g_2)_0 + S(g_2)(\b,y)\,z}
\pmod{z^2}.
\]
Since $(g_2)_0\neq 0$, we have the formal expansion
\[
\frac{1}{(g_2)_0 + S(g_2)(\b,y)\,z}
\equiv
\frac{1}{(g_2)_0}\left(1 - \frac{S(g_2)(\b,y)}{(g_2)_0}\,z\right)
\pmod{z^2}.
\]
Substituting and collecting terms yields
\[
\begin{aligned}
g(z+b_1,\dots,z+b_n,y)
&\equiv
\left((g_1)_0 + S(g_1)(\b,y)\,z\right)
\cdot
\frac{1}{(g_2)_0}
\left(1 - \frac{S(g_2)(\b,y)}{(g_2)_0}\,z\right)
\pmod{z^2}\\
&\equiv
\frac{(g_1)_0}{(g_2)_0}
+
\left(
\frac{S(g_1)(\b,y)}{(g_2)_0}
-
\frac{(g_1)_0 S(g_2)(\b,y)}{(g_2)_0^2}
\right) z
\pmod{z^2}.
\end{aligned}
\]
On the other hand, for the rational function $g=g_1/g_2$,
\[
\frac{\partial g}{\partial x_i}
=
\frac{(\partial g_1/\partial x_i)g_2 - g_1(\partial g_2/\partial x_i)}{g_2^2},
\]
so summing over $i$ gives
\[
S(g)=\sum_{i=1}^n \frac{\partial g}{\partial x_i}=\frac{S(g_1)g_2 - g_1 S(g_2)}{g_2^2}.
\]
Evaluating at $(\b,y)$,
\[
S(g)(\b,y)
=
\frac{S(g_1)(\b,y)(g_2)_0 - (g_1)_0 S(g_2)(\b,y)}{(g_2)_0^2}.
\]
Therefore
\[
\frac{S(g_1)(\b,y)}{(g_2)_0}
-
\frac{(g_1)_0 S(g_2)(\b,y)}{(g_2)_0^2}
=
S(g)(\b,y),
\]
and $\frac{(g_1)_0}{(g_2)_0}=g(\b,y)$. Substituting back proves the claim.
\end{proof}

\begin{lemma}\label{lm-3}
Let $\K$ be a field, $\b=(b_1,\dots,b_n)\in\K^n$, and let
\[
g(\x,y)=\frac{g_1(\x,y)}{g_2(\x,y)}\in \K(\x,y)
\]
be a rational function with $g_1,g_2\in\K[\x,y]$. Then
\[
g(z+b_1,\dots,2z+b_k,\dots,z+b_n,y)
\equiv
g(\b,y)+\left(S(g)+\frac{\partial g}{\partial x_k}\right)(\b,y)\,z
\pmod{z^2}.
\]
\end{lemma}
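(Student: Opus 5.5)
The plan mirrors the proof of the preceding appendix lemma, replacing the uniform shift $z+\b$ by the modified shift $\b+z\,\mathbf{w}$, where $\mathbf{w}=(1,\dots,1,2,1,\dots,1)$ has a $2$ in position $k$. I treat numerator and denominator separately and then combine them with the quotient rule modulo $z^2$.

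First, for the polynomials $g_1,g_2\in\K[\x,y]$, Lemma~\ref{2shift} gives
\[
g_j(z+b_1,\dots,2z+b_k,\dots,z+b_n,y)\equiv (g_j)_0+\Bigl(S(g_j)+\tfrac{\partial g_j}{\partial x_k}\Bigr)(\b,y)\,z \pmod{z^2},\qquad j=1,2,
\]
where $(g_j)_0:=g_j(\b,y)$. Since $g(\b,y)$ is assumed well defined, $(g_2)_0\neq 0$, and it is invertible in $\K(y)$. Hence the shifted denominator is a unit in $\K(y)[z]/\langle z^2\rangle$. Its inverse is
\[
(g_2)_0^{-1}\Bigl(1-\frac{L(g_2)}{(g_2)_0}z\Bigr),
\]
where I abbreviate the differential operator $L:=S+\partial/\partial x_k$, evaluated at $(\b,y)$.

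Multiplying out and discarding $z^2$ terms gives
\[
g(\b,y)+\frac{L(g_1)(g_2)_0-(g_1)_0L(g_2)}{(g_2)_0^2}\,z .
\]
It remains to identify the $z$-coefficient with $L(g)(\b,y)$. The operator $L$ is a $\K$-linear combination of the derivations $\partial/\partial x_i$, so it is itself a derivation on $\K(\x,y)$. It therefore satisfies the quotient rule
\[
L(g_1/g_2)=\frac{L(g_1)g_2-g_1L(g_2)}{g_2^2}.
\]
Evaluating this identity at $(\b,y)$ is legitimate because the denominator $g_2^2$ does not vanish there. This completes the argument.

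There is no real obstacle. The only point needing care is that evaluation at $(\b,y)$ commutes with the shift and with taking derivatives of rational functions, and this holds precisely because $(g_2)_0\neq0$. Alternatively, one could derive the lemma from the preceding appendix lemma applied to the rational function $\tilde g(\x,y):=g(x_1,\dots,x_k',\dots,x_n,y)$ after a change of variables. The direct computation above is cleaner.
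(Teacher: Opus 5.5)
Your proposal is correct and follows the paper's proof essentially step by step: you expand numerator and denominator via the polynomial case (Lemma~\ref{2shift}), invert the shifted denominator modulo $z^2$ using $(g_2)_0\neq 0$, and identify the resulting $z$-coefficient with $\bigl(S(g)+\partial g/\partial x_k\bigr)(\b,y)$ via the quotient rule. Your remark that $S+\partial/\partial x_k$ is a derivation is a slightly cleaner justification of the quotient formula, which the paper writes out explicitly.
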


\begin{proof}
Set $(g_1)_0:=g_1(\b,y)$ and $(g_2)_0:=g_2(\b,y)$. Since $g(\b,y)$ is well defined, we have $(g_2)_0\neq 0$.

By the polynomial case, expanding in $z$ to first order gives
\[
\begin{aligned}
g_1(z+b_1,\dots,2z+b_k,\dots,z+b_n,y)
&\equiv
(g_1)_0+\left(S(g_1)+\frac{\partial g_1}{\partial x_k}\right)(\b,y)\,z
\pmod{z^2},\\
g_2(z+b_1,\dots,2z+b_k,\dots,z+b_n,y)
&\equiv
(g_2)_0+\left(S(g_2)+\frac{\partial g_2}{\partial x_k}\right)(\b,y)\,z
\pmod{z^2}.
\end{aligned}
\]
Hence
\[
g(z+b_1,\dots,2z+b_k,\dots,z+b_n,y)
\equiv
\frac{(g_1)_0+(S(g_1)+\partial_k g_1)(\b,y)\,z}
{(g_2)_0+(S(g_2)+\partial_k g_2)(\b,y)\,z}
\pmod{z^2},
\]
where we write $\partial_k g_i := \dfrac{\partial g_i}{\partial x_k}$.
Since $(g_2)_0\neq 0$, we have the formal expansion
\[
\frac{1}{(g_2)_0+(S(g_2)+\partial_k g_2)(\b,y)\,z}
\equiv
\frac{1}{(g_2)_0}
\left(1-\frac{(S(g_2)+\partial_k g_2)(\b,y)}{(g_2)_0}\,z\right)
\pmod{z^2}.
\]
Substituting and collecting terms yields
\[
\begin{aligned}
g(z+b_1,\dots,2z+b_k,\dots,z+b_n,y)
&\equiv
\frac{(g_1)_0}{(g_2)_0}
+
\Bigg(
\frac{(S(g_1)+\partial_k g_1)(\b,y)}{(g_2)_0}
-
\frac{(g_1)_0(S(g_2)+\partial_k g_2)(\b,y)}{(g_2)_0^2}
\Bigg) z
\pmod{z^2}.
\end{aligned}
\]
On the other hand, for the rational function $g=g_1/g_2$,
\[
S(g)+\frac{\partial g}{\partial x_k}
=
\frac{(S(g_1)+\partial_k g_1)g_2 - g_1(S(g_2)+\partial_k g_2)}{g_2^2}.
\]
Evaluating at $(\b,y)$,
\[
\left(S(g)+\frac{\partial g}{\partial x_k}\right)(\b,y)
=
\frac{(S(g_1)+\partial_k g_1)(\b,y)(g_2)_0
-
(g_1)_0(S(g_2)+\partial_k g_2)(\b,y)}
{(g_2)_0^2}.
\]
Therefore
\[
\frac{(S(g_1)+\partial_k g_1)(\b,y)}{(g_2)_0}
-
\frac{(g_1)_0(S(g_2)+\partial_k g_2)(\b,y)}{(g_2)_0^2}
=
\left(S(g)+\frac{\partial g}{\partial x_k}\right)(\b,y),
\]
and $\frac{(g_1)_0}{(g_2)_0}=g(\b,y)$. Substituting back proves the claim.
\end{proof}

\bibliographystyle{abbrv}
\bibliography{mybibfile}

\begin{table}[p]
\centering
\renewcommand{\arraystretch}{1.3}
\caption{Notational Conventions}
\label{tab:notation}
\begin{tabular}{c|l}
\toprule
\textbf{Symbol} & \textbf{Description} \\
\midrule
$n$ & number of variables \\
$T$ & term count of the GCD (or a bound thereof) \\
$D$ & total degree of the input polynomials (or a bound) \\
$d$ & partial degree bound \\
$\K$ & base field \\
$\mathbb{Z}$ & ring of integers \\
$\Hi$ & coefficient bound of input polynomials: $\max\{\|A\|_\infty,\|B\|_\infty\}$ \\
$\Ho$ & coefficient bound of the GCD: $\|G\|_\infty$ \\
$\mathbb{H}_{o,\max}$ & worst-case coefficient bound for the GCD \\
$\mathcal{L}$ & set of collected lucky primes \\
$\kappa$ & number of primes needed for rational reconstruction \\
$\kappa_{\max}$ & maximum number of primes required in the worst case \\
$L_{\max}$ & maximum number of outer iterations in the coefficient guessing strategy \\
$\mu,\varepsilon$ & desired failure probability \\
$p$ & a prime for modular reduction \\
$\lambda$ & lower bound for the prime selection interval $[\lambda, 4\lambda]$ \\
$\H$ & current guess for the coefficient bound $\Ho$ \\
$G^*$ & current approximation to the GCD in the recursive algorithm \\
$\|F\|_{0}$ & number of nonzero terms of a polynomial $F$ \\
$\operatorname{cont}(F)$ & content of $F$ (GCD of its coefficients) \\
$\operatorname{pp}(F)$ & primitive part of $F$: $F / \operatorname{cont}(F)$ \\
$\operatorname{lc}_{x_k}(F)$ & leading coefficient of $F$ with respect to $x_k$ \\
$\res_{x_k}(F,G)$ & resultant of $F$ and $G$ with respect to $x_k$ \\
$\widetilde{O}$ & big-O notation suppressing polylogarithmic factors \\
\bottomrule
\end{tabular}
\end{table}

\end{document}